\documentclass[11pt]{article}

\usepackage[T1]{fontenc}
\usepackage[utf8]{inputenc}
\usepackage{lmodern}
\usepackage[a4paper,margin=1in]{geometry}
\usepackage{amsmath,amssymb,amsthm,mathtools}
\usepackage{microtype}
\usepackage[colorlinks=true,linkcolor=blue,citecolor=blue,urlcolor=blue]{hyperref}
\usepackage{esint}
\usepackage{comment}
\usepackage[section]{placeins}

\numberwithin{equation}{section}
\theoremstyle{plain}
\newtheorem{theorem}{Theorem}[section]
\newtheorem{proposition}[theorem]{Proposition}
\newtheorem{lemma}[theorem]{Lemma}
\newtheorem{corollary}[theorem]{Corollary}
\theoremstyle{definition}

\theoremstyle{remark}
\newtheorem{remark}[theorem]{Remark}

\usepackage{mathrsfs}

\newcommand{\R}{\mathbb R}
\newcommand{\Z}{\mathbb Z}
\newcommand{\T}{\mathbb T}

\newcommand{\N}{\mathbb N}
\newcommand{\eps}{\varepsilon}
\newcommand{\ii}{\mathrm{i}}

\newcommand{\keywords}[1]{%
  \par\medskip
  \noindent\textbf{Keywords.} #1
}

\usepackage{xcolor}

\newcommand{\MA}[1]{{\color{blue}#1}}

\usepackage{algorithm}
\usepackage{algpseudocode}
\usepackage{float}

\title{Point vortex dynamics in quasi-periodic channels: transporting trajectories and ergodic distribution of equilibria}
\author{}
\date{}

\begin{document}
\author{Mohamed Ali
\and
Taoufik Hmidi
}
\maketitle

\begin{abstract}
We study the dynamics of a single point vortex in an unbounded planar
channel whose interfaces are quasi-periodic in the longitudinal
direction. The motion is governed by the Robin function. We introduce
a hull formulation that lifts the quasi-periodic geometry to a
periodic problem on a finite-dimensional torus and yields an exact
quasi-periodic representation of the Robin function. Under a natural
transversality condition, we construct global quasi-periodic invariant
graphs describing transporting vortex trajectories and show that a
full neighborhood of each boundary component is foliated by such
graphs. Under a Diophantine condition on the spatial frequencies, the
dynamics along each graph can be straightened to a constant drift,
so that the vortex motion is quasi-periodic modulo translation.
A central contribution concerns the distribution of vortex
equilibria. In the quasi-periodic setting, where no
fundamental spatial cell exists, we identify critical points of the
Robin function with crossings of a hypersurface by a Kronecker flow
on the hull torus. Exploiting unique ergodicity, we establish a general
zero-counting theorem, allowing finite-order tangencies, which yields
an explicit geometric flux formula for the asymptotic density of
critical points and their limiting phase distribution. The result is
nonperturbative once the critical hull is constructed. Explicit
periodic and quasi-periodic models reveal bifurcations and phase
transitions in the critical-point distribution, while numerical computations
provide quantitative validation of the analytical results.
\end{abstract}
\keywords{Point vortices, Robin function, quasi-periodic channels,
invariant graphs, quasi-periodic dynamics, Kronecker flows, unique ergodicity,
critical-point distribution,  phase transitions.}
\tableofcontents

\section{Introduction and main results}
\label{sec-introduction}

Point-vortex systems constitute one of the most classical
finite-dimensional models arising from the incompressible Euler
equations. Their origins go back to the pioneering work of Helmholtz
\cite{Helmholtz1858}, who introduced the vortex equations in the plane
to describe the mutual interaction of concentrated vortices. Their
Hamiltonian structure was subsequently developed by Kirchhoff and
Routh; see \cite{Kirchhoff1876,Routh1881}, as well as the modern
accounts \cite{MajdaBertozzi,MarchioroPulvirenti,Newton, Saffman}.
Beyond their intrinsic interest as finite-dimensional Hamiltonian
systems, point vortices provide an effective description of highly
concentrated vorticity distributions in inviscid incompressible flows.
Roughly speaking, when the vorticity is concentrated in a finite number
of well-separated small regions, each carrying a prescribed
circulation, the evolution of the corresponding vortex centers is
described, at leading order, by the point-vortex system; see, for
instance, \cite{MajdaBertozzi,MarchioroPulvirenti}. In this sense,
point vortices may be viewed as singular particles encoding the dominant
dynamics of concentrated coherent structures in two-dimensional Euler
flows.

Already in the full plane, point-vortex dynamics exhibits a remarkably
rich structure. Depending on the number, strengths, and relative
positions of the vortices, one encounters relative equilibria, rotating
and translating configurations, periodic or quasi-periodic motions, collapse phenomena,
scattering, and, for sufficiently many vortices, genuinely complicated
dynamics; see \cite{Aref2007,Newton,Saffman}. These phenomena make the
point-vortex system a fundamental model at the interface between fluid
dynamics, Hamiltonian systems, and geometric mechanics.

The situation changes substantially in the presence of boundaries.
The geometry of the domain then enters the Hamiltonian through the
regular part of the Green function. In particular, even an isolated
vortex interacts with a solid boundary through the harmonic correction
to the free-space Green function, and this boundary-induced interaction
may generate a nontrivial drift in the absence of any other vortices.
For a single vortex, all pairwise interaction terms vanish, and the
dynamics is therefore governed entirely by the Robin function; see
\cite{Gustafsson1979,Newton}. In this sense, the Robin function acts as
an effective potential encoding the influence of the domain geometry
on the vortex motion.

Boundary-induced point-vortex dynamics has been investigated in a
variety of geometrically nontrivial fluid domains. In particular,
conformal mappings of wavy-wall channels have been used to analyze how
boundary corrugations modify vortex trajectories and transport
\cite{LeeEtAl2010}. More generally, conformal and potential-theoretic
methods provide explicit constructions of Kirchhoff--Routh Hamiltonians
and allow the resulting vortex dynamics to be studied in domains
containing solid obstacles, gaps, or periodic geometric structures;
see, for instance,
\cite{CrowdyMarshall2005,CrowdyMarshall2006,BaddooCrowdy2019}.
These works demonstrate that boundary geometry can profoundly alter
point-vortex dynamics and provide a natural fluid-mechanical motivation
for the present study.

The geometry considered here is of a different nature. We allow the
channel boundaries to vary quasi-periodically in the longitudinal
direction. Consequently, there is in general no finite spatial period
cell to which the problem can be reduced. The wall-induced Hamiltonian,
and in particular the associated Robin function, must instead be
described through a quasi-periodic hull. This formulation makes it
possible to exploit the underlying torus dynamics and to investigate
not only individual vortex trajectories, but also the distribution and
recurrence properties of the critical points generated by the
quasi-periodic boundary geometry.\\
More precisely, let $\Omega\subset\R^2$ be a planar domain and denote by 
$R_\Omega$  its Robin function. The motion
 of a single point vortex $z(t)=(x(t),y(t))$ of circulation $\Gamma$ is
governed by the Hamiltonian system
\begin{equation}
\label{eq-single-vortex-intro}
\dot z(t)
=
\tfrac{\Gamma}{2}
\nabla^\perp R_\Omega(z(t)),
\qquad
\nabla^\perp=(\partial_y,-\partial_x).
\end{equation}
Since the velocity field is tangent to the level sets of $R_\Omega$,
the Robin function is conserved along the vortex motion:
\[
R_\Omega(z(t))=R_\Omega(z(0)),
\]
and every vortex trajectory is contained in a level set of the Robin
function. Away from critical points of $R_\Omega$, each connected
component of a level set is an orbit of the point-vortex flow. Thus, the geometry of the level sets and the
critical-point structure of $R_\Omega$ provide a direct description of
the phase portrait of the single-vortex dynamics.
The connection between the geometry of the domain
and the resulting vortex dynamics is particularly transparent, since
Green functions, and consequently Robin functions, transform naturally
under conformal mappings. This provides a powerful mechanism for
relating geometric properties of $\Omega$ to the structure of the
vortex trajectories; see \cite{Gustafsson1979,Nehari,Ransford}.
Let us emphasize an important distinction between bounded and unbounded
domains. Assume first that $\Omega$ is bounded. By Sard's theorem,
almost every value of $R_\Omega$ is a regular value. For such an energy
$E$, each connected component of
\[
\{z\in\Omega:R_\Omega(z)=E\}
\]
is a compact one-dimensional manifold and hence a closed curve. Since
the Hamiltonian vector field does not vanish on a regular level, the corresponding point-vortex motion is periodic in time. Thus, in bounded domains, regular energy levels generically give rise to periodic
point-vortex trajectories. Related existence results for
periodic solutions of several point-vortex systems in bounded domains, including
solutions with prescribed minimal period, have been obtained in
\cite{BartschSacchet2018}.

The global organization of these trajectories is therefore closely
related to the critical-point structure of the Robin function. This
structure is delicate and remains poorly understood in general
domains. A particularly favorable situation occurs for bounded convex
domains, where the Robin function is strictly convex and possesses a
unique critical point; see
\cite{CaffarelliFriedman,Gustafsson1990}. In that case, the phase portrait
is organized around a distinguished equilibrium and is foliated by
periodic level curves. Beyond such geometrically constrained settings,
considerably less is known about the number and nature of the critical
points of the Robin function.

The situation is fundamentally different in an unbounded domain. A
connected component of a regular level set of $R_\Omega$ need not be
compact, and the associated vortex trajectory may escape every compact
subset of the domain. The relevant dynamical structures are therefore
not necessarily closed orbits but transporting trajectories. This
already illustrates how strongly the geometry of the boundary can
affect the dynamics: while a single point vortex in the whole plane is
stationary, the presence of solid walls may generate equilibria, trapped
motions, separatrices, and unbounded transport.

This contrast suggests a natural change of perspective. In bounded
domains, a central question is the existence and organization of closed
level curves of the Robin function and the corresponding periodic
vortex trajectories. In channel domains, by contrast, one is naturally
led to consider noncompact level curves that cross the domain in the
longitudinal direction and generate transporting vortex trajectories.
The relevant question is therefore no longer primarily one of periodic
recurrence, but rather of the persistence, geometry, and modulation of
such transporting orbits under deformations of the channel boundaries.
\\
The flat strip provides the simplest reference configuration for this
problem. Let
\[
\Omega_0=\R\times(0,h).
\]
Its invariance under horizontal translations implies that the Robin
function depends only on the transverse variable,
\[
R_{\Omega_0}(x,y)=R_0(y)=\tfrac1{2\pi}
\log
\left(
\tfrac{2h}{\pi}
\sin\tfrac{\pi y}{h}
\right).
\]
Accordingly, the single-vortex system
\eqref{eq-single-vortex-intro} reduces to
\begin{equation*}
\label{eq-flat-strip-vortex-intro}
\dot x
=
\tfrac{\Gamma}{2}R_0'(y),
\qquad
\dot y=0.
\end{equation*}
Hence every horizontal line
\[
y=y_0,
\qquad
R_0'(y_0)\neq0,
\]
is an invariant orbit along which the vortex travels with the constant
longitudinal velocity
\[
c(y_0)
=
\tfrac{\Gamma}{2}R_0'(y_0).
\]
The centerline $y=\tfrac{h}{2}$, characterized by
$R_0'(\tfrac{h}{2})=0$, is precisely the set of stationary vortices.
Thus the flat strip is completely integrable and foliated, away from
the centerline, by horizontal transporting orbits parametrized by the
transverse coordinate.

This simple structure provides the natural starting point for studying
deformed channels. The purpose of the present work is to understand how
the horizontal transporting trajectories of the flat strip are modified
when translation invariance is broken by quasi-periodic boundary
deformations. More precisely, we consider channels of the form
\begin{equation*}
\label{eq-intro-channel}
\Omega
=
\left\{
(x,y)\in\R^2:
f_-(\omega_-x)<y<f_+(\omega_+x)
\right\},
\end{equation*}
where the two interfaces are smooth quasi-periodic graphs and remain
uniformly separated, see Section \ref{sec-two} for more details. Our aim is to determine how the geometry of these
interfaces is encoded in the Robin function and, consequently, how it
governs the persistence and modulation of transporting vortex
trajectories. {The qualitative transition from the flat-strip dynamics to periodic and
quasi-periodic boundary perturbations is summarized schematically in
Figure~\ref{fig:intro_schematic}.
}

\begin{figure}[t]
    \centering
    \includegraphics[width=\textwidth]{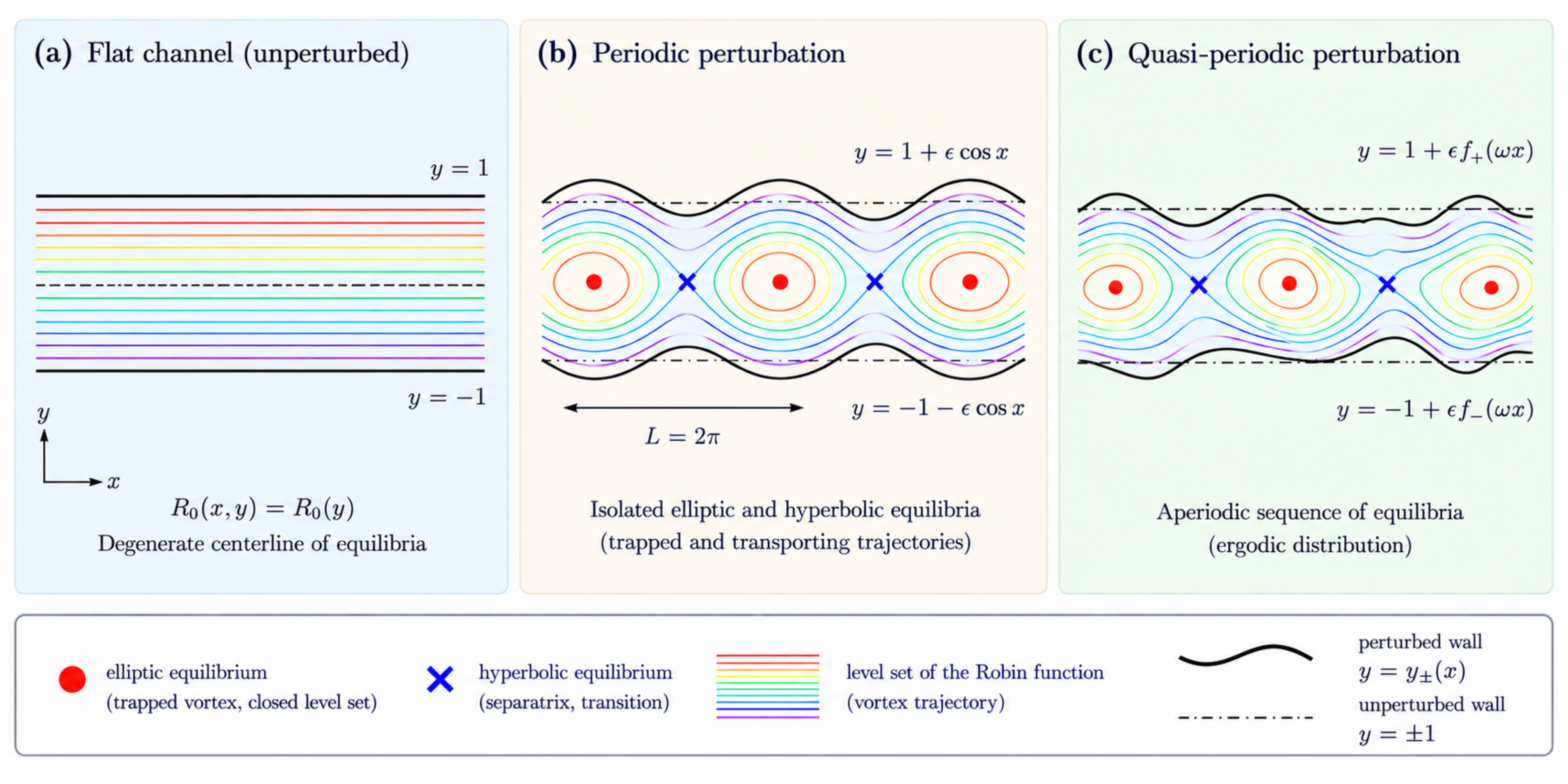}
    \caption{
Schematic of the three dynamical regimes considered in this work.
Left: in the flat channel, regular Robin levels are horizontal
transporting trajectories and the centerline is a degenerate family of
equilibria. Center: a periodic boundary perturbation may split this
critical line into isolated elliptic and hyperbolic equilibria,
producing trapped islands, separatrix-type levels, and transporting
trajectories. Right: for quasi-periodic boundary perturbations, analogous
local structures occur without a fundamental spatial period; the
equilibria form an aperiodic sequence whose large-scale distribution is
described by the ergodic theory developed below. The figure is
schematic and is not based on numerical data.
    }
    \label{fig:intro_schematic}
\end{figure}

This raises two complementary problems. The first concerns the dynamics: \textit{if an invariant trajectory is quasi-periodic as a spatial graph, does
its time parametrization remain quasi-periodic?} The second concerns the
critical-point structure of the Robin function: \textit{in the absence of a
spatial period, how should its critical points be described and
distributed?}

Both questions are naturally formulated through the compact hull
associated with the channel, in which longitudinal translations are
encoded by a linear flow on a finite-dimensional torus. For
transporting trajectories, the hull provides the appropriate
finite-dimensional description of their spatial and temporal
quasi-periodicity. For critical points, it replaces the unavailable
period cell and provides the framework for their statistical
distribution. We first address the dynamics of transporting
trajectories and return subsequently to the critical-point problem.

\subsection{Quasi-periodic channels and transporting trajectories}
We first investigate the persistence of transporting trajectories in quasi-periodic channels. We show that such channels possess a robust family of transporting orbits near each boundary. This phenomenon is nonperturbative: it follows from the universal logarithmic behavior of the Robin function near a smooth boundary. In particular, sufficiently close regular level sets can be represented as global graphs over the longitudinal variable and inherit the quasi-periodic structure of the channel.
This structure is naturally described through the compact hull of the channel. If $\theta\in\T^d$ denotes the phase associated with a longitudinal translation, then the Robin function can be represented as
\begin{align}\label{Robin-Intro}
R_\Omega(x,y)=\mathcal R(\omega x,y),
\end{align}
where $\mathcal R$ is periodic in $\theta$. The corresponding transporting level curves can then be written as
\[
y=\mathcal Y(\omega x),
\]
where $\mathcal Y$ is periodic on $\T^d$.
It is important, however, to distinguish the geometry of an orbit from its time parametrization. The existence and quasi-periodic graph structure of these trajectories require no arithmetic  assumption on $\omega$, whereas the motion along such a graph is generally nonuniform. Showing that its time parametrization is itself quasi-periodic amounts to straightening this variable-speed flow, leading to a cohomological equation on $\T^d$. Under a suitable Diophantine condition on $\omega$, this equation can be solved, and the vortex motion can be expressed as a constant longitudinal drift modulated by a bounded quasi-periodic oscillation.
We summarize these geometric and dynamical conclusions in the following qualitative statement.
\begin{theorem}[Quasi-periodic dynamics near the boundary]
\label{thm-intro-boundary}
Consider a channel whose upper and lower interfaces are smooth
quasi-periodic graphs in the horizontal variable with nonresonant frequency $\omega\in\R^d$. Then, sufficiently
close to either boundary, the support of every point-vortex orbit is
an invariant curve given by the graph of a quasi-periodic function of frequency $\omega.$
\\
Moreover, under a suitable Diophantine condition on  $\omega$, the dynamics along each such graph is
quasi-periodic modulo a uniform horizontal drift. More precisely, the
corresponding trajectory has the form
\[
z(t)=c_\star\,t\,e_1+Z(\nu t),
\]
where $c_\star\neq0,\,\nu=c_\star\omega$ and  $Z$ is periodic on a finite-dimensional torus $\T^d$.

\end{theorem}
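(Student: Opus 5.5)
The plan has three stages: lift the Robin function to the hull torus, build the invariant graphs near each wall by an implicit function argument there, and straighten the motion along each graph with a cohomological equation.

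\textbf{Hull representation.} We first lift the channel to its hull. Write $\omega=(\omega_-,\omega_+)\in\R^d$ and, for $\theta\in\T^d$, let $\Omega_\theta=\{f_-(\theta_-)\ldots\}$ denote the translated channel $\{(x,y): f_-(\theta_-+\omega_-x)<y<f_+(\theta_++\omega_+x)\}$. Translation covariance of Green functions gives $R_{\Omega_\theta}(x,y)=R_{\Omega_{\theta+\omega x}}(0,y)$. We then set $\mathcal R(\theta,y):=R_{\Omega_\theta}(0,y)$, which yields \eqref{Robin-Intro}. Smooth dependence of $\mathcal R$ on $\theta$ would follow from a conformal map (or Hadamard-type variation) argument, uniformly in $\theta$ thanks to the compactness of $\T^d$ and the uniform separation of the walls.

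\textbf{Invariant graphs near the boundary.} Near the lower wall we use the universal expansion
\[
\mathcal R(\theta,y)=\tfrac{1}{2\pi}\log\bigl(2\,d_\theta(y)\bigr)+O(d_\theta(y)),
\]
where $d_\theta(y)$ is the distance to the wall, with the same expansion holding for derivatives uniformly in $\theta$. It follows that $\partial_y\mathcal R\sim \tfrac{1}{2\pi (y-f_-(\theta_-))}\to+\infty$ uniformly. Hence, in a uniform collar $0<y-f_-<\delta$, the equation $\mathcal R(\theta,y)=E$ has, for each $\theta$ and each sufficiently negative $E$, a unique solution $y=\mathcal Y_E(\theta)$. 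This solution is smooth on $\T^d$ by the implicit function theorem. The level set of $R_\Omega$ in the collar is then exactly the graph $y=\mathcal Y_E(\omega x)$, and the same argument applies at the upper wall. No arithmetic condition on $\omega$ is needed at this stage; nonresonance only serves to make the hull the correct object.

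\textbf{Straightening the motion along the graph.} On the graph $y=\mathcal Y_E(\omega x)$, the motion satisfies $\dot x=\tfrac{\Gamma}{2}\partial_y R_\Omega(x,\mathcal Y_E(\omega x))=:V(\omega x)$, where $V$ is smooth on $\T^d$ and bounded away from zero in the collar. We rewrite this as $dt/dx=1/V(\omega x)$. Set $c_\star^{-1}:=\int_{\T^d}V^{-1}\,d\theta$, and solve the cohomological equation
\[
\omega\cdot\nabla_\theta g = \frac{1}{V}-\frac{1}{c_\star}
\]
on $\T^d$. Under a Diophantine condition $|k\cdot\omega|\ge\gamma|k|^{-\tau}$, the Fourier coefficients $\hat g_k=\widehat{(1/V)}_k/(\ii k\cdot\omega)$ decay rapidly because $V$ is smooth, so $g\in C^\infty(\T^d)$. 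This gives $t=x/c_\star+g(\omega x)-g(\omega x_0)$. We then invert for $x$ as a function of $t$ by writing $x=c_\star t+u$. The correction $u$ must satisfy $u=-c_\star\,g(\omega c_\star t+\omega u)+\text{const}$. We look for $u=U(\nu t)$ with $\nu=c_\star\omega$, which amounts to solving $U(\varphi)=-c_\star g(\varphi+\omega U(\varphi))+\text{const}$ on $\T^d$. This is the standard inversion of a torus diffeomorphism: the map $\theta\mapsto\theta+\omega c_\star g(\theta)$ is a diffeomorphism of $\T^d$ isotopic to the identity, because its one-dimensional flow lines are monotone (equivalently, $x\mapsto t(x)$ is strictly increasing). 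We therefore define $U$ by inverting this map, rather than by a contraction argument, which would require smallness.

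\textbf{Conclusion and main obstacle.} With this $U$ in hand, $z(t)=c_\star t e_1+Z(\nu t)$, where $Z(\varphi)=\bigl(U(\varphi),\mathcal Y_E(\varphi+\omega U(\varphi))\bigr)$. Since $V$ has a fixed sign, $c_\star\neq0$. The step I expect to be the main obstacle is the first: establishing the uniform-in-$\theta$ smoothness of $\mathcal R$ together with the boundary asymptotics with derivative control. I would try conformal mapping of each $\Omega_\theta$ onto the flat strip, using uniform boundary regularity estimates for the mapping (Kellogg–Warschawski type).
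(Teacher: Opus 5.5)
Your proposal is correct and follows essentially the paper's route: the hull identity $R_\Omega(x,y)=\mathscr R(x\omega,y)$; uniform transversality near each wall, obtained from the logarithmic boundary asymptotics and giving global graphs $y=\mathscr Y_E(x\omega)$ by the implicit function theorem; and then a cohomological equation under the Diophantine condition, followed by inversion of the torus map $\theta\mapsto\theta+u(\theta)\omega$ (your $u=c_\star g$, with $c_\star^{-1}$ taken as the normalized average $\fint_{\T^d}V^{-1}\,d\theta$). The differences lie only in sub-steps: for the $\theta$-regularity you flag as the main obstacle, the paper avoids conformal maps by flattening each $\Omega_\theta$ onto the fixed strip $\R\times(0,1)$, so that $\theta$ enters only through the coefficients and boundary data of a uniformly elliptic divergence-form problem and parameter-dependent Schauder estimates apply; and it proves bijectivity of the torus map by a degree argument, where your orbit-by-orbit monotonicity argument (together with $\det D\Theta=c_\star/V>0$) is an equally valid alternative.
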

The proof combines elliptic, geometric, and dynamical arguments. The
first step, developed in Section~\ref{sec-two}, is to lift the quasi-periodic
geometry to a periodic family of channels parametrized by
$\theta\in\T^d$. Translation covariance of the Green function then
yields the exact hull representation \eqref{Robin-Intro}.
A key point is to establish sufficient regularity of the hull function
$\mathcal R$ with respect to both the phase and transverse variables.
For this purpose, the family of hull domains is flattened onto a fixed
strip, reducing the problem to a family of uniformly elliptic
divergence-form equations whose coefficients depend periodically on
$\theta$. Elliptic regularity and parameter-dependent Schauder
estimates then yield the required regularity of $\mathcal R$. Since the
coefficients of the transformed operator involve first derivatives of
the boundary parametrizations, this procedure naturally entails the
loss of one derivative in the phase variables.

The geometric part of the argument is developed in
Section~\ref{sec-3.1} and is more general than the near-boundary statement of
Theorem~\ref{thm-intro-boundary}. The essential hypothesis is the
uniform transversality condition
\[
|\partial_y\mathcal R(\theta,y)|\geq c_0>0
\]
on a suitable phase-dependent region of the hull domain. Under this
condition, the implicit function theorem, together with the compactness
of $\T^d$, produces global periodic level graphs
$y=\mathcal Y(\theta)$; restricting them to the Kronecker orbit
$\theta=\omega x$ gives the quasi-periodic invariant graphs in the
physical channel. Thus, proximity to the boundary is only one mechanism
ensuring transversality. The general result applies equally to regions
in the interior of the channel whenever the same condition can be
verified; see Theorem~\ref{thm-exact-qp-orbits}.

The time parametrization requires a different argument. Once an
invariant graph has been constructed, the vortex equation reduces to a
scalar quasi-periodic equation for the longitudinal variable. In
Section~\ref{sec-3.2}, this variable-speed flow is straightened by solving a
cohomological equation on $\T^d$. This is the only stage at which an
arithmetic assumption enters: a Diophantine condition on $\omega$
controls the small divisors and yields a constant drift plus a bounded
quasi-periodic correction.
The near-boundary conclusion of
Theorem~\ref{thm-intro-boundary} is obtained by verifying the
transversality condition uniformly close to either interface, using
the universal boundary asymptotics of the Robin function; see
Subsection~\ref{subsec-qp-motion-near-boundary}. The underlying
transversality principle is, however, not restricted to a boundary
layer and applies in any region of the channel where
$\partial_y\mathcal R$ remains uniformly away from zero.

This observation is particularly useful for small quasi-periodic
perturbations of the flat strip. Away from the degenerate centerline
$y=h/2$, the vertical derivative of the flat-strip Robin function does
not vanish, and this property persists under sufficiently small
deformations of the interfaces. Consequently, the horizontal
transporting orbits of the flat strip persist as quasi-periodic
invariant graphs. In this perturbative regime, one can moreover go
beyond the qualitative theory: Hadamard's variational formula yields
explicit first-order expansions of the Robin function, the invariant
graphs, and the corresponding vortex trajectories. These results are
developed in Section~\ref{sec-4} and are summarized as follows.

\begin{theorem}[Persistence away from the centerline]
\label{thm-main-intro}
Consider a sufficiently small quasi-periodic perturbation, with nonresonant frequency $\omega,$ of the flat
channel
\(
\R\times(0,h).
\)
Then every horizontal point-vortex orbit of the flat channel whose
height $y_\star$ remains bounded away from the boundary and satisfies
$y_\star\neq h/2$ persists, for $|\eps|$ sufficiently small, as a
quasi-periodic invariant graph
\[
y=Y_{\eps,y_\star}(x),
\qquad
Y_{\eps,y_\star}(x)
=
y_\star+O(\eps),
\]
with the same spatial frequency vector as the channel.
\\
If, in addition, the frequency vector satisfies a suitable
Diophantine condition, the dynamics on this graph is quasi-periodic
modulo a constant horizontal drift. More precisely, the corresponding
trajectory can be written as
\[
z(t)
=
c_{\eps,y_\star}t\,e_1
+
Z_{\eps,y_\star}(\nu_{\eps,y_\star}t),
\]
where $Z_{\eps,y_\star}$ is periodic on $\T^d$ and
\[
\nu_{\eps,y_\star}
=
c_{\eps,y_\star}\omega,
\qquad
c_{\eps,y_\star}
=
\tfrac{\Gamma}{4h}
\cot\left(\tfrac{\pi y_\star}{h}\right)
+
O(\eps).
\]
\end{theorem}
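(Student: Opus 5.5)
The plan is to derive Theorem~\ref{thm-main-intro} from the general transversality result (Theorem~\ref{thm-exact-qp-orbits}) and the straightening result of Section~\ref{sec-3.2}. We verify their hypotheses perturbatively around the flat strip, and then use Hadamard's formula to obtain the leading-order drift.

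\textbf{Step 1: hull representation and flattening.} Write the perturbed channel as $\eps f_-(\theta)<y<h+\eps f_+(\theta)$, with phase $\theta\in\T^d$ encoding the longitudinal translations. Using the hull construction of Section~\ref{sec-two}, we obtain $R_\Omega(x,y)=\mathcal R_\eps(\omega x,y)$. We flatten each hull domain onto $\R\times(0,h)$ by the affine-in-$y$ map
\[
y\mapsto \frac{y-\eps f_-}{h+\eps(f_+-f_-)}\,h .
\]
This turns the problem into a family of divergence-form operators whose coefficients are $O(\eps)$ perturbations of the Laplacian in $C^{k-1}$, uniformly in $\theta$. The parameter-dependent Schauder estimates already used in Section~\ref{sec-two} then give
\[
\|\mathcal R_\eps-R_0\|_{C^{2}(\T^d\times K)}=O(\eps)
\]
on every compact $K\subset(0,h)$ bounded away from the walls. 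Here $\mathcal R_\eps$ is transported back to the original coordinates, which costs only $O(\eps)$ in the compact region. The regular part of the Green function is smooth there, so the logarithmic singularity causes no difficulty.

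\textbf{Step 2: transversality and invariant graphs.} Fix $y_\star\neq h/2$ with $\delta\le y_\star\le h-\delta$. Then
\[
|R_0'(y)|=\tfrac1{2h}\bigl|\cot(\pi y/h)\bigr|\ge 2c_0>0
\]
on an interval $J=[y_\star-\rho,y_\star+\rho]$ that avoids $h/2$. By Step~1, $|\partial_y\mathcal R_\eps|\ge c_0$ on $\T^d\times J$ for small $|\eps|$. We solve $\mathcal R_\eps(\theta,y)=E_\eps$ with $E_\eps$ chosen as the level through $y_\star$; the natural choices are $\mathcal R_\eps(0,y_\star)$ or the $\theta$-average. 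Theorem~\ref{thm-exact-qp-orbits}, whose proof uses the implicit function theorem and compactness of $\T^d$, then gives $\mathcal Y_\eps\in C^1(\T^d)$. Since $\mathcal Y_0\equiv y_\star$, this yields $\|\mathcal Y_\eps-y_\star\|_\infty=O(\eps)$, and $Y_{\eps,y_\star}(x)=\mathcal Y_\eps(\omega x)$. This part requires no arithmetic condition.

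\textbf{Step 3: time parametrization and drift.} On the graph, the equation reduces to
\[
\dot x=\tfrac\Gamma2\,\partial_y\mathcal R_\eps\bigl(\omega x,\mathcal Y_\eps(\omega x)\bigr)=:g_\eps(\omega x),
\]
with $g_\eps$ nonvanishing. The Diophantine straightening result of Section~\ref{sec-3.2} applies provided $g_\eps$ has enough $\theta$-regularity; this is where the one-derivative loss must be budgeted by taking the boundary data smooth enough. It gives the form $z(t)=c\,t\,e_1+Z(\nu t)$ with
\[
c^{-1}=\int_{\T^d}g_\eps^{-1}\,d\theta .
\]
Since $g_\eps=\tfrac\Gamma2R_0'(y_\star)+O(\eps)$, we get
\[
c_{\eps,y_\star}=\tfrac{\Gamma}{4h}\cot\bigl(\tfrac{\pi y_\star}{h}\bigr)+O(\eps).
\]
Hadamard's formula is needed only for the explicit first-order corrections, not for the stated $O(\eps)$ bound.

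\textbf{Main obstacle.} The delicate step is Step~1: obtaining the $O(\eps)$ bound on $\mathcal R_\eps-R_0$ in a $\theta$-regularity class strong enough for the cohomological equation, uniformly in $\theta$ on an unbounded strip. I would handle this by writing the regular part as the solution of a Dirichlet problem with boundary data $-\frac1{2\pi}\log|z-w|$. I would then apply the uniform Schauder estimates for the flattened operator to the difference from the flat solution, treating the $O(\eps)$ coefficient perturbation as a forcing term. Uniformity in $x$ comes from the translation structure, and the bound is differentiated in $\theta$ term by term.
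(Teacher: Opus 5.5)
Your proposal is correct and has the same skeleton as the paper. There the theorem is assembled from Lemma~\ref{lem-robin-expansion-perturbed}, Proposition~\ref{cor-small-qp-exact-orbits} and Proposition~\ref{prop-first-order-orbit-small-channel}: transversality on a fixed interval $J\not\ni h/2$, then Theorem~\ref{thm-exact-qp-orbits}, then Theorem~\ref{thm-exact-qp-orbits-1}.

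\textbf{Where you differ.} The main difference is the source of the estimate $\|\mathcal R_\eps-R_0\|_{C^1(\T^d\times J)}=O(\eps)$. The paper reads it off the Hadamard expansion $\mathcal R_\eps=R_0+\eps\mathcal R_1+\eps^2\mathcal R_{2,\eps}$, with $\mathcal R_{2,\eps}$ bounded in $C^{m-1}$. That remainder bound is itself justified by the same flattening and Schauder argument you describe, carried to second order in $\eps$. You stop at zeroth order and perturb the flattened Dirichlet problem directly, with no shape derivative. This is more economical and is all the statement needs, since it only claims $O(\eps)$. The paper's route buys the explicit first-order content: the graph correction $-2h\eps\tan(\pi y_\star/h)\mathcal R_1(\theta,y_\star)$, the drift $c_0+\eps c_1+O(\eps^2)$, and the orbit expansion uniform in $t\in\R$.

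You also choose a different level. Your $E_\eps=\mathcal R_\eps(0,y_\star)$ makes the graph pass through $(0,y_\star)$. The paper takes $E=R_0(y_\star)$, which forces it to enlarge $J$ to $\widetilde J$ so that $R_0(J)\subset\operatorname{int}\mathcal E^{\rm glob}_{\eps,\widetilde J}$. Both choices work.

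Your ``main obstacle'' is smaller than you make it. Smallness is needed only in $C^1$: for transversality, for $\mathcal Y_\eps=y_\star+O(\eps)$, and for $c=\tfrac{\Gamma}{2}R_0'(y_\star)+O(\eps)$. The high $\theta$-regularity required by the cohomological equation is a fixed-$\eps$ property, already supplied by Proposition~\ref{prop-regularity-Robin-hull} and Theorem~\ref{thm-exact-qp-orbits}. So no $\eps$-uniform bound in high norms is needed.

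\textbf{Two small repairs.} First, you never verify the hypothesis $E_\eps\in\mathcal E^{\rm glob}_{a,b}$ of Theorem~\ref{thm-exact-qp-orbits}. It takes one line. We have $E_\eps=R_0(y_\star)+O(\eps)$, $R_0$ is strictly monotone on $J=[y_\star-\rho,y_\star+\rho]$ with $|R_0'|\ge 2c_0$, and $|\mathcal R_\eps-R_0|\le C\eps$ uniformly. Hence $E_\eps$ lies strictly between $\mathcal R_\eps(\theta,y_\star-\rho)$ and $\mathcal R_\eps(\theta,y_\star+\rho)$ for every $\theta$ once $C\eps<c_0\rho$. The same monotonicity gives $\|\mathcal Y_\eps-y_\star\|_\infty\le C\eps/c_0$, which is the actual justification behind your ``since $\mathcal Y_0\equiv y_\star$''.

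Second, the drift must be defined by $c^{-1}=\fint_{\T^d}g_\eps^{-1}\,d\theta$, with the normalized average. With the unnormalized $\int_{\T^d}$, your $c$ is off by the factor $(2\pi)^d$, and the claimed limit $\tfrac{\Gamma}{4h}\cot(\pi y_\star/h)$ would fail.
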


The preceding results describe the persistence of the regular
transporting levels of the flat strip. A different phenomenon occurs
near the centerline $y=h/2$, where the flat Robin function loses its
transverse nondegeneracy and the whole horizontal line consists of
equilibria. Small boundary oscillations may break this degenerate
critical manifold into isolated critical points, producing elliptic
and hyperbolic vortex equilibria and changing the topology of the
nearby level sets. Sections~\ref{subsec-explicit-conformal-example} and~\ref{sec-boundary-oscillation-critical-points} are devoted to understanding this
symmetry-breaking mechanism and its dependence on the geometry of the
boundary oscillations.

We first consider in Section~\ref{subsec-explicit-conformal-example} an explicit conformal deformation of
the flat strip. This example plays two complementary roles. On the one
hand, conformal covariance gives an exact representation of the Robin
function and allows the critical points and the associated vortex
orbits to be analyzed directly. On the other hand, it provides an
independent test of the perturbative theory developed in Section~\ref{sec-4}:
expanding the exact conformal formula reproduces the first variation
obtained from Hadamard's formula and yields explicit expressions for
the deformation and time parametrization of the invariant curves. In
particular, the continuous family of equilibria forming the flat
centerline is destroyed by the perturbation and replaced, in this
model, by an alternating periodic family of isolated elliptic and
hyperbolic equilibria.

Section~\ref{sec-boundary-oscillation-critical-points} reveals a further effect which is not visible in a
single-mode perturbation: the critical-point structure may undergo
bifurcations as different spatial scales compete. We consider a
two-mode periodic deformation and use the first-order Robin expansion
to reduce the critical-point equation near the centerline to an
explicit finite-dimensional problem. The interaction between the two
Fourier modes is encoded by a Chebyshev polynomial, and the relative
amplitude of the modes becomes a bifurcation parameter. As this
parameter crosses distinguished values, pairs of non-axial critical
points are created or annihilated. In particular, for a perturbation
involving the modes $1$ and $n$, the number of nondegenerate critical
points in one spatial period may vary from two to $2n$.

These examples highlight an important feature of the problem:
boundary oscillations do not merely deform pre-existing vortex
trajectories; they may also reorganize the critical-point structure of
the Robin Hamiltonian and produce  bifurcations of the vortex
phase portrait. In the periodic setting, this structure can be
described by counting the critical points in a fundamental cell. This
naturally raises the question of how such a description should be
formulated for  quasi-periodic channels, where no spatial
period cell is available. This leads us to the statistical and
ergodic viewpoint developed in the next subsection.

\subsection{Ergodic distribution of vortex equilibria}

We now turn to the second main theme of the paper, which concerns the
spatial organization of the critical points of the Robin function in
 quasi-periodic channels. The absence of a fundamental
spatial cell changes the nature of the problem. In the periodic
setting, the critical-point structure can be described by counting
equilibria over one period. For a  quasi-periodic geometry,
such a finite-cell description is no longer available: isolated
critical points may occur indefinitely along the channel and form an
aperiodic set. The relevant quantity is therefore their frequency of
occurrence on large spatial scales.
More precisely, if $\{x_n\}_{n\in\mathbb Z}$ denotes the longitudinal
positions of a family of positive critical points, we are naturally led to the
asymptotic density
\begin{equation*}
\label{eq-density-intro}
\rho
=
\lim_{L\to\infty}
\tfrac{1}{L}
\#
\Big\{
n\in\mathbb Z:\ x_n\in[0,L]
\Big\},
\end{equation*}
provided that the limit exists. The existence of this limit is not a
local perturbative question. It depends on the recurrence of the
quasi-periodic geometry over arbitrarily large longitudinal distances
and therefore naturally brings ergodic theory into the problem.

The hull formulation provides the appropriate compact setting. After
solving the vertical criticality equation, the remaining condition
determining the longitudinal positions of the critical points takes
the form
\begin{equation*}
\label{eq-critical-hull-intro}
\mathcal S(\omega x)=0,
\end{equation*}
where $\mathcal S:\T^d\to\R$ is smooth and  periodic. Introducing the critical
hypersurface
\begin{equation*}
\label{eq-Sigma-intro}
\Sigma
=
\left\{
\theta\in\T^d:\ \mathcal S(\theta)=0
\right\},
\end{equation*}
we see that the physical critical points correspond to the spatial
times at which the Kronecker orbit
\[
x\longmapsto\omega x\pmod{2\pi}
\]
intersects $\Sigma$. Thus, in the physical channel, the problem reduces to studying the intersections between a linear flow and a hypersurface in a compact torus.
This correspondence is central to our approach. It separates the two
mechanisms governing the distribution of equilibria: the channel
geometry determines the hypersurface $\Sigma$, whereas the frequency
vector $\omega$ determines how $\Sigma$ is sampled by the Kronecker
flow. When the components of $\omega$ are rationally independent, the
latter flow is uniquely ergodic. Ergodicity is therefore not merely a
technical ingredient in the proof; it is the mechanism through which
the quasi-periodic geometry produces deterministic large-scale
statistics.
There is nevertheless an important obstruction to applying unique
ergodicity directly. The number of intersections with $\Sigma$ is a
singular observable rather than a continuous function on the torus.
Moreover, the Kronecker flow may become tangent to $\Sigma$, so that
the usual local description of a transverse crossing degenerates. The
main ergodic result of the paper shows that these difficulties can be
overcome and, in particular, that isolated tangencies of finite order
do not alter the leading asymptotic distribution.

The result is formulated at the level of a general periodic function
$\mathcal S$ and a nonresonant Kronecker flow, independently of its
specific origin from the Robin function. For $L>0$, we denote by
\begin{equation*}
N_{\mathcal S,\omega}(L)
:=
\#
\Big\{
x\in[0,L]:
\mathcal S(x\omega)=0
\Big\}
\end{equation*}
the number of intersections, counted without multiplicity, of the
orbit $x\mapsto x\omega$ with the zero hypersurface of $\mathcal S$.
Whenever the limit exists, we define the corresponding asymptotic
crossing density by
\begin{equation*}
\rho_{\mathcal S,\omega}
:=
\lim_{L\to\infty}
\tfrac{N_{\mathcal S,\omega}(L)}{L}.
\end{equation*}
The following theorem gives the existence of this limit and identifies
it intrinsically in terms of the geometry of the zero set and the
direction of the Kronecker flow. A complete statement is given in Theorem \ref{thm-critical-point-ergodic-distribution} and Theorem \ref{thm-density-finite-order-tangencies}.

\begin{theorem}
\label{thm-intro-critical-density-l1}
Let $\omega\in\R^d$ satisfy the nonresonance condition
and assume that $\Sigma$ is a nonempty smooth hypersurface. Assume that the intersections of
the Kronecker flow with $\Sigma$ are transverse, or more generally
that the tangencies satisfy the finite-order nondegeneracy conditions
specified below.
Then the asymptotic density $\rho_{\mathcal S,\omega}$ is well-defined and takes the form
\begin{align*}
\label{eq-density-delta-critical}
\rho_{\mathcal S,\omega}
&=
\fint_{\mathbb T^d}
\delta_0\bigl(\mathcal S(\theta)\bigr)
\left|
D_\omega\mathcal S(\theta)
\right|
\,d\theta
\\
\nonumber
&=
\frac{1}{(2\pi)^d}
\int_\Sigma
\frac{
|D_\omega\mathcal S(\theta)|
}{
|\nabla_\theta\mathcal S(\theta)|
}
\,d\sigma(\theta),\quad D_\omega:
=
\omega\cdot\nabla_\theta,
\end{align*}
where $\delta_0$ is the Dirac mass.

\end{theorem}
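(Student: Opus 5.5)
The plan is to convert the singular counting observable into averages of continuous functions along the Kronecker orbit, where unique ergodicity applies, and to control the error from the tangency set separately.

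\emph{Transverse case.} Since $\Sigma$ is compact and every crossing is transverse, $|D_\omega\mathcal S|\ge c>0$ on $\Sigma$. Hence zeros of $x\mapsto\mathcal S(x\omega)$ are uniformly separated and simple.

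For small $\eta>0$, fix a continuous even bump $\chi_\eta$ with $\int\chi_\eta=1$ and support in $[-\eta,\eta]$. Then set
\[
F_\eta(\theta)=\chi_\eta\bigl(\mathcal S(\theta)\bigr)\,|D_\omega\mathcal S(\theta)|,
\]
which is continuous on $\T^d$. Along the orbit, $\frac{d}{dx}\mathcal S(x\omega)=D_\omega\mathcal S(x\omega)$. On each maximal interval where $|\mathcal S(x\omega)|\le\eta$ and $\mathcal S(x\omega)$ is monotone, the change of variables $s=\mathcal S(x\omega)$ gives
\[
\int F_\eta(x\omega)\,dx=\int\chi_\eta=1 .
\]
For $\eta$ small, transversality and uniform $C^2$ bounds force every component of $\{|\mathcal S(x\omega)|\le\eta\}$ near a zero to be such a monotone interval containing exactly one zero. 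Components not containing a zero do not occur, because $|\mathcal S|\le\eta$ forces proximity to $\Sigma$, where $|D_\omega\mathcal S|\ge c/2$. Therefore
\[
\int_0^L F_\eta(x\omega)\,dx=N_{\mathcal S,\omega}(L)+O(1),
\]
with the $O(1)$ coming from boundary effects at $x=0$ and $x=L$. Unique ergodicity of the nonresonant Kronecker flow then gives
\[
\frac{N(L)}{L}\longrightarrow\fint F_\eta .
\]
To evaluate $\fint F_\eta$, apply the coarea formula with respect to $\mathcal S$. Note that $\nabla\mathcal S\neq0$ near $\Sigma$, because $D_\omega\mathcal S\neq0$ there. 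This yields
\[
\fint F_\eta=\frac{1}{(2\pi)^d}\int\chi_\eta(s)\int_{\{\mathcal S=s\}}\frac{|D_\omega\mathcal S|}{|\nabla\mathcal S|}\,d\sigma\,ds .
\]
The left side is independent of $\eta$, since it equals the limit of $N(L)/L$. Letting $\eta\to0$ and using continuity of the inner integral in $s$, which holds because level sets vary smoothly near a regular level, gives the surface formula. The $\delta_0$ expression is the same quantity written distributionally.

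\emph{Finite-order tangencies.} Let $T=\Sigma\cap\{D_\omega\mathcal S=0\}$. The finite-order hypothesis, which I would take to mean that $D_\omega^j\mathcal S\neq0$ at $T$ for some $j\le k$, together with $\nabla\mathcal S\neq0$ on $\Sigma$, makes $T$ a compact set of $\sigma$-measure zero in $\Sigma$, of codimension at least one. Take a neighborhood $U_r$ of $T$ with volume $O(r)$.

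Outside $U_r$ the transverse argument applies verbatim after cutting off smoothly. Inside $U_r$ we need two bounds.
\begin{itemize}
\item The finite-order condition bounds the number of zeros of $x\mapsto\mathcal S(x\omega)$ on any orbit segment of length $\ell_0$ in $U_r$ by $k$. This follows from a Rolle/Taylor argument applied to $D_\omega^k\mathcal S$.
\item By unique ergodicity applied to a continuous majorant $\psi_r\ge\mathbf 1_{U_r}$ with $\int\psi_r=O(r)$, the number of such segments up to time $L$ is at most $L\,O(r)/\ell_0+o(L)$.
\end{itemize}
Hence
\[
\limsup\Bigl|\frac{N(L)}{L}-\rho_r\Bigr|\le C_k\,r ,
\]
where $\rho_r$ is the flux integral over $\Sigma\setminus U_r$. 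Letting $r\to0$, and using that the integrand $|D_\omega\mathcal S|/|\nabla\mathcal S|$ is bounded, the flux integral over $\Sigma\cap U_r$ vanishes in the limit. This gives existence of the limit and the formula.

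\emph{Main obstacle.} The main obstacle is the uniform local count near tangencies. A bound of at most $k$ zeros per short orbit segment requires the finite-order derivative to be nonvanishing uniformly on a neighborhood, and the segment length $\ell_0$ must not shrink with $r$. I would first try choosing $\ell_0$ from a compactness lower bound on $|D^j_\omega\mathcal S|$ near $T$.
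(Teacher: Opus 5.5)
Your transverse argument is the same as the paper's proof of Theorem~\ref{thm-critical-point-ergodic-distribution}: mollify to $\chi_\eta(\mathcal S)|D_\omega\mathcal S|$, note that each complete crossing component contributes exactly $1$ with an $O(1)$ endpoint error, apply unique ergodicity at fixed $\eta$, then use coarea and let $\eta\to0$. That part is fine.

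The gap is in the tangency step, at the claim that the number of orbit segments of length $\ell_0$ carrying the zeros in $U_r$ is at most $L\,O(r)/\ell_0+o(L)$. Unique ergodicity applied to a majorant $\psi_r\ge\mathbf 1_{U_r}$ controls only the \emph{time} $\int_0^L\mathbf 1_{U_r}(x\omega)\,dx$ spent in $U_r$. It does not control the \emph{number of visits} to $U_r$. A visit can be arbitrarily short, for example an orbit line grazing the edge of $U_r$, and still contain a crossing of $\Sigma$. Dividing time by $\ell_0$ assumes every visit lasts at least $\ell_0$, which is false. In fact $T$ contains no flow segment, by the finite-order condition. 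So if $U_r$ shrinks to $T$, then for $r\ll\ell_0$ no orbit segment of length $\ell_0$ fits inside $U_r$ at all. For a ball of radius $r$ around an isolated tangency, the time fraction is of order $r^d$ while the visit rate is of order $r^{d-1}$. The visit rate is set by the cross-section of the set transverse to $\omega$, not by its volume. Your bound may hold numerically when $T$ is finite, but only because $O(r)$ is a very loose volume bound; the argument does not prove it.

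The missing idea is to count passages through a set whose complete passages all have a fixed duration. The paper does this with the flow box $B_\delta$ of Lemma~\ref{lem-negligible-density-near-tangency}, built around each of the finitely many tangency points. It is a cylinder of length $2a$ along $\omega/|\omega|$ and transverse radius $\delta$. The orbit moves parallel to the axis, so it enters and leaves only through the end faces, and every complete passage lasts exactly $2a/|\omega|$. The passage count is therefore (time in $B_\delta$)$\cdot|\omega|/(2a)+O(1)$, which has density $O(\delta^{d-1})$. Lemma~\ref{lem-uniform-local-root-bound} then gives at most $r_j$ zeros per passage.

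In your framework the equivalent fix is to replace $U_r$ by its flow thickening $U_r+[-\ell_0,\ell_0]\omega$. Every segment $[x_n,x_n+\ell_0]$ starting at a zero with $x_n\omega\in U_r$ lies in this set. A greedy choice of disjoint such segments covers all zeros in $U_r$, with at most $k$ zeros each. Their number is then at most $\ell_0^{-1}\int_0^{L+\ell_0}\mathbf 1_{U_r+[-\ell_0,\ell_0]\omega}(x\omega)\,dx$, so you need $|U_r+[-\ell_0,\ell_0]\omega|\to0$ as $r\to0$. This is immediate when the tangency set is finite, as the statement assumes. For the arbitrary compact $T$ you allow, you must also show $|T+[-\ell_0,\ell_0]\omega|=0$. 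That would follow from $\sigma(T)=0$, which you only assert, via a Lipschitz-image argument on $T\times[-\ell_0,\ell_0]$.
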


The formulas above admit a simple geometric interpretation. The
density is the mean flux of the Kronecker flow through the critical
hypersurface. A portion of $\Sigma$ is weighted by the normal velocity
with which the flow crosses it, namely by
$|D_\omega\mathcal S|/|\nabla_\theta\mathcal S|$. Consequently, the critical phases are not, in general, uniformly distributed on $\Sigma$: regions where the flow has a larger normal component carry a larger asymptotic weight. As we shall see in Theorem~\ref{thm-critical-point-ergodic-distribution}, this yields substantially more than a counting law. It defines a canonical probability measure on the critical hypersurface, describing the asymptotic distribution in the hull of the phases associated with vortex equilibria in the physical channel. 

An important feature of Theorem~\ref{thm-intro-critical-density-l1} is
that it is nonperturbative. Once the critical equation admits a
sufficiently regular hull representation and the geometric hypotheses
on $\Sigma$ are satisfied, neither the density formula nor the
limiting crossing measure requires the channel to be close to a flat
strip. Smallness is needed only in applications where perturbation
theory is used to construct the relevant critical graph, identify the
hypersurface $\Sigma$, or obtain explicit approximations of its
geometry. This separates the perturbative analysis of the Robin
function from the ergodic mechanism governing the large-scale
distribution of its critical points.

The treatment of tangencies is particularly relevant in this respect.
At a transverse intersection, the zero of
$\mathcal S(\omega x)$ is locally stable and standard regularization
arguments can be combined with unique ergodicity. At a tangency this
local mechanism degenerates, and a direct approximation of the
crossing count becomes more delicate. We show that isolated
finite-order tangencies can nevertheless be incorporated without
altering the flux formula. Thus global transversality, while
convenient, is not essential for the asymptotic density.

This point also becomes important when the geometry depends on an
additional parameter. As the parameter varies, transverse crossings
may merge into tangencies and subsequently disappear or split into
new crossings. The density may consequently undergo a loss of
regularity even when the underlying boundary deformation depends
smoothly on the parameter. The explicit two-frequency models studied in Section \ref{sec-two-frequency-cosine-case}
 exhibit this mechanism and lead to sharp regularity transitions
for the asymptotic density.

From a broader perspective, the novelty here lies in combining the
Robin-function hull with ergodic zero-counting to describe the
equilibrium set of a point vortex in a quasi-periodic geometry. Ergodicity
and equidistribution of Kronecker flows are classical, but the
observable arising in the present problem is singular and geometric:
one must count intersections with a critical hypersurface rather than
average a prescribed continuous function. The resulting framework
links the geometry of the channel, the arithmetic of its frequencies,
and the spatial statistics of vortex equilibria. In this way, the
finite-cell counting problem of periodic channels is replaced, in the
genuinely quasi-periodic setting, by an intrinsic density and a
canonical distribution of critical phases.

The paper is organized as follows. In Section~\ref{sec-two} we introduce the hull
formulation for channels with quasi-periodic interfaces and establish
the corresponding representation and regularity properties of the
Robin function. In Section~\ref{sec-03} we study the quasi-periodic geometry of
its regular level sets, the dynamics along the associated invariant
graphs, and the resulting foliation near the boundary. Section~\ref{sec-4} is
devoted to small quasi-periodic perturbations of the flat strip, where
Hadamard's variational formula is used to derive first-order expansions
of the Robin function, the invariant graphs, and the corresponding
vortex trajectories.

The subsequent sections turn to the vortex equilibria problem.
In Section~\ref{subsec-explicit-conformal-example} we study an explicit conformal perturbation of the flat
strip, which provides an exact model for the breaking of the degenerate
critical centerline into isolated equilibria and an explicit
illustration of the perturbative theory. Section~\ref{sec-boundary-oscillation-critical-points} investigates this
symmetry-breaking mechanism for two-mode periodic interfaces and
describes the resulting amplitude-dependent bifurcations in the number
and location of the critical points. Section~\ref{sec-quasi-periodic-interfaces-ergodicity} develops the ergodic
theory underlying the distribution of critical points in 
quasi-periodic channels. We first establish a general zero-counting
theory along Kronecker flows, including isolated finite-order
tangencies, and then apply it to the Robin function to obtain the
asymptotic density and phase distribution of vortex equilibria.
Particular attention is given to two-frequency models, where the
critical geometry, tangencies, and the resulting transitions in the
density can be analyzed explicitly.

\section{Single-vortex dynamics in quasi-periodic channels}\label{sec-two}

We consider the dynamics of a single point vortex in a stationary channel
bounded by two graph interfaces which are genuinely quasi-periodic in the
longitudinal variable. The small-amplitude perturbation of the flat strip
will be considered later.
Let
\begin{equation}
\Omega
=
\left\{
(x,y)\in\R^2:
b_-(x)<y<b_+(x)
\right\},
\label{eq-domain-general}
\end{equation}
where we impose the uniform separation condition
\begin{equation}\label{separation1}
\inf_{x\in\mathbb R}b_+(x)-\sup_{x\in\mathbb R}b_-(x)=h_0>0.
\end{equation}
In particular, this condition guarantees a uniform positive separation between the two interfaces under arbitrary phase shifts.
We assume that the two boundary graphs are quasi-periodic and have the form
\begin{equation}
b_+(x)=f_+(\omega_+x),
\qquad
b_-(x)=f_-(\omega_-x),
\label{eq-qp-boundaries-general}
\end{equation}
where 
\[
\omega_-\in\R^{d_-},
\qquad
\omega_+\in\R^{d_+},
\]
and the functions
\[
f_\pm:\T^{d_\pm}\to\R
\]
are at least continuous  and periodic in each angular variable. We assume that the full frequency vector
\[
\omega=(\omega_-,\omega_+)\in\R^d,
\qquad
d=d_-+d_+,
\]
is nonresonant, namely,
\[
\ell\cdot\omega\neq0
\qquad
\text{for every }\ell\in\Z^d\setminus\{0\}.
\]
Let $G_\Omega(z,\zeta)$ be the Dirichlet Green function normalized by
\[
-\Delta_zG_\Omega(z,\zeta)=\delta_\zeta(z),
\qquad
G_\Omega(z,\zeta)=0,
\quad z\in\partial\Omega,
\]
and define the Robin function
\begin{equation*}
R_\Omega(z)
=
\lim_{\zeta\to z}
\left(
G_\Omega(z,\zeta)
+
\frac1{2\pi}\log|z-\zeta|
\right).
\label{eq-robin-general}
\end{equation*}
For a point vortex of circulation $\Gamma\neq0$, the equation of motion is
\begin{equation}
\dot z
=
\tfrac{\Gamma}{2}\nabla^\perp R_\Omega(z),
\qquad
\nabla^\perp=(\partial_y,-\partial_x),
\label{eq-vortex-general}
\end{equation}
that is, $z(t)=(x(t),y(t))$ satisfies the ODE
\begin{equation*}
\dot x
=
\tfrac{\Gamma}{2}\partial_yR_\Omega(x,y),
\qquad
\dot y
=
-\tfrac{\Gamma}{2}\partial_xR_\Omega(x,y).
\label{eq-vortex-components}
\end{equation*}
The system \eqref{eq-vortex-general} is an autonomous Hamiltonian system,
with Hamiltonian given, up to a multiplicative constant, by the Robin
function \(R_\Omega\). In particular,
\[
\tfrac{d}{dt}R_\Omega(z(t))
=
\tfrac{\Gamma}{2}
\nabla R_\Omega(z(t))
\cdot
\nabla^\perp R_\Omega(z(t))
=
0.
\]
Hence, the Robin function is conserved along the flow, and every
point-vortex trajectory is contained in a connected component of a level
set of \(R_\Omega\). Consequently, the qualitative dynamics of the point
vortex is determined by the geometry and topology of the level sets of
the Robin function. In particular, regular closed components correspond
to periodic trajectories, while critical points and critical level sets
may give rise to equilibria and separatrix-type orbits. This can be summarized in this classical result.

\begin{proposition}
Let $z_0\in\Omega$ and set
\[
E=R_\Omega(z_0).
\]
Then the orbit through $z_0$ is contained in the connected component of
\[
\{z\in\Omega:R_\Omega(z)=E\}.
\]
If this connected component contains no critical point
of $R_\Omega$, then it is a one-dimensional regular curve and the
point-vortex trajectory parametrizes this curve.
More precisely:
\begin{enumerate}
\item critical points of $R_\Omega$ are precisely the equilibrium point-vortex
configurations;
\item a compact regular connected component of a level set is a periodic
vortex orbit;
\item a noncompact regular component is an unbounded vortex orbit;
\item level sets containing critical points may contain separatrices or
homoclinic/heteroclinic connections.
\end{enumerate}
\end{proposition}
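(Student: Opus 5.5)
The plan rests on conservation of the Hamiltonian together with the local structure of level sets near regular points. Throughout we use that $R_\Omega$ is smooth (indeed real-analytic) in $\Omega$. Indeed, $R_\Omega(z)=\tfrac1{2\pi}\log(\ldots)$ is the diagonal value of the regular part $H(z,\zeta)=G_\Omega(z,\zeta)+\tfrac1{2\pi}\log|z-\zeta|$, which is harmonic in each variable. Hence the vector field $\tfrac{\Gamma}{2}\nabla^\perp R_\Omega$ is locally Lipschitz, and the Cauchy--Lipschitz theorem gives local existence and uniqueness of orbits.

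\textbf{Containment.} The identity $\tfrac{d}{dt}R_\Omega(z(t))=0$ established above shows that $R_\Omega(z(t))=E$ for all $t$ in the maximal interval of existence. Since $t\mapsto z(t)$ is continuous, its image is connected. It therefore lies in the connected component of $\{R_\Omega=E\}$ containing $z_0$.

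\textbf{Regular components.} Suppose the component $C$ contains no critical point. At each $z\in C$ we have $\nabla R_\Omega(z)\neq0$, so the implicit function theorem represents $\{R_\Omega=E\}$ near $z$ as a smooth embedded arc. Consequently $C$ is a connected one-dimensional embedded submanifold without boundary in $\Omega$. By the classification of connected $1$-manifolds, $C$ is diffeomorphic either to a circle or to $\R$.

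The vector field $\nabla^\perp R_\Omega$ is tangent to $C$ and nonvanishing on it, so the flow restricted to $C$ is a nonvanishing flow on a $1$-manifold. Its orbit through $z_0$ is therefore open in $C$. The orbit is also closed in $C$: a limit point in $C$ of the orbit lies on an orbit which, by uniqueness and local flow-box coordinates, coincides with the orbit of $z_0$. By connectedness of $C$, the orbit fills all of $C$.

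\textbf{Items 1--3.} Item 1 is immediate: $\dot z=0$ if and only if $\nabla^\perp R_\Omega(z)=0$, that is, if and only if $\nabla R_\Omega(z)=0$.

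For item 2, take $C$ compact, hence a circle. The speed $|\nabla R_\Omega|$ is bounded below by a positive constant on $C$, so the orbit exists for all time and travels the finite length of $C$ in finite time. It returns to $z_0$, and is therefore periodic.

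For item 3, take $C$ noncompact, hence a properly embedded line in $\Omega$. The orbit parametrizes it monotonically and cannot return to $z_0$, so it is not periodic. Moreover, it cannot remain in any compact $K\subset\Omega$ for all time. Indeed, $C\cap K$ would then be compact, and the speed would be bounded below on it; the orbit would then traverse an unbounded arclength inside the compact set $C\cap K$, which is impossible. The orbit either leaves every compact set of $\Omega$ (approaching $\partial\Omega$ or infinity along the channel) or exits in finite time. In this sense it is an unbounded or escaping orbit.

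\textbf{Item 4 and the main obstacle.} Item 4 is descriptive rather than a precise assertion, so we only justify it qualitatively. Near a nondegenerate saddle, the Morse lemma shows that the level set is locally a cross of two arcs. The components of this cross minus the equilibrium are regular orbits asymptotic to the critical point, namely separatrices. When such an orbit joins two critical points it gives a heteroclinic connection, and when it returns to the same critical point it gives a homoclinic one.

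The only genuinely delicate point is the properness claim in item 3. An embedded noncompact $1$-manifold could in principle accumulate on itself. This is excluded here because $C$ is closed in $\Omega$, being a component of a closed level set, and because $\nabla R_\Omega$ is nonvanishing on $C$, so each point of $C$ has a flow-box neighbourhood meeting $C$ in a single arc.
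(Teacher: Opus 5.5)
Your containment step is the same as the paper's, which only records $\frac{d}{dt}R_\Omega(z(t))=0$ and treats the rest as classical. Your arguments that a regular component $C$ is a properly embedded $1$-manifold filled by the orbit, and your treatment of items 1, 2 and 4, are sound.

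The gap is in item 3. The statement says the orbit is \emph{unbounded}, but you only show that it eventually leaves every compact subset of $\Omega$. You explicitly keep the alternative that it does so by running into $\partial\Omega$ inside a bounded region, and end with ``unbounded or escaping''. As it stands, your argument cannot do better. It uses only that $C$ is closed in $\Omega$ and that $\nabla R_\Omega\neq0$ on $C$. For a general function with these two properties, a noncompact regular component can be a bounded arc with both ends on $\partial\Omega$: think of $\{x^2+y^2=E\}$ in the strip $\{0<y<h\}$ with $E$ small.

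What is missing is a property specific to the Robin function: $R_\Omega(z)\to-\infty$ as $\operatorname{dist}(z,\partial\Omega)\to0$, uniformly along the channel. You can take this from the uniform asymptotics \eqref{eq-Robin-boundary-asymptotics}. Alternatively, without any regularity of $f_\pm$, use that $\Omega$ is simply connected: then $R_\Omega=\frac1{2\pi}\log r_\Omega$ with $r_\Omega$ the conformal radius, and Koebe's one-quarter theorem gives $r_\Omega(z)\le4\operatorname{dist}(z,\partial\Omega)$. Either way, $\{R_\Omega=E\}\subset\{\operatorname{dist}(\cdot,\partial\Omega)\ge\delta_E\}$ with $\delta_E=\frac14e^{2\pi E}>0$.

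Each set $\{z:\operatorname{dist}(z,\partial\Omega)\ge\delta_E,\ |z|\le M\}$ is compact in $\Omega$. Your properness and arclength argument therefore gives $|z(t)|\to\infty$ at both ends of the existence interval, which is the claimed unboundedness.

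If you also want global existence, a Koebe distortion estimate gives $|\nabla R_\Omega|\lesssim 1/\operatorname{dist}(\cdot,\partial\Omega)$. So the velocity is bounded on the level set, and the finite-time-exit alternative disappears as well.
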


\subsection{Exact hull representation of the Robin function}
\label{subsec-exact-hull-Robin}

Let $\Omega$ be the domain defined by \eqref{eq-domain-general}, whose
boundaries satisfy the quasi-periodicity assumption
\eqref{eq-qp-boundaries-general} and the uniform separation condition
\eqref{separation1}. We introduce the associated hull family of domains
\begin{equation}
\label{eq-hull-domain}
\Omega_{\theta}
=
\left\{
(x,y)\in\R^2:
f_-(\omega_-x+\theta_-)<y<
f_+(\omega_+x+\theta_+)
\right\},
\end{equation}
where
\[
\omega:=(\omega_-,\omega_+)\in
\R^{d_-}\times\R^{d_+},
\qquad
\theta:=(\theta_-,\theta_+)
\in
\R^{d_-}\times\R^{d_+}
=
\R^d,\, d=d_{-}+d_+.
\]
Since the hull functions $f_\pm$ are periodic in their phase variables,
the family $\Omega_\theta$ is periodic with respect to
$\theta\in\T^d$.
Associated with the family $\{\Omega_\theta\}_{\theta\in\T^d}$, we
introduce the natural domain
\begin{equation}
\label{eq-domain-Robin-hull}
\mathcal D
:=
\left\{
(\theta,y)\in\T^d\times\R:
f_-(\theta_-)<y<f_+(\theta_+)
\right\}.
\end{equation}
By construction, the domain \(\mathcal D\subset\T^d\times\mathbb R\) is periodic in the angular variable $\theta;$ equivalently, its lift to $\mathbb R^d\times\mathbb R$ is $2\pi\mathbb Z^d$-periodic.
Notice that
\[
(\theta,y)\in\mathcal D
\quad\Longleftrightarrow\quad
(0,y)\in\Omega_\theta.
\]
We define the Robin hull function by
\begin{equation}
\label{eq-robin-hull}
\mathscr R(\theta,y)
:=
R_{\Omega_\theta}(0,y),
\qquad
(\theta,y)\in\mathcal D,
\end{equation}
where $R_{\Omega_\theta}$ denotes the Robin function associated with
the domain $\Omega_\theta$.
\\
The purpose of introducing the family \eqref{eq-hull-domain} is to
replace the quasi-periodic dependence on the unbounded spatial variable
$x$ by a periodic dependence on the compact phase space $\T^d$.
Horizontal translations of the original quasi-periodic geometry are
therefore encoded by translations of the phase variable $\theta$.

\begin{proposition}
\label{prop-exact-hull-Robin}
The Robin hull function
\[
\mathscr R:\mathcal D\longrightarrow\R
\]
defined by \eqref{eq-robin-hull} is periodic with respect to each component of $\theta$. Moreover, for
every $\theta\in\T^d$ and every $(x,y)\in\Omega_\theta$, one has
\begin{equation}
\label{eq-robin-hull-shift}
R_{\Omega_\theta}(x,y)
=
\mathscr R(\theta+x\omega,y).
\end{equation}
In particular,
\begin{equation}
\label{eq-robin-exact-hull}
R_\Omega(x,y)
=
\mathscr R(x\omega,y),
\qquad
(x,y)\in\Omega.
\end{equation}
Consequently, the dependence of the Robin function on the longitudinal
variable is obtained by restricting the periodic hull function
$\mathscr R$ to the linear flow
\[
x\mapsto x\omega
\]
on $\T^d$. In particular, along every horizontal line contained in the
channel, the map $x\mapsto R_\Omega(x,y)$ is quasi-periodic with
frequency vector $\omega$.
\end{proposition}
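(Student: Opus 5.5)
The argument rests on two elementary invariances of the Robin function: its invariance under rigid translations of the domain, and the fact that it depends only on the domain. We do not need any regularity of $\mathscr R$ here; that comes later.

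\emph{Periodicity.} If $\theta'=\theta+2\pi k$ with $k\in\Z^d$, then by the $2\pi$-periodicity of $f_\pm$ in each angular variable,
\[
f_\pm(\omega_\pm x+\theta'_\pm)=f_\pm(\omega_\pm x+\theta_\pm)\qquad\text{for all }x.
\]
Hence $\Omega_{\theta'}=\Omega_\theta$ as subsets of $\R^2$. The Green function and the Robin function are determined by the domain alone, so $R_{\Omega_{\theta'}}=R_{\Omega_\theta}$. In particular $\mathscr R(\theta',y)=\mathscr R(\theta,y)$. The same computation shows that the condition $(\theta,y)\in\mathcal D$ is periodic in $\theta$, so $\mathscr R$ is well defined on $\mathcal D\subset\T^d\times\R$. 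We should also record that each $\Omega_\theta$ satisfies a separation condition like \eqref{separation1}. The infimum of $f_+$ over the closure of the Kronecker orbit equals its infimum over $\T^d$ by density, since $\omega$ is nonresonant and $f_\pm$ is continuous; the same holds for the supremum of $f_-$. Thus every $\Omega_\theta$ is a uniformly separated channel, and its Dirichlet Green function exists, is unique and has a well-defined Robin function. We take this standard existence theory for granted, as the paper implicitly does.

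\emph{Translation covariance.} Fix $x_0\in\R$ and let $\tau(x,y)=(x-x_0,y)$. Then $(x,y)\in\Omega_\theta$ if and only if $f_-(\omega_-x+\theta_-)<y<f_+(\omega_+x+\theta_+)$. Writing $x=x_0+s$, this says $f_-(\omega_-s+\theta_-+x_0\omega_-)<y<f_+(\omega_+s+\theta_++x_0\omega_+)$, that is, $(s,y)\in\Omega_{\theta+x_0\omega}$. So $\tau(\Omega_\theta)=\Omega_{\theta+x_0\omega}$. Since $\Delta$ and the Dirichlet condition are translation invariant, uniqueness of the Green function gives
\[
G_{\Omega_\theta}(z,\zeta)=G_{\Omega_{\theta+x_0\omega}}(\tau z,\tau\zeta).
\]
The term $\log|z-\zeta|$ is also translation invariant. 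Passing to the limit $\zeta\to z$ yields $R_{\Omega_\theta}(z)=R_{\Omega_{\theta+x_0\omega}}(\tau z)$. Taking $z=(x_0,y)$ gives \eqref{eq-robin-hull-shift}:
\[
R_{\Omega_\theta}(x_0,y)=R_{\Omega_{\theta+x_0\omega}}(0,y)=\mathscr R(\theta+x_0\omega,y).
\]
Setting $\theta=0$, so that $\Omega_0=\Omega$, gives \eqref{eq-robin-exact-hull}.

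\emph{Quasi-periodicity.} For fixed $y$ and any $x$ with $(x,y)\in\Omega$, the function $x\mapsto\mathscr R(x\omega,y)$ is the restriction of the $\T^d$-periodic function $\mathscr R(\cdot,y)$ to the linear flow. This is quasi-periodic with frequency $\omega$ in the sense used in the paper. If one insists on a continuous generating function on all of $\T^d$, one restricts to heights $y$ that lie in $(\sup f_-,\inf f_+)$.

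\emph{Main obstacle.} The only delicate point is the uniqueness of the Dirichlet Green function on the unbounded channel $\Omega_\theta$, since the translation argument relies on it. We would fix the class of Green functions to be those that are bounded away from the pole and vanish on $\partial\Omega_\theta$. Uniqueness in this class follows from the Phragmén--Lindelöf principle in a strip: the difference of two such Green functions is bounded, harmonic and zero on the boundary, hence identically zero. Alternatively, one can construct $G_{\Omega_\theta}$ as the monotone limit of Green functions of the truncated domains $\Omega_\theta\cap\{|x|<n\}$. Both characterizations are visibly invariant under horizontal translations.
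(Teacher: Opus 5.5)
Your proof is correct and follows the same route as the paper's. Both use the phase-shift identity $\Omega_\theta-x_0e_1=\Omega_{\theta+x_0\omega}$, the translation covariance of the Green and Robin functions evaluated at $(x_0,y)$, periodicity from $\Omega_{\theta+2\pi k}=\Omega_\theta$, and then the specialization $\theta=0$. Your two extra remarks make explicit what the paper takes for granted: the uniform separation of every $\Omega_\theta$ via density of the orbits $x\mapsto\omega_\pm x$, and the uniqueness of the channel Green function via a Phragm\'en--Lindel\"of argument.
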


\begin{proof}
We first establish the relation between horizontal translations in the
physical domain and translations on the phase torus. Let
{$\theta=(\theta_-,\theta_+)\in\R^d$} and $a\in\R$. By definition,
$(X,Y)\in\Omega_\theta-ae_1$ if and only if
$(X+a,Y)\in\Omega_\theta$. Hence,
\[
f_-\bigl(\omega_-(X+a)+\theta_-\bigr)
<
Y
<
f_+\bigl(\omega_+(X+a)+\theta_+\bigr).
\]
Equivalently,
\[
f_-\bigl(\omega_-X+\theta_-+a\omega_-\bigr)
<
Y
<
f_+\bigl(\omega_+X+\theta_++a\omega_+\bigr).
\]
Therefore,
\begin{equation}
\label{eq-domain-phase-shift}
\Omega_\theta-ae_1
=
\Omega_{\theta+a\omega}.
\end{equation}
Thus a horizontal translation by $a$ in physical space is exactly
equivalent to a translation by $a\omega$ in the phase variables.
\\
We next use the translation covariance of the Green and Robin
functions. If $D\subset\R^2$ is a domain and $a\in\R^2$, then the
Dirichlet Green function satisfies
\[
G_{D-a}(z-a,\zeta-a)
=
G_D(z,\zeta).
\]
Passing to the regular part on the diagonal gives the corresponding
identity for the Robin function,
\begin{equation}
\label{eq-Robin-translation-covariance}
R_{D-a}(z-a)
=
R_D(z).
\end{equation}
We apply \eqref{eq-Robin-translation-covariance} with
\[
D=\Omega_\theta,
\qquad
a=xe_1,
\qquad
z=(x,y).
\]
It follows that
\[
R_{\Omega_\theta}(x,y)
=
R_{\Omega_\theta-xe_1}(0,y).
\]
Using the phase-shift identity \eqref{eq-domain-phase-shift}, we obtain
\[
R_{\Omega_\theta}(x,y)
=
R_{\Omega_{\theta+x\omega}}(0,y).
\]
Since $(x,y)\in\Omega_\theta$, we have
\[
f_-(\omega_-x+\theta_-)
<
y
<
f_+(\omega_+x+\theta_+).
\]
Equivalently,
\[
(\theta+x\omega,y)\in\mathcal D.
\]
Therefore, by the definition \eqref{eq-robin-hull} of the Robin hull
function,
\[
R_{\Omega_\theta}(x,y)
=
\mathscr R(\theta+x\omega,y),
\]
which proves \eqref{eq-robin-hull-shift}.
It remains to verify the periodicity in the phase variables. Since
$f_\pm$ are periodic on their respective tori, for every
$m\in\Z^d$ one has
\[
\Omega_{\theta+2\pi m}
=
\Omega_\theta,
\]
where we use the convention
$\T^d=\R^d/(2\pi\Z)^d$. Moreover,
\[
(\theta,y)\in\mathcal D
\quad\Longleftrightarrow\quad
(\theta+2\pi m,y)\in\mathcal D.
\]
Consequently,
\[
\mathscr R(\theta+2\pi m,y)
=
R_{\Omega_{\theta+2\pi m}}(0,y)
=
R_{\Omega_\theta}(0,y)
=
\mathscr R(\theta,y).
\]
Thus $\mathscr R$ is periodic in the phase variables and is
well defined on $\mathcal D$.\\
Finally, taking $\theta=0$ in \eqref{eq-robin-hull-shift} and using
$\Omega_0=\Omega$ yields
\[
R_\Omega(x,y)
=
\mathscr R(x\omega,y),
\]
which is \eqref{eq-robin-exact-hull}. Since $\mathscr R$ is periodic
in its phase variable, this identity gives the desired exact
quasi-periodic representation of the Robin function.
\end{proof}

The proposition shows that the quasi-periodicity of the geometry is
inherited exactly by the Robin function throughout the channel: the
spatial dependence of $R_\Omega$ is obtained by restricting the
periodic hull function $\mathscr R$ to the linear flow
$x\mapsto x\omega$ on $\T^d$.

\subsection{Regularity of the Robin hull function}

We now investigate the regularity of the Robin hull function with
respect to both the phase variable $\theta$ and the vertical variable
$y$. Although the Robin function is smooth in the interior of each
fixed domain $\Omega_\theta$, its dependence on $\theta$ is governed
by the regularity of the moving boundary. The natural way to analyze
this dependence is to flatten the family
$\{\Omega_\theta\}_{\theta\in\mathbb T^d}$ onto a fixed strip. The
resulting elliptic operator has coefficients involving one derivative
of the boundary parametrization. This leads naturally to the loss of
one derivative with respect to the phase variables.
A second point has to be taken into account. After flattening, the
Robin function is obtained by evaluating the regular part of the Green
function at a point of the fixed strip which itself depends on
$(\theta,y)$. Thus, one has to control simultaneously the dependence
of the pulled-back Green function on the parameters $(\theta,y)$ and
its elliptic regularity with respect to the spatial variable. Since
the Robin hull is now defined on its natural domain $\mathcal D$, the
corresponding regularity statement is local in $\mathcal D$. Our
result reads as follows.

\begin{proposition}
\label{prop-regularity-Robin-hull}
Let $m\geq1$ be an integer and $\alpha\in(0,1)$. Assume that
\begin{align}
\label{reg-m}
f_\pm\in C^{m,\alpha}(\mathbb T^{d_\pm}),
\end{align}
and that the uniform separation condition \eqref{separation1} holds.
Then the Robin hull function defined in \eqref{eq-robin-hull}
satisfies
\begin{equation}
\label{eq-Robin-hull-regularity}
\mathscr R\in C_{\mathrm{loc}}^{m-1}(\mathcal D).
\end{equation}
\end{proposition}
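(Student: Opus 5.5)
The plan is to flatten the family $\{\Omega_\theta\}$ onto the fixed strip $S=\R\times(0,1)$ and then apply parameter-dependent Schauder theory. For each $\theta\in\T^d$ we use the diffeomorphism
\[
\Phi_\theta(x,s)=\bigl(x,\;b_-^\theta(x)+s\,(b_+^\theta(x)-b_-^\theta(x))\bigr),\qquad b_\pm^\theta(x)=f_\pm(\omega_\pm x+\theta_\pm),
\]
which maps $S$ onto $\Omega_\theta$. By \eqref{separation1} the Jacobian $b_+^\theta-b_-^\theta\ge h_0$ is bounded below uniformly in $(\theta,x)$.

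\emph{Decomposition of the Robin function.} Write $G_{\Omega_\theta}(z,\zeta)=-\frac1{2\pi}\log|z-\zeta|+H_\theta(z,\zeta)$, so that $\mathscr R(\theta,y)=H_\theta((0,y),(0,y))$. For fixed $\zeta_0=(0,y_0)$ with $(\theta_0,y_0)\in\mathcal D$, the function $H_\theta(\cdot,\zeta)$ is harmonic in $\Omega_\theta$ with boundary data $\frac1{2\pi}\log|z-\zeta|$. Pulling back by $\Phi_\theta$, the function $u_{\theta,\zeta}=H_\theta(\Phi_\theta(\cdot),\zeta)$ solves
\[
\operatorname{div}\bigl(A_\theta\nabla u\bigr)=0 \quad\text{in } S,
\]
with $A_\theta=|\det D\Phi_\theta|\,(D\Phi_\theta)^{-1}(D\Phi_\theta)^{-T}$ and Dirichlet data $g_{\theta,\zeta}(x,j)=\frac1{2\pi}\log|\Phi_\theta(x,j)-\zeta|$ for $j=0,1$.

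The coefficients $A_\theta$ involve $f_\pm$ and $f'_\pm$ evaluated at $\omega_\pm x+\theta_\pm$. Hence $\theta\mapsto A_\theta$ is $C^{m-1}$ into $C^{0,\alpha}$ (more precisely, $\partial_\theta^k A_\theta\in C^{m-1-k,\alpha}$ uniformly), and it is uniformly elliptic thanks to $h_0$. The data $g$ depends on $(\theta,\zeta)$ through $f_\pm$ only, so it is $C^{m}$ in the parameters for $\zeta$ away from the boundary. Since $S$ is unbounded, uniqueness and the estimates will be obtained in a bounded-harmonic / weighted class. We subtract a smooth extension of $g$, for instance $(1-s)g(x,0)+s\,g(x,1)$, which grows only logarithmically, and use the maximum principle on the strip together with interior and boundary Schauder estimates on unit cells $[k,k+1]\times[0,1]$, whose constants are uniform in $k$ and $\theta$.

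\emph{Parameter differentiation.} We differentiate the equation formally in $\theta$ and $\zeta$. The derivative $\partial^\beta u$ with $|\beta|=k$ satisfies
\[
\operatorname{div}(A_\theta\nabla \partial^\beta u)=-\sum_{\gamma<\beta}\binom{\beta}{\gamma}\operatorname{div}\bigl(\partial^{\beta-\gamma}A_\theta\nabla\partial^\gamma u\bigr),
\]
with differentiated boundary data. Proceeding by induction on $k\le m-1$ and using divergence-form Schauder estimates (a $C^{0,\alpha}$ right-hand side divergence gives $C^{1,\alpha}$), we obtain that $\partial^\beta u$ exists and lies in $C^{1,\alpha}(\bar S)$ locally uniformly. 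The difference quotients are justified by the same estimates applied to the quotient equations. This gives $(\theta,\zeta)\mapsto u_{\theta,\zeta}\in C^{1,\alpha}$ of class $C^{m-1}$.

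\emph{Evaluation on the diagonal.} Finally,
\[
\mathscr R(\theta,y)=u_{\theta,(0,y)}\bigl(\Phi_\theta^{-1}(0,y)\bigr),
\]
where $\Phi_\theta^{-1}(0,y)=\bigl(0,(y-b_-^\theta(0))/(b_+^\theta(0)-b_-^\theta(0))\bigr)$ is $C^m$ in $(\theta,y)$. Since the evaluation point lies in the interior, interior regularity makes $u$ smooth in the spatial variable. Each mixed parameter derivative $\partial^\beta u$ is harmonic for the operator $\operatorname{div}(A_\theta\nabla\cdot)$ up to lower-order source terms, so $\partial^\beta u$ is $C^{1,\alpha}$ in the space variable. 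The composition is therefore $C^{m-1}$ by the chain rule, applied inductively.

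The main obstacle is controlling the joint regularity in parameters and space at the evaluation point. Spatial derivatives of $\partial^\beta_\theta u$ of order higher than one are not controlled by the coefficients beyond $C^{m-1-k,\alpha}$. The plan is to count the orders carefully: a total of $m-1$ derivatives split between the parameter and the spatial variable always costs at most $m-1$ derivatives of $A_\theta$, which stay in $C^{0,\alpha}$. An alternative is to avoid spatial derivatives altogether by working directly with $\zeta$-derivatives of $H$ in the physical domain locally, where $H_\theta$ is harmonic and hence smooth in $(z,\zeta)$ away from $\partial\Omega_\theta$.
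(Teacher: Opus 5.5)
Your proposal is correct and follows essentially the same route as the paper. Both use the same flattening $\Phi_\theta$ onto $S$, pull back the regular part $H_\theta$ as a divergence-form problem with logarithmic boundary data, differentiate in the parameters with divergence-form Schauder estimates by induction, and evaluate at $(0,\sigma(\theta,y))$; your order count in the last paragraph is exactly the paper's mixed regularity $\partial_{(\theta,y)}^\beta\widetilde H_{\theta,y}\in C^{m-|\beta|,\alpha}_{\mathrm{loc}}(S)$ for $|\beta|\le m-1$.

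One correction: in the flattened variables $u$ is only $C^{m,\alpha}$, not smooth, since $A_\theta\in C^{m-1,\alpha}$. That is still all the chain rule needs. Your treatment of the unbounded strip (bounded-solution class, subtraction of a log-growing extension, cell-uniform estimates) also supplies detail the paper leaves implicit.
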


\begin{proof}
We divide the proof into two steps.

\medskip
\noindent
\textbf{Step 1.} \textit{Flattening the family of domains.}
Introduce the flat horizontal strip
\[
S:=\mathbb R\times(0,1).
\]
For every $\theta=(\theta_-,\theta_+)\in\mathbb T^d$, define
\[
\Phi_\theta:S\longrightarrow\Omega_\theta
\]
by
\begin{equation}
\label{eq-flattening-Robin}
\Phi_\theta(x,s)
=
\left(
x,\,
(1-s)f_-(\theta_-+\omega_-x)
+
sf_+(\theta_++\omega_+x)
\right).
\end{equation}
By setting
\[
h_\theta(x)
:=
f_+(\theta_++\omega_+x)
-
f_-(\theta_-+\omega_-x),
\]
we get  from direct computations
\[
\det D\Phi_\theta(x,s)=h_\theta(x).
\]
Using the uniform separation assumption \eqref{separation1},
\[
h_\theta(x)\geq h_0>0,
\qquad
(\theta,x)\in\mathbb T^d\times\mathbb R.
\]
Consequently, $\Phi_\theta$ is a diffeomorphism from $S$ onto
$\Omega_\theta$, uniformly with respect to $\theta$.
\\
Since
\[
f_\pm\in C^{m,\alpha},
\]
the flattening maps depend $C^m$ on the phase variables, locally in the
spatial variable. More precisely,
\[
\theta\longmapsto\Phi_\theta
\]
is $C^m$ with values in the appropriate local Hölder spaces.
\\
Under the change of variables $z=\Phi_\theta(q)$, the Laplace equation
is transformed into a divergence-form equation on the fixed strip $S$.
The corresponding coefficient matrix is
\begin{equation}
\label{eq-A-theta-flattening}
A_\theta
=
J_\theta
D\Phi_\theta^{-1}D\Phi_\theta^{-T},
\qquad
J_\theta:=\det D\Phi_\theta.
\end{equation}
Since $A_\theta$ depends algebraically on $D\Phi_\theta$ and
$(D\Phi_\theta)^{-1}$, it follows that
\(
\theta\longmapsto A_\theta
\)
is of class $C^{m-1}$, locally with values in
$C^{0,\alpha}(\overline S)$. Moreover, the family $A_\theta$ is
uniformly elliptic:
\[
\lambda|\xi|^2
\leq
A_\theta(q)\xi\cdot\xi
\leq
\Lambda|\xi|^2
\]
for some constants $0<\lambda\leq\Lambda$ independent of
$\theta$, $q$, and $\xi$.\\
This is precisely where one derivative is lost: the coefficients of
the pulled-back elliptic operator involve the first derivatives of the
boundary parametrization.

\medskip
\noindent
\textbf{Step 2.} \textit{Regularity of the pulled-back Green function.}\\
Let $G_\theta(z,\zeta)$ denote the Dirichlet Green function of
$\Omega_\theta$. We use the decomposition
\begin{equation}
\label{eq-Green-regular-decomposition}
G_\theta(z,\zeta)
=
-\tfrac{1}{2\pi}\log|z-\zeta|
+
H_\theta(z,\zeta),
\end{equation}
where $H_\theta$ denotes the regular part. Thus, by definition
\begin{equation}
\label{eq-Robin-H-diagonal}
R_{\Omega_\theta}(\zeta)
=
H_\theta(\zeta,\zeta).
\end{equation}
Fix a compact set
\(
\mathcal K\Subset\mathcal D.
\)
Then, there exists
$\delta_{\mathcal K}>0$ such that
\begin{equation}
\label{eq-uniform-interior-hull}
\operatorname{dist}
\bigl((0,y),\partial\Omega_\theta\bigr)
\geq
\delta_{\mathcal K},
\qquad
(\theta,y)\in\mathcal K.
\end{equation}
Thus, although the Robin hull is defined on the whole of $\mathcal D$,
its restriction to $\mathcal K$ enjoys the same uniform interior
separation that was previously imposed globally.
For $(\theta,y)\in\mathcal K$, the function
\[
z\longmapsto H_\theta(z,(0,y))
\]
solves
\begin{equation}
\label{eq-H-physical-problem}
\begin{cases}
\Delta_z H_\theta(z,(0,y))=0,
&z\in\Omega_\theta,\\[1mm]
\displaystyle
H_\theta(z,(0,y))
=
\tfrac{1}{2\pi}\log|z-(0,y)|,
&z\in\partial\Omega_\theta.
\end{cases}
\end{equation}
Set
\[
\widetilde H_{\theta,y}(q)
:=
H_\theta\bigl(\Phi_\theta(q),(0,y)\bigr).
\]
Pulling back \eqref{eq-H-physical-problem} through $\Phi_\theta$ gives
the fixed-domain problem
\begin{equation}
\label{eq-H-pulled-back-problem}
\begin{cases}
\operatorname{div}
\bigl(A_\theta\nabla\widetilde H_{\theta,y}\bigr)=0,
&q\in S,\\[1mm]
\displaystyle
\widetilde H_{\theta,y}(q)
=
g_{\theta,y}(q),
&q\in\partial S,
\end{cases}
\end{equation}
where
\begin{equation}
\label{eq-boundary-data-H}
g_{\theta,y}(q)
:=
\tfrac{1}{2\pi}
\log\left|\Phi_\theta(q)-(0,y)\right|.
\end{equation}
By \eqref{eq-uniform-interior-hull}, there exists
$c_{\mathcal K}>0$ such that
\[
\left|\Phi_\theta(q)-(0,y)\right|
\geq c_{\mathcal K},
\qquad
q\in\partial S,\quad
(\theta,y)\in\mathcal K.
\]
Consequently, no singularity occurs in the boundary datum
\eqref{eq-boundary-data-H}. Since $\Phi_\theta$ depends $C^m$ on
$\theta$, it follows that the map
\[
(\theta,y)\longmapsto g_{\theta,y}
\]
is of class $C^m$ with respect to the parameter variables, locally
uniformly for $(\theta,y)\in\mathcal K$.
\\
Next, we identify the point of the fixed strip corresponding to
$(0,y)$. We have
\[
\Phi_\theta(0,s)
=
\left(
0,\,
(1-s)f_-(\theta_-)+sf_+(\theta_+)
\right).
\]
Hence the unique $s=\sigma(\theta,y)$ satisfying
\[
\Phi_\theta(0,\sigma(\theta,y))=(0,y)
\]
is given explicitly by
\begin{equation}
\label{eq-sigma-Robin}
\sigma(\theta,y)
=
\tfrac{y-f_-(\theta_-)}
{f_+(\theta_+)-f_-(\theta_-)}.
\end{equation}
Using \eqref{eq-Robin-H-diagonal} and the definition of
$\widetilde H_{\theta,y}$, we therefore obtain
\begin{align}
\mathscr R(\theta,y)
&=
R_{\Omega_\theta}(0,y)
\nonumber\\
&=
H_\theta((0,y),(0,y))
\nonumber\\
&=
\widetilde H_{\theta,y}
\bigl(0,\sigma(\theta,y)\bigr).
\label{eq-Robin-hull-evaluation}
\end{align}
Thus, the regularity of $\mathscr R$ follows from that of
$\widetilde H_{\theta,y}$. It is useful to distinguish between two
different types of regularity. The map
\(
(\theta,y)\longmapsto\widetilde H_{\theta,y}
\)
describes the dependence of the solution on the parameters
$(\theta,y)$, whereas
\(
q\longmapsto\widetilde H_{\theta,y}(q)
\)
describes its elliptic regularity with respect to the spatial variable
$q\in S$.
For every fixed $(\theta,y)\in\mathcal K$, the function
$\widetilde H_{\theta,y}$ solves \eqref{eq-H-pulled-back-problem}.
Since
\[
f_\pm\in C^{m,\alpha}(\mathbb T^{d_\pm}),
\]
the coefficients of the pulled-back operator satisfy
\[
A_\theta
\in
C_{\mathrm{loc}}^{m-1,\alpha}(S)
\]
uniformly with respect to $\theta\in\mathbb T^d$. Therefore, by
interior elliptic regularity,
\begin{equation}
\label{eq-H-spatial-regularity}
\widetilde H_{\theta,y}
\in
C_{\mathrm{loc}}^{m,\alpha}(S).
\end{equation}
This spatial regularity has to be distinguished from the regularity
with respect to the parameters. The latter concerns the map
\[
(\theta,y)
\longmapsto
\widetilde H_{\theta,y}|_K
\]
with values in a suitable Hölder space on $K\Subset S$.
\\
For $m=1$, the coefficients $A_\theta$ and the boundary data
$g_{\theta,y}$ depend continuously on $(\theta,y)\in\mathcal K$.
In this case, the flattening maps satisfy
\[
\Phi_\theta\in C_{\mathrm{loc}}^{1,\alpha}(S),
\]
and the coefficient matrices
\[
A_\theta
=
J_\theta
D\Phi_\theta^{-1}D\Phi_\theta^{-T}
\]
belong to $C_{\mathrm{loc}}^{0,\alpha}(\overline S)$ and are uniformly
elliptic. By the continuous dependence of solutions of uniformly
elliptic Dirichlet problems with respect to the coefficients and the
boundary data, it follows that
\[
(\theta,y)
\longmapsto
\widetilde H_{\theta,y}
\]
is continuous on $\mathcal K$, locally with respect to the spatial
variable. Since $\sigma$ is continuous on $\mathcal K$, the evaluation
formula \eqref{eq-Robin-hull-evaluation} yields
\[
\mathscr R\in C^0(\mathcal K).
\]
Since $\mathcal K\Subset\mathcal D$ is arbitrary, this proves
\[
\mathscr R\in C_{\mathrm{loc}}^0(\mathcal D)
\]
when $m=1$.
Assume now that $m\geq2$. The higher-order parameter regularity follows
by differentiating the elliptic problem with respect to
\[
p:=(\theta,y).
\]
For a multi-index $\beta$ satisfying
\[
1\leq|\beta|\leq m-1,
\]
set
\[
U_\beta
:=
\partial_p^\beta\widetilde H_{\theta,y}.
\]
Differentiating the elliptic equation in \eqref{eq-H-pulled-back-problem}
gives
\begin{equation}
\label{eq-differentiated-H}
\operatorname{div}
\bigl(A_\theta\nabla U_\beta\bigr)
=
-
\operatorname{div}
\Bigg(
\sum_{\substack{0<\gamma\leq\beta}}
\binom{\beta}{\gamma}
(\partial_p^\gamma A_\theta)
\nabla
\partial_p^{\beta-\gamma}
\widetilde H_{\theta,y}
\Bigg).
\end{equation}
Since $A_\theta$ is independent of $y$, only derivatives in the
$\theta$ variables contribute to $\partial_p^\gamma A_\theta$. On the
boundary,
\begin{equation}
\label{eq-differentiated-H-boundary}
U_\beta
=
\partial_p^\beta g_{\theta,y}
\qquad\text{on }\partial S.
\end{equation}
Since
\(
\partial_\theta^\gamma A_\theta
\)
exists and depends continuously on $\theta$ for every
$|\gamma|\leq m-1$, we may apply interior Schauder estimates to the
elliptic equation \eqref{eq-differentiated-H}. More precisely,
if
\[
\operatorname{div}(A\nabla u)=\operatorname{div}F
\qquad\text{in }S,
\]
where $A$ is uniformly elliptic and
\[
A,F\in C_{\mathrm{loc}}^{k,\alpha}(S),
\]
then, for every $K\Subset K'\Subset S$,
\begin{equation}
\label{eq-divergence-Schauder}
\|u\|_{C^{k+1,\alpha}(K)}
\leq
C
\left(
\|u\|_{L^\infty(K')}
+
\|F\|_{C^{k,\alpha}(K')}
\right).
\end{equation}
Applying \eqref{eq-divergence-Schauder} to
\eqref{eq-differentiated-H} and arguing inductively with respect to
$|\beta|$, we obtain, for every 
$|\beta|\leq m-1$,
\begin{equation}
\label{eq-H-mixed-regularity}
\partial_{(\theta,y)}^\beta\widetilde H_{\theta,y}
\in
C_{\mathrm{loc}}^{m-|\beta|,\alpha}(S),
\end{equation}
with continuous dependence on $(\theta,y)\in\mathcal K$. Hence, for
every compact set $K\Subset S$, the parameter derivatives
\[
(\theta,y)
\longmapsto
\partial_{(\theta,y)}^\beta
\widetilde H_{\theta,y}\big|_K
\]
are continuous with values in
$C^{m-|\beta|,\alpha}(K)$.
\\
The uniform separation condition \eqref{separation1} guarantees that
the denominator in \eqref{eq-sigma-Robin} is bounded away from zero.
Consequently,
\[
\sigma\in C_{\mathrm{loc}}^{m,\alpha}(\mathcal D).
\]
Moreover, since $\mathcal K\Subset\mathcal D$, there exists
$\eta_{\mathcal K}>0$ such that
\[
\eta_{\mathcal K}
\leq
\sigma(\theta,y)
\leq
1-\eta_{\mathcal K},
\qquad
(\theta,y)\in\mathcal K.
\]
Thus the evaluation points
\[
\bigl(0,\sigma(\theta,y)\bigr),
\qquad
(\theta,y)\in\mathcal K,
\]
remain in a compact subset of $S$. Combining this fact with
\eqref{eq-H-mixed-regularity} and the evaluation formula
\eqref{eq-Robin-hull-evaluation}, we conclude that
\[
\mathscr R\in C^{m-1}(\mathcal K).
\]
Since $\mathcal K\Subset\mathcal D$ was arbitrary, we finally obtain
\[
{
\mathscr R\in C_{\mathrm{loc}}^{m-1}(\mathcal D).
}
\]
This completes the proof.
\end{proof}

\section{Quasi-periodic structure of the level sets}\label{sec-03}

We now turn from the construction and regularity of the Robin hull to
the dynamics it generates. The analysis naturally separates into two
parts: the geometry of the invariant level sets and the time
parametrization of the motion along them. The first is governed by a
transversality condition and does not involve any arithmetic assumption
on the frequencies. The second leads to a cohomological equation on
$\T^d$, for which a Diophantine condition is introduced to control the
small divisors. This distinction between spatial and temporal
quasi-periodicity is the main theme of this section.
\subsection{Quasi-periodic invariant graphs and regularity}\label{sec-3.1}

The hull representation developed in the previous section allows us to describe the level sets of the Robin function through a periodic function on the compact phase space $\T^d$. Under a suitable transversality condition, these level sets can be represented as global periodic graphs whose restrictions to the linear flow $\theta=x\omega$ yield quasi-periodic invariant graphs in the physical channel. The purpose of this subsection is to establish this geometric representation and to investigate the regularity of the resulting graphs. In particular, the construction is purely geometric and does not require any arithmetic assumption on the frequency vector $\omega$.
The following theorem makes this construction precise.
Before stating it,
we introduce some notation adapted to the natural domain $\mathcal D$ of
the Robin hull. Let
\[
a,b:\T^d\longrightarrow\R
\]
be two periodic functions such that
\begin{equation}
\label{eq-admissible-variable-strip}
f_-(\theta_-)
<
a(\theta)
<
b(\theta)
<
f_+(\theta_+),
\qquad
\theta=(\theta_-,\theta_+)\in\T^d.
\end{equation}
For every $\theta\in\T^d$, we define the interval
\[
J_\theta
:=
[a(\theta),b(\theta)]
\]
and set
\[
\mathscr R(\theta,J_\theta)
:=
\left\{
\mathscr R(\theta,y):y\in J_\theta
\right\}.
\]
We then introduce the common energy range
\begin{equation}
\label{eq-common-energy-range}
\mathcal E_{a,b}^{\mathrm{glob}}
:=
\bigcap_{\theta\in\T^d}
\mathscr R(\theta,J_\theta).
\end{equation}
This formulation allows the vertical interval $J_\theta$ to vary with
the phase and, in particular, to follow either boundary of the channel.
For example, choosing
\[
a(\theta)=f_-(\theta_-)+\delta,
\qquad
b(\theta)=f_-(\theta_-)+2\delta,
\]
with $\delta>0$ sufficiently small, describes a region lying at distance
of order $\delta$ from the lower boundary. An analogous choice can be
made near the upper boundary.
The main result reads as follows.
\begin{theorem}
\label{thm-exact-qp-orbits}
Let $f_\pm$ satisfy \eqref{reg-m} with $m\geq2$, together with \eqref{separation1}, and let
\[
a,b\in C^{m-1}(\T^d)
\]
satisfy \eqref{eq-admissible-variable-strip}. Assume that
\[
\mathcal E_{a,b}^{\mathrm{glob}}\neq\varnothing
\]
and that the following transversality condition holds: there exists $c_0>0$ such that
\begin{equation}
\label{eq-uniform-transversality-general}
\left|
\partial_y\mathscr R(\theta,y)
\right|
\geq c_0,
\qquad
\theta\in\T^d,
\quad
y\in J_\theta.
\end{equation}
Then, for every $E\in \mathcal E_{a,b}^{\mathrm{glob}}$,
there exists a unique periodic function
\(
\mathscr Y
\in
C^{m-1}(\T^d)
\)
such that
\begin{equation}
\label{eq-hull-level-equation-general}
a(\theta)
\leq
\mathscr Y(\theta)
\leq
b(\theta),
\qquad
\mathscr R
\bigl(
\theta,
\mathscr Y(\theta)
\bigr)
=
E,
\qquad
\theta\in\T^d.
\end{equation}
In addition, the function
\begin{equation}
\label{eq-exact-qp-orbit-general}
Y(x)
:=
\mathscr Y(x\omega)
\end{equation}
is quasi-periodic in $x$ and satisfies
\begin{equation}
\label{eq-physical-level-set-general}
R_\Omega(x,Y(x))
=
E,
\qquad
x\in\R.
\end{equation}
Moreover, the graph
\[
\Gamma_E
=
\left\{
(x,Y(x)):x\in\R
\right\}
\]
is invariant under the point-vortex flow
\begin{align}
\label{Orbit-th}
\dot z
=
\tfrac{\Gamma}{2}
\nabla^\perp R_\Omega(z),
\qquad
z(0)\in\Gamma_E.
\end{align}
\end{theorem}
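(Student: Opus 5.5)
The plan is to build $\mathscr Y$ fibrewise on $\T^d$, using monotonicity in $y$. Once the graph exists, its regularity comes from the implicit function theorem, and invariance comes from conservation of $R_\Omega$.

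\emph{Existence and uniqueness for each fixed phase.} Proposition~\ref{prop-regularity-Robin-hull} with $m\geq2$ gives $\mathscr R\in C^{m-1}_{\mathrm{loc}}(\mathcal D)\subset C^1_{\mathrm{loc}}$. The set $K:=\{(\theta,y):\theta\in\T^d,\ y\in J_\theta\}$ is compact and contained in $\mathcal D$ by \eqref{eq-admissible-variable-strip}, so $\partial_y\mathscr R$ is continuous on $K$. Fix $\theta$. By \eqref{eq-uniform-transversality-general}, $\partial_y\mathscr R(\theta,\cdot)$ does not vanish on the interval $J_\theta$, hence has constant sign there. Therefore $y\mapsto\mathscr R(\theta,y)$ is strictly monotone on $J_\theta$, and $\mathscr R(\theta,J_\theta)$ is a compact interval.

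Since $E\in\mathcal E^{\mathrm{glob}}_{a,b}$, $E$ lies in this interval for every $\theta$. So there is exactly one $\mathscr Y(\theta)\in J_\theta$ with $\mathscr R(\theta,\mathscr Y(\theta))=E$, which settles uniqueness. Periodicity of $\mathscr Y$ follows from the periodicity of $\mathscr R$ (Proposition~\ref{prop-exact-hull-Robin}) and of $a,b$.

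\emph{Regularity.} I apply the implicit function theorem to $F(\theta,y)=\mathscr R(\theta,y)-E$ near each $(\theta_0,\mathscr Y(\theta_0))$. This point lies in $\mathcal D$ and $\partial_yF\neq0$ there. The theorem gives a $C^{m-1}$ solution branch $\tilde y(\theta)$ near $\theta_0$ with $\tilde y(\theta_0)=\mathscr Y(\theta_0)$.

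The main obstacle is identifying $\tilde y$ with $\mathscr Y$ when $\mathscr Y(\theta_0)$ sits at an endpoint $a(\theta_0)$ or $b(\theta_0)$. The local branch might then leave $J_\theta$, and the constraint set is only of class $C^{m-1}$ in $\theta$. I would handle this by first proving continuity of $\mathscr Y$ directly: take $\theta_n\to\theta$, pass to a limit point of $\mathscr Y(\theta_n)$, which lies in $J_\theta$ by continuity of $a,b$, and conclude by uniqueness.

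Next I extend transversality to a neighbourhood. By uniform continuity of $\partial_y\mathscr R$ near $K$ and compactness, $|\partial_y\mathscr R|\geq c_0/2$ on a $\delta$-neighbourhood $J^\delta_\theta=[a(\theta)-\delta,b(\theta)+\delta]$ still inside $\mathcal D$. By the same monotonicity argument, the zero of $F(\theta,\cdot)$ in $J^\delta_\theta$ is unique. Both $\tilde y(\theta)$ and $\mathscr Y(\theta)$ lie in $J^\delta_\theta$ for $\theta$ close to $\theta_0$, using continuity of each. Hence they coincide, and $\mathscr Y\in C^{m-1}(\T^d)$.

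\emph{Physical level set and invariance.} By \eqref{eq-robin-exact-hull}, $R_\Omega(x,Y(x))=\mathscr R(x\omega,\mathscr Y(x\omega))=E$; here $(x,Y(x))\in\Omega$ because $(x\omega,\mathscr Y(x\omega))\in\mathcal D$. Quasi-periodicity of $Y$ is immediate from the definition $Y(x)=\mathscr Y(x\omega)$.

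For invariance, first note that along $\Gamma_E$ we have $\nabla R_\Omega\neq0$, since
\[
\partial_yR_\Omega(x,y)=\partial_y\mathscr R(x\omega,y)\neq0 .
\]
Differentiating $R_\Omega(x,Y(x))=E$ shows that $\Gamma_E$ is a connected regular curve contained in $\{R_\Omega=E\}$. Moreover, the uniqueness of $\mathscr Y(x\omega)$ in $J_{x\omega}$ shows that near $\Gamma_E$ the level set coincides with $\Gamma_E$.

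Along a trajectory starting on $\Gamma_E$, $R_\Omega$ is conserved. The vector field $\nabla^\perp R_\Omega$ is tangent to $\Gamma_E$ and nonvanishing there. Hence the solution of \eqref{Orbit-th} can be written as $z(t)=(x(t),Y(x(t)))$, where $x(t)$ solves the scalar ODE
\[
\dot x=\tfrac\Gamma2\partial_y\mathscr R\bigl(x\omega,\mathscr Y(x\omega)\bigr).
\]
The right-hand side is bounded and $C^1$, so this ODE has a global solution. By uniqueness for \eqref{Orbit-th}, this is the trajectory, which therefore stays on $\Gamma_E$.
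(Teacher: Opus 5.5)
Your proposal is correct and follows essentially the paper's route: fibrewise strict monotonicity of $\mathscr R(\theta,\cdot)$ on $J_\theta$ to define $\mathscr Y$, the implicit function theorem for $C^{m-1}$ regularity, and conservation of $R_\Omega$ for invariance. Your additional steps make rigorous two points the paper only asserts: proving continuity of $\mathscr Y$ and extending transversality to the enlarged fibres $[a(\theta)-\delta,b(\theta)+\delta]$ identifies the local implicit-function branch with $\mathscr Y$ even when $\mathscr Y(\theta_0)$ is an endpoint of $J_{\theta_0}$ (the paper just says local branches ``agree on overlaps''), and building the trajectory from the scalar ODE on the graph plus uniqueness for \eqref{Orbit-th} shows the trajectory cannot leave the region where the level set is the graph (which the paper takes for granted). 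One small correction: the $C^1$ regularity of the scalar vector field and the well-posedness of \eqref{Orbit-th} come from the interior smoothness of $R_\Omega$ in physical variables, not from the hull estimate, which only gives $\partial_y\mathscr R\in C^{m-2}$.
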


\begin{proof}
Consider the variable strip
\[
\mathcal J_{a,b}
:=
\left\{
(\theta,y)\in\T^d\times\R:
a(\theta)<y<b(\theta)
\right\}.
\]
By \eqref{eq-admissible-variable-strip},
\[
\mathcal J_{a,b}\subset\mathcal D.
\]
Moreover, since $a$ and $b$ are continuous and $a<b$, the set
$\mathcal J_{a,b}$ is connected.
Set
\[
F(\theta,y)
:=
\mathscr R(\theta,y)-E,\,\, (\theta,y)\in\mathcal{D}.
\]
By Proposition~\ref{prop-regularity-Robin-hull},
\[
\mathscr R
\in
C_{\mathrm{loc}}^{m-1}(\mathcal D).
\]
Since the closed variable strip \(\overline{\mathcal J}_{a,b}\)
is a compact subset of $\mathcal D$, it follows that
\[
\mathscr R
\in
C^{m-1}(\overline{\mathcal J}_{a,b}).
\]
In particular, since $m\geq2$, the derivative
$\partial_y\mathscr R$ is continuous there.
\\
By the transversality assumption \eqref{eq-uniform-transversality-general},
$\partial_y\mathscr R$ never vanishes on
$\overline{\mathcal J}_{a,b}$. Since $\mathcal J_{a,b}$ is connected,
$\partial_y\mathscr R$ has a constant sign. Consequently, for every
fixed $\theta\in\T^d$, the map
\[
y\longmapsto\mathscr R(\theta,y)
\]
is strictly monotone on $J_\theta$.
Now, pick  $E\in \mathcal E_{a,b}^{\mathrm{glob}}$. Then, for every
$\theta\in\T^d$,
\[
E
\in
\mathscr R
\bigl(
\theta,[a(\theta),b(\theta)]
\bigr).
\]
Hence, there exists
\[
\exists y\in[a(\theta),b(\theta)]
\quad\hbox{
such that}\quad
F(\theta,y)=0.
\]
Strict monotonicity implies that this zero is unique and denoted by $\mathscr Y(\theta)$. This defines a function
\[
\mathscr Y:\T^d\longrightarrow\R
\]
such that
\[
a(\theta)
\leq
\mathscr Y(\theta)
\leq
b(\theta)
\]
and
\[
F
\bigl(
\theta,\mathscr Y(\theta)
\bigr)
=
0,
\qquad
\forall\,\theta\in\T^d.
\]
Moreover,
\[
\left|
\partial_yF
\bigl(
\theta,\mathscr Y(\theta)
\bigr)
\right|
=
\left|
\partial_y\mathscr R
\bigl(
\theta,\mathscr Y(\theta)
\bigr)
\right|
\geq c_0.
\]
The implicit function theorem therefore gives a local
$C^{m-1}$ representation of the solution around every
$\theta\in\T^d$. Since the solution is unique in each fiber
$J_\theta$, these local representations agree on their overlaps and
define a global periodic function
\(
\mathscr Y\in C^{m-1}(\T^d).
\)
Differentiating the equation
\[
\mathscr R
\bigl(
\theta,\mathscr Y(\theta)
\bigr)
=
E
\]
with respect to $\theta_j$, we obtain
\begin{equation}
\label{eq-derivative-Y-hull}
\partial_{\theta_j}\mathscr Y(\theta)
=
-
\frac{
\partial_{\theta_j}\mathscr R
\bigl(
\theta,\mathscr Y(\theta)
\bigr)
}{
\partial_y\mathscr R
\bigl(
\theta,\mathscr Y(\theta)
\bigr)
},
\qquad
j=1,\ldots,d.
\end{equation}
We now return to the physical channel. By the exact hull representation
established in Proposition~\ref{prop-exact-hull-Robin},
\[
R_\Omega(x,y)
=
\mathscr R(x\omega,y),
\qquad
(x,y)\in\Omega.
\]
Setting
\[
Y(x)
:=
\mathscr Y(x\omega),
\]
we have, by \eqref{eq-admissible-variable-strip},
\[
f_-(\omega_-x)
<
a(x\omega)
\le
Y(x)
\le
b(x\omega)
<
f_+(\omega_+x),
\]
and therefore
\[
(x,Y(x))\in\Omega,
\qquad
x\in\R.
\]
Furthermore,
\[
R_\Omega(x,Y(x))
=
\mathscr R
\bigl(
x\omega,
\mathscr Y(x\omega)
\bigr)
=
E.
\]
Hence the global graph
\[
\Gamma_E
=
\left\{
(x,Y(x)):x\in\R
\right\}
\]
is contained in the level set
$\{R_\Omega=E\}$.\\
It remains to prove its invariance by the point vortex flow. 
Let $z(t)=(x(t),y(t))$ be a solution of the point-vortex equation \eqref{Orbit-th} with initial condition
\[
z(0)=\bigl(x_0,Y(x_0)\bigr)\in\Gamma_E.
\]
Along the trajectory, we have
\begin{align*}
\frac{d}{dt}R_\Omega(z(t))
&=
\nabla R_\Omega(z(t))\cdot\dot z(t)
\\
&=
\frac{\Gamma}{2}
\nabla R_\Omega(z(t))
\cdot
\nabla^\perp R_\Omega(z(t))
\\
&=0.
\end{align*}
Hence
\[
R_\Omega(z(t))
=
R_\Omega(z(0))
=
E.
\]
Therefore the trajectory remains in the level set $\{R_\Omega=E\}$. Since, in the region under consideration, this level set is uniquely represented by the graph
\[
\Gamma_E
=
\left\{
(x,Y(x)):x\in\mathbb R
\right\},
\]
we necessarily have
\[
y(t)=Y(x(t)).
\]
Consequently,
\[
z(t)=\bigl(x(t),Y(x(t))\bigr)\in\Gamma_E,
\]
and thus $\Gamma_E$ is invariant under the point-vortex flow \eqref{Orbit-th}.
 This completes the proof.
\end{proof}

\subsection{Diophantine frequencies and quasi-periodic dynamics}\label{sec-3.2}

The previous subsection provides a geometric description of the
point-vortex trajectories as invariant quasi-periodic graphs. The next
natural question is whether this spatial quasi-periodicity is also
reflected in the time evolution of the vortex. This is not immediate:
even though the trajectory lies on a quasi-periodic graph, the vortex
does not move along this graph with constant horizontal velocity.
Consequently, the quasi-periodicity of the orbit as a set does not
directly imply quasi-periodicity of its time parametrization.\\
The purpose of this subsection is to show that the time evolution can
nevertheless be reduced to a uniform horizontal translation, up to a
quasi-periodic correction. More precisely, we shall prove that the
motion admits a decomposition into a linear drift and a bounded
quasi-periodic oscillation. The frequency vector of this oscillation is
obtained from the spatial frequencies of the channel after multiplication
by the effective drift velocity.\\
As we shall see below, the argument relies on the fact that, along an invariant graph, the
point-vortex system reduces to a scalar equation for the horizontal
position. The transversality condition ensures that the corresponding
horizontal velocity never vanishes, so that the motion is monotone and
can be reparametrized. Straightening this nonconstant quasi-periodic
velocity then leads naturally to a cohomological equation on the torus.\\
It is precisely at this stage that an arithmetic condition on the
frequency vector enters the analysis. Indeed, solving the cohomological
equation involves a small-divisor problem. We therefore impose a
Diophantine condition on $\omega$, which provides a quantitative lower
bound on the corresponding divisors. Combined with sufficient regularity
of the channel boundaries, this condition allows us to solve the
cohomological equation with a controlled loss of derivatives and,
consequently, to obtain a quasi-periodic correction with the desired
regularity.\\
This reduction shows, in particular, that the geometry of the
quasi-periodic channel determines not only the spatial structure of the
invariant trajectories, but also their temporal frequency content. The
following theorem gives the resulting quasi-periodic decomposition of
the point-vortex dynamics modulo horizontal translation.

\begin{theorem}
\label{thm-exact-qp-orbits-1}
Under the assumptions of Theorem~\ref{thm-exact-qp-orbits}, assume in
addition that the frequency vector $\omega$ is Diophantine, namely, that
there exist constants $\gamma>0$ and $\tau>d-1$ such that
\begin{equation}
\label{eq-Diophantine-k-orbit}
|\omega\cdot\ell|
\geq
\frac{\gamma}{\langle\ell\rangle^\tau},
\qquad
\forall\,\ell\in\Z^d\setminus\{0\}.
\end{equation}
Let $r\geq1$ be an integer and assume that
\[
f_\pm\in C^{m,\alpha}(\mathbb T^{d_\pm}),
\qquad
\alpha\in(0,1),
\]
for some integer $m$ satisfying
\begin{equation}
\label{eq-regularity-threshold-time}
m>r+\tau+d+2.
\end{equation}
Then every point-vortex trajectory contained in $\Gamma_E$ is
quasi-periodic modulo a horizontal translation. More precisely, for
every initial point $z(0)\in\Gamma_E$, the corresponding orbit of
\eqref{Orbit-th} can be parametrized as
\begin{equation}
\label{eq-point-vortex-QP-modulo-translation}
z(t)
=
c_\star t\,e_1
+
Z_\star(c_\star t\omega),
\end{equation}
where
\begin{equation*}
\label{eq-effective-drift-theorem}
c_\star
:=
\left(
\fint_{\T^d}
\frac{d\theta}{\mathscr V(\theta)}
\right)^{-1},
\qquad
\mathscr V(\theta)
:=
\tfrac{\Gamma}{2}
\partial_y\mathscr R
\bigl(
\theta,\mathscr Y(\theta)
\bigr),
\end{equation*}
and $Z_\star$ is a periodic function on $\T^d$ with 
\[
Z_\star\in C^r(\T^d;\R^2).
\]
 Furthermore, for every
$\Psi\in C(\mathbb R^2)$,
\begin{equation*}
\label{eq-ergodic-boundary-orbit}
\lim_{T\to\infty}
\frac1T\int_0^T
\Psi\bigl(Z_{\star}(c_\star\omega t)\bigr)\,dt
=
\fint_{\mathbb T^d}
\Psi\bigl(Z_\star(\theta)\bigr)\,d\theta.
\end{equation*}
Throughout the paper, we use the notation
$$
\fint_{\T^d}:=\frac1{(2\pi)^d}
\int_{\mathbb T^d}$$
\end{theorem}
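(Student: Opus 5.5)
The proof reduces the motion on $\Gamma_E$ to a scalar ODE for $x(t)$, then straightens it by a cohomological equation. By Theorem~\ref{thm-exact-qp-orbits}, any trajectory starting on $\Gamma_E$ satisfies $y(t)=Y(x(t))=\mathscr Y(x(t)\omega)$. Using the hull representation $R_\Omega(x,y)=\mathscr R(x\omega,y)$, the horizontal equation becomes
\[
\dot x=\tfrac{\Gamma}{2}\partial_y\mathscr R\bigl(x\omega,\mathscr Y(x\omega)\bigr)=\mathscr V(x\omega).
\]
Here $\mathscr V\in C^{m-2}(\T^d)$, since $\mathscr R\in C^{m-1}$ and $\mathscr Y\in C^{m-1}$. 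By the transversality condition \eqref{eq-uniform-transversality-general}, $|\mathscr V|\ge \tfrac{|\Gamma|}{2}c_0>0$, so $\mathscr V$ has constant sign. Hence $x(t)$ is strictly monotone and defined for all times, and $1/\mathscr V\in C^{m-2}(\T^d)$. In particular $c_\star$ is well defined, nonzero, and has the sign of $\mathscr V$.

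We invert time as a function of position. From $dt/dx=1/\mathscr V(x\omega)$ we get $t(x)=t_0+\int_{x_0}^x d\xi/\mathscr V(\xi\omega)$. Write
\[
\frac1{\mathscr V(\theta)}=\frac1{c_\star}+g(\theta),\qquad \fint_{\T^d} g=0,
\]
and solve the cohomological equation $D_\omega u=g$ on $\T^d$ by Fourier series: $\hat u_\ell=\hat g_\ell/(\ii\,\omega\cdot\ell)$ for $\ell\neq0$. The Diophantine bound \eqref{eq-Diophantine-k-orbit} gives $|\hat u_\ell|\le\gamma^{-1}\langle\ell\rangle^{\tau}|\hat g_\ell|$. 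With $g\in C^{m-2}$, we have $|\hat g_\ell|\lesssim\langle\ell\rangle^{-(m-2)}$. Summing $\langle\ell\rangle^{r+\tau-(m-2)}$ over $\Z^d$ converges when $m-2-r-\tau>d$, which is exactly \eqref{eq-regularity-threshold-time}. This yields $u\in C^r(\T^d)$, and the threshold is designed for precisely this estimate. Then
\[
t=\frac{x}{c_\star}+u(x\omega)+\text{const}.
\]
Absorbing the constant by a shift of the phase (choose the base point so that the constant vanishes), we get $c_\star t=x+c_\star u(x\omega)$.

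This relation must now be inverted to express $x$ as a function of $t$, and this is the main obstacle: a naive inversion along the single orbit gives only a function of $t$, not a torus function. I would look for $x=c_\star t+w(c_\star t\omega)$ with $w$ periodic on $\T^d$. Substituting $\varphi=c_\star t\omega$, the identity becomes the torus equation
\[
w(\varphi)=-c_\star\,u\bigl(\varphi+w(\varphi)\omega\bigr).
\]
For each fixed $\varphi$, the scalar map $s\mapsto s+c_\star u(\varphi+s\omega)$ has derivative $1+c_\star D_\omega u=c_\star/\mathscr V>0$, bounded below. So it is a strictly increasing bijection of $\R$ and the equation has a unique root $w(\varphi)$, which is periodic in $\varphi$ by uniqueness. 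The implicit function theorem, with nonvanishing derivative $c_\star/\mathscr V$, gives $w\in C^r(\T^d)$; here $r\ge1$ is needed so that $u\in C^1$.

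Finally set $Z_\star(\varphi)=\bigl(w(\varphi),\ \mathscr Y(\varphi+w(\varphi)\omega)\bigr)\in C^r$, using $m-1\ge r$. Then $x(t)\omega=c_\star t\omega+w\,\omega$, so $y(t)=\mathscr Y(x(t)\omega)$ takes the stated form \eqref{eq-point-vortex-QP-modulo-translation}. For the ergodic statement, $\nu=c_\star\omega$ is nonresonant because $c_\star\neq0$. Since $\Psi\circ Z_\star$ is continuous on $\T^d$, the time average along the Kronecker orbit $t\mapsto\nu t$ converges by unique ergodicity (Weyl equidistribution, checked on trigonometric polynomials and extended by density) to $\fint_{\T^d}\Psi(Z_\star(\theta))\,d\theta$.
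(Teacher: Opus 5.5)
Your proof is correct. It differs from the paper's only in the step you flagged as the main obstacle, the inversion of the straightening map in quasi-periodic form.

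\textbf{Shared steps.} The reduction to $\dot x=\mathscr V(x\omega)$ is the same as in the paper. So is the cohomological equation: your $g,u$ are the paper's $g,u$ divided by $c_\star$. The Fourier estimate giving $u\in C^r$ under $m>r+\tau+d+2$ and the unique-ergodicity argument also coincide. Your identity $c_\star t=x+c_\star u(x\omega)-\beta$ is exactly the paper's $\Phi(x(t))=c_\star t+\Phi(x_0)$.

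\textbf{The inversion step.} The paper introduces the torus map $\Theta(\theta)=\theta+u(\theta)\omega$. It shows $\Theta$ is a local diffeomorphism because $\det D\Theta=c_\star/\mathscr V>0$. It gets surjectivity from an open--closed argument and injectivity from homotopy invariance of the degree, $\deg\Theta=1$. Only then does it define $q=-u\circ\Theta^{-1}$. In your normalization, $q$ satisfies $q(\varphi)+c_\star u\bigl(\varphi+q(\varphi)\omega\bigr)=0$, which is precisely your equation for $w$.

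You solve this functional equation directly, fiber by fiber along the Kronecker line through $\varphi$. The map $s\mapsto s+c_\star u(\varphi+s\omega)$ has derivative $c_\star/\mathscr V$, which is bounded below by a positive constant. Hence it has a unique zero. Periodicity of $w$ follows from uniqueness, and $C^r$ regularity from the implicit function theorem.

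\textbf{Comparison.} Your route is more elementary, since it avoids degree theory. It also gives bijectivity of $\Theta$ at no extra cost: if $\theta,\theta'$ are two preimages of $\varphi$, then $s=-c_\star u(\theta)$ and $s'=-c_\star u(\theta')$ are zeros of the same strictly monotone scalar map, so they coincide. The paper's route produces the global torus diffeomorphism $\Theta$ as an explicit object, which is structurally informative, but the theorem does not need it.

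\textbf{A phrasing point.} Since $z(0)$ is prescribed, you cannot choose the base point so that the constant $\beta=x_0+c_\star u(x_0\omega)$ vanishes. Keep $\beta$ and set $Z_\star(\varphi)=\bigl(\beta+w(\varphi+\beta\omega),\,\mathscr Y(\varphi+\beta\omega+w(\varphi+\beta\omega)\omega)\bigr)$, as the paper does with $q_\star$. This $Z_\star$ is still periodic and of class $C^r$, so nothing else changes.
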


\begin{proof}
Let
\[
z(t)=(x(t),y(t))
\]
be a solution of \eqref{Orbit-th} contained in $\Gamma_E$. Then
\begin{equation}
\label{eq-point-vortex-components}
\dot x(t)
=
\tfrac{\Gamma}{2}
\partial_yR_\Omega(x(t),y(t)),
\qquad
\dot y(t)
=
-\tfrac{\Gamma}{2}
\partial_xR_\Omega(x(t),y(t)).
\end{equation}
On the invariant graph
\[
y=Y(x),
\]
we therefore obtain the scalar equation
\begin{equation}
\label{eq-scalar-flow-on-graph}
\dot x
=
V(x),
\qquad
V(x)
:=
\tfrac{\Gamma}{2}
\partial_yR_\Omega(x,Y(x)).
\end{equation}
Using the hull representation of the Robin function, define
\begin{equation}
\label{eq-V-hull}
\mathscr V(\theta)
:=
\tfrac{\Gamma}{2}
\partial_y\mathscr R
\bigl(
\theta,\mathscr Y(\theta)
\bigr).
\end{equation}
Then, by \eqref{eq-robin-exact-hull} and
\eqref{eq-exact-qp-orbit-general},
\begin{equation}
\label{eq-V-QP}
V(x)
=
\mathscr V(x\omega).
\end{equation}
Moreover, by the transversality assumption
\eqref{eq-uniform-transversality-general},
\[
|\mathscr V(\theta)|
\geq
\tfrac{|\Gamma|}{2}c_0,
\qquad
\forall\,\theta\in\T^d.
\]
Since $\mathscr V$ is continuous and does not vanish on the connected
torus $\T^d$, it has a constant sign. Hence the horizontal component
$x(t)$ is strictly monotone.
\\
We now straighten the scalar quasi-periodic vector field
\[
\dot x=\mathscr V(x\omega).
\]
Introduce the effective drift
\begin{equation}
\label{eq-effective-drift-c}
c_\star
:=
\left(
\fint_{\T^d}
\frac{d\theta}{\mathscr V(\theta)}
\right)^{-1}.
\end{equation}
Since $\mathscr V$ has a constant sign and is bounded away from zero,
the quantity $c_\star$ is well defined, nonzero, and has the same sign
as $\mathscr V$.
Introduce the periodic real-valued function
\begin{equation}
\label{eq-g-cohomological}
g(\theta)
:=
\frac{c_\star}{\mathscr V(\theta)}-1.
\end{equation}
By the definition of $c_\star$,
\[
\fint_{\T^d}g(\theta)\,d\theta=0.
\]
We solve the cohomological equation
\begin{equation}
\label{eq-cohomological-orbit}
\omega\cdot\partial_\theta u(\theta)
=
g(\theta),
\qquad
\fint_{\T^d}u(\theta)\,d\theta=0.
\end{equation}
In Fourier series, its zero-average solution is given by
\begin{equation}
\label{eq-u-Fourier-orbit}
u(\theta)
=
\sum_{\ell\in\Z^d\setminus\{0\}}
\frac{\widehat g_\ell}
{i\,\omega\cdot\ell}
e^{i\ell\cdot\theta}.
\end{equation}
Since $g$ is real-valued, the function $u$ is real-valued as well.
Define
\begin{equation}
\label{eq-straightening-map-x}
\Phi(x)
:=
x+u(x\omega).
\end{equation}
Then
\begin{align*}
\Phi'(x)
&=
1+
\omega\cdot\partial_\theta u(x\omega)
\\
&=
1+g(x\omega)
\\
&=
\frac{c_\star}{\mathscr V(x\omega)}.
\end{align*}
Since $c_\star$ and $\mathscr V$ have the same sign,
\[
\Phi'(x)>0,
\qquad
x\in\R.
\]
Moreover, $u$ is bounded, and therefore
\[
\Phi(x)-x=u(x\omega)
\]
is bounded. Consequently,
\[
\lim_{x\to\pm\infty}\Phi(x)=\pm\infty,
\]
and $\Phi:\R\to\R$ is an orientation-preserving diffeomorphism.\\
Along a solution of \eqref{eq-scalar-flow-on-graph}, we may write
\begin{align*}
\tfrac{d}{dt}\Phi(x(t))
&=
\Phi'(x(t))\dot x(t)
\\
&=
\frac{c_\star}{\mathscr V(x(t)\omega)}
\mathscr V(x(t)\omega)
\\
&=
c_\star.
\end{align*}
Consequently,
\begin{equation}
\label{eq-Phi-linear-motion}
\Phi(x(t))
=
c_\star t+\Phi(x_0),
\qquad
x_0:=x(0).
\end{equation}
We now describe the inverse of $\Phi$ in quasi-periodic form. Introduce
the torus map
\begin{equation}
\label{eq-torus-map-Theta}
\Theta:\T^d\to\T^d,
\qquad
\Theta(\theta)
=
\theta+u(\theta)\omega.
\end{equation}
Its differential is
\[
D\Theta(\theta)
=
\operatorname{Id}_d
+
\omega\otimes\partial_\theta u(\theta),
\]
where
\[
\bigl(
\omega\otimes\partial_\theta u(\theta)
\bigr)_{ij}
=
\omega_i\partial_{\theta_j}u(\theta).
\]
Since this is a rank-one perturbation of the identity, the matrix
determinant lemma gives
\begin{equation}
\label{eq-det-DTheta}
\det D\Theta(\theta)
=
1+\omega\cdot\partial_\theta u(\theta).
\end{equation}
Using the cohomological equation
\eqref{eq-cohomological-orbit}, we obtain
\[
\det D\Theta(\theta)
=
\frac{c_\star}{\mathscr V(\theta)}
>0,
\qquad
\forall\,\theta\in\T^d.
\]
Thus $\Theta$ is a local diffeomorphism.
We will show that $\Theta$ is in fact a global diffeomorphism.\\
Since
$\Theta$ is a local diffeomorphism, $\Theta(\T^d)$ is open in $\T^d$.
On the other hand, $\T^d$ is compact and $\Theta$ is continuous, so
$\Theta(\T^d)$ is compact and therefore closed in $\T^d$. By
connectedness of $\T^d$,
\[
\Theta(\T^d)=\T^d,
\]
and hence $\Theta$ is surjective.
\\
To prove injectivity, consider the homotopy
\[
\Theta_s(\theta)
:=
\theta+s u(\theta)\omega,
\qquad
s\in[0,1].
\]
It joins the identity to $\Theta$:
\[
\Theta_0=\operatorname{Id}_{\T^d},
\qquad
\Theta_1=\Theta.
\]
By homotopy invariance of the topological degree,
\[
\deg(\Theta)
=
\deg(\operatorname{Id}_{\T^d})
=
1.
\]
We recall below the definition of the degree 
\[
\deg(\Theta)
=
\sum_{\theta\in\Theta^{-1}(\vartheta)}
\operatorname{sgn}
\bigl(\det D\Theta(\theta)\bigr).
\]
Since $\Theta$ is a local diffeomorphism and
\[
\det D\Theta(\theta)>0
\]
everywhere, every preimage of a point contributes $+1$ to the degree.
Therefore, for every $\vartheta\in\T^d$,
\[
1
=
\deg(\Theta)
=
\#\Theta^{-1}(\vartheta).
\]
Thus $\Theta$ is injective, and consequently
\[
\Theta:\T^d\longrightarrow\T^d
\]
is a global diffeomorphism.
We claim that its inverse has the same special structure. \\Indeed, let
$\varphi\in\T^d$ and set
\[
\theta=\Theta^{-1}(\varphi).
\]
Since
\[
\varphi
=
\theta+u(\theta)\omega,
\]
we obtain
\[
\theta
=
\varphi-u(\theta)\omega.
\]
Defining
\[
q(\varphi)
:=
-u\bigl(\Theta^{-1}(\varphi)\bigr),
\]
we get
\begin{equation}
\label{eq-inverse-Theta1}
\Theta^{-1}(\varphi)
=
\varphi+q(\varphi)\omega.
\end{equation}
Equivalently, $q$ satisfies
\[
q(\varphi)
+
u\bigl(\varphi+q(\varphi)\omega\bigr)
=
0,
\qquad
\varphi\in\T^d.
\]
We next recover the inverse of the scalar map $\Phi$. From
\[
\Phi(x)=x+u(x\omega)
\]
we have
\[
\Phi(x)\omega
=
x\omega+u(x\omega)\omega
=
\Theta(x\omega).
\]
Therefore, if $s=\Phi(x)$, then we get in view of \eqref{eq-inverse-Theta1}
\[
x\omega
=
\Theta^{-1}(s\omega)
=
s\omega+q(s\omega)\omega.
\]
It follows that
\begin{equation}
\label{eq-inverse-Phi}
\Phi^{-1}(s)
=
s+q(s\omega),
\qquad
s\in\R.
\end{equation}
Set
\[
\beta:=\Phi(x_0),
\qquad
\nu:=c_\star\omega.
\]
From \eqref{eq-Phi-linear-motion} and
\eqref{eq-inverse-Phi}, we obtain
\begin{align}
x(t)
&=
\Phi^{-1}(c_\star t+\beta)
\nonumber\\
&=
c_\star t+\beta
+
q\bigl(
c_\star t\omega+\beta\omega
\bigr)
\nonumber\\
&=
c_\star t+q_\star(\nu t),
\label{eq-x-QP-modulo-drift}
\end{align}
where
\begin{equation}
\label{eq-q-star-orbit}
q_\star(\varphi)
:=
\beta+q(\varphi+\beta\omega).
\end{equation}
For the vertical component, we write
\[
y(t)
=
Y(x(t))
=
\mathscr Y(x(t)\omega).
\]
Moreover,
\begin{align*}
x(t)\omega
&=\Theta^{-1}\big( \Phi(x(t))\omega\big)\\
&=
\Theta^{-1}
\bigl(
\nu t+\beta\omega
\bigr).
\end{align*}
Hence
\[
y(t)
=
\mathscr Y
\left(
\Theta^{-1}
\bigl(
\nu t+\beta\omega
\bigr)
\right).
\]
Define
\begin{equation}
\label{eq-P-vector-orbit}
Z_\star(\varphi)
:=
\left(
q_\star(\varphi),
\,
\mathscr Y
\left(
\Theta^{-1}
\bigl(
\varphi+\beta\omega
\bigr)
\right)
\right).
\end{equation}
Then $Z_\star$ is periodic on $\T^d$ and
\[
z(t)
=
\bigl(
x(t),y(t)
\bigr)
=
c_\star t\,e_1
+
Z_\star(\nu t).
\]
This proves \eqref{eq-point-vortex-QP-modulo-translation}.
It remains to establish the regularity of $Z_\star$. By
Proposition~\ref{prop-regularity-Robin-hull}, the Robin hull satisfies
\[
\mathscr R
\in
C_{\mathrm{loc}}^{m-1}(\mathcal D).
\]
The graph
\[
\left\{
\bigl(
\theta,\mathscr Y(\theta)
\bigr):
\theta\in\T^d
\right\}
\]
is a compact subset of $\mathcal D$. Hence $\mathscr R$ is
$C^{m-1}$ in a neighborhood of this graph. Combining
\eqref{eq-V-hull} with the regularity of $\mathscr Y$ established in
Theorem~\ref{thm-exact-qp-orbits}, the standard composition rules give
\[
\mathscr V\in C^{m-2}(\T^d).
\]
Since $\mathscr V$ is bounded away from zero,
\[
g\in C^{m-2}(\T^d).
\]
Applying Lemma~\ref{lem-regularity-u} to the Fourier representation
\eqref{eq-u-Fourier-orbit}, we obtain
\[
u\in C^r(\T^d)
\qquad\text{provided that}\qquad
r<m-2-\tau-d.
\]
Condition \eqref{eq-regularity-threshold-time} guarantees precisely this
inequality. Therefore
\[
u,q\in C^r(\T^d),
\qquad
\Theta^{\pm1}\in C^r(\T^d).
\]
Since
\[
\mathscr Y\in C^{m-1}(\T^d)
\]
and $m-1>r$, the composition formula
\eqref{eq-P-vector-orbit} finally yields
\[
Z_\star\in C^r(\T^d;\R^2).
\]
Since $c_\star\neq0$ and $\omega$ is Diophantine, the vector
\[
\nu=c_\star\omega
\]
is nonresonant. 
It is known,  see for instance \cite{Walters1982,KatokHasselblatt1995}, that  the Kronecker flow
\[
\theta\longmapsto \theta+\nu t
\qquad\text{on }\mathbb T^d
\]
is uniquely ergodic, with unique invariant probability measure given by
the Haar measure
\(
\frac{d\theta}{(2\pi)^d}.
\)
Now, for every $\Psi\in C(\mathbb R^2)$, the function
\(
\theta\in\T^d\longmapsto \Psi\bigl(Z_\star(\theta)\bigr)
\)
is continuous on $\mathbb T^d$. Then, the unique ergodicity of the Kronecker flow
yields
\[
\lim_{T\to\infty}
\frac1T\int_0^T
\Psi\bigl(Z_\star(\nu t)\bigr)\,dt
=
\frac1{(2\pi)^d}
\int_{\mathbb T^d}
\Psi\bigl(Z_\star(\theta)\bigr)\,d\theta,
\]
which proves the desired result and  completes the proof of the theorem.
\end{proof}

\begin{remark}
\label{rem-diophantine-exponent}
The restriction $\tau>d-1$ is not needed for the inversion of the
cohomological operator once a fixed vector $\omega$ is known to satisfy
the Diophantine estimate
\[
|\omega\cdot\ell|
\geq
\frac{\gamma}{\langle\ell\rangle^\tau},
\qquad
\ell\in\Z^d\setminus\{0\}.
\]
It is imposed in order to ensure that the set of such Diophantine
vectors has full Lebesgue measure; see
Lemma~\ref{lem-full-measure-diophantine}.
\end{remark}

\subsection{Quasi-periodic motions near the boundary}
\label{subsec-qp-motion-near-boundary}

We now apply Theorems~\ref{thm-exact-qp-orbits} and~\ref{thm-exact-qp-orbits-1} to the dynamics of point vortices located close to the boundary of the channel. The logarithmic behavior of the Robin function near the boundary provides both the transversality and the common energy range required to construct quasi-periodic invariant graphs. We show that every point sufficiently close to either boundary belongs to one of these graphs. Moreover, under a Diophantine condition on the frequency vector, the dynamics on each invariant graph can be straightened into a uniform horizontal drift. Consequently, after removing this drift, the corresponding point-vortex motion is quasi-periodic in time. Before stating this result, we recall a few standard facts concerning the distance map to the boundary and the behavior of the Robin function near the boundary. \\
For each boundary component $\Gamma_\theta$ of $\Omega_\theta$, parametrized by $\gamma_\theta$, there exists a tubular neighborhood of uniform width $\rho_{\rm tub}>0$ in which every point admits a unique representation
\[
z=\gamma_\theta(s)+r\,n_\theta(s),\qquad |r|<\rho_{\rm tub},
\]
where $n_\theta$ denotes the unit normal to $\Gamma_\theta$. The signed distance $r=r_\theta(z)$ has the regularity of the corresponding normal coordinates and, on the interior side of the channel, coincides with
\[
d_\theta(z):=\operatorname{dist}(z,\partial\Omega_\theta)=|r_\theta(z)|.
\]
Moreover,
\[
\nabla d_\theta(z)=n_{\theta,\rm in}(\pi_\theta(z)),
\]
where $\pi_\theta(z)$ denotes the unique nearest point on the boundary. Since the boundary components are graphs with uniformly bounded slopes, the vertical direction is uniformly transversal to the boundary. Hence, after possibly decreasing $\rho_{\rm tub}$, there exists $\kappa_0>0$, independent of $\theta\in\T^d$, such that
\begin{align}
\label{eq-distance-vertical-transversality}
\partial_y d_\theta(z)\leq-\kappa_0<0&
\quad\text{near the upper boundary},\\
\nonumber \partial_y d_\theta(z)\geq\kappa_0>0
\quad&\text{near the lower boundary}.
\end{align}
In particular, $d_\theta$ is strictly monotone along the vertical fibers in the corresponding tubular neighborhoods.
\\
We now recall the standard boundary asymptotics of the Robin function. For $(\theta,y)\in\mathcal D$, let
\[
d_\theta(0,y):=\operatorname{dist}\bigl((0,y),\partial\Omega_\theta\bigr).
\]
Then, uniformly with respect to $\theta\in\T^d$, as $(\theta,y)\in\mathcal D$ approaches the boundary, namely as
\[
d_\theta(0,y)\to0,
\]
we have
\begin{equation}
\label{eq-Robin-boundary-asymptotics}
\mathscr R(\theta,y)
=
\frac{1}{2\pi}
\log\bigl(2d_\theta(0,y)\bigr)
+
O\bigl(d_\theta(0,y)\bigr),
\end{equation}
and
\begin{equation}
\label{eq-Robin-gradient-boundary-asymptotics}
\partial_y\mathscr R(\theta,y)
=
\frac{1}{2\pi}
\frac{\partial_y d_\theta(0,y)}
{d_\theta(0,y)}
+
O(1).
\end{equation}
The remainder terms are uniform in $\theta$.
Combining \eqref{eq-distance-vertical-transversality} and
\eqref{eq-Robin-gradient-boundary-asymptotics}, we deduce that, for
$d_\theta(0,y)$ sufficiently small,
\begin{equation}
\label{eq-Robin-strong-transversality-boundary}
\left|
\partial_y\mathscr R(\theta,y)
\right|
\geq
\frac{c}{d_\theta(0,y)}
\end{equation}
for some $c>0$ independent of $\theta$. Thus the transversality condition required in Theorem~\ref{thm-exact-qp-orbits} becomes stronger as the boundary is approached.
We can now state the consequence for the point-vortex dynamics.
\begin{corollary}[Quasi-periodic motions near the boundary]
\label{cor-qp-motion-near-boundary}
Assume that the interfaces $f_\pm$ satisfy \eqref{separation1} and \eqref{reg-m} with $m\geq2$. Then there exists $\rho_\star>0$ such that every point
\(
z_0=(x_0,y_0)\in\Omega
\)
satisfying
\begin{equation}
\label{eq-point-close-boundary}
0<\operatorname{dist}(z_0,\partial\Omega)<\rho_\star
\end{equation}
belongs to a quasi-periodic invariant graph of the Robin function as stated in Theorem $\ref{thm-exact-qp-orbits}.$ Assume in addition that $\omega$ is Diophantine,
\begin{equation}
\label{eq-Diophantine-boundary-orbits}
|\omega\cdot\ell|
\geq
\frac{\gamma}{\langle\ell\rangle^\tau},
\qquad
\ell\in\Z^d\setminus\{0\},
\end{equation}
and $r,m$  satisfying
$$
r\geq1, m>r+\tau+d+2.
$$ Then  the point-vortex trajectory issued from $z_0$ can be written in the form
\begin{equation}
\label{eq-boundary-drift-qp-motion}
z(t)=c_{\star}t\,e_1+Z_{\star}(\nu t),
\qquad
\nu=c_{\star}\omega,
\end{equation}
where $c_\star,Z_{\star}$ are as in Theorem $\ref{thm-exact-qp-orbits-1}. $

\end{corollary}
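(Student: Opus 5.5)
The plan is to reduce the corollary to Theorems~\ref{thm-exact-qp-orbits} and~\ref{thm-exact-qp-orbits-1}. The only work is to construct, near each boundary component, admissible functions $a,b$ for which the transversality condition \eqref{eq-uniform-transversality-general} holds, the common energy range $\mathcal E^{\mathrm{glob}}_{a,b}$ is nonempty, and the prescribed energy $E_0=R_\Omega(z_0)$ belongs to that range. We treat the lower boundary; the upper boundary is symmetric. By translation covariance (Proposition~\ref{prop-exact-hull-Robin}) we may place $z_0$ on the fiber $\theta=x_0\omega$. The problem thus becomes: show that $(\theta_0,y_0):=(x_0\omega,y_0)$ lies in a variable strip $\mathcal J_{a,b}$ on which all hypotheses hold.

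\textbf{Step 1: choice of the strip and transversality.} We choose $\delta>0$ small and set $a(\theta)=f_-(\theta_-)+\delta_1$ and $b(\theta)=f_-(\theta_-)+\delta_2$ with $0<\delta_1<\delta_2\le\delta$. By \eqref{separation1} these satisfy \eqref{eq-admissible-variable-strip}. Taking $\delta\ll\rho_{\rm tub}$, every point $(0,y)$ with $y\in J_\theta$ lies in the lower tubular neighborhood and is far from the upper boundary. Its distance to $\partial\Omega_\theta$ is therefore comparable to $y-f_-(\theta_-)$: the slopes are uniformly bounded, so $d_\theta(0,y)\in[c_1(y-f_-),\,y-f_-]$. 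The bound \eqref{eq-Robin-strong-transversality-boundary} then gives $\partial_y\mathscr R\ge c/\delta_2>0$ uniformly in $\theta$. This supplies \eqref{eq-uniform-transversality-general} with $c_0=c/\delta_2$, and also a fixed sign, namely increasing in $y$.

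\textbf{Step 2: nonempty common energy range containing $E_0$.} This is the main obstacle. The function $y\mapsto\mathscr R(\theta,y)$ on $J_\theta$ has range $[\mathscr R(\theta,a(\theta)),\mathscr R(\theta,b(\theta))]$. By \eqref{eq-Robin-boundary-asymptotics} and the distance comparison, uniformly in $\theta$,
\[
\tfrac1{2\pi}\log(2c_1\delta_1)-C\delta_1\le\mathscr R(\theta,a(\theta))\le\tfrac1{2\pi}\log(2\delta_1)+C\delta_1,
\]
and similarly for $b$ with $\delta_2$. We choose $\delta_2=K\delta_1$ with $K$ large, specifically $\log K>\log(1/c_1)+1$, and $\delta_1$ small. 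Then
\[
\sup_\theta\mathscr R(\theta,a(\theta))<\inf_\theta\mathscr R(\theta,b(\theta)),
\]
so $\mathcal E^{\mathrm{glob}}_{a,b}$ contains a nondegenerate interval $I_{\delta_1}$, roughly $[\frac1{2\pi}\log(2\delta_1)+\epsilon,\ \frac1{2\pi}\log(2c_1K\delta_1)-\epsilon]$. As $\delta_1$ ranges over $(0,\delta/K]$, these intervals overlap in consecutive dyadic scales and cover a full half-line $(-\infty,E_\delta)$. We then set $\rho_\star$ so small that $\operatorname{dist}(z_0,\partial\Omega)<\rho_\star$ forces $E_0<E_\delta$; this is possible because \eqref{eq-Robin-boundary-asymptotics} gives $E_0\approx\frac1{2\pi}\log(2\operatorname{dist})$. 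We pick $\delta_1$ with $E_0\in I_{\delta_1}$. We also need $y_0\in J_{\theta_0}$. Since $\mathscr R(\theta_0,\cdot)$ is strictly increasing on the whole boundary layer $\{0<y-f_-<\delta\}$ by Step 1, $y_0$ is the unique solution of $\mathscr R(\theta_0,y)=E_0$ in that layer. It therefore coincides with the point $\mathscr Y(\theta_0)\in J_{\theta_0}$ produced by Theorem~\ref{thm-exact-qp-orbits}. The regularity $a,b\in C^{m-1}$ is inherited from $f_-\in C^{m,\alpha}$.

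\textbf{Step 3: conclusion.} Theorem~\ref{thm-exact-qp-orbits} applied with $E=E_0$ produces the invariant graph $\Gamma_{E_0}$. We have $\mathscr Y(x_0\omega)=y_0$, and the graph is located through $z_0$ in the original channel since $\theta=x\omega$ with $\Omega_0=\Omega$. This proves the first claim. Under the Diophantine hypothesis and $m>r+\tau+d+2$, Theorem~\ref{thm-exact-qp-orbits-1} applies verbatim to the trajectory issued from $z_0\in\Gamma_{E_0}$ and yields \eqref{eq-boundary-drift-qp-motion}. The upper boundary is handled identically, with the sign of $\partial_y\mathscr R$ reversed and the roles of $a$ and $b$ interchanged.
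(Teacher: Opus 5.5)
Your argument is correct. Its skeleton matches the paper: reduce to Theorems~\ref{thm-exact-qp-orbits} and~\ref{thm-exact-qp-orbits-1} by building barriers $a,b$ for which $E_0=\mathscr R(x_0\omega,y_0)$ is admissible. The difference is in how the barriers are chosen.

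The paper argues near the upper boundary with barriers adapted to the given point. It sets $\rho_0=\operatorname{dist}(z_0,\partial\Omega)$ and defines $a_{\rho_0}(\theta),b_{\rho_0}(\theta)$ as the points on the vertical fiber where $d_\theta(0,\cdot)$ equals $2\rho_0$ and $\rho_0/2$. Since \eqref{eq-Robin-boundary-asymptotics} is expressed in Euclidean distance, the barrier energies are $\frac1{2\pi}\log(4\rho_0)+O(\rho_0)$ and $\frac1{2\pi}\log\rho_0+O(\rho_0)$. These are independent of $\theta$ up to $O(\rho_0)$. Hence $E_0=\frac1{2\pi}\log(2\rho_0)+O(\rho_0)$ lies strictly between them with margin $\frac{\log 2}{2\pi}$. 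Also $y_0\in\bigl(a_{\rho_0}(\theta_0),b_{\rho_0}(\theta_0)\bigr)$ follows at once from $d_{\theta_0}(0,y_0)=\rho_0$. The price is that these distance level sets must be $C^{m-1}$ in $\theta$, which the paper asserts without proof.

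Your vertical offsets $f_-+\delta_1$ and $f_-+K\delta_1$ make the regularity of $a,b$ trivial. However, vertical and Euclidean distance differ by a slope-dependent factor, so your barrier energies oscillate in $\theta$ by an $O(1)$ amount. You pay for this with three extra ingredients:
\begin{itemize}
\item a large ratio $K$, to force $\sup_\theta\mathscr R(\theta,a)<\inf_\theta\mathscr R(\theta,b)$;
\item a sliding-scale argument in $\delta_1$, to capture $E_0$;
\item a separate monotonicity argument on the whole boundary layer, to identify $y_0$ with $\mathscr Y(\theta_0)$.
\end{itemize}
For the last step, state explicitly that $y_0$ lies in the layer $\{0<y-f_-<\delta\}$. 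This follows from $y_0-f_-(x_0\omega_-)\le\operatorname{dist}(z_0,\partial\Omega)/c_1$ once you impose $\rho_\star\le c_1\delta$.
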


\begin{proof}
We give the proof near the upper boundary, the argument near the lower
boundary being identical up to the orientation of the vertical
coordinate.
Let
\[
\rho_0
:=
\operatorname{dist}(z_0,\partial\Omega).
\]
We assume that $\rho_0>0$ is sufficiently small and that $z_0$ belongs
to the tubular neighborhood of the upper boundary. By the hull
identity in Proposition \ref{prop-exact-hull-Robin},
\[
R_\Omega(x,y)
=
\mathscr R(x\omega,y),
\]
and therefore, 
we have
\begin{equation}
\label{eq-E0-near-boundary}
E_0
=
\mathscr R(\theta_0,y_0)\quad\hbox{with}\quad \theta_0:=x_0\omega.
\end{equation}
Since
\[
d_{\theta_0}(0,y_0)=\rho_0,
\]
the boundary expansion
\eqref{eq-Robin-boundary-asymptotics} gives
\begin{equation}
\label{eq-E0-boundary-expansion}
E_0
=
\frac{1}{2\pi}\log(2\rho_0)
+
O(\rho_0).
\end{equation}
We now construct two phase-dependent barriers following the upper
boundary. For every $\theta\in\T^d$, let $a_{\rho_0}(\theta)$ and
$b_{\rho_0}(\theta)$ be uniquely determined, in the upper tubular
neighborhood, by
\begin{equation}
\label{eq-boundary-barriers-distance}
d_\theta
\bigl(
0,a_{\rho_0}(\theta)
\bigr)
=
2\rho_0,
\qquad
d_\theta
\bigl(
0,b_{\rho_0}(\theta)
\bigr)
=
\frac{\rho_0}{2}.
\end{equation}
For $\rho_0$ sufficiently small these functions are well defined and
belong to $C^{m-1}(\T^d)$. Since we are considering the upper
boundary, the point at distance $2\rho_0$ lies below the point at
distance $\rho_0/2$, and hence
\[
a_{\rho_0}(\theta)
<
b_{\rho_0}(\theta).
\]
We introduce the variable strip
\[
J_\theta^{\rho_0}
=
\left[
a_{\rho_0}(\theta),
b_{\rho_0}(\theta)
\right].
\]
Applying \eqref{eq-Robin-boundary-asymptotics} at the two endpoints
gives, uniformly in $\theta$,
\begin{align*}
\mathscr R
\bigl(
\theta,a_{\rho_0}(\theta)
\bigr)
&=
\frac{1}{2\pi}
\log(4\rho_0)
+
O(\rho_0),
\\
\mathscr R
\bigl(
\theta,b_{\rho_0}(\theta)
\bigr)
&=
\frac{1}{2\pi}
\log(\rho_0)
+
O(\rho_0).
\end{align*}
On the other hand,
\eqref{eq-E0-boundary-expansion} yields
\begin{align*}
\mathscr R
\bigl(
\theta,a_{\rho_0}(\theta)
\bigr)
-
E_0
&=
\frac{\log2}{2\pi}
+
O(\rho_0),
\\
E_0
-
\mathscr R
\bigl(
\theta,b_{\rho_0}(\theta)
\bigr)
&=
\frac{\log2}{2\pi}
+
O(\rho_0).
\end{align*}
Hence, after decreasing $\rho_\star$ if necessary,
\begin{equation}
\label{eq-E0-strict-common-range}
\mathscr R
\bigl(
\theta,b_{\rho_0}(\theta)
\bigr)
<
E_0
<
\mathscr R
\bigl(
\theta,a_{\rho_0}(\theta)
\bigr),
\qquad
\theta\in\T^d.
\end{equation}
In particular,
\[
E_0
\in
\bigcap_{\theta\in\T^d}
\mathscr R
\bigl(
\theta,
(a_{\rho_0}(\theta),b_{\rho_0}(\theta))
\bigr).
\]
Thus the common-energy condition of
Theorem~\ref{thm-exact-qp-orbits} is satisfied.
\\
It remains to verify the transversality assumption. For
\(
y\in J_\theta^{\rho_0},
\)
we have from the monotonicity of $y\mapsto d_\theta(0,y) $
\[
\frac{\rho_0}{2}
\leq
d_\theta(0,y)
\leq
2\rho_0.
\]
By \eqref{eq-Robin-gradient-boundary-asymptotics} and
\eqref{eq-distance-vertical-transversality}, for $\rho_0$ sufficiently
small we obtain
\[
\left|
\partial_y\mathscr R(\theta,y)
\right|
\geq
\frac{c}{\rho_0},
\qquad
\theta\in\T^d,
\quad
y\in J_\theta^{\rho_0}.
\]
Therefore the uniform transversality assumption
\eqref{eq-uniform-transversality-general} is also satisfied.
\\
All the hypotheses of Theorem~\ref{thm-exact-qp-orbits} are now
fulfilled. Hence there exists a unique
\[
\mathscr Y_{E_0}\in C^{m-1}(\T^d)
\]
such that
\[
a_{\rho_0}(\theta)
\leq
\mathscr Y_{E_0}(\theta)
\leq
b_{\rho_0}(\theta)
\]
and
\[
\mathscr R
\bigl(
\theta,\mathscr Y_{E_0}(\theta)
\bigr)
=
E_0.
\]
The corresponding physical graph
\[
Y_{E_0}(x)
=
\mathscr Y_{E_0}(x\omega)
\]
is therefore quasi-periodic and invariant under the point-vortex
flow.
Finally, since
\[
\mathscr R(\theta_0,y_0)
=
E_0
\]
and
\[
d_{\theta_0}(0,y_0)=\rho_0
\in
\left(
\frac{\rho_0}{2},2\rho_0
\right),
\]
we have
\[
a_{\rho_0}(\theta_0)
<
y_0
<
b_{\rho_0}(\theta_0).
\]
The uniqueness part of Theorem~\ref{thm-exact-qp-orbits} therefore
implies
\[
y_0
=
\mathscr Y_{E_0}(\theta_0).
\]
Since $\theta_0=x_0\omega$, it follows that
\[
y_0
=
Y_{E_0}(x_0),
\]
and hence
\[
z_0\in\Gamma_{E_0}.
\]
This completes the proof of the first point. The second part follows directly from Theorem~\ref{thm-exact-qp-orbits-1}, since all its assumptions are satisfied by the invariant graph constructed above.
\end{proof}

\begin{remark}
\label{rem-boundary-qp-foliation}
Corollary~\ref{cor-qp-motion-near-boundary} shows more than the
existence of isolated quasi-periodic trajectories near the boundary.
A sufficiently thin neighborhood of either boundary is foliated by
quasi-periodic level graphs of the Robin function. Indeed,
\eqref{eq-Robin-gradient-boundary-asymptotics} implies that the Robin
function is strictly monotone along each vertical fiber in this
region. Thus every point sufficiently close to the boundary belongs
to exactly one of these invariant graphs.
\end{remark}

\section{Small-amplitude perturbations of the flat strip}\label{sec-4}

We now specialize the general theory developed above to small
quasi-periodic perturbations of the flat strip. This setting provides a
natural model in which the conclusions of
Theorems~\ref{thm-exact-qp-orbits} and
\ref{thm-exact-qp-orbits-1} can be made more explicit and their
geometric content can be examined in detail.

In the unperturbed flat strip, the Robin function depends only on the
transverse variable, and its regular level sets form a simple family of
horizontal vortex trajectories. Our purpose is to understand how this
family is deformed when the two interfaces are subjected to small
quasi-periodic perturbations. More precisely, we show that the regular
level sets of the flat Robin function persist, for sufficiently small
deformations, as quasi-periodic invariant graphs of the perturbed
vortex dynamics.
We restrict our analysis to levels lying away from the centerline
$y=h/2$, where the flat Robin function is nondegenerate in the
transverse direction and the transversality condition required by the
general theory persists under sufficiently small perturbations. This
allows us to obtain a family of quasi-periodic invariant trajectories
continuing the horizontal trajectories of the flat geometry. Finally,
using the Hadamard expansion of the Robin function, we determine the
leading-order deformation of these invariant graphs and thereby make
explicit the first-order effect of the boundary oscillations on the
vortex trajectories.
\\
Here, we consider a small quasi-periodic perturbation of the flat
channel of the form
\begin{equation}
\Omega_\eps
=
\left\{
(x,y)\in\R^2:
\eps g_-(\omega_-x)<y<
h+\eps g_+(\omega_+x)
\right\},
\qquad
|\eps|\ll1,
\label{eq-small-qp-domain}
\end{equation}
where
\[
g_\pm:\T^{d_\pm}\to\R
\]
are smooth periodic functions, $h>0$, and
\[
\omega:=(\omega_-,\omega_+)
\in
\R^{d_-}\times\R^{d_+}
=
\R^d.
\]
The corresponding hull family is
\begin{equation}
\label{eq-small-qp-hull-domain}
\Omega_{\eps,\theta}
=
\left\{
(x,y)\in\R^2:
\eps g_-(\omega_-x+\theta_-)<y<
h+\eps g_+(\omega_+x+\theta_+)
\right\},
\end{equation}
and its natural Robin-hull domain is
\begin{equation}
\label{eq-small-qp-Robin-domain}
\mathcal D_\eps
:=
\left\{
(\theta,y)\in\T^d\times\R:
\eps g_-(\theta_-)<y<
h+\eps g_+(\theta_+)
\right\}.
\end{equation}
According to Proposition \ref{prop-exact-hull-Robin}, the hull Robin function denoted here by $\mathscr R_\eps$ satisfies 
\[
\mathscr R_\eps(\theta,y)
=
R_{\Omega_{\eps,\theta}}(0,y),
\qquad
\forall\,(\theta,y)\in\mathcal D_\eps.
\]
\subsection{The flat-strip Robin function}

In the absence of perturbations in \eqref{eq-small-qp-domain}, the
domain reduces to the flat strip
\[
\Omega_0=\R\times(0,h).
\]
The associated Dirichlet Green function is explicitly given by
\begin{equation}
\label{eq-flat-green}
G_0(z,\zeta)
=
\frac{1}{4\pi}
\log\left(
\frac{
\cosh\left(\frac{\pi(x-\xi)}{h}\right)
-
\cos\left(\frac{\pi(y+\eta)}{h}\right)
}{
\cosh\left(\frac{\pi(x-\xi)}{h}\right)
-
\cos\left(\frac{\pi(y-\eta)}{h}\right)
}
\right),
\end{equation}
where $z=(x,y)$ and $\zeta=(\xi,\eta)$ belong to $\Omega_0$.
Therefore, by a straightforward computation, the Robin function
depends only on the transverse variable $y$ and takes the form
\begin{equation}
R_0(y)
=
\frac1{2\pi}
\log
\left(
\frac{2h}{\pi}
\sin\frac{\pi y}{h}
\right).
\label{eq-flat-robin}
\end{equation}
Thus
\begin{equation}
R_0'(y)
=
\frac1{2h}
\cot\frac{\pi y}{h},
\qquad
R_0''(y)
=
-\frac{\pi}{2h^2}
\csc^2\frac{\pi y}{h}.
\label{eq-flat-robin-derivatives}
\end{equation}
Hence $y=h/2$ is the unique critical horizontal level. The unperturbed
vortex dynamics is
\begin{equation}
\label{eq-flat-vortex}
\dot x
=
\frac{\Gamma}{4h}
\cot\frac{\pi y}{h},
\qquad
\dot y=0.
\end{equation}
Thus every horizontal line $y=y_\star\neq h/2$ is an invariant orbit
on which the vortex moves with constant nonzero horizontal velocity.
The direction of propagation changes across the centerline $y=h/2$,
which is the unique critical level and consists entirely of stationary
vortex configurations. Hence the flat strip possesses a one-parameter
family of translating invariant orbits, and the purpose of the
perturbative analysis below is to understand how these orbits deform
under small quasi-periodic perturbations of the channel boundaries.
Another point to mention is that the geometry is invariant under horizontal
translations and, consequently, the associated hull family of domains
is independent of the phase variable $\theta$. The Robin hull function
does not depend on $\theta$ either. More precisely,
\[
\mathcal D_0
=
\T^d\times(0,h)
\]
and
\[
\mathscr R_0(\theta,y)
=
R_0(y),
\qquad
(\theta,y)\in\mathcal D_0.
\]
Thus the level sets of the Robin hull are simply the horizontal graphs
\[
y=y_\star,
\]
and the phase dependence appears only after the quasi-periodic
perturbation of the interfaces is introduced.

\subsection{Robin function expansion via the Hadamard variational formula}
\label{subsec-Hadamard-Robin}
We now derive the first-order variation of the Robin function under a small quasi-periodic deformation of the flat strip. The natural tool for this purpose is the classical Hadamard variational formula, which describes the shape derivative of the Dirichlet Green function in terms of the normal displacement of the boundary and the normal derivatives of the unperturbed Green function. In particular, the first-order effect of a boundary deformation is encoded entirely by a boundary integral. We refer, for instance, to \cite{Hadamard,HenrotPierre,Kozono,Peetre, SokolowskiZolesio, SuzukiTsuchiya2016} for the general theory of shape derivatives and Hadamard variational formulas.
We intend to apply the general Hadamard variational formula to the
quasi-periodic perturbation of the flat strip
\[
\Omega_0=\mathbb R\times(0,h).
\]
The boundary of the perturbed domain $\Omega_\eps$ defined in
\eqref{eq-small-qp-domain} consists of the two interfaces
\[
y=\eps g_-(\omega_-x),
\qquad
y=h+\eps g_+(\omega_+x),
\]
where
\[
g_\pm:\T^{d_\pm}\to\R
\]
are smooth periodic functions. Let $R_\eps$ denote the Robin function
of $\Omega_\eps$. The purpose of this subsection is to obtain the
first-order expansion of $R_\eps$ with a remainder of order
$O(\eps^2)$, locally uniformly in the flat strip, together with the
corresponding expansion of the Robin hull on its natural domain. Our result reads as follows.

\begin{lemma}
\label{lem-robin-expansion-perturbed}
Assume that
\[
g_\pm\in C^{m,\alpha}(\T^{d_\pm}),
\qquad
\alpha\in(0,1),
\qquad
m\ge2.
\]
Let
\(
K\subset(0,h)
\)
be compact. Then there exists $\eps_0>0$ such that, for every
$|\eps|\leq\eps_0$,
\begin{equation}
\label{eq-Robin-hull-final-expansion}
\mathscr R_\eps(\theta,y)
=
R_0(y)
+
\eps\mathcal R_1(\theta,y)
+
\eps^2\mathcal R_{2,\eps}(\theta,y),
\end{equation}
for $(\theta,y)\in\T^d\times K$, where
\begin{equation}
\label{eq-R1-hull-final}
\mathcal R_1(\theta,y)
=
\int_{\R}
g_+(\theta_++s\omega_+)
P_+^2(s,y)\,ds
-
\int_{\R}
g_-(\theta_-+s\omega_-)
P_-^2(s,y)\,ds,
\end{equation}
and $\mathcal R_{2,\eps}:\T^d\times(0,h)\to\R$ satisfies
\begin{equation*}
\label{eq-R2-uniform-Cm1}
\sup_{|\eps|\leq\eps_0}
\|
\mathcal R_{2,\eps}
\|_{C^{m-1}(\T^d\times K)}
\leq C_K.
\end{equation*}
Here
\begin{equation*}
\label{eq-Ppm-flat}
P_{\pm}(s,y)
=
\frac{1}{2h}
\frac{
\sin(\pi y/h)
}{
\cosh(\pi s/h)\pm\cos(\pi y/h)
}.
\end{equation*}
In addition,
\begin{equation*}
\label{eq-Robin-physical-expansion}
R_\eps(x,y)
=
R_0(y)
+
\eps R_1(x,y)
+
\eps^2R_{2,\eps}(x,y),
\end{equation*}
where
\begin{equation*}
\label{eq-R1-explicit}
R_1(x,y)
=
\int_{\R}
g_+\bigl((s+x)\omega_+\bigr)
P_+^2(s,y)\,ds
-
\int_{\R}
g_-\bigl((s+x)\omega_-\bigr)
P_-^2(s,y)\,ds,
\end{equation*}
and
\[
R_{2,\eps}(x,y)
=
\mathcal R_{2,\eps}(x\omega,y).
\]
\end{lemma}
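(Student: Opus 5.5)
The plan is to work on the hull and deduce the physical expansion from Proposition~\ref{prop-exact-hull-Robin}: once \eqref{eq-Robin-hull-final-expansion} holds on $\T^d\times K$, the identity $R_\eps(x,y)=\mathscr R_\eps(x\omega,y)$ gives $R_1(x,y)=\mathcal R_1(x\omega,y)$ and $R_{2,\eps}=\mathcal R_{2,\eps}(x\omega,\cdot)$. For fixed $\theta$, I will use the flattening of Proposition~\ref{prop-regularity-Robin-hull}, now with the $\eps$-dependent boundary
$f_-=\eps g_-$, $f_+=h+\eps g_+$. This produces a coefficient matrix $A_{\eps,\theta}$ on the fixed strip $S$. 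The key observation is that $A_{\eps,\theta}$, the boundary data $g_{\theta,y}$ and the evaluation point $\sigma(\theta,y)$ all depend polynomially (in fact rationally, with denominators bounded away from zero) on $\eps$. Each $\eps$-derivative costs at most one derivative of $g_\pm$. Hence the map $(\eps,\theta,y)\mapsto\widetilde H$ is $C^{m-1}$ jointly in all parameters, by the same differentiated-equation and Schauder argument as in \eqref{eq-differentiated-H}, with $\eps$ simply added to the parameter vector $p$.

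Taylor's formula with integral remainder in $\eps$ then gives
\[
\mathcal R_{2,\eps}=\int_0^1(1-t)\,\partial_\eps^2\mathscr R_{t\eps}\,dt,
\]
so the uniform $C^{m-1}(\T^d\times K)$ bound follows from uniform parameter estimates on the compact set $[-\eps_0,\eps_0]\times\T^d\times K$. Here $\eps_0$ is chosen so that $K\subset(\eps_0\|g_-\|_\infty,\,h-\eps_0\|g_+\|_\infty)$, which keeps $\T^d\times K$ inside $\mathcal D_\eps$ with uniform interior distance. One caveat: counting derivatives, $\partial_\eps^2$ combined with $m-1$ derivatives in $(\theta,y)$ may demand slightly more than $C^{m,\alpha}$. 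If so, I would exploit the smallness structure: either differentiate only in $\theta$ and $y$ after $\eps$ is integrated, or note that $\partial_\eps A$ involves $g_\pm'$ linearly so that the loss does not compound. I expect this derivative bookkeeping to be the main technical obstacle.

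It remains to identify $\mathcal R_1=\partial_\eps\mathscr R_\eps|_{\eps=0}$. Rather than differentiating the flattened problem, I will invoke the Hadamard formula for the Robin function,
\[
\delta R(\zeta)=-\int_{\partial\Omega_0}\bigl(\partial_nG_0(z,\zeta)\bigr)^2\,\delta n(z)\,ds,
\]
where $\delta n$ is the outward normal displacement. On the upper wall this is $\delta n=g_+$; on the lower wall it is $\delta n=-g_-$. The sign convention is fixed so that outward displacement increases $R$, which agrees with $R_0'>0$ near $y=0$ and the lower wall moving inward when $g_->0$. The consistency of the two approaches is justified because the first variation of the flattened problem coincides with the shape derivative, and the latter is classical for smooth perturbations of this kind; it could also be checked directly on the flat strip.

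The remaining step is the computation of the Poisson kernels $-\partial_nG_0((s,0),(0,y))$ and $-\partial_nG_0((s,h),(0,y))$ from \eqref{eq-flat-green}. Differentiating $\log(\cosh(\pi s/h)-\cos(\pi(y\pm\eta)/h))$ in $\eta$ at the walls yields $P_\mp(s,y)$: the $\cos(\pi y/h)$ sign is $-$ at $\eta=0$ and $+$ at $\eta=h$, since $\cos(\pi(h-y)/h)=-\cos(\pi y/h)$. Substituting $x\mapsto x\omega+\theta$ via translation along the boundary gives \eqref{eq-R1-hull-final}. Finally, convergence of the integrals and their smoothness in $\theta$ follow from the exponential decay of $P_\pm^2$ in $s$, uniformly for $y\in K$.
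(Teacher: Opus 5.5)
Your proposal follows the paper's proof essentially step by step: Hadamard's formula gives the first variation, the flattening/Schauder argument of Proposition~\ref{prop-regularity-Robin-hull} with $\eps$ adjoined to the parameters gives the uniform bound on $\partial_\eps^2\mathscr R_\eps$ in $C^{m-1}(\T^d\times K)$, Taylor's formula with integral remainder follows, and the hull identity yields the physical expansion. Your second resolution of the derivative count is the right one, since $\eps$ enters $A_{\eps,\theta}$, the boundary data and $\sigma$ only through $\eps g_\pm$ and $\eps\nabla g_\pm$, so $\eps$-derivatives cost nothing beyond the single derivative already lost in $D\Phi_{\eps,\theta}$. One slip to fix: with $\delta n$ the outward displacement, the displayed formula $\delta R=-\int(\partial_nG_0)^2\,\delta n\,ds$ would produce $-\mathcal R_1$; the sign must be $+$, which is consistent with the monotonicity you invoke (outward displacement increases $R$), and then $\delta n=g_+$ on the upper wall and $\delta n=-g_-$ on the lower wall give exactly \eqref{eq-R1-hull-final}.
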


\begin{proof}
We first work at the level of the Robin hull. For
\(
\theta=(\theta_-,\theta_+)\in\T^d,
\)
let $\Omega_{\eps,\theta}$ denote the corresponding phase-shifted
channel,
\[
\Omega_{\eps,\theta}
=
\left\{
(x,y)\in\R^2:
\eps g_-(\theta_-+\omega_-x)
<
y
<
h+\eps g_+(\theta_++\omega_+x)
\right\},
\]
and recall from Proposition \ref{prop-exact-hull-Robin}  that
\[
\mathscr R_\eps(\theta,y)
=
R_{\Omega_{\eps,\theta}}(0,y).
\]
At $\eps=0$, the domain is the flat strip
independently of $\theta$, and therefore
\[
\mathscr R_0(\theta,y)=R_0(y).
\]
We begin by computing the first variation. For fixed
$\theta\in\T^d$, let $G_{\eps,\theta}$ denote the Dirichlet Green
function of $\Omega_{\eps,\theta}$. To simplify the notation in this
part of the proof, we write $G_\eps=G_{\eps,\theta}$.
By the classical Hadamard variational formula for the Dirichlet Green
function, see for instance \cite{Hadamard,SuzukiTsuchiya2016}, we have, for
$z,\zeta\in\Omega_0$,
\begin{equation}
\label{eq-hadamard-green-proof}
\left.
\frac{d}{d\eps}G_\eps(z,\zeta)
\right|_{\eps=0}
=
\int_{\partial\Omega_0}
\rho(\xi)
\partial_{n_\xi}G_0(z,\xi)
\partial_{n_\xi}G_0(\zeta,\xi)
\,d\sigma(\xi),
\end{equation}
where $\rho$ denotes the signed normal velocity of the boundary
deformation, and $
\partial_{n_\xi}$
is the normal derivative, with respect to the boundary variable $\xi$.
\\
The boundary of the flat strip has the two components
\[
\Gamma_-=\R\times\{0\},
\qquad
\Gamma_+=\R\times\{h\},
\]
with outward unit normals
\[
n_-=(0,-1),
\qquad
n_+=(0,1).
\]
For the family $\Omega_{\eps,\theta}$, the corresponding signed normal
velocities at $\eps=0$ are
\[
\rho_-(s)
=
-g_-(\theta_-+s\omega_-),
\qquad
\rho_+(s)
=
g_+(\theta_++s\omega_+).
\]
Consequently, \eqref{eq-hadamard-green-proof} becomes
\begin{align}
\left.
\frac{d}{d\eps}G_\eps(z,\zeta)
\right|_{\eps=0}
&=
\int_{\R}
g_+(\theta_++s\omega_+)
\partial_{n_+}G_0(z,(s,h))
\partial_{n_+}G_0(\zeta,(s,h))
\,ds
\nonumber\\
&\quad
-
\int_{\R}
g_-(\theta_-+s\omega_-)
\partial_{n_-}G_0(z,(s,0))
\partial_{n_-}G_0(\zeta,(s,0))
\,ds.
\label{eq-hadamard-two-boundaries-proof}
\end{align}
We now take
\[
z=\zeta=(0,y),
\qquad y\in K.
\]
Since the logarithmic singular part of the Green function is universal
and does not depend on the domain deformation, differentiating the
regular part and then restricting to the diagonal gives the
corresponding variation of the Robin function. For the flat strip,
the normal derivatives of $G_0$ along the two boundary components are
the Poisson kernels, namely
\[
\partial_{n_+}G_0((0,y),(s,h))
=
P_+(s,y),
\qquad
\partial_{n_-}G_0((0,y),(s,0))
=
P_-(s,y).
\]
 Hence
\begin{align}
\left.
\partial_\eps
\mathscr R_\eps(\theta,y)
\right|_{\eps=0}
&=
\int_{\R}
g_+(\theta_++s\omega_+)
P_+^2(s,y)\,ds
-
\int_{\R}
g_-(\theta_-+s\omega_-)
P_-^2(s,y)\,ds,
\label{eq-first-variation-robin-hull-proof}
\end{align}
which gives the formula of $\mathcal R_1$ stated  in
\eqref{eq-R1-hull-final}.
Since $K\Subset(0,h)$, the kernels
\[
P_{\pm}(s,y)
=
\frac{1}{2h}
\frac{\sin(\pi y/h)}
{\cosh(\pi s/h)\pm\cos(\pi y/h)}
\]
and all the $y$-derivatives required below decay exponentially as
$|s|\to\infty$, uniformly for $y\in K$. Differentiation with respect
to $\theta_\pm$ acts only on $g_\pm$. Thus the assumption
\[
g_\pm\in C^{m,\alpha}(\T^{d_\pm})
\]
allows differentiation under the integral sign and gives the required
regularity of $\mathcal R_1$ on $\T^d\times K$.
\\
It remains to control the second-order remainder. We briefly recall
the standard fixed-domain argument. Choose a family of
diffeomorphisms, as in the proof of Proposition \ref{prop-regularity-Robin-hull},
\[
\Phi_{\eps,\theta}:\Omega_0\longrightarrow\Omega_{\eps,\theta}
\]
which maps the two horizontal components of $\partial\Omega_0$ onto
the corresponding perturbed interfaces and depends smoothly on
$(\eps,\theta)$. For $|\eps|$ sufficiently small,
$\Phi_{\eps,\theta}$ and its inverse are uniformly controlled.
Pulling the Dirichlet problem back by $\Phi_{\eps,\theta}$ transforms
the Laplacian on $\Omega_{\eps,\theta}$ into a uniformly elliptic
divergence-form operator on the fixed strip,
\[
L_{\eps,\theta}
=
\nabla\cdot
\bigl(
A_{\eps,\theta}\nabla
\bigr),
\]
whose coefficients depend smoothly on $\eps$. Since
$g_\pm\in C^{m,\alpha}$, the coefficients and the derivatives in
$\eps$ needed below are uniformly controlled with the corresponding
$C^{m-1,\alpha}$ regularity in the phase and spatial variables.
\\
Differentiating the pulled-back elliptic problem twice with respect to
$\eps$ gives uniformly elliptic equations for the first and second
$\eps$-derivatives of the pulled-back Green function. Standard
Schauder estimates on compact subsets, together with the subtraction
of the universal logarithmic singularity before restriction to the
diagonal, yield
\begin{equation}
\label{eq-second-epsilon-Robin-bound}
\sup_{|\eps|\leq\eps_0}
\left\|
\partial_\eps^2\mathscr R_\eps
\right\|_{C^{m-1}(\T^d\times K)}
\leq C_K.
\end{equation}
We do not reproduce the standard elliptic shape-differentiability
argument in further detail. It is quite similar to the proof of Proposition \ref{prop-regularity-Robin-hull}.
Taylor's formula with integral remainder now gives
\begin{align*}
\mathscr R_\eps(\theta,y)
&=
\mathscr R_0(\theta,y)
+
\eps
\left.
\partial_\eps\mathscr R_\eps(\theta,y)
\right|_{\eps=0}
\\
&\quad
+
\eps^2
\int_0^1
(1-t)
\partial_\eps^2
\mathscr R_{t\eps}(\theta,y)
\,dt.
\end{align*}
Defining
\[
\mathcal R_{2,\eps}(\theta,y)
:=
\int_0^1
(1-t)
\partial_\eps^2
\mathscr R_{t\eps}(\theta,y)
\,dt,
\]
we obtain
\[
\mathscr R_\eps(\theta,y)
=
R_0(y)
+
\eps\mathcal R_1(\theta,y)
+
\eps^2\mathcal R_{2,\eps}(\theta,y),
\]
and \eqref{eq-second-epsilon-Robin-bound} implies
\[
\sup_{|\eps|\leq\eps_0}
\|
\mathcal R_{2,\eps}
\|_{C^{m-1}(\T^d\times K)}
\leq C_K.
\]
Finally, the exact hull identity  established in Proposition \ref{prop-exact-hull-Robin} gives
\[
R_\eps(x,y)
=
\mathscr R_\eps(x\omega,y),
\qquad
x\omega=(x\omega_-,x\omega_+).
\]
It follows that
\[
R_\eps(x,y)
=
R_0(y)
+
\eps R_1(x,y)
+
\eps^2R_{2,\eps}(x,y),
\]
where
\[
R_{2,\eps}(x,y)
=
\mathcal R_{2,\eps}(x\omega,y)
\]
and
\begin{align*}
R_1(x,y)
&=
\mathcal R_1(x\omega,y)
\\
&=
\int_{\R}
g_+\bigl((s+x)\omega_+\bigr)
P_+^2(s,y)\,ds
-
\int_{\R}
g_-\bigl((s+x)\omega_-\bigr)
P_-^2(s,y)\,ds.
\end{align*}
Thus $R_{2,\eps}$ is quasi-periodic in $x$. This concludes the proof.
\end{proof}

\subsection{First-order expansion of the quasi-periodic graphs}

We now aim at a more quantitative description of the trajectories
constructed in Theorems~\ref{thm-exact-qp-orbits} and
\ref{thm-exact-qp-orbits-1} in the small-amplitude regime
$|\eps|\ll1$. In the flat channel, the invariant orbits are the
horizontal lines $y=y_\star$, and the natural question is to determine
how these orbits deform under a quasi-periodic perturbation of the
interfaces. Our goal is therefore to expand the corresponding invariant
graph around $y_\star$ and identify explicitly its first-order
quasi-periodic correction. Combining the expansion of the Robin hull
with the level-set equation will provide this correction in terms of
the boundary profiles $g_\pm$ and the Poisson kernels of the flat
strip.

\begin{proposition}
\label{cor-small-qp-exact-orbits}
Let
\[
J=[a,b]\subset(0,h),\,\hbox{with}\,\,
\tfrac h2\notin J.
\]
Then there exists $\varepsilon_J>0$ such that, for every
$|\varepsilon|\le\varepsilon_J$, we have 
\[
\T^d\times J\subset\mathcal D_\varepsilon
\quad\hbox{and}\quad
\inf_{(\theta,y)\in\T^d\times J}
\left|
\partial_y\mathscr R_\varepsilon(\theta,y)
\right|
>0.
\]
Define
\begin{equation}
\label{eq-common-energy-eps-J}
\mathcal E_{\varepsilon,J}^{\mathrm{glob}}
:=
\bigcap_{\theta\in\T^d}
\mathscr R_\varepsilon(\theta,J).
\end{equation}
Then, for $|\varepsilon|\le\varepsilon_J,$
\[
\operatorname{int}
\mathcal E_{\varepsilon,J}^{\mathrm{glob}}
\neq\varnothing,
\]
and  for every
\[
E\in
\operatorname{int}
\mathcal E_{\varepsilon,J}^{\mathrm{glob}},
\]
there exists a unique quasi-periodic invariant graph
\[
\Gamma_E
=
\Big\{
(x,Y_{\varepsilon,E}(x)):x\in\R
\Big\},
\]
with
\[
Y_{\varepsilon,E}(x)
=
\mathscr Y_{\varepsilon,E}(x\omega),
\qquad
\mathscr Y_{\varepsilon,E}
\in
C^1(\T^d;(a,b)),
\]
satisfying
\[
R_{\Omega_\varepsilon}
\bigl(
x,Y_{\varepsilon,E}(x)
\bigr)
=
E.
\]
Furthermore, for every $y_\star\in J$, the invariant graph associated
with the energy
\[
E=R_0(y_\star)
\]
is well defined and admits the expansion
\begin{align}
\mathscr Y_{\varepsilon,E}(\theta)
&=
y_\star
-
2h\varepsilon
\tan\left(\tfrac{\pi y_\star}{h}\right)
\mathcal R_1(\theta,y_\star)
+
O_J(\varepsilon^2),
\label{eq-Y-leading-general}
\end{align}
uniformly for
\[
(\theta,y_\star)\in\T^d\times J,
\]
where $\mathcal R_1$ is defined by
\eqref{eq-R1-hull-final}.
\end{proposition}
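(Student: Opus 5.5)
The plan is to combine the Hadamard expansion of Lemma~\ref{lem-robin-expansion-perturbed} with the abstract construction of Theorem~\ref{thm-exact-qp-orbits}. The main novelty is a quantitative implicit-function argument for the expansion \eqref{eq-Y-leading-general}.

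\textbf{Step 1: geometry and transversality.} Since $J=[a,b]\Subset(0,h)$, choose $\varepsilon_J\le \min(a,h-b)/(2\max\|g_\pm\|_\infty)$. Then $\T^d\times J\subset\mathcal D_\varepsilon$.

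Fix a compact $K\subset(0,h)$ whose interior contains $J$, and apply Lemma~\ref{lem-robin-expansion-perturbed} on $\T^d\times K$ with $m\ge2$. This gives $\|\mathscr R_\varepsilon-R_0\|_{C^1(\T^d\times K)}\le C_K|\varepsilon|$.

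Since $h/2\notin J$, the function $R_0'(y)=\frac1{2h}\cot(\pi y/h)$ has a constant sign on $J$ and satisfies $|R_0'|\ge c_J>0$ there. Shrinking $\varepsilon_J$ then yields $|\partial_y\mathscr R_\varepsilon|\ge c_J/2$ on $\T^d\times J$.

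\textbf{Step 2: common energy range.} Assume for definiteness $R_0'>0$ on $J$; the other case is symmetric. The intervals $\mathscr R_\varepsilon(\theta,J)$ are $[\mathscr R_\varepsilon(\theta,a),\mathscr R_\varepsilon(\theta,b)]$. Hence
\[
\mathcal E^{\mathrm{glob}}_{\varepsilon,J}=\Bigl[\sup_\theta\mathscr R_\varepsilon(\theta,a),\inf_\theta \mathscr R_\varepsilon(\theta,b)\Bigr]\supset[R_0(a)+C|\varepsilon|,R_0(b)-C|\varepsilon|].
\]
This interval has nonempty interior once $2C|\varepsilon|<R_0(b)-R_0(a)$.

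For $E$ in the interior, apply Theorem~\ref{thm-exact-qp-orbits} with the constant barriers $a,b$. It gives a unique $\mathscr Y_{\varepsilon,E}\in C^{m-1}\subset C^1$ solving $\mathscr R_\varepsilon(\theta,\mathscr Y)=E$, together with the invariant graph. Strict inequalities $\mathscr R_\varepsilon(\theta,a)<E<\mathscr R_\varepsilon(\theta,b)$ and strict monotonicity force $\mathscr Y\in(a,b)$.

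\textbf{Step 3: the energy $E=R_0(y_\star)$ when $y_\star$ lies near an endpoint.} Here lies the main obstacle. For $y_\star$ near $a$ or $b$, the value $R_0(y_\star)$ may fall outside the interior of $\mathcal E^{\mathrm{glob}}_{\varepsilon,J}$.

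My approach is to work on a slightly enlarged interval $J'=[a-\delta,b+\delta]\subset(0,h)$ that still avoids $h/2$, and to carry out Steps 1--2 on $J'$. For $y_\star\in J$ and $|\varepsilon|$ small, $R_0(y_\star)$ then lies in the interior of the enlarged common range. This gives a level graph with values in $(a-\delta,b+\delta)$; the expansion below shows it lies within $O(\varepsilon)$ of $y_\star$.

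Read literally, the statement asks for values in $(a,b)$ for every $y_\star\in J$, which fails at the endpoints. I will therefore interpret "well defined" via the enlarged strip, and record this point in a remark.

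\textbf{Step 4: the expansion.} Write $\mathscr Y=y_\star+\eta(\theta)$. The mean value theorem applied to $\mathscr R_\varepsilon(\theta,y_\star+\eta)=R_0(y_\star)$, together with the lower bound on $\partial_y\mathscr R_\varepsilon$, gives $|\eta|\le C|\varepsilon|$ uniformly.

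Next, Taylor expand using Lemma~\ref{lem-robin-expansion-perturbed}. The $C^2$ bound on $R_0$ and the $C^1$ bounds on $\mathcal R_1$ and $\mathcal R_{2,\varepsilon}$ on $\T^d\times K'$ yield
\[
0=R_0'(y_\star)\eta+\varepsilon\mathcal R_1(\theta,y_\star)+O(\eta^2+\varepsilon|\eta|+\varepsilon^2).
\]
Therefore $\eta=-\varepsilon\mathcal R_1(\theta,y_\star)/R_0'(y_\star)+O(\varepsilon^2)$.

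Since $1/R_0'(y_\star)=2h\tan(\pi y_\star/h)$, this is exactly \eqref{eq-Y-leading-general}. The constants depend only on $J'$, so the expansion is uniform in $(\theta,y_\star)$.
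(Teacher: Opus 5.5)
Your proposal is correct and follows essentially the same route as the paper: $C^1$-closeness of $\mathscr R_\varepsilon$ to $R_0$ from Lemma~\ref{lem-robin-expansion-perturbed}, which gives transversality and the endpoint description of $\mathcal E^{\mathrm{glob}}_{\varepsilon,J}$; an application of Theorem~\ref{thm-exact-qp-orbits}; and, for the expansion, passage to a slightly enlarged interval $\widetilde J\Subset(0,h)$ avoiding $h/2$, followed by a first-order Taylor expansion. The paper resolves the endpoint issue you flag in the same way (its graph for $E=R_0(y_\star)$ takes values in $\widetilde J$), and your mean-value bound $|\eta|\le C|\varepsilon|$ and your strict-inequality argument for $(a,b)$-valuedness are somewhat more explicit than the paper's versions.
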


\begin{proof}
For $\varepsilon=0$, the Robin function of the flat strip
$\R\times(0,h)$ depends only on $y$, and in view of
\eqref{eq-flat-robin-derivatives} we have
\[
R_0'(y)
=
\tfrac{1}{2h}
\cot\left(\tfrac{\pi y}{h}\right).
\]
In particular, its only critical point in $(0,h)$ is $y=h/2$. By assumption, the interval $J$
lies entirely on one side of $h/2$, implying that
\[
m_J
:=
\min_{y\in J}|R_0'(y)|
>0.
\]
Since the functions $g_\pm$ are bounded, then 
for $|\varepsilon|$ sufficiently small we have
\[
\varepsilon g_-(\theta_-)
<
a
\leq y\leq
b
<
h+\varepsilon g_+(\theta_+),\,\forall (\theta,y)\in\T^d\times J.
\]
 Thus
\[
\T^d\times J
\subset
\mathcal D_\varepsilon.
\]
From Lemma \ref{lem-robin-expansion-perturbed}, we deduce that
\[
\|
\mathscr R_\varepsilon-R_0
\|_{C^1(\T^d\times J)}
\leq
C_J|\varepsilon|.
\]
Hence, for $|\varepsilon|$ sufficiently small,
\[
\left|
\partial_y\mathscr R_\varepsilon(\theta,y)
\right|
\geq
\tfrac{m_J}{2}
>0,
\qquad
(\theta,y)\in\T^d\times J.
\]
Thus the uniform transversality condition of
Theorem~\ref{thm-exact-qp-orbits} is satisfied on the constant
fiberwise interval
\(
J_\theta=J.
\)
We next verify that the corresponding common energy range is nonempty
and describe its behavior as $\varepsilon\to0$. For $\varepsilon=0$,
the Robin hull is independent of $\theta$, and therefore
\[
\mathcal E_{0,J}^{\mathrm{glob}}
=
R_0(J).
\]
Since $R_0'$ has a constant sign on $J$, this is a nondegenerate
interval.
\\
For $|\varepsilon|$ sufficiently small, the same monotonicity persists.
For each $\theta\in\T^d$, set
\[
e_{\varepsilon,J}^-(\theta)
:=
\min
\left\{
\mathscr R_\varepsilon(\theta,a),
\mathscr R_\varepsilon(\theta,b)
\right\}\quad\hbox{and}\quad 
e_{\varepsilon,J}^+(\theta)
:=
\max
\left\{
\mathscr R_\varepsilon(\theta,a),
\mathscr R_\varepsilon(\theta,b)
\right\}.
\]
Then
\[
\mathscr R_\varepsilon(\theta,J)
=
\left[
e_{\varepsilon,J}^-(\theta),
e_{\varepsilon,J}^+(\theta)
\right],
\]
and hence
\[
\mathcal E_{\varepsilon,J}^{\mathrm{glob}}
=
\left[
E_{\varepsilon,J}^-,
E_{\varepsilon,J}^+
\right],
\]
where
\[
E_{\varepsilon,J}^-
:=
\max_{\theta\in\T^d}
e_{\varepsilon,J}^-(\theta),
\qquad
E_{\varepsilon,J}^+
:=
\min_{\theta\in\T^d}
e_{\varepsilon,J}^+(\theta).
\]
The estimate
\[
\|
\mathscr R_\varepsilon-R_0
\|_{C^0(\T^d\times J)}
\leq
C_J|\varepsilon|
\]
implies
\[
E_{\varepsilon,J}^-
=
\min\{R_0(a),R_0(b)\}
+
O_J(|\varepsilon|)
\quad\textnormal{and}\quad
E_{\varepsilon,J}^+
=
\max\{R_0(a),R_0(b)\}
+
O_J(|\varepsilon|).
\]
Consequently,
\[
E_{\varepsilon,J}^+
-
E_{\varepsilon,J}^-
=
|R_0(b)-R_0(a)|
+
O_J(|\varepsilon|)
>0
\]
for $|\varepsilon|$ sufficiently small. Therefore
\[
\operatorname{int}
\mathcal E_{\varepsilon,J}^{\mathrm{glob}}
\neq\varnothing.
\]
More generally, every compact interval
\[
K
\Subset
\operatorname{int}R_0(J)
\]
satisfies
\[
K
\subset
\operatorname{int}
\mathcal E_{\varepsilon,J}^{\mathrm{glob}}
\]
for all sufficiently small $|\varepsilon|$. Thus the interior energy
range of the flat strip persists locally uniformly away from its
endpoints.
\\
We have therefore verified both the uniform transversality condition
and the nonemptiness of the common energy range. The conclusion follows
from Theorem~\ref{thm-exact-qp-orbits}. In particular, for every
\[
E
\in
\operatorname{int}
\mathcal E_{\varepsilon,J}^{\mathrm{glob}},
\]
there exists a unique global quasi-periodic invariant graph contained
in $\R\times(a,b)$ and lying on the level
\[
R_{\Omega_\varepsilon}=E.
\]
We now derive the first-order expansion. Since the estimate is required
uniformly for $y_\star\in J$, we choose a slightly larger compact
interval
\[
\widetilde J=[\widetilde a,\widetilde b]
\Subset(0,h)
\]
such that
\[
J\Subset\operatorname{int}\widetilde J
\quad\hbox{and}\quad
\tfrac h2\notin\widetilde J.
\]
The preceding argument applied to $\widetilde J$ shows that
\[
\operatorname{int}
\mathcal E_{\varepsilon,\widetilde J}^{\mathrm{glob}}
\neq\varnothing
\]
for $|\varepsilon|$ sufficiently small.
Since
\[
R_0(J)
\Subset
\operatorname{int}R_0(\widetilde J),
\]
the persistence of the common energy range gives
\begin{equation}
\label{eq-R0J-inside-energy}
R_0(J)
\subset
\operatorname{int}
\mathcal E_{\varepsilon,\widetilde J}^{\mathrm{glob}}
\end{equation}
for all sufficiently small $|\varepsilon|$. Consequently, for every
$y_\star\in J$, the energy
\(
E=R_0(y_\star)
\)
gives a unique invariant graph
\[
\mathscr Y_{\varepsilon,E}:
\T^d\longrightarrow\widetilde J.
\]
By \eqref{eq-hull-level-equation-general},
\begin{equation}
\label{eq-level-equation-Yeps}
\mathscr R_\varepsilon
\bigl(
\theta,\mathscr Y_{\varepsilon,E}(\theta)
\bigr)
=
R_0(y_\star),
\qquad
\theta\in\T^d.
\end{equation}
From the expansion of the Robin hull obtained in
\eqref{eq-Robin-hull-final-expansion}, applied on the compact set
\[
\T^d\times\widetilde J
\Subset
\mathcal D_0,
\]
we have
\[
\mathscr R_\varepsilon(\theta,y)
=
R_0(y)
+
\varepsilon\mathcal R_1(\theta,y)
+
O_{\widetilde J}(\varepsilon^2).
\]
Hence
\begin{equation}
\label{eq-level-equation-expanded}
R_0
\bigl(
\mathscr Y_{\varepsilon,E}(\theta)
\bigr)
+
\varepsilon
\mathcal R_1
\bigl(
\theta,
\mathscr Y_{\varepsilon,E}(\theta)
\bigr)
+
O_{\widetilde J}(\varepsilon^2)
=
R_0(y_\star).
\end{equation}
Since $\widetilde J$ does not intersect the critical level $h/2$,
\[
\inf_{y\in\widetilde J}|R_0'(y)|>0.
\]
The implicit function theorem therefore gives
\[
\mathscr Y_{\varepsilon,E}(\theta)-y_\star
=
O_J(\varepsilon),
\quad\hbox{
uniformly for}\quad
(\theta,y_\star)\in\T^d\times J.
\]
Set
\[
\delta_\varepsilon(\theta)
:=
\mathscr Y_{\varepsilon,E}(\theta)-y_\star.
\]
Then
\[
\delta_\varepsilon
=
O_J(\varepsilon).
\]
Expanding $R_0$ at $y_\star$, we obtain
\[
R_0(y_\star+\delta_\varepsilon)
=
R_0(y_\star)
+
R_0'(y_\star)\delta_\varepsilon
+
O_J(\delta_\varepsilon^2),
\]
and therefore
\[
R_0(y_\star+\delta_\varepsilon)
=
R_0(y_\star)
+
R_0'(y_\star)\delta_\varepsilon
+
O_J(\varepsilon^2).
\]
Similarly, using the regularity of $\mathcal R_1$ on
$\T^d\times\widetilde J$,
\[
\mathcal R_1
\bigl(
\theta,y_\star+\delta_\varepsilon
\bigr)
=
\mathcal R_1(\theta,y_\star)
+
O_J(\varepsilon).
\]
Consequently,
\[
\varepsilon
\mathcal R_1
\bigl(
\theta,y_\star+\delta_\varepsilon
\bigr)
=
\varepsilon
\mathcal R_1(\theta,y_\star)
+
O_J(\varepsilon^2).
\]
Substituting these expansions into
\eqref{eq-level-equation-expanded}, we obtain
\[
R_0'(y_\star)\delta_\varepsilon(\theta)
+
\varepsilon
\mathcal R_1(\theta,y_\star)
+
O_J(\varepsilon^2)
=
0.
\]
From the explicit formula
\[
\tfrac{1}{R_0'(y_\star)}
=
2h
\tan\left(\tfrac{\pi y_\star}{h}\right),
\]
we deduce that
\[
\delta_\varepsilon(\theta)
=
-
2h\varepsilon
\tan\left(\tfrac{\pi y_\star}{h}\right)
\mathcal R_1(\theta,y_\star)
+
O_J(\varepsilon^2).
\]
Recalling that
\[
\mathscr Y_{\varepsilon,E}(\theta)
=
y_\star+\delta_\varepsilon(\theta),
\]
we conclude that
\begin{align*}
\mathscr Y_{\varepsilon,E}(\theta)
&=
y_\star
-
2h\varepsilon
\tan\left(\tfrac{\pi y_\star}{h}\right)
\mathcal R_1(\theta,y_\star)
+
O_J(\varepsilon^2),
\end{align*}
uniformly for
\[
(\theta,y_\star)\in\T^d\times J.
\]
This proves \eqref{eq-Y-leading-general}.
\end{proof}

\subsection{First-order expansion of the quasi-periodic orbits}

The previous analysis provides a first-order description of the geometry
of the invariant graphs arising from the horizontal trajectories of the
flat channel. We now complement this spatial description by studying the
time parametrization of the corresponding point-vortex trajectories.
More precisely, we aim to determine how the constant horizontal drift of
the flat case is modified by the quasi-periodic perturbation and to
identify the leading oscillatory correction to the motion. To obtain a
description which is uniform for all times, we keep the effective drift
$c_\eps$ unexpanded in the linear part of the trajectory, while deriving
separately its first-order expansion in $\eps$. This leads to a global
representation of the vortex trajectory as the superposition of an
effective horizontal drift and a bounded quasi-periodic modulation.

\begin{proposition}
\label{prop-first-order-orbit-small-channel}
Assume the hypotheses of Proposition~$\ref{cor-small-qp-exact-orbits}$
and the Diophantine and regularity assumptions of
Theorem~$\ref{thm-exact-qp-orbits-1}$. Let
\[
y_\star\in(0,h)\setminus\left\{\tfrac h2\right\}
\]
and consider the invariant graph associated with the energy
\(
E=R_0(y_\star).
\)
Then the effective drift satisfies
\begin{equation*}
\label{eq-drift-first-order}
c_\eps
=
c_0+\eps c_1+O_{y_\star}(\eps^2),
\end{equation*}
where
\begin{equation}
\label{eq-c1-small-channel}
c_0
=
\tfrac{\Gamma}{4h}
\cot\left(\tfrac{\pi y_\star}{h}\right),\quad c_1
=
\fint_{\T^d}V_1(\theta)\,d\theta,
\end{equation}
with
\begin{equation*}
\label{eq-V1-small-channel}
V_1(\theta)
=
\tfrac{\Gamma}{2}
\left[
\partial_y\mathcal R_1(\theta,y_\star)
+
\tfrac{2\pi\mathcal R_1(\theta,y_\star)}
{h\sin(2\pi y_\star/h)}
\right].
\end{equation*}
Let  $u_1$ denote the unique real-valued zero-average solution of
\begin{equation}
\label{eq-u1-small-channel}
\omega\cdot\partial_\theta u_1(\theta)
=
\tfrac{c_1-V_1(\theta)}{c_0},
\qquad
\fint_{\T^d}u_1(\theta)\,d\theta=0.
\end{equation}
Then every point-vortex trajectory contained in the corresponding
invariant graph admits the global-in-time expansion
\begin{equation}
\label{eq-orbit-first-order-small-channel}
\begin{aligned}
x(t)
&=
x_0+c_\eps t
+
\eps
\left[
u_1(\theta_0)
-
u_1(\theta_0+c_\eps t\omega)
\right]
+
O_{y_\star}(\eps^2),
\\
y(t)
&=
y_\star
-
2h\eps
\tan\left(\tfrac{\pi y_\star}{h}\right)
\mathcal R_1
\bigl(
\theta_0+c_\eps t\omega,y_\star
\bigr)
+
O_{y_\star}(\eps^2),
\end{aligned}
\end{equation}
where
\[
\theta_0=x_0\omega.
\]
The remainders in \eqref{eq-orbit-first-order-small-channel} are
uniform for $t\in\R$.
\end{proposition}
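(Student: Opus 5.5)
We apply Theorem~\ref{thm-exact-qp-orbits-1} to the invariant graph $\mathscr Y_\eps:=\mathscr Y_{\eps,E}$ of Proposition~\ref{cor-small-qp-exact-orbits} with $E=R_0(y_\star)$, and then expand each ingredient of that construction in $\eps$. The first step is an expansion of the velocity
\[
\mathscr V_\eps(\theta)=\tfrac{\Gamma}{2}\partial_y\mathscr R_\eps\bigl(\theta,\mathscr Y_\eps(\theta)\bigr).
\]
By Lemma~\ref{lem-robin-expansion-perturbed}, $\partial_y\mathscr R_\eps=R_0'+\eps\partial_y\mathcal R_1+O(\eps^2)$ in $C^{m-2}$ on $\T^d\times\widetilde J$. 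Inserting \eqref{eq-Y-leading-general} and Taylor expanding $R_0'$ at $y_\star$ gives
\[
\mathscr V_\eps=\tfrac{\Gamma}{2}\Bigl[R_0'(y_\star)+\eps\bigl(\partial_y\mathcal R_1(\theta,y_\star)-R_0''(y_\star)\,2h\tan(\tfrac{\pi y_\star}{h})\,\mathcal R_1(\theta,y_\star)\bigr)\Bigr]+O(\eps^2).
\]
From \eqref{eq-flat-robin-derivatives} we have $-R_0''\cdot 2h\tan=\tfrac{\pi}{h}\csc^2\tan=\tfrac{2\pi}{h\sin(2\pi y_\star/h)}$. Hence $\mathscr V_\eps=c_0+\eps V_1+O(\eps^2)$, and this holds in $C^{k}(\T^d)$ for every $k$ that the regularity allows. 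For that we need the remainder of \eqref{eq-Y-leading-general} in $C^k$, not just $C^0$. This follows by differentiating the implicit equation \eqref{eq-level-equation-Yeps} in $\theta$, using the $C^{m-1}$ bound on $\mathcal R_{2,\eps}$.

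Next comes the drift. Since $c_\eps=\bigl(\fint 1/\mathscr V_\eps\bigr)^{-1}$ and $1/\mathscr V_\eps=c_0^{-1}-\eps V_1c_0^{-2}+O(\eps^2)$, we get $c_\eps=c_0+\eps\fint V_1+O(\eps^2)$, which is \eqref{eq-c1-small-channel}. For the cohomological equation, the datum is
\[
g_\eps=\frac{c_\eps}{\mathscr V_\eps}-1=\eps\,\frac{c_1-V_1}{c_0}+\eps^2 h_\eps,
\]
with $h_\eps$ bounded in $C^{m-2}$ uniformly in $\eps$. By linearity of the Fourier solution \eqref{eq-u-Fourier-orbit} and Lemma~\ref{lem-regularity-u}, the solution is $u_\eps=\eps u_1+\eps^2 w_\eps$ with $\|w_\eps\|_{C^r}\le C$. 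Here the threshold \eqref{eq-regularity-threshold-time} is used once more, which is why the statement assumes the regularity of Theorem~\ref{thm-exact-qp-orbits-1}.

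The trajectory is obtained from the conjugacy $\Phi_\eps(x)=x+u_\eps(x\omega)$, which gives $\Phi_\eps(x(t))=c_\eps t+\Phi_\eps(x_0)$. Writing $x(t)=x_0+c_\eps t+\delta(t)$, we get
\[
\delta(t)=u_\eps(\theta_0)-u_\eps(x(t)\omega),
\]
so $|\delta(t)|\le 2\|u_\eps\|_\infty=O(\eps)$ uniformly in $t$. Then $x(t)\omega=\theta_0+c_\eps t\omega+\delta(t)\omega$. Because $u_1$ is Lipschitz, $u_\eps(x(t)\omega)=\eps u_1(\theta_0+c_\eps t\omega)+O(\eps^2)$ uniformly in $t$, and this yields the expansion of $x(t)$. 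For the vertical component, $y(t)=\mathscr Y_\eps(x(t)\omega)$. We insert \eqref{eq-Y-leading-general} and replace the phase $x(t)\omega$ by $\theta_0+c_\eps t\omega$, at a cost of $\eps\cdot\|\partial_\theta\mathcal R_1\|_\infty|\delta|=O(\eps^2)$.

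The main obstacle is keeping every remainder uniform for $t\in\R$. This is handled by never expanding $c_\eps t$: we keep the exact drift $c_\eps$ inside the phase, and all remaining errors are sup-norm errors of torus functions evaluated along the orbit. A second obstacle is the $C^1$ (and $C^r$) control of the $O(\eps^2)$ remainders in $\mathscr Y_\eps$ and $\mathscr V_\eps$. This control is needed so that the small-divisor inversion with loss $\tau+d$ still leaves a bounded $w_\eps$. We would obtain it by carrying the $C^{m-1}$ bound of Lemma~\ref{lem-robin-expansion-perturbed} through the implicit function theorem with uniform constants.
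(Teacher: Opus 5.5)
Your proposal is correct and follows essentially the same route as the paper's proof. Both expand $\mathscr V_\eps$ along the invariant graph using the Robin-hull expansion and \eqref{eq-Y-leading-general}, then expand $c_\eps$ and the cohomological datum $c_\eps/\mathscr V_\eps-1$, and finally recover $x(t)$ and $y(t)$ from the straightening map $\Phi_\eps$ with the exact drift $c_\eps$ kept inside the phase. The differences are refinements: you read off $x(t)$ directly from $x(t)=x_0+c_\eps t+u_\eps(\theta_0)-u_\eps(x(t)\omega)$ rather than first expanding $\Phi_\eps^{-1}$, and you track the $O(\eps^2)$ remainders in $C^{m-2}(\T^d)$ so that Lemma~\ref{lem-regularity-u} actually gives $u_\eps=\eps u_1+O_{C^r}(\eps^2)$, which the paper asserts without specifying norms.
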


\begin{proof}
From the first-order expansion of the invariant graph obtained in
\eqref{eq-Y-leading-general},
\begin{equation}
\label{eq-Y-expansion-proof-orbit}
\mathscr Y_{\eps,E}(\theta)
=
y_\star+\eps Y_1(\theta)+O_{y_\star}(\eps^2),
\end{equation}
where
\begin{equation}
\label{eq-Y1-proof-orbit}
Y_1(\theta)
=
-2h
\tan\left(\tfrac{\pi y_\star}{h}\right)
\mathcal R_1(\theta,y_\star)
=
-\tfrac{\mathcal R_1(\theta,y_\star)}
{R_0'(y_\star)}.
\end{equation}
Moreover, from the expansion of the Robin hull
\eqref{eq-Robin-hull-final-expansion}, locally around $y_\star$,
\[
\mathscr R_\eps(\theta,y)
=
R_0(y)
+
\eps\mathcal R_1(\theta,y)
+
O_{y_\star}(\eps^2),
\]
and hence
\[
\partial_y\mathscr R_\eps(\theta,y)
=
R_0'(y)
+
\eps\partial_y\mathcal R_1(\theta,y)
+
O_{y_\star}(\eps^2).
\]
Recall from Theorem~\ref{thm-exact-qp-orbits-1} that the horizontal
velocity along the invariant graph is
\[
\mathscr V_\eps(\theta)
=
\tfrac{\Gamma}{2}
\partial_y\mathscr R_\eps
\bigl(
\theta,\mathscr Y_{\eps,E}(\theta)
\bigr).
\]
Taylor expansion around $y=y_\star$ yields
\[
\mathscr V_\eps(\theta)
=
c_0+\eps V_1(\theta)+O_{y_\star}(\eps^2),
\]
where
\[
c_0
=
\tfrac{\Gamma}{2}R_0'(y_\star)
=
\tfrac{\Gamma}{4h}
\cot\left(\tfrac{\pi y_\star}{h}\right)
\]
and
\[
V_1(\theta)
=
\tfrac{\Gamma}{2}
\left[
\partial_y\mathcal R_1(\theta,y_\star)
-
\frac{R_0''(y_\star)}{R_0'(y_\star)}
\mathcal R_1(\theta,y_\star)
\right].
\]
Since
\[
R_0''(y)
=
-\tfrac{\pi}{2h^2}
\csc^2\left(\tfrac{\pi y}{h}\right),
\]
we obtain
\[
\frac{R_0''(y_\star)}{R_0'(y_\star)}
=
-\frac{2\pi}
{h\sin(2\pi y_\star/h)},
\]
and therefore
\[
V_1(\theta)
=
\tfrac{\Gamma}{2}
\left[
\partial_y\mathcal R_1(\theta,y_\star)
+
\tfrac{2\pi\mathcal R_1(\theta,y_\star)}
{h\sin(2\pi y_\star/h)}
\right].
\]
We next expand the effective drift introduced in Theorem~\ref{thm-exact-qp-orbits-1}
\[
c_\eps
=
\left(
\fint_{\T^d}
\tfrac{d\theta}{\mathscr V_\eps(\theta)}
\right)^{-1}.
\]
First, we have
\[
\tfrac1{\mathscr V_\eps(\theta)}
=
\tfrac1{c_0}
-
\eps\tfrac{V_1(\theta)}{c_0^2}
+
O_{y_\star}(\eps^2).
\]
Averaging over $\T^d$ gives
\[
\fint_{\T^d}
\tfrac{d\theta}{\mathscr V_\eps(\theta)}
=
\tfrac1{c_0}
-
\tfrac{\eps}{c_0^2}
\fint_{\T^d}V_1(\theta)\,d\theta
+
O_{y_\star}(\eps^2).
\]
Therefore
\[
c_\eps
=
c_0+\eps c_1+O_{y_\star}(\eps^2),
\qquad
c_1
=
\fint_{\T^d}V_1(\theta)\,d\theta.
\]
We now turn to the quasi-periodic correction to the trajectory.
The straightening function $u_\eps$ satisfies
\[
\omega\cdot\partial_\theta u_\eps(\theta)
=
\frac{c_\eps}{\mathscr V_\eps(\theta)}-1,
\qquad
\fint_{\T^d}u_\eps(\theta)\,d\theta=0.
\]
Using the expansions of $c_\eps$ and $\mathscr V_\eps$, we obtain
\[
\frac{c_\eps}{\mathscr V_\eps(\theta)}-1
=
\eps
\frac{c_1-V_1(\theta)}{c_0}
+
O_{y_\star}(\eps^2).
\]
Hence
\begin{equation}
\label{eq-u-eps-expansion}
u_\eps(\theta)
=
\eps u_1(\theta)
+
O_{y_\star}(\eps^2),
\end{equation}
where $u_1$ is the zero-average solution of
\eqref{eq-u1-small-channel}. The right-hand side of
\eqref{eq-u1-small-channel} has zero average by the definition of
$c_1$, while the Diophantine condition on $\omega$ provides the
required solution of the cohomological equation.
\\
Next, we shall follow the proof of Theorem \ref{thm-exact-qp-orbits-1}. Recall by \eqref{eq-straightening-map-x} the straightening map
\[
\Phi_\eps(x)
=
x+u_\eps(x\omega).
\]
By \eqref{eq-u-eps-expansion},
\[
\Phi_\eps(x)
=
x+\eps u_1(x\omega)+O_{y_\star}(\eps^2),
\]
uniformly for $x\in\R$. Its inverse therefore satisfies
\begin{equation}
\label{eq-Phi-inverse-first-order}
\Phi_\eps^{-1}(s)
=
s-\eps u_1(s\omega)+O_{y_\star}(\eps^2),
\end{equation}
uniformly for $s\in\R$.
Along the point-vortex trajectory,
\[
\Phi_\eps(x(t))
=
c_\eps t+\Phi_\eps(x_0).
\]
Since
\[
\Phi_\eps(x_0)
=
x_0+\eps u_1(\theta_0)+O_{y_\star}(\eps^2),
\qquad
\theta_0=x_0\omega,
\]
we obtain
\[
c_\eps t+\Phi_\eps(x_0)
=
x_0+c_\eps t
+
\eps u_1(\theta_0)
+
O_{y_\star}(\eps^2).
\]
Applying \eqref{eq-Phi-inverse-first-order}, we find
\[
\begin{aligned}
x(t)
={}&
x_0+c_\eps t
+
\eps u_1(\theta_0)
-
\eps
u_1
\big(
\bigl(
x_0+c_\eps t+\eps u_1(\theta_0)
+O_{y_\star}(\eps^2)
\bigr)\omega
\big)
+
O_{y_\star}(\eps^2).
\end{aligned}
\]
Since $u_1$ is periodic and sufficiently regular,
\[
u_1
\left(
\bigl(
x_0+c_\eps t+\eps u_1(\theta_0)
+O_{y_\star}(\eps^2)
\bigr)\omega
\right)
=
u_1(\theta_0+c_\eps t\omega)
+
O_{y_\star}(\eps),
\]
uniformly for $t\in\R$. Consequently,
\[
x(t)
=
x_0+c_\eps t
+
\eps
\left[
u_1(\theta_0)
-
u_1(\theta_0+c_\eps t\omega)
\right]
+
O_{y_\star}(\eps^2),
\]
uniformly for all $t\in\R$.
Finally, since the trajectory lies on the invariant graph,
\[
y(t)
=
\mathscr Y_{\eps,E}(x(t)\omega).
\]
The preceding expansion gives
\[
x(t)\omega
=
\theta_0+c_\eps t\omega
+
O_{y_\star}(\eps)
\]
uniformly for $t\in\R$. Using
\eqref{eq-Y-expansion-proof-orbit}, we therefore obtain
\[
y(t)
=
y_\star
+
\eps
Y_1(\theta_0+c_\eps t\omega)
+
O_{y_\star}(\eps^2).
\]
By \eqref{eq-Y1-proof-orbit},
\[
y(t)
=
y_\star
-
2h\eps
\tan\left(\tfrac{\pi y_\star}{h}\right)
\mathcal R_1
\bigl(
\theta_0+c_\eps t\omega,y_\star
\bigr)
+
O_{y_\star}(\eps^2),
\]
uniformly for all $t\in\R$. This proves
\eqref{eq-orbit-first-order-small-channel} and completes the proof of the proposition.
\end{proof}

\begin{remark}
\label{cor:zero_mean_drift}
Under the hypotheses of Proposition~\ref{prop-first-order-orbit-small-channel}, if
\[
\int_{\mathbb T^{d_+}}g_+(\theta_+)\,d\theta_+=0,
\qquad
\int_{\mathbb T^{d_-}}g_-(\theta_-)\,d\theta_-=0,
\]
then averaging formula~\eqref{eq-c1-small-channel} over the torus yields
\[
c_1=\langle V_1\rangle=0,
\]
and therefore
\begin{equation*}
c_{\varepsilon,y_*}
=
c_{0,y_*}+O(\varepsilon^2).
\label{eq:zero_mean_drift}
\end{equation*}

\end{remark}

\section{Explicit conformal  perturbation of the flat strip}
\label{subsec-explicit-conformal-example}
In this section, we illustrate the preceding general theory of quasi-periodic perturbations of the flat strip by considering a simple class of domains obtained through an explicit conformal deformation of the flat geometry. The availability of an explicit conformal map allows us to compute the Robin function directly and derive its first-order expansion with respect to the perturbation parameter. This provides a concrete model in which both the deformation of the invariant vortex trajectories and the resulting vortex dynamics can be described explicitly.\\
Consider the
flat strip
\[
S:=\R\times(0,1)
\]
and, for $|\eps|<1$, we introduce the holomorphic map
\begin{equation*}
\label{eq-explicit-Phi}
\Phi_\eps(z)
=
z+\eps e^{iz}.
\end{equation*}
We define
\[
S_\eps:=\Phi_\eps(S).
\]
Since
\[
\Phi_\eps'(z)
=
1+i\eps e^{iz},
\]
and
\[
|i\eps e^{iz}|
=
|\eps|e^{-\Im z}
\leq |\eps|<1,
\qquad z\in S,
\]
the derivative never vanishes. Moreover, since the strip is convex and
\[
\sup_{z\in S}|(e^{iz})'|
=
\sup_{z\in S}|ie^{iz}|
\leq 1,
\]
we have by the  mean value inequality
\[
|\Phi_\eps(z_1)-\Phi_\eps(z_2)|
\geq
(1-|\eps|)|z_1-z_2|,
\]
and therefore $\Phi_\eps$ is injective. Thus $\Phi_\eps$ is a conformal
diffeomorphism from $S$ onto $S_\eps$.
\\
Writing $z=x+iy$, we have
\begin{equation*}
\label{eq-explicit-Phi-real}
\Phi_\eps(x+iy)
=
X+iY,
\end{equation*}
where
\begin{equation*}
\label{eq-XY-explicit}
X
=
x+\eps e^{-y}\cos x,
\qquad
Y
=
y+\eps e^{-y}\sin x.
\end{equation*}
In particular, the two boundary components of $S_\eps$ are parametrized
by
\[
(X,Y)
=
(x+\eps\cos x,\eps\sin x)
\]
and
\[
(X,Y)
=
(x+\eps e^{-1}\cos x,
1+\eps e^{-1}\sin x).
\]
Consequently, as graphs over the physical horizontal coordinate $X$,
they admit the expansions
\begin{equation*}
\label{eq-explicit-boundaries}
Y_-(X)
=
\eps\sin X+O(\eps^2),
\qquad
Y_+(X)
=
1+\eps e^{-1}\sin X+O(\eps^2).
\end{equation*}
Thus, in the notation of the perturbative theory,
\begin{equation}
\label{eq-explicit-gpm}
g_-(X)=\sin X,
\qquad
g_+(X)=e^{-1}\sin X.
\end{equation}


\subsection{Exact Robin function and critical points}

Recall that the Robin function of the flat strip $S=\R\times(0,1)$ is
\begin{equation*}
\label{eq-explicit-R0}
R_0(y)
=
\tfrac{1}{2\pi}
\log\left(
\tfrac{2}{\pi}\sin(\pi y)
\right).
\end{equation*}
If $R_\eps$ denotes the Robin function of $S_\eps$, conformal
covariance of the Green function yields
\begin{equation*}
\label{eq-Robin-conformal-transform}
R_\eps(\Phi_\eps(z))
=
R_0(z)
+
\tfrac{1}{2\pi}\log|\Phi_\eps'(z)|.
\end{equation*}
Since $R_0$ only depends on $\Im z$, we obtain the exact formula
\begin{equation*}
\label{eq-exact-Robin-reference}
R_\eps(\Phi_\eps(x+iy))
=
\tfrac{1}{2\pi}
\log\left(
\tfrac{2}{\pi}\sin(\pi y)
\right)
+
\tfrac{1}{2\pi}
\log
\left|
1+i\eps e^{-y}e^{ix}
\right|.
\end{equation*}
The inverse conformal map can also be written explicitly in terms of the
Lambert function. Indeed, if $w=\Phi_\eps(z)$, then
\begin{equation*}
\label{eq-Phi-inverse-Lambert}
z=\Phi_\eps^{-1}(w)
=
w+iW_0\left(i\eps e^{iw}\right),
\end{equation*}
where $W_0$ denotes the principal branch of the Lambert $W$ function.
Therefore, setting
\begin{align}\label{Zeps}
Z_\eps(w)
:=
w+iW_0\left(i\eps e^{iw}\right),
\end{align}
the Robin function in physical coordinates is exactly
\begin{equation*}
\label{eq-exact-Robin-physical}
R_\eps(w)
=
\frac{1}{2\pi}
\log\left[
\frac{2}{\pi}
\sin\big(\pi\textnormal{Im} Z_\eps(w)\big)
\right]
+
\frac{1}{2\pi}
\log
\left|
1+i\eps e^{iZ_\eps(w)}
\right|.
\end{equation*}
Notice that
\[
\Phi_\eps(z+2\pi)
=
\Phi_\eps(z)+2\pi.
\]
Hence $S_\eps$ is $2\pi$-periodic in the horizontal direction and
\[
R_\eps(X+2\pi,Y)
=
R_\eps(X,Y).
\]
This example therefore corresponds to the one-frequency case
\[
d=1,
\qquad
\omega=1.
\]
The associated Robin hull is consequently
\begin{equation*}
\label{eq-explicit-Robin-hull}
\mathscr R_\eps(\theta,Y)
:=
R_\eps(\theta,Y),
\qquad
(\theta,Y)\in\T\times(0,1),
\end{equation*}
where, more explicitly,
\begin{equation*}
\label{eq-exact-hull-Robin-explicit}
\mathscr R_\eps(\theta,Y)
=
\frac{1}{2\pi}
\log\left[
\frac{2}{\pi}
\sin\big(
\pi\textnormal{Im} Z_\eps(\theta+iY)
\big)
\right]
+
\frac{1}{2\pi}
\log
\left|
1+i\eps e^{iZ_\eps(\theta+iY)}
\right|.
\end{equation*}

\paragraph{Critical points of the Robin function.}
For $\eps\neq0,$ we shall identify the critical points of the Robin function in the
physical domain $S_\eps$. Let
\[
w=X+iY=\Phi_\eps(z),
\qquad
z=x+iy,
\]
and introduce the pullback
\[
\widetilde R_\eps(x,y)
:=
R_\eps(\Phi_\eps(x+iy)).
\]
Since $\Phi_\eps$ is a conformal diffeomorphism,
\[
\nabla_{x,y}\widetilde R_\eps
=
D\Phi_\eps(x,y)^T
\nabla_{X,Y}R_\eps,
\]
and therefore
\[
\nabla_{X,Y}R_\eps(X,Y)=0
\quad\Longleftrightarrow\quad
\nabla_{x,y}\widetilde R_\eps(x,y)=0.
\]
Thus the critical-point equations may be solved in the reference
coordinates $(x,y)$, provided that the resulting points are subsequently
mapped to the physical coordinates by $\Phi_\eps$.
\\
Using conformal covariance of the Robin function, we have
\[
\widetilde R_\eps(x,y)
=
R_0(y)
+
\tfrac{1}{4\pi}
\log\left(
1-2\eps e^{-y}\sin x+\eps^2e^{-2y}
\right).
\]
The equation $\partial_x\widetilde R_\eps=0$ gives
\[
\cos x=0,
\]
so that, modulo $2\pi$, there are two possible critical points,
corresponding to
\(
x_+=\frac{\pi}{2}\) and \(
x_-=\frac{3\pi}{2}.
\)
Their vertical reference coordinates $y_\pm$, obtained from the second
critical-point equation $\partial_y\widetilde R_\eps=0$, are determined
respectively by
\[
\pi\cot(\pi y_+)
+
\tfrac{\eps e^{-y_+}}{1-\eps e^{-y_+}}
=0,
\qquad
\pi\cot(\pi y_-)
-
\tfrac{\eps e^{-y_-}}{1+\eps e^{-y_-}}
=0.
\]
For $|\eps|\ll1$, these equations yield
\[
y_\pm
=
\tfrac12
\pm
\tfrac{\eps e^{-1/2}}{\pi^2}
+
O(\eps^2).
\]
The corresponding critical points in the physical variables are
\[
P_\pm=\Phi_\eps(x_\pm+iy_\pm).
\]
Since $\cos x_\pm=0$, we obtain
\[
w_+
=
\left(
\tfrac{\pi}{2},
y_++\eps e^{-y_+}
\right),
\qquad
w_-
=
\left(
\tfrac{3\pi}{2},
y_- -\eps e^{-y_-}
\right),
\]
and hence
\begin{equation*}
\label{eq-physical-critical-points}
{
\begin{aligned}
w_+
&=
\left(
\tfrac{\pi}{2},
\tfrac12+
\eps e^{-1/2}
\left(1+\tfrac1{\pi^2}\right)
\right)
+
O(\eps^2),
\\
w_-
&=
\left(
\tfrac{3\pi}{2},
\tfrac12-
\eps e^{-1/2}
\left(1+\tfrac1{\pi^2}\right)
\right)
+
O(\eps^2).
\end{aligned}}
\end{equation*}
Thus the critical centerline $y=1/2$ of the flat strip is broken by the
periodic perturbation into two isolated critical points per spatial
period. For $\eps>0$, we can check that $w_+$ is hyperbolic, whereas $w_-$ is elliptic.
\\
This preceding computation illustrates an interesting  effect of the periodic oscillation
on the degenerate critical set of the flat strip. Indeed, for $\eps=0$,
the whole centerline
\(
y=\frac12
\)
consists of critical points of the Robin function. As soon as
$\eps\neq0$, this continuous family is destroyed: the periodic
perturbation breaks the horizontal translation symmetry and leaves only
two isolated critical points in each spatial period: one
of these points is elliptic and the other is hyperbolic. Thus the
degenerate critical line of the flat geometry splits into an alternating
periodic family of elliptic and hyperbolic critical points. {The corresponding Robin level sets and point-vortex phase portrait
are shown in Figure~\ref{fig-conformal-strip-eps03} for
$\varepsilon=0.3$.}

\begin{figure}[ht]
    \centering
    \includegraphics[width=0.7\textwidth]{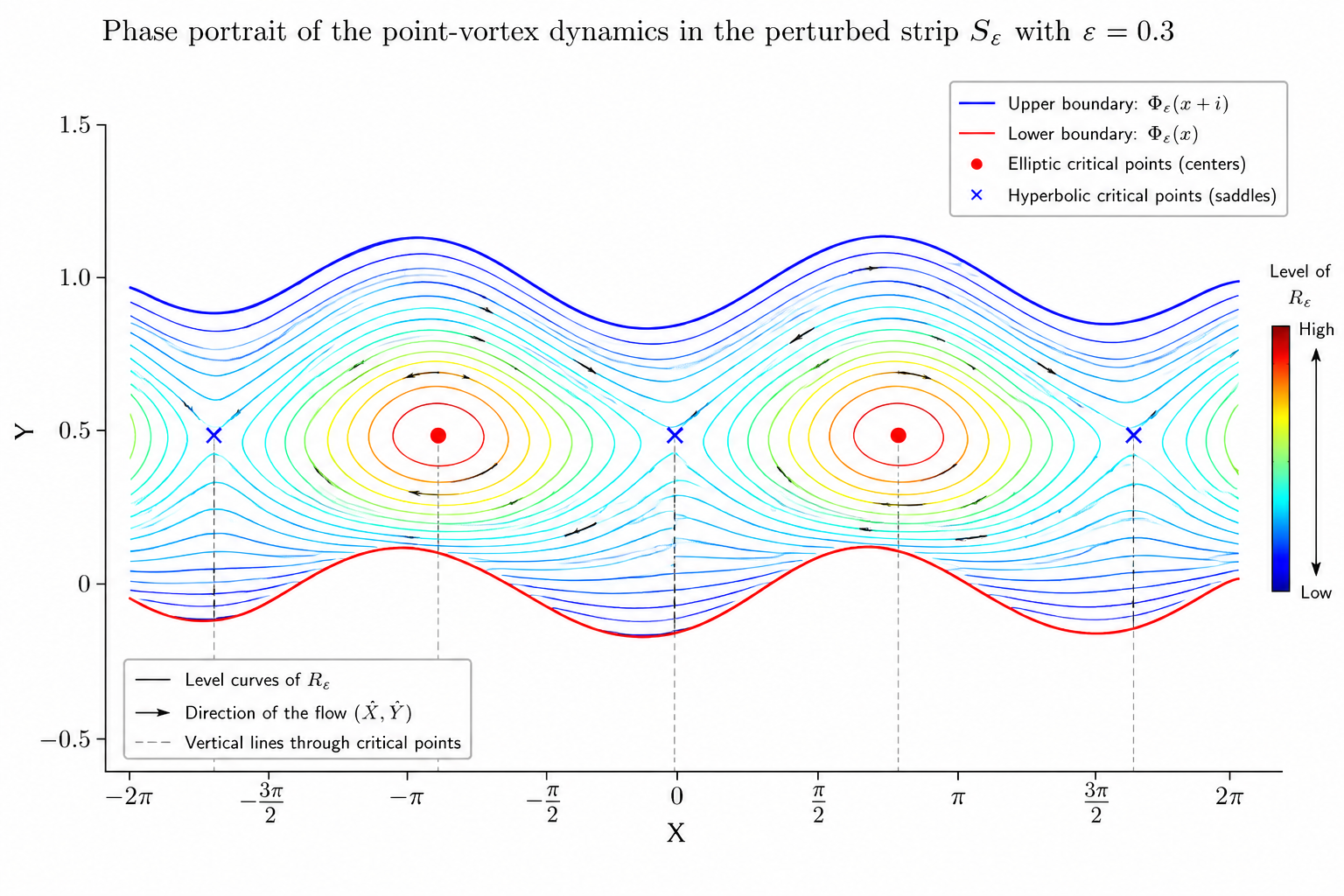}
    \caption{Phase portrait of the point-vortex dynamics in the perturbed
strip $S_\eps=\Phi_\eps(S)$, with
$\Phi_\eps(z)=z+\eps e^{iz}$ and $\eps=0.3$.
The curves are level sets of the Robin function $R_\eps$.}
    \label{fig-conformal-strip-eps03}
\end{figure}

\subsection{Geometry and first-order expansion of the vortex orbits}

The inverse map defined by \eqref{Zeps} admits the expansion
\[
Z_\eps(w)
=
w-\eps e^{iw}+O(\eps^2).
\]
For $w=X+iY$, this gives
\begin{equation*}
\label{eq-inverse-first-order}
\Im Z_\eps(X+iY)
=
Y-\eps e^{-Y}\sin X+O(\eps^2).
\end{equation*}
Furthermore,
\[
\tfrac{1}{2\pi}
\log
\left|
1+i\eps e^{iZ_\eps(w)}
\right|
=
-\tfrac{\eps}{2\pi}e^{-Y}\sin X
+
O(\eps^2).
\]
Expanding the first term of \eqref{eq-exact-Robin-physical} around $Y$
therefore yields
\[
R_0\left(
Y-\eps e^{-Y}\sin X
\right)
=
R_0(Y)
-
\eps e^{-Y}R_0'(Y)\sin X
+
O(\eps^2).
\]
Consequently,
\begin{equation*}
\label{eq-explicit-Robin-first-order}
R_\eps(X,Y)
=
R_0(Y)
+
\eps R_1(X,Y)
+
O(\eps^2),
\end{equation*}
where
\begin{align}
R_1(X,Y)
&=
-e^{-Y}
\left(
R_0'(Y)+\frac{1}{2\pi}
\right)\sin X
\nonumber\\
&=
-\tfrac{e^{-Y}}{2\pi}
\left(
\pi\cot(\pi Y)+1
\right)\sin X.
\label{eq-explicit-R1}
\end{align}
Accordingly, the first-order Robin hull is
\begin{equation*}
\label{eq-explicit-hull-first-order}
\mathscr R_\eps(\theta,Y)
=
R_0(Y)
-
\tfrac{\eps}{2\pi}{e^{-Y}}
\left(
\pi\cot(\pi Y)+1
\right)\sin\theta
+
O(\eps^2).
\end{equation*}
This formula can also be recovered directly from the Hadamard formula \eqref{eq-R1-hull-final} giving $\mathcal{R}_1$, using the boundary perturbations
\eqref{eq-explicit-gpm}. The conformal representation therefore provides
an explicit verification of the general first-order shape derivative.

\paragraph{Explicit illustration of Proposition~ \ref{cor-small-qp-exact-orbits}.}
Fix
\(
y_\star\in(0,1)\setminus\left\{\tfrac12\right\}
\)
and consider 
\[
E=R_0(y_\star).
\]
Since
\[
R_0'(y_\star)
=
\tfrac12\cot(\pi y_\star)
\neq0,
\]
From \eqref{eq-Y-leading-general}, the
invariant graph satisfies
\[
\mathscr Y_{\eps,E}(\theta)
=
y_\star
-
2\eps\tan(\pi y_\star)
 R_1(\theta,y_\star)
+
O(\eps^2).
\]
Substituting \eqref{eq-explicit-R1}, we obtain
\begin{align*}
\mathscr Y_{\eps,E}(\theta)
&=
y_\star
+
\eps e^{-y_\star}
\tan(\pi y_\star)
\left(
\cot(\pi y_\star)+\tfrac{1}{\pi}
\right)
\sin\theta
+
O(\eps^2)
\nonumber\\
&=
y_\star
+
\eps e^{-y_\star}
\left(
1+\tfrac{1}{\pi}\tan(\pi y_\star)
\right)
\sin\theta
+
O(\eps^2).
\label{eq-explicit-invariant-graph}
\end{align*}
Thus, in physical coordinates,
\begin{equation*}
\label{eq-explicit-invariant-graph-physical}
Y_{\eps,E}(X)
=
y_\star
+
\eps A(y_\star)\sin X
+
O(\eps^2),
\end{equation*}
where
\begin{equation*}
\label{eq-explicit-Aystar}
A(y_\star)
=
e^{-y_\star}
\left(
1+\tfrac{1}{\pi}\tan(\pi y_\star)
\right).
\end{equation*}
Hence the horizontal invariant line $Y=y_\star$ of the flat strip is
deformed into an explicitly sinusoidal invariant curve.

\paragraph{Explicit illustration of Proposition~\ref{prop-first-order-orbit-small-channel}.}

We next compute the first-order time parametrization. 
According to Proposition \ref{prop-first-order-orbit-small-channel},
\[
V_1(\theta)
=
\tfrac{\Gamma}{2}
\left[
\partial_y R_1(\theta,y_\star)
+
\tfrac{2\pi  R_1(\theta,y_\star)}
{\sin(2\pi y_\star)}
\right].
\]
Since $R_1(\theta,y)$ is proportional to $\sin\theta$, we may write
\[
V_1(\theta)=B(y_\star)\sin\theta.
\]
In particular,
\[
\int_{\T}V_1(\theta)\,d\theta=0,
\]
and therefore the first-order correction to the effective drift vanishes:
\begin{equation*}
\label{eq-explicit-c1-zero}
c_1=0.
\end{equation*}
Consequently,
\begin{equation*}
\label{eq-explicit-effective-drift}
c_\eps
=
\tfrac{\Gamma}{4}\cot(\pi y_\star)
+
O(\eps^2).
\end{equation*}
The first-order straightening function $u_1$ is the unique zero-average
solution of
\[
\partial_\theta u_1(\theta)
=
-\tfrac{V_1(\theta)}{c_0}.
\]
A direct computation gives
\begin{equation*}
\label{eq-explicit-u1}
u_1(\theta)
=
e^{-y_\star}
\left[
\tfrac{\tan(\pi y_\star)}{\pi}
-
\tan^2(\pi y_\star)
\right]
\cos\theta.
\end{equation*}
The expansion of Proposition~\ref{prop-first-order-orbit-small-channel} then becomes
\begin{align*}
x(t)
={}&
x_0+c_0t
+
\eps e^{-y_\star}
\left[
\tfrac{\tan(\pi y_\star)}{\pi}
-
\tan^2(\pi y_\star)
\right]
\nonumber\\
&\qquad\times
\left[
\cos x_0
-
\cos(x_0+c_0t)
\right]
+
O(\eps^2),\quad \theta_0=x_0.
\label{eq-explicit-x-orbit}
\end{align*}
For the vertical component,
\begin{equation*}
\label{eq-explicit-y-orbit}
y(t)
=
y_\star
+
\eps e^{-y_\star}
\left(
1+\tfrac{\tan(\pi y_\star)}{\pi}
\right)
\sin(x_0+c_0t)
+
O(\eps^2).
\end{equation*}
Thus the complete point-vortex trajectory takes the explicit form
\begin{equation*}
\label{eq-explicit-full-orbit}
{
\begin{aligned}
x(t)
={}&
x_0
+
\tfrac{\Gamma}{4}\cot(\pi y_\star)t
\\
&+
\eps e^{-y_\star}
\left[
\tfrac{\tan(\pi y_\star)}{\pi}
-
\tan^2(\pi y_\star)
\right]
\left[
\cos x_0
-
\cos\left(
x_0+\tfrac{\Gamma}{4}\cot(\pi y_\star)t
\right)
\right]
+
O(\eps^2),
\\[1ex]
y(t)
={}&
y_\star
+
\eps e^{-y_\star}
\left(
1+\tfrac{\tan(\pi y_\star)}{\pi}
\right)
\sin\left(
x_0+\tfrac{\Gamma}{4}\cot(\pi y_\star)t
\right)
+
O(\eps^2).
\end{aligned}
}
\end{equation*}
This example gives an explicit realization of Propositions~\ref{cor-small-qp-exact-orbits}
and~\ref{prop-first-order-orbit-small-channel}. The invariant horizontal line of the flat strip develops a
sinusoidal transverse oscillation of order $\eps$, while the constant
horizontal motion acquires a bounded periodic modulation. Since the
first-order velocity correction has zero average, the effective drift
does not change at order $\eps$ in this particular example.

{
\subsection{Periodic phase portraits beyond the conformal example}
\label{sec:periodic_phase_portraits}
The conformal example above provides an explicit realization of the
breaking of the degenerate centerline of the flat strip into isolated
elliptic and hyperbolic equilibria.  The same qualitative Hamiltonian
organization is not restricted to geometries admitting an explicit
conformal representation.
To illustrate this point, we compute the full Robin function for two
periodic channels, one preserving the reflection symmetry of the flat
strip and one breaking it.  Figure~\ref{fig:periodic_robin_portraits}
shows the resulting phase portraits.  In both cases the colored curves
are level sets of the full Robin function $R_\varepsilon$ and therefore
point-vortex trajectories.
For the symmetric channel
\[
y_\pm(x)=\pm\bigl(1+0.1\cos x\bigr),
\]
the identity
\[
R_\varepsilon(x,-y)=R_\varepsilon(x,y)
\]
forces the critical points to remain on the centerline $y=0$.
The continuous critical line of the flat strip is replaced by an
alternating periodic chain of elliptic and hyperbolic equilibria.
Closed Robin levels surround the elliptic points, while the critical
levels passing through the hyperbolic points form separatrix-type
boundaries between trapped and transporting trajectories.
\\
Breaking the vertical reflection symmetry, for example with
\[
y_+(x)=1+0.1\cos x,
\qquad
y_-(x)=-1-0.2\sin x,
\]
removes this constraint.  The elliptic and hyperbolic equilibria are
then displaced away from the centerline and the associated islands
become asymmetric, while the same basic Hamiltonian organization
persists.
\begin{figure}[h]
\centering
\begin{minipage}{0.49\textwidth}
\centering
\includegraphics[width=\textwidth]{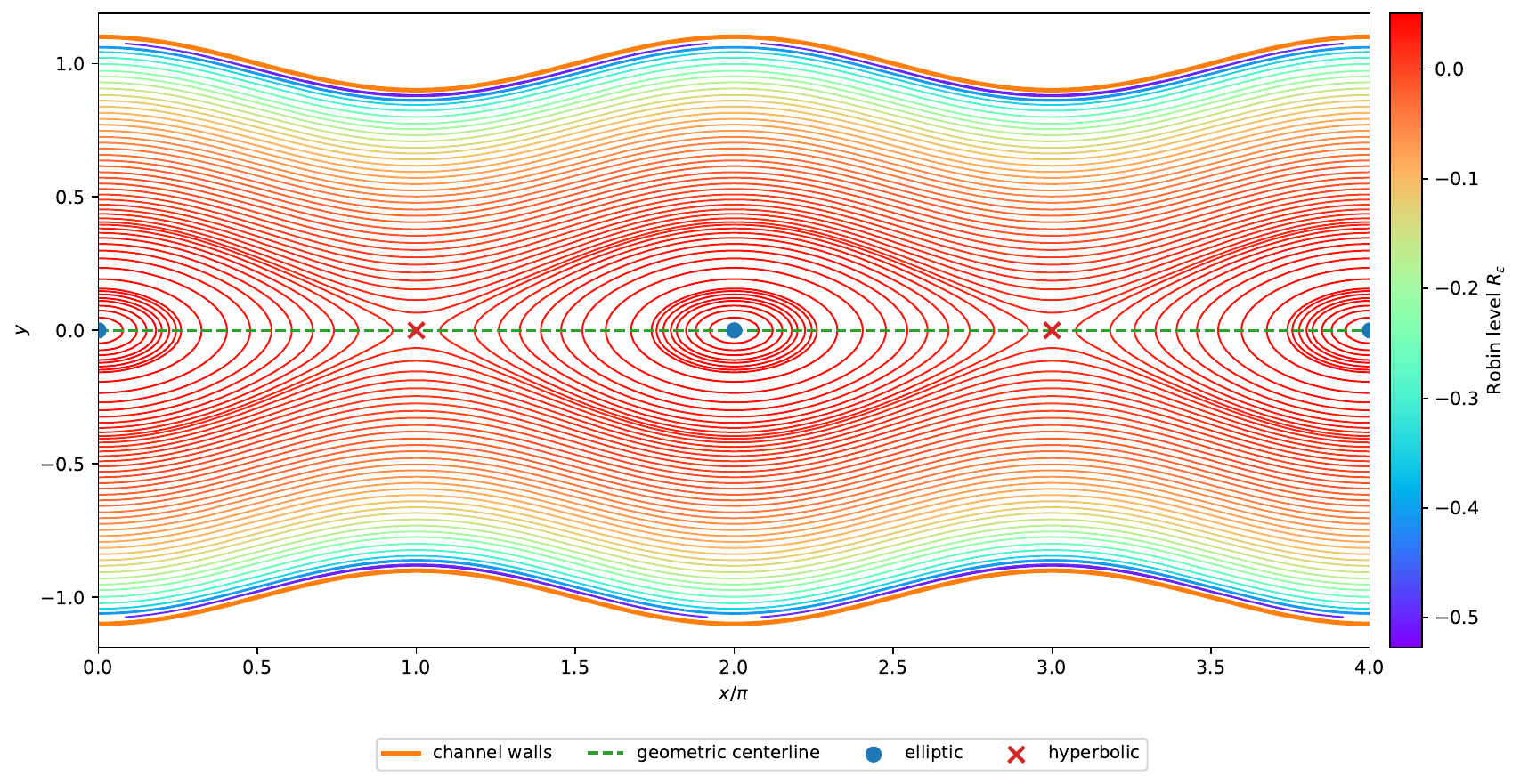}
\\[-1mm]
{\small (a) Symmetric periodic channel}
\end{minipage}
\hfill
\begin{minipage}{0.49\textwidth}
\centering
\includegraphics[width=\textwidth]{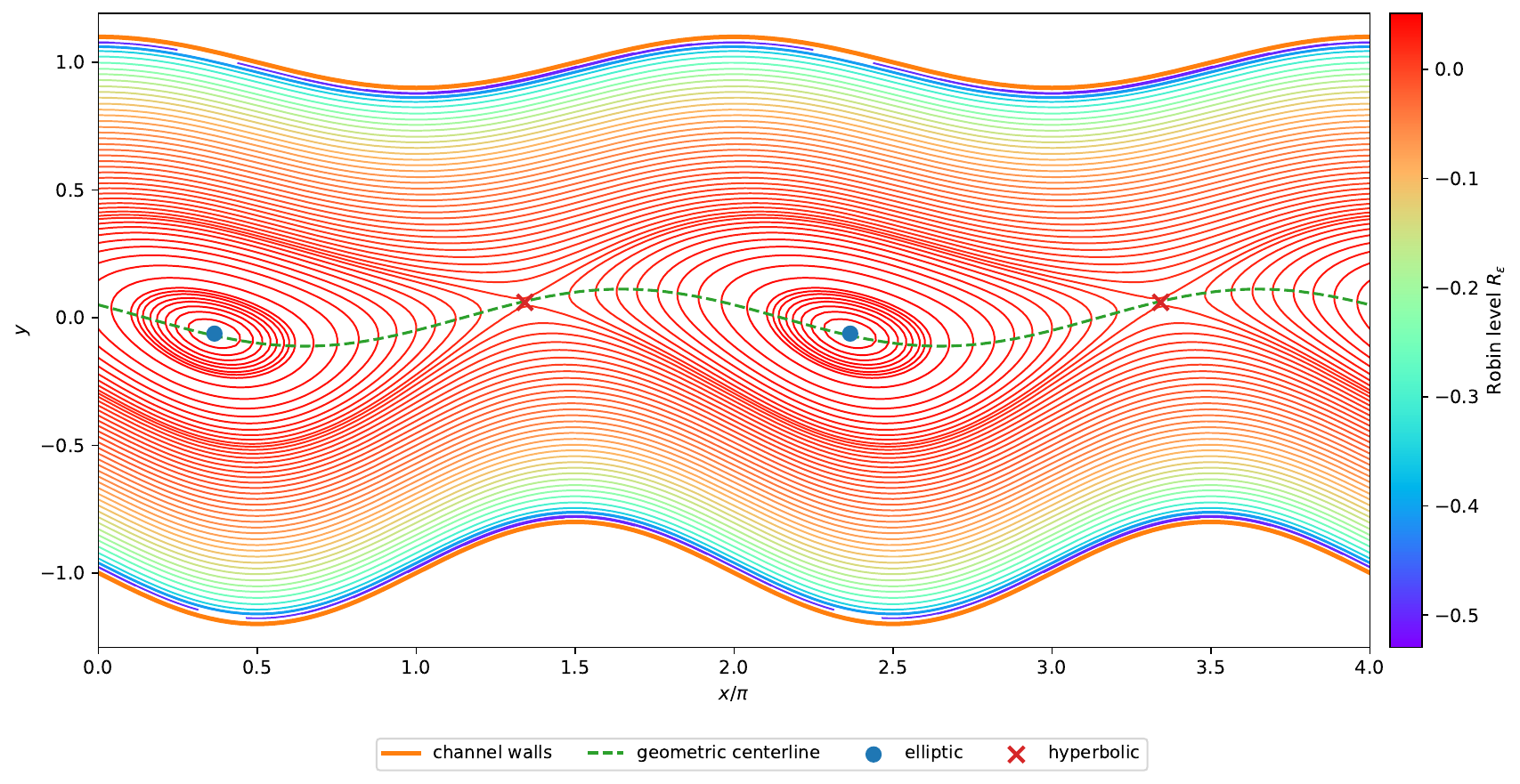}
\\[-1mm]
{\small (b) Asymmetric periodic channel}
\end{minipage}
\caption{
Full-Robin phase portraits for two periodic channels.
In panel~(a), reflection symmetry keeps the critical points on the
centerline and produces an alternating chain of elliptic and hyperbolic
equilibria.  In panel~(b), breaking this symmetry displaces the
equilibria and deforms the trapped islands.  The discrete colored curves
are level sets of the full Robin function $R_\varepsilon$, with colors
assigned from the continuous Robin-value scale.  Closed levels represent
trapped vortex motion, while the critical levels through hyperbolic
points separate these islands from transporting trajectories.
}
\label{fig:periodic_robin_portraits}
\end{figure}
These examples isolate the generic periodic mechanism: a boundary
perturbation resolves the degenerate critical centerline of the flat
strip into isolated equilibria and reorganizes the nearby Robin level
sets.  The next question is whether the number of such equilibria within
one period is itself stable when several spatial scales are present.
This leads naturally to the two-mode problem studied in the next
section, where the relative modal amplitude becomes a bifurcation
parameter.
}

\section{Periodic boundary oscillation effects on the distribution of critical points}
\label{sec-boundary-oscillation-critical-points}
We now investigate more systematically how the interaction between
different spatial modes in a \emph{periodic} boundary perturbation affects the
critical-point structure of the Robin function. In contrast with the
single-mode periodic examples considered above, the presence of a second
Fourier mode introduces a relative amplitude parameter. Varying this
parameter may change the number and location of the critical points generated
from the degenerate centerline of the flat strip.
\\
We first derive the corresponding first-order Robin function for this
two-mode periodic perturbation. We then analyze the critical points lying near
the centerline and finally describe the bifurcations in their number as the
relative amplitude of the two periodic modes varies.

\subsection{Two-mode boundary perturbations and the Robin expansion}
\label{subsec-two-mode-Robin-expansion}

We consider a flat lower boundary and perturb only the upper interface
by two Fourier modes. More precisely, let
\begin{equation}
\label{eq-two-mode-periodic-channel}
\Omega_\varepsilon
=
\left\{
(x,y)\in\mathbb R^2:
0<y<1+\varepsilon g_+(x)
\right\},
\end{equation}
where
\begin{equation*}
\label{eq-two-mode-periodic-boundary}
g_+(x)
=
\cos x+a\cos(nx),
\qquad
g_-(x)=0,
\qquad
n\in\mathbb N,\quad n\ge2,
\end{equation*}
and $a\in\mathbb R$ is a parameter measuring the relative amplitude
of the higher Fourier mode. We assume throughout that
$|\varepsilon|$ is sufficiently small so that
$\Omega_\varepsilon$ remains a smooth channel.
\\
For the perturbed channel \eqref{eq-two-mode-periodic-channel},
Lemma~\ref{lem-robin-expansion-perturbed} gives
\begin{equation*}
\label{eq-Robin-two-mode-expansion}
R_{\Omega_\varepsilon}(x,y)
=
R_0(y)
+
\varepsilon R_1(x,y)
+
O(\varepsilon^2),
\end{equation*}
uniformly on any compact subsets of the flat strip. In view of the
 Hadamard formula seen in Lemma \ref{lem-robin-expansion-perturbed}, we have
\begin{equation*}
\label{eq-R1-two-mode-integral}
R_1(x,y)
=
\int_{\mathbb R}
\left[
\cos(s+x)+a\cos(n(s+x))
\right]
P_+^2(s,y)\,ds,
\end{equation*}
where
\begin{equation*}
\label{eq-Pplus-two-mode}
P_+(s,y)
=
\frac12
\frac{\sin(\pi y)}
{\cosh(\pi s)+\cos(\pi y)}.
\end{equation*}
Since $P_+^2(\cdot,y)$ is even, the odd Fourier contributions vanish.
Consequently,
\begin{equation*}
\label{eq-R1-two-mode}
R_1(x,y)
=
A_1(y)\cos x
+
aA_n(y)\cos(nx),
\end{equation*}
where, for every integer $k\ge1$,
\begin{equation}
\label{eq-Ak-two-mode}
A_k(y)
:=
\int_{\mathbb R}
\cos(ks)P_+^2(s,y)\,ds
=
\frac{1}{2\sinh k}
\left[
\frac{k}{\pi}\cosh(ky)
-
\sinh(ky)\cot(\pi y)
\right].
\end{equation}
In particular,
\begin{equation}
\label{eq-Ak-centerline}
A_k\left(\tfrac12\right)
=
\tfrac{k}{4\pi\sinh(k/2)}.
\end{equation}
Differentiating \eqref{eq-Ak-two-mode}, we obtain
\begin{equation*}
\label{eq-Ak-prime-two-mode}
A_k'(y)
=
\frac{1}{2\sinh k}
\left[
\frac{k^2}{\pi}\sinh(ky)
-
k\cosh(ky)\cot(\pi y)
+
\pi\sinh(ky)\csc^2(\pi y)
\right],
\end{equation*}
and therefore
\begin{equation}
\label{eq-Ak-prime-centerline}
A_k'\left(\tfrac12\right)
=
\tfrac{k^2+\pi^2}{4\pi\cosh(k/2)}.
\end{equation}
We shall work with the first-order approximation
\begin{equation}
\label{eq-approx-Robin-two-mode}
\widetilde R_\varepsilon(x,y)
:=
R_0(y)
+
\varepsilon A_1(y)\cos x
+
\varepsilon aA_n(y)\cos(nx).
\end{equation}
The flat strip possesses the whole line $y=1/2$ as a critical
manifold. The purpose of the next subsection is to describe how
\eqref{eq-approx-Robin-two-mode} resolves this degeneracy into isolated
critical points.

\subsection{Critical points near the centerline}
\label{subsec-critical-points-centerline-periodic}

Differentiating \eqref{eq-approx-Robin-two-mode}, the critical-point
equations are
\begin{equation}
\label{eq-critical-system-two-mode}
\left\{
\begin{aligned}
&A_1(y)\sin x
+
anA_n(y)\sin(nx)=0,
\\
&{\tfrac12\cot(\pi y)}
+
\varepsilon
\left[
A_1'(y)\cos x
+
aA_n'(y)\cos(nx)
\right]
=0.
\end{aligned}
\right.
\end{equation}
Using
\[
\sin(nx)
=
\sin x\,U_{n-1}(\cos x),
\]
where $U_{n-1}$ denotes the Chebyshev polynomial of the second kind,
the first equation becomes
\begin{equation}
\label{eq-horizontal-factorization-two-mode}
\sin x
\left[
A_1(y)
+
anA_n(y)U_{n-1}(\cos x)
\right]
=0.
\end{equation}
It is therefore natural to distinguish the axial branches
$x=0,\pi$ from the non-axial critical points.
Define
\begin{equation*}
\label{eq-Lambda-n-definition}
\Lambda_n(a)
:=
-
\tfrac{\sinh(n/2)}
{a n^2\sinh(1/2)}.
\end{equation*}
The main goal is to prove the following result.
\begin{proposition}
\label{prop-critical-branches-two-mode}
Let $n\ge2$ and $a\neq0$. Assume that
\begin{equation*}
\label{eq-simple-Chebyshev-roots}
U_{n-1}'(c)\neq0
\end{equation*}
for every $c\in(-1,1)$ satisfying
\begin{equation}
\label{eq-Chebyshev-level-equation}
U_{n-1}(c)=\Lambda_n(a),
\end{equation}
and assume moreover that
\begin{equation}
\label{eq-axial-nondegeneracy-condition}
\Lambda_n(a)
\notin
\left\{
n,\,(-1)^{n-1}n
\right\}.
\end{equation}
Then, for $|\varepsilon|$ sufficiently small, all the critical points
of $\widetilde R_\varepsilon$ near the centerline are isolated and
nondegenerate.
There are always two axial critical points
\[
P_{0,\varepsilon}
=
(0,y_{0,\varepsilon}),
\qquad
P_{\pi,\varepsilon}
=
(\pi,y_{\pi,\varepsilon}),
\]
with
\begin{align*}
\label{eq-axial-critical-expansions}
y_{0,\varepsilon}
&=
\tfrac12
{+
\tfrac{\varepsilon}{2\pi^2}}
\left[
\tfrac{1+\pi^2}{\cosh(1/2)}
+
a\tfrac{n^2+\pi^2}{\cosh(n/2)}
\right]
+
O(\varepsilon^2),
\\
y_{\pi,\varepsilon}
&=
\tfrac12
{-
\tfrac{\varepsilon}{2\pi^2}}
\left[
\tfrac{1+\pi^2}{\cosh(1/2)}
-
a(-1)^n
\tfrac{n^2+\pi^2}{\cosh(n/2)}
\right]
+
O(\varepsilon^2).
\end{align*}
In addition, every solution
$c_j\in(-1,1)$ of
\eqref{eq-Chebyshev-level-equation} generates two non-axial critical
points
\begin{equation*}
\label{eq-off-axis-critical-points}
P_{j,\varepsilon}^{\pm}
=
\left(
\pm\arccos(c_j)+O(\varepsilon),
\tfrac12+\varepsilon Y_j+O(\varepsilon^2)
\right)
\pmod{2\pi},
\end{equation*}
where
\begin{equation*}
\label{eq-Yj-two-mode}
Y_j
=
\tfrac{1}{2\pi^2}
\left[
\tfrac{1+\pi^2}{\cosh(1/2)}\,c_j
+
a
\tfrac{n^2+\pi^2}{\cosh(n/2)}
T_n(c_j)
\right],
\end{equation*}
and $T_n$ denotes the Chebyshev polynomial of the first kind.
\end{proposition}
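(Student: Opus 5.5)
The plan is a blow-up of the centerline followed by the implicit function theorem for a reduced system that is regular in $\varepsilon$. We first localize. If $(x,y)$ is a critical point of $\widetilde R_\varepsilon$ with $|y-\tfrac12|\le\delta$, the second equation of \eqref{eq-critical-system-two-mode} gives $|\cot(\pi y)|\le C|\varepsilon|$, because $A_1'$ and $A_n'$ are bounded near $\tfrac12$. Hence $|y-\tfrac12|\le M|\varepsilon|$ for some fixed $M$. We therefore set
\[
y=\tfrac12+\varepsilon\eta,\qquad |\eta|\le M+1 ,
\]
and regard $(x,\eta)\in\T\times[-M-1,M+1]$ as the unknowns.

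Since $\cot(\tfrac\pi2+\pi\varepsilon\eta)=-\tan(\pi\varepsilon\eta)$, we define the rescaled map
\[
F(x,\eta,\varepsilon)
=
\Bigl(
-\bigl[A_1(y)\sin x+anA_n(y)\sin(nx)\bigr],\;
-\tfrac{\tan(\pi\varepsilon\eta)}{2\varepsilon}+A_1'(y)\cos x+aA_n'(y)\cos(nx)
\Bigr),
\qquad y=\tfrac12+\varepsilon\eta .
\]
For $\varepsilon\neq0$ the zeros of $F$ are exactly the critical points of $\widetilde R_\varepsilon$ in the localized region: the first component is $\varepsilon^{-1}\partial_x\widetilde R_\varepsilon$ and the second is $\varepsilon^{-1}\partial_y\widetilde R_\varepsilon$. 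The map $F$ extends smoothly (real-analytically) to $\varepsilon=0$. Using \eqref{eq-Ak-centerline} and \eqref{eq-Ak-prime-centerline}, its limit is
\[
F(x,\eta,0)=\Bigl(-h(x),\;-\tfrac{\pi}{2}\eta+\tfrac{1+\pi^2}{4\pi\cosh(1/2)}\cos x+a\tfrac{n^2+\pi^2}{4\pi\cosh(n/2)}\cos(nx)\Bigr).
\]
Here $h(x)=\sin x\,B(\cos x)$, with $B(c)=\tfrac{1}{4\pi\sinh(1/2)}+a\tfrac{n^2}{4\pi\sinh(n/2)}U_{n-1}(c)$, by the factorization \eqref{eq-horizontal-factorization-two-mode}.

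Next we identify the zeros of the limit system. $B(c)=0$ holds exactly when $U_{n-1}(c)=\Lambda_n(a)$. So the zeros of $h$ on $\T$ are $x=0$, $x=\pi$, and $x=\pm\arccos c_j$ for the roots $c_j\in(-1,1)$ of \eqref{eq-Chebyshev-level-equation}. The second component then determines $\eta$ explicitly, using $\cos(nx)=T_n(\cos x)$. This gives precisely the values $Y_j$ in the statement, and the stated axial coefficients after inserting $c=\pm1$ and $T_n(\pm1)=(\pm1)^n$.

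We then check that the Jacobian $D_{(x,\eta)}F(\cdot,\cdot,0)$ is invertible at each of these zeros. It is triangular, because the first component does not depend on $\eta$ at $\varepsilon=0$. Its determinant is therefore $\tfrac{\pi}{2}h'(x)$. At the axial points, $h'(x)=\cos x\,B(\pm1)$, and $B(\pm1)\neq0$ is exactly \eqref{eq-axial-nondegeneracy-condition}, since $U_{n-1}(1)=n$ and $U_{n-1}(-1)=(-1)^{n-1}n$. At the non-axial points, $B(c_j)=0$, so
\[
h'(x)=-\sin^2x\;a\tfrac{n^2}{4\pi\sinh(n/2)}U_{n-1}'(c_j)\neq0 ,
\]
because $\sin x\neq0$ and the Chebyshev roots are assumed simple.

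The implicit function theorem now gives a unique smooth branch $(x_\varepsilon,\eta_\varepsilon)$ through each zero. In particular $x_\varepsilon=x_0+O(\varepsilon)$ and $\eta_\varepsilon=\eta_0+O(\varepsilon)$, which yields the expansions in the statement. Moreover $x\equiv0$ and $x\equiv\pi$ remain exact solutions of the first component for every $\varepsilon$, so the axial branches stay on the axis.

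The main obstacle is completeness: these must be all the critical points near the centerline. The limit system has finitely many zeros, all nondegenerate, in the compact set $\T\times[-M-1,M+1]$. A standard compactness argument then applies: $|F(\cdot,\cdot,0)|$ is bounded below outside small neighborhoods of these zeros, and $F(\cdot,\cdot,\varepsilon)\to F(\cdot,\cdot,0)$ uniformly in $C^1$. Together with local uniqueness from the implicit function theorem, this excludes any other zeros for $|\varepsilon|$ small.

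Finally, nondegeneracy for $\widetilde R_\varepsilon$ itself follows from the rescaling. The Hessian in $(x,y)$ equals $\varepsilon\,D_{(x,\eta)}F\cdot\operatorname{diag}(1,\varepsilon^{-1})$. Its determinant is therefore $\varepsilon\det D_{(x,\eta)}F$, which is nonzero for small $\varepsilon\neq0$.
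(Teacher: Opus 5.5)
Your proposal is correct and takes essentially the same route as the paper. Both arguments locate the zeros of the $\varepsilon=0$ limiting system through the factorization $\sin(nx)=\sin x\,U_{n-1}(\cos x)$, apply the implicit function theorem at these nondegenerate zeros, and obtain nondegeneracy from a Hessian determinant of the form $\tfrac{\pi\varepsilon}{2}h'(x_0)+O(\varepsilon^2)$. The differences are minor: the paper works directly in $(x,y)$ with only the $x$-equation divided by $\varepsilon$ and then expands the $y$-equation, whereas your blow-up $y=\tfrac12+\varepsilon\eta$ gives the first-order coefficients at once and, through your compactness argument, makes explicit the completeness claim (no other critical points near the centerline) that the paper leaves implicit.
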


\begin{proof}
We first consider the solutions of
\eqref{eq-horizontal-factorization-two-mode} for which
$\sin x=0$. Modulo $2\pi$, these are
\[
x=0,
\qquad
x=\pi.
\]
For $x=0$, the second equation of
\eqref{eq-critical-system-two-mode} becomes
\[
\tfrac12\cot(\pi y)
+
\varepsilon
\left[
A_1'(y)+aA_n'(y)
\right]
=0.
\]
At $\varepsilon=0$, this equation has the unique solution
$y=1/2$. Moreover,
\begin{align*}
R_0''\left(\tfrac12\right)=-\tfrac{\pi}{2}<0.
\end{align*}
Therefore, the implicit function theorem gives a unique solution
$y_{0,\varepsilon}$ close to $1/2$. Expanding with respect to
$\varepsilon$ and using
\eqref{eq-Ak-prime-centerline} gives the suitable  expansion of $y_{0,\eps}$.
The same argument at $x=\pi$ gives the second axial critical point
$P_{\pi,\varepsilon}$ and the corresponding expansion.
\\
Let us verify their nondegeneracy. Since the mixed derivative vanishes
at $x=0,\pi$, we obtain
\begin{equation*}
\label{eq-Hessian-axial-zero}
\det D^2\widetilde R_\varepsilon(P_{0,\varepsilon})
=
\frac{\pi\varepsilon}{2}
\left[
A_1\left(\tfrac12\right)
+
an^2A_n\left(\tfrac12\right)
\right]
+
O(\varepsilon^2),
\end{equation*}
and
\begin{equation*}
\label{eq-Hessian-axial-pi}
\det D^2\widetilde R_\varepsilon(P_{\pi,\varepsilon})
=
\frac{\pi\varepsilon}{2}
\left[
-
A_1\left(\tfrac12\right)
+
a(-1)^n n^2A_n\left(\tfrac12\right)
\right]
+
O(\varepsilon^2).
\end{equation*}
Using \eqref{eq-Ak-centerline}, the leading coefficients vanish
precisely when
\[
\Lambda_n(a)=n
\qquad\text{or}\qquad
\Lambda_n(a)=(-1)^{n-1}n,
\]
respectively. Thus
\eqref{eq-axial-nondegeneracy-condition} guarantees that both axial
critical points are nondegenerate for $|\varepsilon|$ sufficiently
small.
We next consider the solutions for which $\sin x\neq0$. In this case,
\eqref{eq-horizontal-factorization-two-mode} is equivalent to
\begin{equation*}
\label{eq-off-axis-equation-y}
U_{n-1}(\cos x)
=
-
\frac{A_1(y)}
{anA_n(y)}.
\end{equation*}
At $\varepsilon=0$, the second equation of
\eqref{eq-critical-system-two-mode} imposes
\(
y=\frac12.
\)
Using \eqref{eq-Ak-centerline}, the limiting horizontal equation becomes
\[
U_{n-1}(\cos x)
=
-
\frac{A_1(1/2)}
{anA_n(1/2)}
=
\Lambda_n(a).
\]
Let $c_j\in(-1,1)$ satisfy
\[
U_{n-1}(c_j)=\Lambda_n(a).
\]
The corresponding horizontal positions are
\[
x_j^\pm
=
\pm\arccos(c_j)
\pmod{2\pi}.
\]
Since $c_j\in(-1,1)$ and
$U_{n-1}'(c_j)\neq0$, these are simple solutions of the limiting
horizontal equation. Moreover, the derivative with respect to $y$ of the function involved in the second equation
of \eqref{eq-critical-system-two-mode}  at
$\varepsilon=0$ is non zero, consequently, the Jacobian of the critical-point system is invertible
at
\[
\left(x_j^\pm,\tfrac12,0\right),
\]
and the implicit function theorem gives unique critical points
$P_{j,\varepsilon}^{\pm}$ nearby.
More precisely,
\begin{equation*}
\label{eq-Hessian-off-axis}
\det D^2\widetilde R_\varepsilon
\left(
x_j^\pm,\tfrac12
\right)
=-
{\varepsilon
\frac{an^2}{8\sinh(n/2)}}
(1-c_j^2)
U_{n-1}'(c_j)
+
O(\varepsilon^2),
\end{equation*}
which is nonzero for sufficiently small $|\varepsilon|$.
Finally, write
\[
x_{j,\varepsilon}^\pm
=
x_j^\pm+O(\varepsilon),
\qquad
y_{j,\varepsilon}
=
\tfrac12+\varepsilon Y_j+O(\varepsilon^2).
\]
Expanding the second equation of
\eqref{eq-critical-system-two-mode} gives
\[
R_0''\left(\tfrac12\right)Y_j
+
A_1'\left(\tfrac12\right)c_j
+
aA_n'\left(\tfrac12\right)T_n(c_j)
=
0.
\]
Combined with \eqref{eq-Ak-prime-centerline}, it yields the suitable formula for $Y_j.$
\end{proof}
\subsection{Amplitude-dependent bifurcations of the critical set}
\label{subsec-amplitude-bifurcations-critical}

We now count the critical points obtained above and describe how their
number changes as the relative amplitude $a$ varies. Define
\begin{equation}
\label{eq-Nn-definition}
N_n(\lambda)
:=
\#
\Big\{
c\in(-1,1):
U_{n-1}(c)=\lambda
\Big\}.
\end{equation}
The preceding analysis immediately yields the following result.

\begin{proposition}
\label{prop-number-critical-points-two-mode}
Let the assumptions of
Proposition~\ref{prop-critical-branches-two-mode} hold. Then there
exists $\varepsilon_0>0$ such that, for every
$0<|\varepsilon|<\varepsilon_0$, the Robin function
$R_{\Omega_\varepsilon}$ has exactly
\begin{equation}
\label{eq-number-critical-points-two-mode}
{
2+2N_n\bigl(\Lambda_n(a)\bigr)
}
\end{equation}
nondegenerate critical points in one spatial period.
\\
Moreover, for $|a|$ sufficiently small there are exactly two critical points per
period, whereas for $|a|$ sufficiently large there are exactly $2n$
critical points per period.
\end{proposition}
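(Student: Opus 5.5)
The plan is to count the critical points of the full Robin function $R_{\Omega_\varepsilon}$ directly. I reduce the critical-point system to a scalar periodic equation in $x$ whose limit as $\varepsilon\to0$ is exactly the horizontal equation \eqref{eq-horizontal-factorization-two-mode} at $y=\tfrac12$. Proposition~\ref{prop-critical-branches-two-mode} then identifies its zeros, and the count becomes a root count for Chebyshev polynomials.

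\textbf{Step 1: localization.} I first show that, for small $|\varepsilon|$, every critical point in one period lies in a band $|y-\tfrac12|\le\delta$. Near either boundary, the asymptotics \eqref{eq-Robin-strong-transversality-boundary} give $|\partial_y R|\ge c/d$, uniformly in $\varepsilon$, since the constants depend only on $C^{2}$ bounds of the boundary. On the compact set $\delta\le y\le 1-\delta'$ with $|y-\tfrac12|\ge\delta$, Lemma~\ref{lem-robin-expansion-perturbed} gives $\partial_yR_{\Omega_\varepsilon}=R_0'(y)+O(\varepsilon)$, which is nonzero. Here I use that $g_+$ is smooth, so $m$ can be taken large and the remainder $\varepsilon^2\mathcal R_{2,\varepsilon}$ is $O(\varepsilon^2)$ in $C^2$.

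\textbf{Step 2: Lyapunov--Schmidt reduction in the band.} Since $R_0''(\tfrac12)=-\pi/2\neq0$, the equation $\partial_yR_{\Omega_\varepsilon}(x,y)=0$ has, by the implicit function theorem uniformly in $x\in\T$, a unique solution $y=Y_\varepsilon(x)=\tfrac12+O(\varepsilon)$ in the band, with $Y_\varepsilon$ smooth and $2\pi$-periodic. The critical points are therefore in bijection with the zeros of
\[
G_\varepsilon(x):=\varepsilon^{-1}\partial_xR_{\Omega_\varepsilon}\bigl(x,Y_\varepsilon(x)\bigr)=\partial_xR_1\bigl(x,\tfrac12\bigr)+O(\varepsilon),
\]
where the error is $O(\varepsilon)$ in $C^1(\T)$. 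The limit function is
\[
G_0(x)=-\sin x\bigl[A_1(\tfrac12)+anA_n(\tfrac12)U_{n-1}(\cos x)\bigr].
\]

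\textbf{Step 3: simplicity of the zeros of $G_0$.} At $x=0,\pi$, the derivative $G_0'$ equals $\mp\bigl[A_1(\tfrac12)+an^2A_n(\tfrac12)(\pm1)^{n-1}\bigr]$. This is nonzero exactly under \eqref{eq-axial-nondegeneracy-condition}, by the same computation as in the Hessian formulas of Proposition~\ref{prop-critical-branches-two-mode}. At $x=\pm\arccos c_j$ we have $\sin x\neq0$, and the derivative of the bracket is proportional to $U_{n-1}'(c_j)\sin x\neq0$. Distinct $c_j\in(-1,1)$ give $2N_n(\Lambda_n(a))$ distinct points, none equal to $0$ or $\pi$. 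A $C^1$-small perturbation of a function with finitely many simple zeros on $\T$ has the same number of zeros, all simple. Hence $G_\varepsilon$ has exactly $2+2N_n(\Lambda_n(a))$ zeros.

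\textbf{Step 4: nondegeneracy.} Each simple zero of $G_\varepsilon$ gives a critical point with Hessian determinant $\varepsilon\,G_\varepsilon'(x)R_0''(\tfrac12)+O(\varepsilon^2)\neq0$. This can be read off from the Schur complement of the Hessian with respect to the $y$-block.

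\textbf{Step 5: the count in $a$.} Write $c=\cos\varphi$, so $U_{n-1}(c)=\sin(n\varphi)/\sin\varphi$. On $(-1,1)$ this satisfies $|U_{n-1}|<n$, with supremum $n$ approached only at the endpoints. For $|a|$ small, $|\Lambda_n(a)|\sim C/|a|>n$, so $N_n=0$ and there are $2$ critical points per period. For $|a|$ large, $\Lambda_n(a)\to0$. The polynomial $U_{n-1}$ has $n-1$ simple zeros $\cos(k\pi/n)$, and $\Lambda_n(a)$ avoids its critical values, all of which are bounded away from $0$. Thus $N_n=n-1$, and the count is $2+2(n-1)=2n$. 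In both regimes the hypotheses on simple Chebyshev roots and axial nondegeneracy hold automatically.

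\textbf{Main obstacle.} I expect Steps 1--2 to be the hardest: making the reduction rigorous for the full $R_{\Omega_\varepsilon}$ rather than the truncation $\widetilde R_\varepsilon$. This requires the $O(\varepsilon^2)$ remainder in $C^2$ uniformly up to the band, together with uniform-in-$\varepsilon$ boundary asymptotics. My first approach would be to take the $C^{m-1}$ remainder bound of Lemma~\ref{lem-robin-expansion-perturbed} with $m\ge3$ on a fixed compact $K\ni\tfrac12$, and to combine it with the boundary estimate. The middle region is then handled by $R_0'\neq0$.
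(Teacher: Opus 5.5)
Your argument is correct, but it is organized differently from the paper's proof.

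\textbf{The paper's route.} The paper counts the critical points of the truncated function $\widetilde R_\varepsilon$: the two axial points and the pairs $\pm\arccos c_j$. It takes their nondegeneracy from the Hessian expansions in Proposition~\ref{prop-critical-branches-two-mode}. It then asserts that, because $R_{\Omega_\varepsilon}=\widetilde R_\varepsilon+O(\varepsilon^2)$, each one persists uniquely as a critical point of the full Robin function, and that no others appear ``in the region under consideration''.

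\textbf{Your route.} You perform the Lyapunov--Schmidt reduction directly on the full Robin function, which is the same device the paper only uses later, in Section 7, for the quasi-periodic centerline. The count becomes a count of simple zeros of the $C^1$-small perturbation $G_\varepsilon$ of $G_0=\partial_xR_1(\cdot,\tfrac12)$. Nondegeneracy follows from the Schur-complement identity $\det D^2R=\partial_{yy}R\cdot\varepsilon G_\varepsilon'$, which reproduces exactly the paper's axial and off-axis Hessian determinants.

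\textbf{What your route buys.}
\begin{enumerate}
\item It handles the anisotropic degeneracy cleanly. The Hessian of $\widetilde R_\varepsilon$ at these points has one eigenvalue of size $O(\varepsilon)$, so the paper's persistence claim under an $O(\varepsilon^2)$ perturbation really needs a rescaled implicit-function argument, which the paper does not spell out.
\item It gives an exact bijection between critical points in the band and zeros of $G_\varepsilon$, rather than persistence alone.
\item Your Step 1 addresses the literal claim ``in one spatial period''. The paper's proof only excludes extra critical points near the centerline.
\item Your Step 5 checks that the hypotheses of Proposition~\ref{prop-critical-branches-two-mode} hold automatically for $|a|$ small and for $|a|$ large. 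The paper leaves this implicit.
\end{enumerate}

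\textbf{The one imported ingredient.} Step 1 needs the boundary asymptotics \eqref{eq-Robin-gradient-boundary-asymptotics} to hold uniformly in $\varepsilon$. The paper states them only for a fixed channel, uniformly in $\theta$. You should justify that the remainder constants depend only on uniform smoothness bounds for the boundaries of the family $\{\Omega_\varepsilon\}$. Alternatively, restrict the conclusion, as the paper effectively does, to a fixed neighborhood of the centerline.
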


\begin{proof}
First, we note that each root
\(
c_j\in(-1,1)
\)
of the equation
\[
U_{n-1}(c)=\Lambda_n(a)
\]
produces exactly two critical points corresponding to
\[
x=\pm\arccos(c_j),
\]
while the axial branches $x=0,\pi$ contribute two additional critical
points. Hence
\[
\#\operatorname{Crit}(\widetilde R_\varepsilon)
=
2+2N_n\bigl(\Lambda_n(a)\bigr).
\]
Away from the exceptional values specified in
Proposition~\ref{prop-critical-branches-two-mode}, all these critical
points are nondegenerate. Since
\[
R_{\Omega_\varepsilon}
=
\widetilde R_\varepsilon
+
O(\varepsilon^2)
\]
in the required differentiability class on compact subsets, each
critical point of $\widetilde R_\varepsilon$ persists uniquely as a
nearby critical point of the full Robin function
$R_{\Omega_\varepsilon}$ for $|\varepsilon|$ sufficiently small.
The localization near the flat critical line also excludes additional
critical points in the region under consideration. Thus
$R_{\Omega_\varepsilon}$ and $\widetilde R_\varepsilon$ have the same
number of critical points in one period.
\\
The integer
\[
N_n\bigl(\Lambda_n(a)\bigr)
\]
is locally constant as a function of $a$ as long as
$\Lambda_n(a)$ does not cross a critical value of $U_{n-1}$ and does
not reach one of the endpoint values associated with $c=\pm1$.
Consequently, changes in the number of critical points occur at the
distinguished amplitudes
\begin{equation}
\label{eq-bifurcation-amplitudes-two-mode}
a_\mu
=
-
\frac{\sinh(n/2)}
{n^2\sinh(1/2)\,\mu},
\end{equation}
where $\mu\neq0$ is a critical value of $U_{n-1}$ on $(-1,1)$,
together with the endpoint degeneracies already excluded in
\eqref{eq-axial-nondegeneracy-condition}.
For small amplitudes,
\[
\lim_{a\to0}
|\Lambda_n(a)|
=
+\infty.
\]
Since $U_{n-1}$ is bounded on $[-1,1]$, it follows that for $|a|$ sufficiently small
\[
N_n\bigl(\Lambda_n(a)\bigr)=0.
\]
Hence there are exactly two critical
points per period.
\\
On the other hand,
\[
\lim_{|a|\to\infty}
|\Lambda_n(a)|
=
0.
\]
The polynomial $U_{n-1}$ has precisely $n-1$ simple zeros in
$(-1,1)$, given by
\[
\cos\left(\tfrac{j\pi}{n}\right),
\qquad
j=1,\ldots,n-1.
\]
Therefore, for $|a|$ sufficiently large,
\[
N_n\bigl(\Lambda_n(a)\bigr)=n-1,
\]
and hence
\[
\#\operatorname{Crit}(R_{\Omega_\varepsilon})
=
2+2(n-1)
=
2n.
\]
\end{proof}

\begin{remark}
\label{rem-two-mode-critical-bifurcation}
The preceding proposition describes explicitly how the degenerate
critical line $y=1/2$ of the flat strip is resolved by the geometry of
the oscillating interface. The fundamental mode alone produces two
critical points per period. As the amplitude $|a|$ of the $n$-th
harmonic increases, additional pairs of critical points appear when
the level $\Lambda_n(a)$ crosses an extremal value of
$U_{n-1}$. Thus the number of critical points is piecewise constant
with respect to $a$, with bifurcations occurring at a finite
collection of distinguished amplitudes. For sufficiently large
$|a|$, the $n$-th harmonic dominates and the maximal number $2n$ of
critical points per period is attained.
\end{remark}
\paragraph{Full-Robin verification of the bifurcation diagram.}
We examine the bifurcation mechanism of Proposition~\ref{prop-number-critical-points-two-mode} using the
full Robin function, without replacing it by the first-order
approximation \eqref{eq-approx-Robin-two-mode}.  We consider
\[
\Omega_\varepsilon
=
\left\{
(x,y)\in\mathbb{R}^2:
0<y<
1+\varepsilon\bigl(\cos x+a\cos(3x)\bigr)
\right\},
\]
with $n=3$ and $\varepsilon=0.05$.  In this case
\[
U_2(c)=4c^2-1,
\]
and the positive-amplitude bifurcation predicted by
Proposition~~\ref{prop-number-critical-points-two-mode} occurs at
\[
a_\ast
=
\frac{\sinh(3/2)}
{9\sinh(1/2)}
\simeq 0.4540179.
\]
For $a<a_\ast$, the leading-order theory predicts only the two axial
equilibria, whereas for $a>a_\ast$ two pairs of non-axial equilibria
are created and the total number of critical points becomes six.
\\
For each value of $a$, the Robin function is computed directly from
the perturbed geometry.  We first determine the numerical vertical
critical graph from
\[
\partial_yR_\varepsilon^{\rm num}(x,Y_c(x))=0,
\]
and subsequently locate the equilibria as the zeros of the derivative
of the reduced Robin function
\[
\widehat R_\varepsilon^{\rm num}(x)
=
R_\varepsilon^{\rm num}(x,Y_c(x)).
\]
Figure~\ref{periodic_bifurcation_full_robin} compares the resulting
full-Robin equilibrium branches with the first-order locations given by
the Chebyshev equation of Proposition~6.1.  The numerical computation
reproduces the transition from two to six critical points and places
the transition within the interval
\[
0.4525<a_{\rm bif}^{\rm num}<0.4550,
\]
which contains the analytical value $a_\ast\simeq0.4540179$.
Away from the immediate bifurcation neighborhood, the full-Robin
branches closely follow the analytical prediction.
\\
To quantify the asymptotic agreement, we fix $a=0.60$ and repeat the
calculation for decreasing values of $\varepsilon$.  The maximum
horizontal displacement between the full-Robin non-axial
equilibria and their first-order predictions satisfies
\[
\max_j
\left|
x_{j,\varepsilon}^{\rm num}-x_j^{(1)}
\right|
\propto \varepsilon^{0.994},
\]
in agreement with the expected $O(\varepsilon)$ accuracy of the first-order critical-point expansion (Figure~\ref{periodic_bifurcation_full_robin_convergence}).
\begin{figure}[t]
    \centering
    \includegraphics[width=0.7\textwidth]{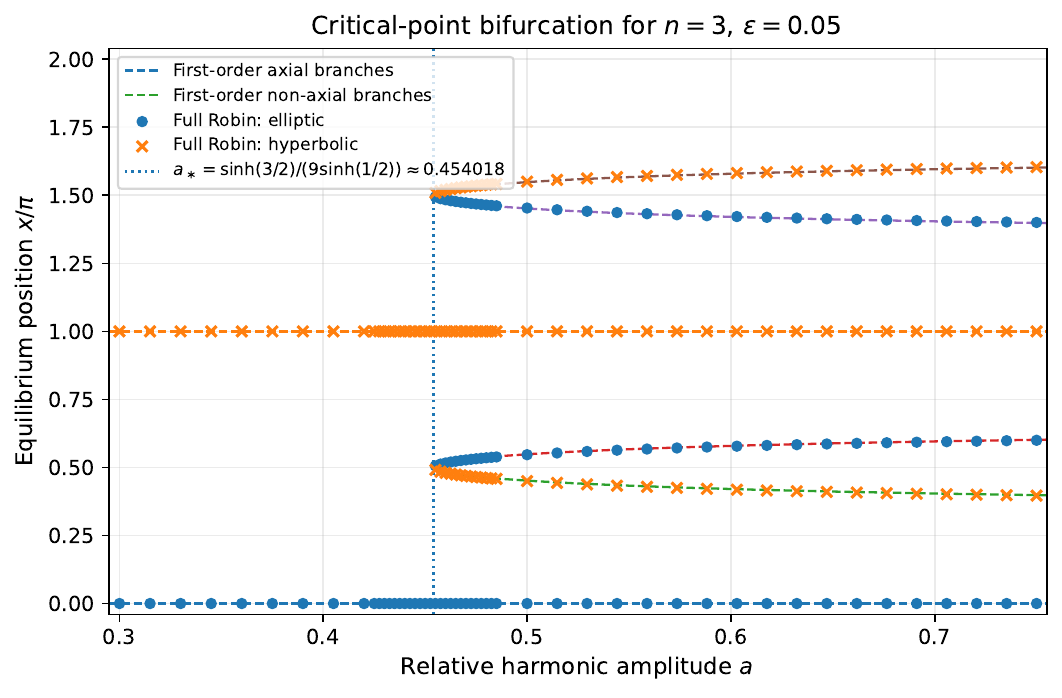}
    \caption{Bifurcation of the critical points for the two-mode periodic channel
$0<y<1+\varepsilon(\cos x+a\cos3x)$ with $\varepsilon=0.05$.
Dashed curves denote the first-order equilibrium branches obtained from the Chebyshev reduction in Proposition~\ref{prop-critical-branches-two-mode}, while the symbols denote critical points computed from the full numerical Robin function. Below the threshold only the two axial equilibria remain; above it two additional pairs of non-axial equilibria are created.}
\label{periodic_bifurcation_full_robin}
\end{figure}
\begin{figure}[h]
    \centering
    \includegraphics[width=0.7\textwidth]{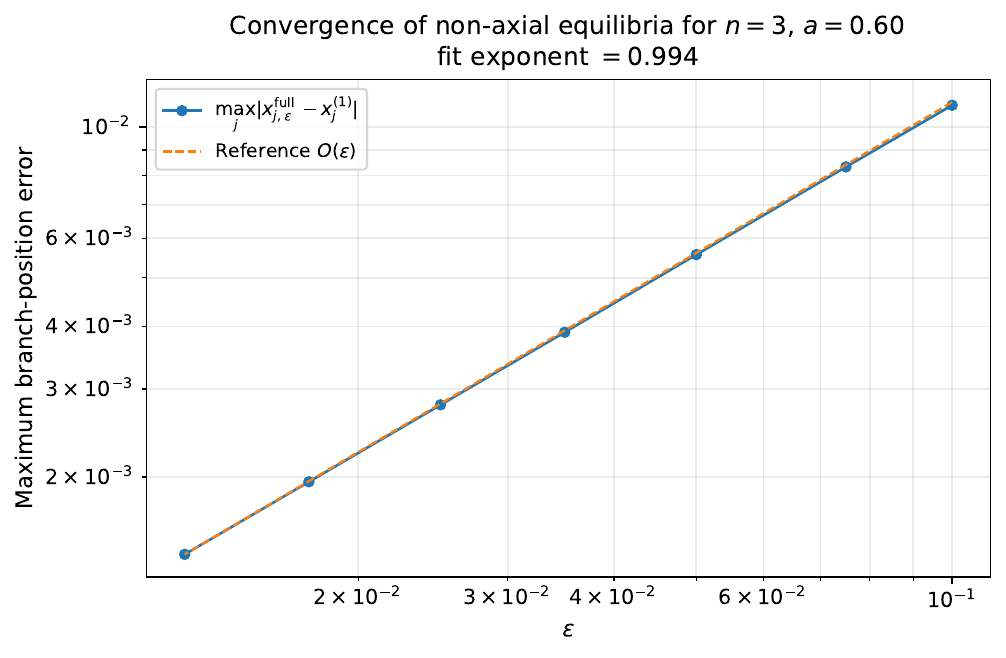}
    \caption{Convergence of the full-Robin non-axial equilibrium locations to the first-order predictions for $n=3$ and $a=0.60$. The maximum branch-position error is proportional to $\varepsilon^{0.994}$, confirming the $O(\varepsilon)$ accuracy of the perturbative critical-point location.}
    \label{periodic_bifurcation_full_robin_convergence}
\end{figure}
The periodic setting considered here provides a natural precursor to
the  quasi-periodic situation. In the present case the critical points can be counted in a single fundamental cell. For incommensurate spatial frequencies, however, no such cell exists and the appropriate quantity is instead the asymptotic number of critical
points per unit length. This motivates the ergodic analysis developed in the next section.

\section{Ergodicity and asymptotic distribution of vortex equilibria}
\label{sec-quasi-periodic-interfaces-ergodicity}

We now investigate the asymptotic distribution of the critical points of the Robin function, which correspond precisely to vortex equilibria, along a channel with quasi-periodic interfaces. In the
periodic setting, this question can be reduced to a single fundamental
cell, since the entire configuration of critical points repeats after
one spatial period. No such reduction is available in the
quasi-periodic case, where the interaction of incommensurate spatial
frequencies produces a non-repeating pattern along the channel.
Consequently, rather than attempting to locate every critical point
individually, it is more natural to study their distribution on large
spatial scales.

The main objective of this section is to show that this distribution,
although non-periodic, possesses a precise deterministic structure.
The key observation is that the quasi-periodic dependence on the
longitudinal variable can be lifted to a linear flow on a
finite-dimensional torus. The critical points can then be interpreted,
to leading order, as the successive times at which this flow intersects
a suitable hypersurface determined by the geometry of the channel.
When the underlying frequencies are rationally independent, the
corresponding linear flow is uniquely ergodic and explores the torus
uniformly in the long run. This is the mechanism through which
ergodicity naturally enters the study of the critical set.

Under an appropriate transversality condition, the crossings are
nondegenerate and their frequency can be quantified. This leads to an
asymptotic density for the number of critical points per unit length
and, more precisely, to an equidistribution law for their associated
phases on the torus. The limiting distribution is determined not only
by the ergodicity of the underlying flow, but also by the geometry and
the direction of the crossings. Thus, in the quasi-periodic setting,
unique ergodicity plays the role that spatial periodicity plays in the
periodic case, providing a natural statistical description of the
critical points despite the absence of any exact repeating pattern.

\subsection{Ergodic distribution of zeros along Kronecker flows}
\label{subsubsec-Kronecker-flow-critical}

We briefly recall the unique ergodicity of the Kronecker flow on the torus
$\T^d$ and develop a general framework for describing the asymptotic
distribution of the zeros of smooth functions \mbox{on $\T^d$} when restricted
to Kronecker trajectories. Let
\[
\omega=(\omega_1,\ldots,\omega_d)\in\mathbb R^d
\]
satisfy the nonresonance condition
\begin{equation}
\label{eq-nonresonance-Kronecker-critical}
\omega\cdot\ell\neq0,
\qquad
\forall\,\ell\in\mathbb Z^d\setminus\{0\}.
\end{equation}
We consider the associated Kronecker flow
\begin{equation*}
\label{eq-Kronecker-flow-critical}
\Phi^x_\omega:\mathbb T^d\longrightarrow\mathbb T^d,
\qquad
\Phi^x_\omega(\theta)
=
\theta+x\omega
\pmod{2\pi},
\qquad
x\in\mathbb R.
\end{equation*}
In particular, the orbit issued from the origin is
\begin{equation*}
\label{eq-Kronecker-orbit-origin-critical}
x\longmapsto
\Phi^x_\omega(0)
=
x\omega
\pmod{2\pi}.
\end{equation*}
Under the nonresonance assumption
\eqref{eq-nonresonance-Kronecker-critical}, the Kronecker flow is
minimal and uniquely ergodic, see for instance \cite{Walters1982,KatokHasselblatt1995}. Its unique invariant probability
measure is the normalized Haar measure
\begin{equation*}
\label{eq-normalized-Haar-critical}
dm_{\mathbb T^d}(\theta)
=
\tfrac{d\theta}{(2\pi)^d}.
\end{equation*}
Consequently, for every continuous function
$h\in C(\mathbb T^d)$ and every initial phase
$\theta_0\in\mathbb T^d$,
\begin{equation}
\label{eq-unique-ergodic-average-critical}
\lim_{L\to+\infty}
\frac1L
\int_0^L
h\bigl(\Phi^x_\omega(\theta_0)\bigr)\,dx
=
\fint_{\mathbb T^d}
h(\theta)\,d\theta.
\end{equation}
The convergence is uniform with respect to the initial phase
$\theta_0$.
\\
Next, let $\mathcal S:\T^d\to\R$ be a smooth function. We consider the real-valued function obtained by restricting $\mathcal S$ along a Kronecker trajectory, namely,
\begin{equation}
\label{eq-q-torus-representation}
q(x)
:=
\mathcal S\bigl(\Phi^x_\omega(0)\bigr)
=
\mathcal S(x\omega), \, x\in\R.
\end{equation}
We  introduce the zero set
\begin{equation}
\label{eq-Sigma-general-critical}
\Sigma
:=
\left\{
\theta\in\mathbb T^d:
\mathcal S(\theta)=0
\right\}.
\end{equation}
We assume that $0$ is a regular value of $\mathcal S$, namely,
\begin{equation}
 \label{eq-regular-level-set-finite-tangencies}
 \nabla\mathcal S(\theta)\neq0,
 \qquad\forall\, \theta\in\Sigma,
 \end{equation}
The zeros of $q$ are exactly the times at which the Kronecker orbit
through the origin intersects $\Sigma$:
\begin{equation}
\label{eq-zero-intersection-Kronecker}
q(x)=0
\quad\Longleftrightarrow\quad
\Phi^x_\omega(0)\in\Sigma.
\end{equation}
Thus, the zeros of $q$ correspond precisely to the successive intersections
of the Kronecker \mbox{orbit $\big{\{}\Phi_\omega^x(0):x\in\R\big\}$} with the hypersurface
$\Sigma$. Under the nonresonance assumption \eqref{eq-nonresonance-Kronecker-critical}, this orbit is dense
in $\T^d$, so the study of the zeros of $q$ can be recast as an intersection
problem between a dense Kronecker trajectory and the zero level set $\Sigma$.
We denote by
\begin{equation*}
\label{eq-flow-derivative-G}
D_\omega \mathcal S
:=
\omega\cdot\nabla_\theta \mathcal S
\end{equation*}
its derivative along the Kronecker flow. Equivalently,
\begin{equation*}
\label{eq-Domega-flow-identity-critical}
\frac{d}{dx}
\mathcal S\bigl(\Phi^x_\omega(\theta)\bigr)
=
D_\omega \mathcal S\bigl(\Phi^x_\omega(\theta)\bigr).
\end{equation*}
In particular, we have
\begin{equation*}
\label{eq-q-prime-DomegaS}
q^\prime(x)
=
D_\omega\mathcal S(x\omega).
\end{equation*}
Hence a zero $x_\ast$ of $q$ is simple precisely when the Kronecker
flow crosses $\Sigma$ transversally at the phase
\[
\Theta_\ast:=x_\ast\omega\pmod{2\pi},
\]
that is,
\begin{equation*}
\label{eq-transversal-crossing-Sigma}
\mathcal S(\Theta_\ast)=0,
\qquad
D_\omega\mathcal S(\Theta_\ast)\neq0.
\end{equation*}
Accordingly, a natural global transversality assumption is
\begin{equation}
\label{eq-global-transversality-Sigma}
D_\omega\mathcal S(\theta)\neq0,
\qquad
\forall\,\theta\in\Sigma.
\end{equation}
Under \eqref{eq-global-transversality-Sigma}, every intersection of
the Kronecker flow with $\Sigma$ is transversal, and consequently
every zero of $q$ is simple and it is isolated. In addition, since $\Sigma$ is compact, this is equivalent to the
existence of $c_\ast>0$ such that
\begin{equation}
\label{eq-global-uniform-transversality-critical}
|D_\omega\mathcal S(\theta)|
\geq c_\ast,
\qquad
\forall\,\theta\in\Sigma.
\end{equation}
In particular, all zeros of $q$ are uniformly simple.
\\Notice that \eqref{eq-global-transversality-Sigma} also implies \eqref{eq-regular-level-set-finite-tangencies}.
Therefore $0$ is a regular value of $\mathcal S$, and $\Sigma$ is a
compact smooth hypersurface of $\mathbb T^d$. This geometric
description will be the basis for the asymptotic counting and
equidistribution of the critical points of Robin functions as we shall see later.\\
For $L>0$, we define
\begin{equation}
\label{eq-N-critical-general}
N_{\mathcal S,\omega}(L)
:=
\#
\Big\{
x\in[0,L]:
q(x)=0
\Big\}.
\end{equation}
When it exists, we define the asymptotic density by
\begin{equation}
\label{eq-rho-general-critical}
\rho_{\mathcal S,\omega}
:=
\lim_{L\to\infty}
\tfrac{N_{\mathcal S,\omega}(L)}{L}.
\end{equation}
In what follows, we first consider the fully transverse setting in order to
identify the mechanism governing the asymptotic distribution of zeros and to
derive the corresponding density formula. This setting contains the essential
ergodic and geometric ingredients of the argument, while avoiding the additional
difficulties arising from tangential intersections of the Kronecker flow with
the hypersurface $\Sigma$.
In Section~\ref{subsec-isolated-finite-order-tangencies}, we substantially extend this result by allowing isolated finite-order tangencies of the Kronecker flow with the critical hypersurface. Our first main result is the following.
\begin{theorem}
\label{thm-critical-point-ergodic-distribution}
Assume that the nonresonance condition
\eqref{eq-nonresonance-Kronecker-critical} holds, that $\mathcal S$ is smooth
and satisfies the transversality condition
\eqref{eq-global-transversality-Sigma}, and that $\Sigma\neq\varnothing$. Then the
asymptotic density \eqref{eq-rho-general-critical} exists and is given
by
\begin{align*}
\label{eq-density-delta-critical}
\rho_{\mathcal S,\omega}
&=
\fint_{\mathbb T^d}
\delta_0\bigl(\mathcal S(\theta)\bigr)
\left|
D_\omega\mathcal S(\theta)
\right|
\,d\theta
\\
\nonumber
&=
\frac1{(2\pi)^d}
\int_\Sigma
|\omega\cdot n(\theta)|
\,d\sigma(\theta),
\end{align*}
where $\delta_0$ is the Dirac mass, $n$ is a normal unit vector to $\Sigma$ and $d\sigma$ denotes the surface measure induced on the hypersurface $ \Sigma.$
Let
\(
0<x_1<x_2<\cdots
\)
be the positive zeros of $q$ and define 
\begin{equation*}
\label{eq-critical-phases-general}
\Theta_n
:=
x_n\omega
\pmod{2\pi},
\end{equation*}
together with the empirical measures
\begin{equation*}
\label{eq-empirical-critical-measures-general}
\mu_N
:=
\frac1N
\sum_{n=1}^N
\delta_{\Theta_n}.
\end{equation*}
Then
\begin{equation*}
\label{eq-phase-equidistribution-weak}
{
\mu_N
\xrightharpoonup[N\to\infty]{}
\mu_{\mathcal S,\omega}
\qquad
\text{weakly in }\mathcal P(\Sigma),
}
\end{equation*}
where
\begin{equation}
\label{eq-crossing-measure-critical}
{
d\mu_{\mathcal S,\omega}(\theta)
=
\frac{1}{(2\pi)^d\rho_{\mathcal S,\omega}}
\frac{
|D_\omega\mathcal S(\theta)|
}{
|\nabla\mathcal S(\theta)|
}
\,d\sigma(\theta).
}
\end{equation}
\end{theorem}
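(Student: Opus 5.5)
The plan is to regularize the counting function by a smooth bump concentrated near $\Sigma$ and then apply unique ergodicity \eqref{eq-unique-ergodic-average-critical}. By \eqref{eq-global-uniform-transversality-critical} and continuity, there is $\eta>0$ such that $|D_\omega\mathcal S|\ge c_\ast/2$ on the tubular set $U_\eta:=\{|\mathcal S|<\eta\}$. Fix an even $\chi\in C_c^\infty(-1,1)$ with $\int\chi=1$, and for $0<\delta<\eta$ set
\[
h_\delta(\theta):=\tfrac1\delta\chi\bigl(\mathcal S(\theta)/\delta\bigr)\,|D_\omega\mathcal S(\theta)|,
\]
a continuous function on $\T^d$ supported in $U_\delta$.

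\textbf{Step 1: local structure along the orbit.} Along the orbit, $q'(x)=D_\omega\mathcal S(x\omega)$. Whenever $|q(x)|<\eta$ we have $|q'(x)|\ge c_\ast/2$, so $q$ is strictly monotone on each connected component of $\{|q|<\delta\}$. Each such component $I$ is an interval whose closure contains at most one zero. If $I$ lies in the interior of $[0,L]$, then $q$ runs from $-\delta$ to $\delta$ (or the reverse) across $I$, and the substitution $s=q(x)$ gives
\[
\int_I \tfrac1\delta\chi(q/\delta)|q'|\,dx=1.
\]
Moreover $|I|\le 4\delta/c_\ast$, and distinct zeros are separated by at least a fixed distance $\ell_0>0$, since $q$ must leave $\{|q|<\eta\}$ between consecutive zeros and $|q'|\le C$. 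It follows that
\[
\Bigl|\int_0^L h_\delta(x\omega)\,dx-N_{\mathcal S,\omega}(L)\Bigr|\le 2,
\]
the error coming only from the at most two boundary components at $0$ and $L$. The same fact gives $N(L)\le L/\ell_0+1$.

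\textbf{Step 2: density.} Divide by $L$ and let $L\to\infty$. Unique ergodicity yields
\[
\lim_{L\to\infty}\frac{N(L)}L=\fint h_\delta\,d\theta.
\]
By the coarea formula on $U_\delta$, whose slices $\{\mathcal S=s\}$ are smooth hypersurfaces,
\[
\fint h_\delta=\frac1{(2\pi)^d}\int_{\R}\tfrac1\delta\chi(s/\delta)\int_{\{\mathcal S=s\}}\frac{|D_\omega\mathcal S|}{|\nabla\mathcal S|}\,d\sigma\,ds.
\]
The inner integral is continuous in $s$. The limit is independent of $\delta$, so letting $\delta\to0$ identifies it as
\[
\frac1{(2\pi)^d}\int_\Sigma |\omega\cdot n|\,d\sigma,
\]
using $n=\nabla\mathcal S/|\nabla\mathcal S|$; this is exactly the delta-function expression. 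Positivity of $\rho$ holds because $\Sigma\ne\varnothing$ and the integrand is at least $c_\ast/\sup|\nabla\mathcal S|>0$.

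\textbf{Step 3: equidistribution of phases.} For $\varphi\in C(\T^d)$, which suffices since $C(\Sigma)$ functions extend by Tietze, repeat Step 1 with $\varphi h_\delta$. On each component $I$ around a zero $x_n$, the phases $x\omega$ stay within $O(\delta)$ of $\Theta_n$, so
\[
\int_0^L\varphi h_\delta(x\omega)\,dx=\sum_{x_n\le L}\varphi(\Theta_n)+O\bigl(\mathrm{osc}_{\delta}(\varphi)\,N(L)\bigr)+O(1),
\]
where $\mathrm{osc}_\delta(\varphi)$ is the modulus of continuity of $\varphi$ at scale $C\delta$. Divide by $N(L)\sim\rho L$, apply unique ergodicity and the coarea formula, and then send $\delta\to0$. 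This gives
\[
\frac1N\sum_{n\le N}\varphi(\Theta_n)\to\int_\Sigma\varphi\,d\mu_{\mathcal S,\omega}.
\]
Passing from $L$ to $N$ is harmless because $x_N\to\infty$ and $N(x_N)=N$.

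The main obstacle is Step 1: the bookkeeping showing that each crossing contributes exactly one, and that errors are confined to $O(1)$ boundary terms plus the continuity modulus, all uniformly in $L$. This rests entirely on the uniform lower bound on $|D_\omega\mathcal S|$ near $\Sigma$ together with the uniform separation of zeros.
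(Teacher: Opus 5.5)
Your proposal is correct and follows essentially the same route as the paper. You mollify the count by $\chi_\delta(\mathcal S)|D_\omega\mathcal S|$, show that each complete crossing interval contributes exactly one with an $O(1)$ endpoint error, pass to the limit by unique ergodicity, identify the $\delta\to0$ limit through the coarea formula, and repeat the argument with a continuous test function and its modulus of continuity for the phase distribution. You also make explicit several points the paper leaves implicit: the uniform separation of zeros, the positivity of $\rho_{\mathcal S,\omega}$ needed to divide by $N(L)$, the Tietze extension, and the passage from $L$ to $N$. One small fix: take $\chi\ge0$, as the paper does, if you want the endpoint error bounded by exactly $2$; without it the error is still $O(1)$.
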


\begin{proof}
By the nonresonance assumption, the Kronecker flow
\[
x\longmapsto x\omega\pmod{2\pi}
\]
is uniquely ergodic on $\mathbb T^d$ with respect to normalized
Lebesgue measure.
By \eqref{eq-q-torus-representation},
\[
q(x)=\mathcal S(x\omega),
\qquad
q'(x)=D_\omega\mathcal S(x\omega).
\]
From the uniform transversality assumption \eqref{eq-global-uniform-transversality-critical} and compactness of $\Sigma$ 
there exist a neighborhood $\mathcal U$ of $\Sigma$ and a constant
$c_0>0$ such that
\begin{equation}
\label{eq-transversality-neighborhood-general}
|D_\omega\mathcal S(\theta)|
\geq c_0,
\qquad
\forall \,\theta\in\mathcal U.
\end{equation}
Let $\chi\in C_c^\infty((-1,1))$ be nonnegative and normalized by
\[
\int_{\mathbb R}\chi(s)\,ds=1,
\]
and define
\[
\chi_\eta(s):=\frac1\eta\chi\left(\frac{s}{\eta}\right).
\]
By the global transversality assumption, there exist $\eta_0>0$ and
$c_0>0$ such that
\begin{equation}
\label{eq-transversality-neighborhood-general}
|D_\omega\mathcal S(\theta)|\geq c_0
\qquad\text{whenever}\qquad
|\mathcal S(\theta)|\leq\eta_0.
\end{equation}
Fix $0<\eta<\eta_0$. Then
\[
|q'(x)|
=
|D_\omega\mathcal S(x\omega)|
\geq c_0
\qquad\text{whenever}\qquad
|q(x)|<\eta.
\]
Consequently, on every connected component of the open set
\[
E_{\eta,L}
:=
\{x\in(0,L):|q(x)|<\eta\},
\]
the derivative $q'$ has a constant sign, and hence $q$ is strictly
monotone.
Since the zeros of $q$ are uniformly isolated under the transversality
assumption, $E_{\eta,L}$ has only finitely many connected components.
We may therefore write
\[
E_{\eta,L}
=
\bigcup_{i=1}^{N} I_i,
\qquad
I_i=(a_i,b_i),
\]
where the intervals are pairwise disjoint and ordered so that
\[
b_i\leq a_{i+1},
\qquad
1\leq i\leq N-1.
\]
Consider a component $I_i=(a_i,b_i)$ whose closure is contained in
$(0,L)$. By maximality of $I_i$ and continuity of $q$,
\[
|q(a_i)|=|q(b_i)|=\eta.
\]
Since $q$ is strictly monotone on $I_i$, its endpoint values are
distinct, and therefore
\[
\{q(a_i),q(b_i)\}=\{-\eta,\eta\}.
\]
It follows that
\[
q(I_i)=(-\eta,\eta),
\]
and, in particular, $q$ has exactly one zero in $I_i$. The change of
variables $s=q(x)$ then gives
\begin{equation}
\label{eq-one-crossing-contribution}
\int_{I_i}
\chi_\eta(q(x))|q'(x)|\,dx
=
\int_{-\eta}^{\eta}\chi_\eta(s)\,ds
=
1.
\end{equation}
All components of $E_{\eta,L}$ are of this type except possibly the
first and the last ones, which may intersect the endpoints $0$ and $L$,
respectively. Thus, after summing the contributions of the connected
components, we obtain
\begin{equation*}
\label{eq-regularized-zero-counting}
\int_0^L
\chi_\eta(q(x))|q'(x)|\,dx
=
N_{\mathcal S,\omega}(L)+E_{\eta,L}^{\rm end},
\end{equation*}
where $E_{\eta,L}^{\rm end}$ accounts only for the possible partial
crossings at the endpoints $0$ and $L$. Since each such partial
crossing contributes a quantity between $0$ and $1$, while it can
change the zero count by at most one, we have the uniform estimate
\begin{equation*}
\label{eq-endpoint-error-bound}
|E_{\eta,L}^{\rm end}|\leq 2.
\end{equation*}
Therefore,
\begin{equation}
\label{eq-uniform-zero-counting-density}
\left|
\frac{N_{\mathcal S,\omega}(L)}{L}
-
\frac1L
\int_0^L
\chi_\eta\bigl(\mathcal S(x\omega)\bigr)
|D_\omega\mathcal S(x\omega)|\,dx
\right|
\leq\frac{2}{L}.
\end{equation}
The crucial point is that the estimate is uniform for
$0<\eta<\eta_0$ and $L>0$.
\\
Now, for every fixed $\eta>0$, the function
\[
\theta
\longmapsto
\chi_\eta\bigl(\mathcal S(\theta)\bigr)
|D_\omega\mathcal S(\theta)|
\]
is continuous on $\mathbb T^d$. Hence, unique ergodicity result from \eqref{eq-unique-ergodic-average-critical} gives
\begin{align*}
\lim_{L\to\infty}
\frac1L
\int_0^L
&
\chi_\eta\bigl(\mathcal S(x\omega)\bigr)
|D_\omega\mathcal S(x\omega)|
\,dx
\\
&=
\frac1{(2\pi)^d}
\int_{\mathbb T^d}
\chi_\eta(\mathcal S(\theta))
|D_\omega\mathcal S(\theta)|
\,d\theta.
\end{align*}
Letting $L\to\infty$ in \eqref{eq-uniform-zero-counting-density}
yields
\begin{align}\label{cor-era}
\rho_{\mathcal S,\omega}
=
\frac1{(2\pi)^d}
\int_{\mathbb T^d}
\chi_\eta(\mathcal S(\theta))
|D_\omega\mathcal S(\theta)|
\,d\theta.
\end{align}
Since
$\chi_\eta\rightharpoonup\delta_0$ as $\eta\to0$, it follows that
\[
\rho_{\mathcal S,\omega}
=
\frac1{(2\pi)^d}
\int_{\mathbb T^d}
\delta_0(\mathcal S(\theta))
|D_\omega\mathcal S(\theta)|
\,d\theta.
\]
On the other hand, applying  the coarea formula with \eqref{cor-era},
\begin{align}
\int_{\mathbb T^d}
\chi_\eta\bigl(\mathcal S(\theta)\bigr)
|D_\omega\mathcal S(\theta)|
\,d\theta
&=
\int_{\mathbb R}
\chi_\eta(s)
\left(
\int_{\{\mathcal S=s\}}
\frac{|D_\omega\mathcal S(\theta)|}
{|\nabla\mathcal S(\theta)|}
\,d\sigma_s(\theta)
\right)ds.
\end{align}
Since $0$ is a regular value of $\mathcal S$, the function
\[
s\longmapsto
\int_{\{\mathcal S=s\}}
\frac{|D_\omega\mathcal S(\theta)|}
{|\nabla\mathcal S(\theta)|}
\,d\sigma_s(\theta)
\]
is continuous for $s$ in a neighborhood of $0$. Once again, since 
$\chi_\eta\rightharpoonup\delta_0$ as $\eta\to0$, we get
\begin{equation*}
\lim_{\eta\to0}
\int_{\mathbb T^d}
\chi_\eta\bigl(\mathcal S(\theta)\bigr)
|D_\omega\mathcal S(\theta)|
\,d\theta
=
\int_{\Sigma}
\frac{|D_\omega\mathcal S(\theta)|}
{|\nabla\mathcal S(\theta)|}
\,d\sigma(\theta).
\end{equation*}
Consequently,
\begin{equation*}
{
\rho_{\mathcal S,\omega}
=
\frac{1}{(2\pi)^d}
\int_{\Sigma}
\frac{|D_\omega\mathcal S(\theta)|}
{|\nabla\mathcal S(\theta)|}
\,d\sigma(\theta).
}
\end{equation*}
Equivalently, if
\[
n(\theta)
=
\frac{\nabla\mathcal S(\theta)}
{|\nabla\mathcal S(\theta)|}
\]
denotes the unit normal to $\Sigma$, then
\[
\frac{|D_\omega\mathcal S(\theta)|}
{|\nabla\mathcal S(\theta)|}
=
|\omega\cdot n(\theta)|,
\]
and therefore
\[
\int_{\mathbb T^d}
\delta(\mathcal S(\theta))
|D_\omega\mathcal S(\theta)|
\,d\theta
=
\int_\Sigma |\omega\cdot n(\theta)|\,d\sigma(\theta),
\]
which proves the density formula.
\\
We now prove the equidistribution statement. Let
$\psi\in C(\mathbb T^d)$. We use the same approximate identity
$\chi_\eta$ and the same decomposition into crossing intervals as in
the proof of the density formula.
Let $I_i=(a_i,b_i)$ be a complete crossing interval contained in
$(0,L)$, and denote by $x_i\in I_i$ the unique zero of $q$ in $I_i$.
Since
\[
|q'(x)|\geq c_0
\qquad\text{on }I_i,
\]
we have
\[
|x-x_i|
\leq \tfrac{|q(x)-q(x_i)|}{c_0}
\leq \tfrac{\eta}{c_0},
\qquad x\in I_i.
\]
Hence, if $\varpi_\psi$ denotes a modulus of continuity of $\psi$ on
$\mathbb T^d$, then
\[
\left|
\psi(x\omega)-\psi(x_i\omega)
\right|
\leq
\varpi_\psi\left(\tfrac{|\omega|\eta}{c_0}\right),
\qquad x\in I_i.
\]
Using \eqref{eq-one-crossing-contribution}, we therefore obtain
\begin{align*}
\left|
\int_{I_i}
\psi(x\omega)\chi_\eta(q(x))|q'(x)|\,dx
-
\psi(\Theta_i)
\right|
&\leq
\varpi_\psi\left(\tfrac{|\omega|\eta}{c_0}\right)
\int_{I_i}\chi_\eta(q(x))|q'(x)|\,dx
\nonumber\\
&\le
\varpi_\psi\left(\tfrac{|\omega|\eta}{c_0}\right),
\label{eq-weighted-crossing-error}
\end{align*}
where
\[
\Theta_i=x_i\omega\pmod{2\pi}.
\]
Summing over all complete crossing intervals contained in $(0,L)$ and
treating the two possible endpoint components as in the first part of
the proof, we obtain
\begin{align}
\left|
\frac1L\sum_{0<x_n<L}\psi(\Theta_n)
-
\frac1L\int_0^L
\psi(x\omega)\chi_\eta(q(x))|q'(x)|\,dx
\right|
&\leq
\frac{N_{\mathcal S,\omega}(L)}{L}
\varpi_\psi\left(\frac{|\omega|\eta}{c_0}\right)
+
\frac{C\|\psi\|_{L^\infty}}{L},
\label{eq-weighted-zero-counting-uniform}
\end{align}
where $C>0$ is independent of $L$ and $\eta$.
For every fixed $0<\eta<\eta_0$, the function
\[
\theta\longmapsto
\psi(\theta)
\chi_\eta\bigl(\mathcal S(\theta)\bigr)
|D_\omega\mathcal S(\theta)|
\]
is continuous on $\mathbb T^d$. Hence, by unique ergodicity of the
Kronecker flow,
\begin{align*}
\lim_{L\to\infty}
\frac1L\int_0^L
&\psi(x\omega)
\chi_\eta\bigl(\mathcal S(x\omega)\bigr)
|D_\omega\mathcal S(x\omega)|\,dx
\nonumber\\
&=
\fint_{\mathbb T^d}
\psi(\theta)
\chi_\eta\bigl(\mathcal S(\theta)\bigr)
|D_\omega\mathcal S(\theta)|
\,d\theta.
\label{eq-weighted-ergodic-mollified}
\end{align*}
Since the first part of the proof gives
\[
\frac{N_{\mathcal S,\omega}(L)}{L}
\longrightarrow
\rho_{\mathcal S,\omega},
\]
letting $L\to\infty$ in
\eqref{eq-weighted-zero-counting-uniform} yields
\begin{align}
\limsup_{L\to\infty}
\left|
\frac1L\sum_{0<x_n<L}\psi(\Theta_n)
-
\frac1{(2\pi)^d}
\int_{\mathbb T^d}
\psi(\theta)
\chi_\eta\bigl(\mathcal S(\theta)\bigr)
|D_\omega\mathcal S(\theta)|\,d\theta
\right|
\leq
\rho_{\mathcal S,\omega}
\varpi_\psi\left(\frac{|\omega|\eta}{c_0}\right).
\label{eq-weighted-limsup}
\end{align}
It remains to let $\eta\to0$. As in the first part of the proof, the
coarea formula and the regularity of the level set $\Sigma$ imply
\begin{align*}
\lim_{\eta\to0}
\int_{\mathbb T^d}
&\psi(\theta)
\chi_\eta\bigl(\mathcal S(\theta)\bigr)
|D_\omega\mathcal S(\theta)|\,d\theta
\nonumber\\
&=
\int_\Sigma
\psi(\theta)
\frac{|D_\omega\mathcal S(\theta)|}
{|\nabla\mathcal S(\theta)|}
\,d\sigma(\theta).
\label{eq-weighted-coarea-limit}
\end{align*}
Moreover,
\[
\varpi_\psi(r)\longrightarrow0
\qquad\text{as }r\to0.
\]
Letting $\eta\to0$ in \eqref{eq-weighted-limsup} therefore gives
\begin{equation}
\label{eq-weighted-critical-density-general}
\lim_{L\to\infty}
\frac1L
\sum_{0<x_n<L}
\psi(\Theta_n)
=
\frac1{(2\pi)^d}
\int_\Sigma
\psi(\theta)
\frac{|D_\omega\mathcal S(\theta)|}
{|\nabla\mathcal S(\theta)|}
\,d\sigma(\theta).
\end{equation}
Finally, since
\[
N_{\mathcal S,\omega}(L)
\sim
\rho_{\mathcal S,\omega}L,
\]
we may divide \eqref{eq-weighted-critical-density-general} by
$N_{\mathcal S,\omega}(L)/L$ to obtain
\[
\lim_{L\to\infty}
\frac1{N_{\mathcal S,\omega}(L)}
\sum_{0<x_n<L}\psi(\Theta_n)
=
\frac{1}{(2\pi)^d\rho_{\mathcal S,\omega}}
\int_\Sigma
\psi(\theta)
\frac{|D_\omega\mathcal S(\theta)|}
{|\nabla\mathcal S(\theta)|}
\,d\sigma(\theta).
\]
Equivalently, if the positive zeros are enumerated as
\[
0<x_1<x_2<\cdots,
\]
then
\[
\lim_{N\to\infty}
\frac1N\sum_{n=1}^N\psi(\Theta_n)
=
\int_\Sigma\psi(\theta)\,
d\mu_{\mathcal S,\omega}(\theta),
\]
where
\[
d\mu_{\mathcal S,\omega}(\theta)
=
\frac{1}{(2\pi)^d\rho_{\mathcal S,\omega}}
\frac{|D_\omega\mathcal S(\theta)|}
{|\nabla\mathcal S(\theta)|}
\,d\sigma(\theta).
\]
This proves \eqref{eq-crossing-measure-critical}. This completes the proof of the theorem.

\end{proof}



\subsection{Isolated finite-order tangencies}
\label{subsec-isolated-finite-order-tangencies}

The purpose of this subsection is to show that global transversality, given by \eqref{eq-global-transversality-Sigma}, is stronger than necessary. We allow the Kronecker flow to be tangent
to $\Sigma$ at finitely many points, provided that each tangency is
nondegenerate in the weaker sense of having finite order.
\\
More precisely, define the tangency set
\begin{equation}
\label{eq-tangency-set-finite-order}
\mathcal C
:=
\Big\{
\theta\in\Sigma:
D_\omega\mathcal S(\theta)=0
\Big\}.
\end{equation}
We assume that $\mathcal C$ is finite:
\begin{equation}
\label{eq-finite-tangency-set-main}
\mathcal C
=
\Big\{
\theta^{(1)},\ldots,\theta^{(M)}
\Big\}.
\end{equation}
At each $\theta^{(j)}\in\mathcal C$, we further assume that the
tangency has finite order. Namely, there exists an integer
$r_j\geq2$ such that
\begin{equation}
\label{eq-finite-order-flow-tangency-main}
D_\omega^\ell\mathcal S(\theta^{(j)})=0,
\quad
\forall\, 1\leq\ell<r_j\quad\hbox{and}\quad
D_\omega^{r_j}\mathcal S(\theta^{(j)})\neq0.
\end{equation}
Thus, along the Kronecker direction, the function $\mathcal S$
vanishes at $\theta^{(j)}$ with finite order $r_j$. In particular,
although the first derivative in the flow direction vanishes at a
tangency point, the restriction of $\mathcal S$ to the corresponding
orbit cannot vanish to infinite order there.

As we shall see later in the proofs, this finite-order condition has an important local consequence. Near
each $\theta^{(j)}$, every sufficiently short connected Kronecker
orbit segment can intersect $\Sigma$ only a uniformly bounded number
of times. The bound depends on the order $r_j$, but not on the
particular nearby orbit segment. This prevents zeros of $q$ from
accumulating with arbitrarily large multiplicity near a tangency.

On the other hand, a sufficiently thin transverse flow box around
the orbit segment through $\theta^{(j)}$ has volume of order
$\delta^{d-1}$, where $\delta$ denotes its transverse width. Unique
ergodicity of the Kronecker flow therefore implies that the frequency
of passages through such a box is also of order $\delta^{d-1}$.
Combining these two observations shows that the zeros generated near
a finite-order tangency have asymptotic density tending to zero as
the transverse width tends to zero.
This is the mechanism that allows us to recover the same density
formula as in the globally transverse case. Away from the finite set
$\mathcal C$, the intersections with $\Sigma$ are transverse and the
argument of
Theorem~\ref{thm-critical-point-ergodic-distribution} applies without
change. Near $\mathcal C$, the contribution is negligible in
asymptotic density. Thus isolated finite-order tangencies do not
affect the limiting zero density.
\\
For $L>0$, recall that the set $N_{\mathcal S,\omega}(L)$ is defined through \eqref{eq-N-critical-general} and counting 
the number of distinct zeros of $q$ in $(0,L)$. We emphasize that
zeros are counted without multiplicity.
\\
The following theorem makes the preceding discussion precise and
shows that the asymptotic zero-density formula described by Theorem \ref{thm-critical-point-ergodic-distribution} remains valid in the
presence of finitely many finite-order tangencies.
\begin{theorem}
\label{thm-density-finite-order-tangencies}
Assume
\eqref{eq-nonresonance-Kronecker-critical}, \eqref{eq-regular-level-set-finite-tangencies},
\eqref{eq-finite-tangency-set-main} and
\eqref{eq-finite-order-flow-tangency-main}.
Then the asymptotic density $\rho_{\mathcal S,\omega}$ introduced  in \eqref{eq-rho-general-critical} 
exists and is given by
\begin{align*}
\label{eq-density-finite-order-tangencies}
\rho_{\mathcal S,\omega}
&=
\frac1{(2\pi)^d}
\int_{\mathbb T^d}
\delta\bigl(\mathcal S(\theta)\bigr)
\left|
D_\omega\mathcal S(\theta)
\right|
\,d\theta
\\
\nonumber
&=
\frac{1}{(2\pi)^d}
\int_\Sigma
\frac{
\left|D_\omega\mathcal S(\theta)\right|
}{
\left|\nabla\mathcal S(\theta)\right|
}
\,d\sigma(\theta).
\end{align*}
Moreover, the convergence of the empirical measures remains valid.
\end{theorem}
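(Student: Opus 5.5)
The strategy is to localize: remove small flow-box neighborhoods of the finitely many tangency points $\theta^{(j)}$. Away from these boxes the argument of Theorem~\ref{thm-critical-point-ergodic-distribution} applies unchanged. Inside the boxes the number of zeros will be bounded by a constant times the number of passages, and the frequency of passages is small.

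Concretely, fix $\delta>0$. For each $j$ build a flow box
\[
B_j(\delta)=\{\vartheta+s\omega:\ \vartheta\in H_j,\ |\vartheta-\theta^{(j)}|<\delta,\ |s|<\tau_0\},
\]
where $H_j$ is a hyperplane transverse to $\omega$ through $\theta^{(j)}$ and $\tau_0>0$ is fixed and small. By \eqref{eq-finite-order-flow-tangency-main} and continuity, $D^{r_j}_\omega\mathcal S\neq0$ on $B_j(\delta_0)$ for some $\delta_0>0$. Along each orbit segment in the box, $s\mapsto\mathcal S(\vartheta+s\omega)$ therefore has nonvanishing $r_j$-th derivative. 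By iterated Rolle, it has at most $r_j$ zeros per passage.

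Each visit of the orbit $x\omega$ to $B_j(\delta)$ costs a time of order $\tau_0$ (apart from the two endpoint passages). So the number of passages in $[0,L]$ is at most $C\tau_0^{-1}\int_0^L\mathbf 1_{B_j(2\delta)}(x\omega)\,dx+2$. Dominating $\mathbf 1_{B_j(2\delta)}$ by a continuous cutoff and using unique ergodicity \eqref{eq-unique-ergodic-average-critical}, we obtain
\[
\limsup_{L\to\infty} N^{\mathrm{tan}}_\delta(L)/L\le C\sum_j r_j\delta^{d-1}.
\]

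On the complement $K_\delta=\Sigma\setminus\bigcup_jB_j(\delta)$, compactness gives $|D_\omega\mathcal S|\ge c_\delta>0$. Hence there is $\eta_0(\delta)$ such that $|D_\omega\mathcal S|\ge c_\delta$ on $\{|\mathcal S|<\eta_0\}\setminus\bigcup_jB_j(\delta/2)$. Take a continuous cutoff $\phi_\delta$ that equals $1$ outside $\bigcup B_j(\delta)$ and vanishes on $\bigcup B_j(\delta/2)$. Rerun the crossing-interval argument for $\int_0^L\phi_\delta(x\omega)\chi_\eta(q(x))|q'(x)|\,dx$: each transverse crossing contributes $1$ up to an error governed by the oscillation of $\phi_\delta$ over a window of size $\eta/c_\delta$. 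Crossing intervals that meet the transition region between $B_j(\delta/2)$ and $B_j(\delta)$ are already counted in the tangency bound. Using unique ergodicity and then letting $\eta\to0$ via the coarea step of the earlier proof, the transverse zero density is squeezed between
\[
\frac1{(2\pi)^d}\int_\Sigma\phi_\delta\,\frac{|D_\omega\mathcal S|}{|\nabla\mathcal S|}\,d\sigma
\]
and the same quantity plus $O(\delta^{d-1})$. Since $\nabla\mathcal S\ne0$ on $\Sigma$ by \eqref{eq-regular-level-set-finite-tangencies}, the integrand is bounded, so the full integral over $\Sigma$ is finite and the contribution of $\bigcup_jB_j(\delta)$ is $O(\delta^{d-1})$. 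Letting $\delta\to0$ makes $\liminf$ and $\limsup$ of $N_{\mathcal S,\omega}(L)/L$ coincide with the flux formula. The $\delta_0$ form of the formula follows exactly as before.

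For the empirical measures, apply the same decomposition to $\sum\psi(\Theta_n)$. The tangency-box contribution is bounded by $\|\psi\|_\infty$ times the density bound above, so it vanishes as $\delta\to0$.

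The main obstacle is the uniform Rolle bound together with the passage count. The key requirement is that $D_\omega^{r_j}\mathcal S\neq0$ holds on a full neighborhood, so that the bound of $r_j$ zeros per passage is uniform over all nearby orbit segments. The box length $\tau_0$ must stay fixed as $\delta\to0$, so that each passage accounts for a fixed positive time. When $d=1$ the estimate $\delta^{d-1}$ does not vanish. However, for $d=1$ the flow direction spans the tangent space, so $D_\omega\mathcal S=0$ together with $\nabla\mathcal S\neq0$ is impossible and $\mathcal C=\varnothing$; this case reduces to Theorem~\ref{thm-critical-point-ergodic-distribution}.
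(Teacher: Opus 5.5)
Your proposal is correct and follows essentially the paper's route: a uniform Rolle bound of $r_j$ zeros per passage through a flow box of fixed length and transverse width $\delta$, a passage count of density $O(\delta^{d-1})$ from unique ergodicity, and a cutoff that reduces the remaining zeros to the transverse argument of Theorem~\ref{thm-critical-point-ergodic-distribution}, followed by $\delta\to0$. Two of your additions tighten the paper's sketch: you rerun the crossing-interval argument with the cutoff, whereas the paper simply invokes the weighted formula proved under global transversality, and you observe that $\mathcal C=\varnothing$ when $d=1$, the case where $\delta^{d-1}$ would not vanish. The only slip is that a continuous $\phi_\delta$ vanishing on $B_j(\delta/2)$ and equal to $1$ off $B_j(\delta)$ cannot exist when both boxes have the same flow-direction length $\tau_0$, so take the inner box shorter, e.g.\ $|s|<\tau_0/2$.
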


The main point is to control the number of zeros generated by orbit
segments passing close to a tangency point.

\begin{lemma}
\label{lem-uniform-local-root-bound}
Let $\theta_*\in\mathcal C$ be a tangency of order $r$, namely
\[
D_\omega^\ell\mathcal S(\theta_*)=0,
\quad
1\leq\ell<r\quad\hbox{and}\quad
D_\omega^r\mathcal S(\theta_*)\neq0.
\]
Then there exists a neighborhood $U_*$ of $\theta_*$ such that every
sufficiently short orbit segment of the Kronecker flow contained in
$U_*$ intersects $\Sigma$ at most $r$ times.
\end{lemma}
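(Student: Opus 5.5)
The argument is a one-dimensional Rolle argument along orbit segments, made uniform in the base point through continuity of the $r$-th flow derivative. For $\theta\in\mathbb T^d$ and $x\in\mathbb R$, set
\[
q_\theta(x):=\mathcal S(\theta+x\omega),
\]
so that
\[
q_\theta^{(\ell)}(x)=D_\omega^\ell\mathcal S(\theta+x\omega).
\]
Here we use that $\mathcal S$ is smooth, so $D_\omega^r\mathcal S$ is continuous on $\mathbb T^d$.

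By the finite-order assumption, $D_\omega^r\mathcal S(\theta_*)\neq0$. By continuity there is a neighborhood $U_*$ of $\theta_*$, which we may take to be a small geodesically convex ball in a chart of $\mathbb T^d$, such that
\[
|D_\omega^r\mathcal S(\theta)|\geq\tfrac12|D_\omega^r\mathcal S(\theta_*)|>0,\qquad \theta\in U_*.
\]
The vanishing of the lower-order derivatives $D_\omega^\ell\mathcal S(\theta_*)$, $1\le\ell<r$, is not needed for the counting bound itself. Only the nonvanishing of the $r$-th derivative matters; the lower-order conditions serve to pin down the order of the tangency.

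Now take an orbit segment $\{\theta+x\omega:x\in[x_0,x_1]\}\subset U_*$. "Sufficiently short" can mean of length less than the diameter of $U_*$; more simply, any connected segment contained in $U_*$ will do. On $[x_0,x_1]$ we have $q_\theta^{(r)}(x)\neq0$, with constant sign by continuity on an interval.

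Suppose $q_\theta$ had $r+1$ distinct zeros in $[x_0,x_1]$. Applying Rolle's theorem repeatedly, $q_\theta'$ has at least $r$ distinct zeros strictly between them, $q_\theta''$ has at least $r-1$, and so on, until $q_\theta^{(r)}$ has at least one zero in the interval. This is a contradiction. Hence $q_\theta$ has at most $r$ zeros on the segment, that is, the segment meets $\Sigma$ at most $r$ times. Zeros are counted without multiplicity, which matches the convention for $N_{\mathcal S,\omega}$. Since
\[
q_\theta(x)=0\iff\theta+x\omega\in\Sigma,
\]
this gives the claim.

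The only delicate point is the topology on the torus. An orbit segment "contained in $U_*$" must be a single connected piece of the line $x\mapsto\theta+x\omega$. Because the Kronecker orbit is dense, the full orbit re-enters $U_*$ infinitely often, so the lemma is stated, and must be applied, to one connected piece at a time. Choosing $U_*$ inside a chart in which the flow is a straight-line translation makes each such piece an interval in $x$, and the Rolle argument applies verbatim.

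The bound is uniform: it equals $r$ for all base points $\theta$ and all segments. This uniformity is what the subsequent flow-box counting argument requires.
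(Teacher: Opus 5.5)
Your proposal is correct and follows the paper's proof essentially verbatim. In both, continuity of $D_\omega^r\mathcal S$ gives a chart neighborhood $U_*$ on which it stays bounded away from zero, and then iterated Rolle applied to $t\mapsto\mathcal S(\theta_0+t\omega)$ along a connected lifted orbit segment bounds the number of distinct zeros by $r$. Your added remarks are also consistent with the paper's argument: the lower-order vanishing conditions are not used, and the convex chart makes each connected piece an interval in $x$.
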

\begin{proof}
Since
\[
D_\omega^r\mathcal S(\theta_*)\neq 0,
\]
continuity of $D_\omega^r\mathcal S$ implies that there exist a
neighborhood $U_*$ of $\theta_*$ and a constant $c_*>0$ such that
\begin{equation}
\label{eq-rth-derivative-lower-bound}
\left|
D_\omega^r\mathcal S(\theta)
\right|
\geq c_*
\qquad
\text{for every }\theta\in U_*.
\end{equation}
After shrinking $U_*$ if necessary, we may assume that $U_*$ is
contained in a coordinate neighborhood of $\mathbb T^d$, so that
orbit segments contained in $U_*$ can be lifted unambiguously to
$\mathbb R^d$.
\\
Let $\theta_0\in U_*$ and consider a segment of the Kronecker orbit
through $\theta_0$,
\[
\theta(t)
=
\theta_0+t\omega
\pmod{2\pi},
\qquad
t\in I,
\]
where $I\subset\mathbb R$ is an interval such that
\[
\theta_0+t\omega\in U_*
\qquad
\text{for every }t\in I.
\]
Define
\[
F_{\theta_0}(t)
:=
\mathcal S(\theta_0+t\omega),
\qquad
t\in I.
\]
Differentiating along the flow gives
\[
F_{\theta_0}^{(\ell)}(t)
=
D_\omega^\ell
\mathcal S(\theta_0+t\omega),
\qquad
1\leq \ell\leq r.
\]
In particular,
\[
F_{\theta_0}^{(r)}(t)
=
D_\omega^r
\mathcal S(\theta_0+t\omega).
\]
Hence, by \eqref{eq-rth-derivative-lower-bound},
\[
\left|
F_{\theta_0}^{(r)}(t)
\right|
\geq c_*
\qquad
\text{for every }t\in I.
\]
Using Rolle's theorem we can show that $F_{\theta_0}$ has at most $r$ distinct zeros in $I$.
Finally,
\[
F_{\theta_0}(t)=0
\quad\Longleftrightarrow\quad
\mathcal S(\theta_0+t\omega)=0
\quad\Longleftrightarrow\quad
\theta_0+t\omega\in\Sigma.
\]
Therefore every Kronecker orbit segment contained in $U_*$ intersects
$\Sigma$ at most $r$ times.
\end{proof}

We next establish the estimate which replaces uniform transversality
near the exceptional points.

\begin{lemma}
\label{lem-negligible-density-near-tangency}
Let $\theta_*\in\mathcal C$ be a tangency of finite order. Then there
exist $C>0, \delta_0>0$ and  a family of neighborhoods $B_\delta(\theta_*)$ of $\theta_*$, with 
$0<\delta<\delta_0$, such that
\begin{equation*}
\label{eq-negligible-local-zero-density}
\lim_{\delta\to0}
\limsup_{L\to\infty}
\frac{1}{L}
\#
\Big\{
x\in(0,L):
\mathcal S(x\omega)=0,\;
x\omega\in B_\delta
\Big\}
=
0.
\end{equation*}
More precisely,
\begin{equation*}
\label{eq-local-zero-density-O-delta}
\limsup_{L\to\infty}
\frac{1}{L}
\#
\Big\{
x\in(0,L):
\mathcal S(x\omega)=0,\;
x\omega\in B_\delta
\Big\}
\leq C\delta^{d-1}.
\end{equation*}
\end{lemma}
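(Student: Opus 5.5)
We will take $B_\delta$ to be a thin flow box around the orbit segment through $\theta_*$, count passages of the Kronecker orbit through it using unique ergodicity, and bound the zeros per passage by Lemma~\ref{lem-uniform-local-root-bound}. Let $U_*$ and $r$ be as in that lemma; shrinking $U_*$, we may assume it lies in a coordinate chart. Fix a hyperplane $H$ through $\theta_*$ transverse to $\omega$, for instance $H=\theta_*+\omega^\perp$. For a small fixed length $\tau_0>0$ and $0<\delta<\delta_0$ we set
\[
B_\delta
:=
\bigl\{
\theta_*+\zeta+t\omega:\ \zeta\in\omega^\perp,\ |\zeta|<\delta,\ |t|<\tau_0
\bigr\}.
\]
We choose $\tau_0$ and $\delta_0$ small enough that $B_{\delta_0}\subset U_*$ and that each segment $\{\theta_*+\zeta+t\omega:|t|<\tau_0\}$ is short in the sense required by Lemma~\ref{lem-uniform-local-root-bound}. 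The map $(\zeta,t)\mapsto\theta_*+\zeta+t\omega$ is a linear isomorphism, so $|B_\delta|\le C\delta^{d-1}\tau_0$. Each connected component of $\{x:x\omega\in B_\delta\}$ is an interval of length exactly $2\tau_0/|\omega|^2$ (except possibly at the endpoints $0,L$), since inside the box the orbit moves only in the $t$ variable. Each such passage is one flow line of the box, so by the lemma it contains at most $r$ zeros of $q$.

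Next we bound the number $P(L)$ of passages in $(0,L)$. Passages are disjoint intervals of fixed length $\ell_0=2\tau_0/|\omega|^2$, so $P(L)\,\ell_0\le |\{x\in(0,L):x\omega\in B_\delta\}|+2\ell_0$. To apply unique ergodicity, which needs a continuous observable, we choose a continuous $h_\delta$ with $0\le h_\delta\le 1$, $h_\delta=1$ on $B_\delta$, and support in $B_{2\delta}$ (with $\tau_0$ enlarged slightly). By \eqref{eq-unique-ergodic-average-critical},
\[
\limsup_{L\to\infty}\frac1L\,|\{x\in(0,L):x\omega\in B_\delta\}|
\le \fint_{\T^d}h_\delta
\le C'\delta^{d-1}.
\]
Hence $\limsup_L P(L)/L\le C'\delta^{d-1}/\ell_0$. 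Multiplying by $r$ gives
\[
\limsup_{L\to\infty}\frac1L\,\#\{x\in(0,L):q(x)=0,\ x\omega\in B_\delta\}\le C\delta^{d-1},
\]
where $C$ depends on $r$, $\tau_0$ and $\omega$ but not on $\delta$. For $d\ge2$ this bound tends to $0$ as $\delta\to0$.

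The main obstacle is making Lemma~\ref{lem-uniform-local-root-bound} apply uniformly to every passage. The lemma's ``sufficiently short'' must be a fixed length independent of the orbit; this holds because the Rolle argument uses only $|D^r_\omega\mathcal S|\ge c_*$ on $U_*$. We must also ensure that every passage is a full $t$-segment lying inside $U_*$, which is why $\tau_0$ is fixed first and only $\delta$ is sent to $0$. A secondary point is the case $d=1$, which is degenerate: there $\Sigma$ is a finite set of points and nonresonance forces $\omega\neq0$, so $D_\omega\mathcal S=\omega\,\mathcal S'\neq0$ on $\Sigma$ by \eqref{eq-regular-level-set-finite-tangencies}. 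Thus $\mathcal C=\varnothing$, and we only need to note that the statement is applied with $d\ge2$.
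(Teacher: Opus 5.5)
Your proof is correct and follows essentially the same route as the paper: a flow box of fixed length and transverse width $\delta$ around $\theta_*$, a passage count bounded via unique ergodicity by a multiple of $\delta^{d-1}$, and at most $r$ zeros per passage from Lemma~\ref{lem-uniform-local-root-bound}. Your continuous majorant $h_\delta$ is a slightly cleaner substitute for the paper's appeal to $|\partial B_\delta|=0$, and your $d=1$ remark is a useful addition; one small slip is that with your parametrization $\theta_*+\zeta+t\omega$ the coordinate $t$ advances at unit rate in $x$, so a full passage lasts $2\tau_0$ rather than $2\tau_0/|\omega|^2$, which only changes the constant.
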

\begin{proof}
Choose \(a>0\) and \(\delta_0>0\) sufficiently small so that, for every
\(0<\delta<\delta_0\), the local flow box
\[
B_\delta(\theta_*)=B_\delta
:=
\left\{
\theta_*+t e_0+y:
|t|<a,\;
y\in e_0^\perp,\;
|y|<\delta
\right\},\, e_0:=\frac{\omega}{|\omega|}
\]
is contained in a single coordinate chart of \(\mathbb T^d\) and in
the neighborhood where Lemma~\ref{lem-uniform-local-root-bound}
applies.
The incoming and outgoing transverse sections of \(B_\delta\) are
\[
\Gamma_\delta^-
:=
\left\{
\theta_*-a e_0+y:
y\in e_0^\perp,\;
|y|<\delta
\right\},
\]
and
\[
\Gamma_\delta^+
:=
\left\{
\theta_*+a e_0+y:
y\in e_0^\perp,\;
|y|<\delta
\right\}.
\]
Consider the Kronecker orbit
\[
\theta(x)=x\omega \pmod{2\pi}.
\]
Since
\[
\dot\theta(x)=\omega=|\omega|e_0,
\]
the transverse coordinate \(y\in e_0^\perp\) remains constant while
the orbit stays inside \(B_\delta\). Consequently, every complete
passage through \(B_\delta\), from \(\Gamma_\delta^-\) to
\(\Gamma_\delta^+\), has exactly the same duration
\[
\tau_*=\tfrac{2a}{|\omega|}.
\]
Let \(P_\delta(L)\) denote the number of complete passages through
\(B_\delta\) occurring during the interval \((0,L)\). Apart from these
complete passages, the intersection of the orbit with \(B_\delta\)
over \((0,L)\) can contain at most two incomplete pieces: one
intersecting the initial time \(0\), and one intersecting the terminal
time \(L\). Each such incomplete piece has duration at most
\(\tau_*\). Therefore
\begin{equation}
\label{eq-passage-time-bound}
\int_0^L
\mathbf 1_{B_\delta}(x\omega)\,dx
=
\frac{2a}{|\omega|}\,P_\delta(L)
+
E_\delta(L),
\end{equation}
where
\[
0\leq E_\delta(L)\leq \frac{4a}{|\omega|}.
\]
In particular,
\[
P_\delta(L)
\leq
\frac{|\omega|}{2a}
\int_0^L
\mathbf 1_{B_\delta}(x\omega)\,dx.
\]
We next estimate the asymptotic frequency with which the orbit visits
\(B_\delta\). Since the boundary \(\partial B_\delta\) has zero
\(d\)-dimensional Lebesgue measure, and using \eqref{eq-unique-ergodic-average-critical},  we deduce that
\[
\lim_{L\to\infty}
\frac1L
\int_0^L
\mathbf 1_{B_\delta}(x\omega)\,dx
=
\frac{|B_\delta|}{(2\pi)^d}.
\]
Since \(B_\delta\) is, in these local coordinates, the product of an
interval of length \(2a\) in the \(e_0\)-direction and a
\((d-1)\)-dimensional ball of radius \(\delta\) in \(e_0^\perp\), we
have
\[
|B_\delta|
=
2a\,|B_\delta^{d-1}|
=
2a\,\kappa_{d-1}\delta^{d-1},
\]
where \(\kappa_{d-1}\) denotes the volume of the unit ball in
\(\mathbb R^{d-1}\).\\
Dividing \eqref{eq-passage-time-bound} by \(L\) and letting
\(L\to\infty\), we obtain
\[
\lim_{L\to\infty}
\frac{P_\delta(L)}{L}
=
\frac{|\omega|}{2a}
\frac{|B_\delta|}{(2\pi)^d}
=
\frac{|\omega|\kappa_{d-1}}{(2\pi)^d}
\delta^{d-1}.
\]
Hence
\begin{equation}
\label{eq-passage-density}
\lim_{L\to\infty}
\frac{P_\delta(L)}{L}
\leq
C\delta^{d-1},
\end{equation}
where \(C>0\) is independent of \(L\) and \(\delta\).\\
It remains to relate passages through the flow box to zeros of
\[
q(x):=\mathcal S(x\omega).
\]
By Lemma~\ref{lem-uniform-local-root-bound}, every connected
Kronecker orbit segment contained in \(B_\delta\) intersects
\(\Sigma=\{\mathcal S=0\}\) at most \(r\) times. Hence every complete
passage through \(B_\delta\) contributes at most \(r\) zeros of
\(q\). The at most two incomplete passages meeting the endpoints
\(0\) and \(L\) contribute at most \(2r\) additional zeros. Therefore
\[
\#
\Big\{
x\in(0,L):
q(x)=0,\;
x\omega\in B_\delta
\Big\}
\leq
rP_\delta(L)+2r.
\]
Combining with  \eqref{eq-passage-density}, we conclude that
\[
\limsup_{L\to\infty}
\frac1L
\#
\Big\{
x\in(0,L):
q(x)=0,\;
x\omega\in B_\delta
\Big\}
\leq
Cr\,\delta^{d-1}.
\]
Since \(r\) can be taken uniform  as it is related to the degree of degeneracy of $\theta_*$, then
\[
\limsup_{L\to\infty}
\frac1L
\#
\Big\{
x\in(0,L):
q(x)=0,\;
x\omega\in B_\delta
\Big\}
\leq
C\delta^{d-1}.
\]
Finally, letting \(\delta\to0\) yields
the desired result and  this completes the proof of the lemma.
\end{proof}

\begin{proof}[Proof of Theorem~$\ref{thm-density-finite-order-tangencies}$]
For $\delta>0$ sufficiently small, let
\[
U_\delta:=\bigcup_{j=1}^M U_\delta(\theta^{(j)}),
\]
where the sets $U_\delta(\theta^{(j)})$ are pairwise disjoint
geodesic balls in $\mathbb T^d$, centered at the tangency points
$\theta^{(j)}$ in \eqref{eq-finite-tangency-set-main}, and chosen so that
\[
U_\delta(\theta^{(j)})\subset B_\delta(\theta^{(j)}),
\qquad j=1,\ldots,M,
\]
where $B_\delta(\theta^{(j)})$ denotes the transverse flow box
constructed in Lemma~\ref{lem-negligible-density-near-tangency}.
The radii of these balls are chosen to converge to zero as
$\delta\to0$. Consequently,
\[
\bigcap_{\delta>0}\overline{U_\delta}
=\mathcal C
=\{\theta^{(1)},\ldots,\theta^{(M)}\}.
\]
Since $\mathcal C$ is finite,
Lemma~\ref{lem-negligible-density-near-tangency} gives
\begin{equation}
\label{eq-total-negligible-density-tangencies}
\lim_{\delta\to0}
\limsup_{L\to\infty}
\frac1L
\#
\Big\{
x\in(0,L):
q(x)=0,\;
x\omega\in U_\delta
\Big\}
=
0.
\end{equation}
Choose a cut-off function \(\chi_\delta\in C^\infty(\mathbb T^d)\) such that
\(
0\leq\chi_\delta\leq1,
\)
and vanishing 
on a neighborhood of $\mathcal C$ contained in $U_\delta$, and
\[
\chi_\delta(\theta)=1, \quad \forall \theta\notin U_\delta.
\]
Since the support of $\chi_\delta$ is a compact subset of
$\mathbb T^d\setminus\mathcal C$, there exists
$c_\delta>0$ such that
\[
|D_\omega\mathcal S(\theta)|
\geq c_\delta,\, \forall\theta\in\Sigma\cap\operatorname{supp}\chi_\delta.
\]
Applying \eqref{eq-weighted-critical-density-general} gives
\begin{equation}
\label{eq-weighted-density-away-tangencies}
\lim_{L\to\infty}
\frac1L
\sum_{\substack{0<x<L\\q(x)=0}}
\chi_\delta(x\omega)
=
\frac1{(2\pi)^d}
\int_\Sigma
\chi_\delta(\theta)
\frac{
|D_\omega\mathcal S(\theta)|
}{
|\nabla\mathcal S(\theta)|
}
\,d\sigma(\theta).
\end{equation}
On the other hand, by writing
$$
N_{\mathcal S,\omega}(L)
-
\sum_{\substack{0<x<L\\q(x)=0}}
\chi_\delta(x\omega)
=
\sum_{\substack{0<x<L\\q(x)=0}}
\bigl(1-\chi_\delta(x\omega)\bigr).
$$
we get
\[
0
\leq
N_{\mathcal S,\omega}(L)
-
\sum_{\substack{0<x<L \\q(x)=0}}
\chi_\delta(x\omega)
\leq
\#
\Big\{
x\in(0,L):
q(x)=0,\;
x\omega\in U_\delta
\Big\}.
\]
Hence, by
\eqref{eq-total-negligible-density-tangencies},
\begin{equation}
\label{eq-counting-cutoff-error}
\lim_{\delta\to0}
\limsup_{L\to\infty}
\left|
\frac{N_{\mathcal S,\omega}(L)}{L}
-
\frac1L
\sum_{\substack{0<x<L\\q(x)=0}}
\chi_\delta(x\omega)
\right|
=
0.
\end{equation}
Finally, by
\eqref{eq-regular-level-set-finite-tangencies},
the function
\[
\theta
\longmapsto
\frac{
|D_\omega\mathcal S(\theta)|
}{
|\nabla\mathcal S(\theta)|
}
\]
is continuous on the compact hypersurface $\Sigma$.
Since the neighborhoods $U_\delta\cap\Sigma$ shrink to the finite
set $\mathcal C$, which has zero surface measure,
\[
\lim_{\delta\to0}
\int_{\Sigma\cap U_\delta}
\frac{
|D_\omega\mathcal S(\theta)|
}{
|\nabla\mathcal S(\theta)|
}
\,d\sigma(\theta)
=
0.
\]
Consequently,
\begin{equation}
\label{eq-cutoff-integral-limit}
\lim_{\delta\to0}
\int_\Sigma
\chi_\delta(\theta)
\frac{
|D_\omega\mathcal S(\theta)|
}{
|\nabla\mathcal S(\theta)|
}
\,d\sigma(\theta)
=
\int_\Sigma
\frac{
|D_\omega\mathcal S(\theta)|
}{
|\nabla\mathcal S(\theta)|
}
\,d\sigma(\theta).
\end{equation}
Combining
\eqref{eq-weighted-density-away-tangencies},
\eqref{eq-counting-cutoff-error}, and
\eqref{eq-cutoff-integral-limit}, we conclude that
\[
\lim_{L\to\infty}
\frac{N_{\mathcal S,\omega}(L)}{L}
=
\frac1{(2\pi)^d}
\int_\Sigma
\frac{
|D_\omega\mathcal S(\theta)|
}{
|\nabla\mathcal S(\theta)|
}
\,d\sigma(\theta).
\]
This proves the density result. The convergence of the empirical measures can be established by adapting the argument used in the proof of Theorem~\ref{thm-critical-point-ergodic-distribution}, together with a suitable cut-off procedure near the tangency points.
\end{proof}

\subsection{Application to the density distribution of critical points of the Robin function}
\label{sec-nonperturbative-Robin-density}

We now apply the preceding ergodic zero-density results to the critical
points of the exact Robin function associated with a quasi-periodic
channel. The argument is nonperturbative: the essential ingredients are
the existence of a sufficiently regular Robin hull and a global
vertical critical graph.
Let $\Omega$ be a channel with quasi-periodic interfaces as described in Section \ref{sec-two}. We assume that the frequency  $\omega\in\mathbb R^d$ satisfies the nonresonance condition \eqref{eq-nonresonance-Kronecker-critical}.\\
In Proposition \ref{prop-exact-hull-Robin}, we proved the existence  of a hull Robin function $\mathcal{R}:\mathcal{D}\to\R$ such that
\begin{equation*}
\label{eq-exact-Robin-hull-density}
R_\Omega(x,y)
=
\mathcal R(x\omega,y),
\end{equation*}
The assumption of uniform separation \eqref{separation1}, together with the boundary regularity, seen in Proposition \ref{prop-regularity-Robin-hull}, ensures that
\[
\mathcal R:\mathbb T^d\times I\longrightarrow\mathbb R
\]
is sufficiently smooth and $I$ is a compact interval contained in the
common vertical region of the channel. As usual, $x\omega$ is
understood modulo $2\pi$. In particular,
\begin{equation*}
\label{eq-x-derivative-exact-Robin}
\partial_xR_\Omega(x,y)
=
(D_\omega\mathcal R)(x\omega,y).
\end{equation*}
We are interested in  critical points of $R_\Omega$, namely
solutions of
\begin{equation*}
\label{eq-full-Robin-criticality}
\partial_xR_\Omega(x,y)=0,
\qquad
\partial_yR_\Omega(x,y)=0.
\end{equation*}
In terms of the hull variables, these conditions read
\begin{equation*}
\label{eq-full-Robin-criticality-hull}
D_\omega\mathcal R(\theta,y)=0,
\qquad
\partial_y\mathcal R(\theta,y)=0,
\qquad
\theta=x\omega.
\end{equation*}
Assume that there exists a smooth periodic graph
\begin{equation*}
\label{eq-vertical-critical-graph}
\mathcal Y:\mathbb T^d\longrightarrow I
\end{equation*}
such that
\begin{equation}
\label{eq-vertical-critical-equation}
\partial_y\mathcal R
\bigl(\theta,\mathcal Y(\theta)\bigr)
=0,
\qquad
\theta\in\mathbb T^d.
\end{equation}
Define the reduced Robin hull
\begin{equation*}
\label{eq-reduced-Robin-hull}
\widehat{\mathcal R}(\theta)
:=
\mathcal R\bigl(\theta,\mathcal Y(\theta)\bigr).
\end{equation*}
Differentiating along the direction $\omega$ and using
\eqref{eq-vertical-critical-equation}, we obtain
\begin{align}
D_\omega\widehat{\mathcal R}(\theta)
&=
(D_\omega\mathcal R)
\bigl(\theta,\mathcal Y(\theta)\bigr)
+
\partial_y\mathcal R
\bigl(\theta,\mathcal Y(\theta)\bigr)
D_\omega\mathcal Y(\theta)
\nonumber\\
&=
(D_\omega\mathcal R)
\bigl(\theta,\mathcal Y(\theta)\bigr).
\label{eq-reduced-Robin-derivative}
\end{align}
Accordingly, set
\begin{equation*}
\label{eq-S-reduced-Robin}
\mathcal S_R(\theta)
:=
D_\omega\widehat{\mathcal R}(\theta)
\end{equation*}
and
\begin{equation*}
\label{eq-Sigma-reduced-Robin}
\Sigma_R
:=
\left\{
\theta\in\mathbb T^d:
\mathcal S_R(\theta)=0
\right\}.
\end{equation*}
The corresponding vertical critical graph in the physical variables is
\begin{equation*}
\label{eq-physical-vertical-critical-graph}
Y(x):=\mathcal Y(x\omega).
\end{equation*}
By construction,
\[
\partial_yR_\Omega(x,Y(x))=0,
\]
while \eqref{eq-reduced-Robin-derivative} gives
\[
\partial_xR_\Omega(x,Y(x))
=
\mathcal S_R(x\omega).
\]
Consequently,
\begin{equation}
\label{eq-Robin-critical-zero-equivalence}
\nabla R_\Omega(x,Y(x))=0
\quad\Longleftrightarrow\quad
\mathcal S_R(x\omega)=0.
\end{equation}
The distribution of the critical points of the Robin function along
the graph is therefore reduced exactly to the zero-distribution problem
studied in the preceding sections.
\\
For $L>0$, define
\begin{equation*}
\label{eq-Robin-critical-counting}
N_R(L)
:=
\#
\Big\{
x\in(0,L):\,
\mathcal S_R(x\omega)=0
\Big\}.
\end{equation*}
The following result is an immediate consequence of Theorem~\ref{thm-critical-point-ergodic-distribution}.
\begin{theorem}
\label{thm-nonperturbative-Robin-critical-density}
Assume \eqref{eq-nonresonance-Kronecker-critical} and suppose that the Robin
hull $\mathcal R$ admits a smooth vertical critical graph
$\mathcal Y$ satisfying \eqref{eq-vertical-critical-equation}.
 Assume moreover that  the Kronecker flow is transverse to $\Sigma_R$, namely
\begin{equation}
\label{eq-global-transversality-reduced-Robin}
D_\omega\mathcal S_R(\theta)\neq0,
\qquad
\theta\in\Sigma_R.
\end{equation}
Then the asymptotic density
\begin{equation*}
\label{eq-Robin-critical-density-definition}
\rho_R
:=
\lim_{L\to\infty}\frac{N_R(L)}{L}
\end{equation*}
exists and is given by
\begin{equation*}
\label{eq-nonperturbative-Robin-density}
{
\rho_R
=
\frac{1}{(2\pi)^d}
\int_{\Sigma_R}
\frac{
|D_\omega\mathcal S_R(\theta)|
}{
|\nabla_\theta\mathcal S_R(\theta)|
}
\,d\sigma(\theta)
=
\fint_{\mathbb T^d}
\delta_0\bigl(\mathcal S_R(\theta)\bigr)
|D_\omega\mathcal S_R(\theta)|
\,d\theta.
}
\end{equation*}
The convergence result for the empirical measures remains valid, with the same limiting measure as in Theorem~$\ref{thm-critical-point-ergodic-distribution}.$
\end{theorem}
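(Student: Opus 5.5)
The plan is to reduce the statement to Theorem~\ref{thm-critical-point-ergodic-distribution} applied with $\mathcal S:=\mathcal S_R=D_\omega\widehat{\mathcal R}$. Three things need to be checked: that $\mathcal S_R$ is a sufficiently regular periodic function on $\mathbb T^d$, that the transversality hypothesis \eqref{eq-global-transversality-Sigma} holds for it, and that the counting function $N_R(L)$ coincides with $N_{\mathcal S_R,\omega}(L)$.

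\emph{Regularity.} By Proposition~\ref{prop-regularity-Robin-hull}, $\mathcal R\in C^{m-1}_{\rm loc}(\mathcal D)$, and $\mathcal Y$ is assumed smooth with values in the compact interval $I$. Hence the graph $\{(\theta,\mathcal Y(\theta))\}$ is a compact subset of $\mathcal D$. It follows that $\widehat{\mathcal R}=\mathcal R(\cdot,\mathcal Y(\cdot))$ is $C^{m-1}$ on $\mathbb T^d$ and $\mathcal S_R\in C^{m-2}(\mathbb T^d)$.

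The proof of Theorem~\ref{thm-critical-point-ergodic-distribution} only uses $\mathcal S\in C^1$ together with the fact that $0$ is a regular value. So I will either take $m\ge 3$ (more if needed) or read "smooth" in the statement as this finite regularity. The proof of that theorem should be checked to confirm it uses nothing beyond $C^1$: the coarea formula and the continuity of the fibre integrals near $s=0$ both need only $C^1$ and a regular value.

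\emph{Transversality and regular value.} Hypothesis \eqref{eq-global-transversality-reduced-Robin} is exactly \eqref{eq-global-transversality-Sigma} for $\mathcal S_R$. As noted in the paper, it implies $\nabla\mathcal S_R\neq 0$ on $\Sigma_R$, because $D_\omega\mathcal S_R=\omega\cdot\nabla\mathcal S_R$. Consequently $\Sigma_R$ is a compact $C^1$ hypersurface, and by compactness we get the uniform bound \eqref{eq-global-uniform-transversality-critical}.

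\emph{Identification and nonemptiness.} By \eqref{eq-Robin-critical-zero-equivalence}, $N_R(L)=N_{\mathcal S_R,\omega}(L)$ and $\mathcal S_R(x\omega)=q(x)$. Theorem~\ref{thm-critical-point-ergodic-distribution} also assumes $\Sigma\neq\varnothing$. Here this is automatic: $\widehat{\mathcal R}$ is continuous on $\mathbb T^d$, so it attains a maximum at some $\theta_0$, and there its derivative $D_\omega\widehat{\mathcal R}(\theta_0)$ vanishes. (Alternatively, $\fint D_\omega\widehat{\mathcal R}=0$ by periodicity, so $\mathcal S_R$ changes sign or vanishes identically, and the latter is excluded by transversality.)

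Applying Theorem~\ref{thm-critical-point-ergodic-distribution} then gives existence of $\rho_R$, both density formulas, and weak convergence of the empirical measures to $\mu_{\mathcal S_R,\omega}$.

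The only real obstacle is bookkeeping of regularity: making sure the derivative losses (one in Proposition~\ref{prop-regularity-Robin-hull}, one for $D_\omega$, one more for $D_\omega\mathcal S_R$) still leave at least $C^1$, and that the nonresonance condition is all that is needed for unique ergodicity, which is the case.
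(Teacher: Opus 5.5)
Your proposal is correct and follows the paper's route. The paper presents this theorem as an immediate consequence of Theorem~\ref{thm-critical-point-ergodic-distribution} applied to $\mathcal S=\mathcal S_R$ via \eqref{eq-Robin-critical-zero-equivalence}. Your extra checks fill in details the paper leaves implicit: $\mathcal S_R$ is at least $C^1$ (which needs $m\ge 3$ in Proposition~\ref{prop-regularity-Robin-hull}), and $\Sigma_R\neq\varnothing$ holds automatically because $D_\omega\widehat{\mathcal R}$ vanishes at a maximum of $\widehat{\mathcal R}$.
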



\begin{remark}
\label{rem-Robin-density-nonperturbative}
The preceding result is entirely nonperturbative: the density argument
itself does not require any smallness assumption on the quasi-periodic
deformation of the channel. The essential geometric hypothesis is instead
the existence of a global vertical critical graph
\[
\mathcal Y:\mathbb T^d\to I.
\]
For a general quasi-periodic channel, the existence of such a graph must
be established independently. In the perturbative regime, however, when
the channel is a sufficiently small quasi-periodic deformation of a flat
strip, the critical graph can be constructed by means of the implicit
function theorem in an appropriate space of periodic functions. This
construction will be carried out below.
\end{remark}

\begin{remark}
The preceding theorem assumes global transversality. This hypothesis
can be relaxed by using the finite-order tangency version of Theorem \ref{thm-density-finite-order-tangencies}.
Let
\[
\mathcal C_R
:=
\left\{
\theta\in\Sigma_R:
D_\omega\mathcal S_R(\theta)=0
\right\}
\]
be the tangency set. Suppose that $\mathcal C_R$ is finite and that,
for every $\theta_*\in\mathcal C_R$, there exists an integer
$r=r(\theta_*)\geq2$ such that
\begin{equation}
\label{eq-finite-order-Robin-tangency}
D_\omega^j\mathcal S_R(\theta_*)=0,
\qquad
1\leq j<r,
\qquad
D_\omega^r\mathcal S_R(\theta_*)\neq0.
\end{equation}
Then the same density formula remains valid:
\begin{equation}
\label{eq-Robin-density-finite-tangencies}
{
\rho_R
=
\frac{1}{(2\pi)^d}
\int_{\Sigma_R}
\frac{
|D_\omega\mathcal S_R(\theta)|
}{
|\nabla_\theta\mathcal S_R(\theta)|
}
\,d\sigma(\theta)
=
\frac{1}{(2\pi)^d}
\int_{\Sigma_R}
|\omega\cdot n_R(\theta)|
\,d\sigma(\theta).
}
\end{equation}
This extension is particularly relevant in the nonperturbative setting.
Indeed, for a general quasi-periodic channel, there is no reason to expect
the Kronecker flow to intersect the critical hypersurface $\Sigma_R$
everywhere transversally. The finite-order result shows that isolated
tangencies do not affect the leading asymptotic density, provided that
their degeneracy is of finite order. As we shall see, such degeneracies
already arise in the perturbative regime. In particular,
Proposition~\ref{prop-two-frequency-geometry} provides examples for which
\eqref{eq-finite-order-Robin-tangency} holds with tangency order strictly
larger than two.
\end{remark}
\subsection{Application of the
nonperturbative density theorem}
\label{subsec-centerline-exact-density}

We now apply Theorem~\ref{thm-nonperturbative-Robin-critical-density}
to a small quasi-periodic perturbation of the flat strip. The main
point is that the perturbative analysis is used only to construct the
vertical critical graph and to verify the geometric hypotheses of the
general theorem. Once this has been done, the asymptotic density of
critical points follows directly from the exact Robin hull and is
therefore an exact statement for the full Robin function.
We consider
\begin{equation*}
\label{eq-QP-upper-perturbation-centerline}
g_-(x)=0,
\qquad
g_+(x)
=
\sum_{j=1}^d f_j(\omega_jx),
\end{equation*}
where
\begin{align}\label{REg-smooth}
f_j\in C^{m+3,\alpha}(\mathbb T;\mathbb R),
\qquad
0<\alpha<1, m\geq2
\end{align}
are $2\pi$-periodic, and the frequency 
\(
\omega=(\omega_1,\ldots,\omega_d)
\)
satisfies the nonresonance condition \eqref{eq-nonresonance-Kronecker-critical}.
Let $\mathcal R_\varepsilon(\theta,y)$ denote the exact Robin hull
of the perturbed channel, and recall the identity
\begin{align}\label{robin-link}
R_{\Omega_\varepsilon}(x,y)
=
\mathcal R_\varepsilon(x\omega,y).
\end{align}
For every fixed
\(
0<\delta<\tfrac12,
\)
the perturbative construction developed in Lemma \ref{lem-robin-expansion-perturbed} gives
\begin{equation}
\label{eq-Robin-hull-expansion-centerline}
\mathcal R_\varepsilon(\theta,y)
=
R_0(y)
+
\varepsilon\mathcal R_1(\theta,y)
+
\varepsilon^2\mathcal R_{2,\varepsilon}(\theta,y), \,
(\theta,y)
\in
\mathbb T^d
\times
\left[
\tfrac12-\delta,\tfrac12+\delta
\right],
\end{equation}
where the remainder is uniformly bounded in the $C^{m+2}\left(\mathbb T^d
\times
\left[
\tfrac12-\delta,\tfrac12+\delta
\right]\right)$ for
$|\varepsilon|$ sufficiently small. On the other hand,
\begin{equation}
\label{eq-flat-centerline-nondegeneracy}
R_0'\left(\tfrac12\right)=0,
\qquad
R_0''\left(\tfrac12\right)=-\tfrac{\pi}{2}<0.
\end{equation}
The nondegeneracy in
\eqref{eq-flat-centerline-nondegeneracy} allows us first to solve the
vertical critical-point equation globally on the phase torus.

\begin{proposition}
\label{prop-exact-vertical-critical-graph-centerline}
There exist $\varepsilon_0>0$ and $\delta_0>0$ such that, for every
$|\varepsilon|<\varepsilon_0$, there exists a unique periodic function
\[
\mathcal Y_\varepsilon:\mathbb T^d\longrightarrow
\left(
\tfrac12-\delta_0,\tfrac12+\delta_0
\right)
\]
of class $C^m(\T^d)$ and satisfying
\begin{equation}
\label{eq-exact-vertical-critical-graph}
\partial_y\mathcal R_\varepsilon
\bigl(
\theta,\mathcal Y_\varepsilon(\theta)
\bigr)
=
0,
\qquad
\theta\in\mathbb T^d.
\end{equation}
Moreover,
\begin{equation}
\label{eq-Yepsilon-expansion-exact}
\mathcal Y_\varepsilon(\theta)
=
\tfrac12
+
\tfrac{2\varepsilon}{\pi}
\partial_y\mathcal R_1\left(\theta,\tfrac12\right)
+
\varepsilon^2\mathcal Y_{2,\varepsilon}(\theta),
\end{equation}
where
\[
\mathcal Y_{2,\varepsilon}\in C^m(\T^d)
\]
and, for some constant $C>0$ independent of $\varepsilon$,
\begin{equation}
\label{eq-Y2-uniform-C1}
\sup_{|\varepsilon|\leq\varepsilon_0}
\|\mathcal Y_{2,\varepsilon}\|_{C^m(\T^d)}
\leq C.
\end{equation}

\end{proposition}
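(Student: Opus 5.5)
The plan is to solve the vertical critical-point equation by the implicit function theorem, applied fiberwise but with uniform constants, so that the local solutions glue into a global periodic graph, exactly as in the proof of Theorem~\ref{thm-exact-qp-orbits}. Set
\[
F(\varepsilon,\theta,y):=\partial_y\mathcal R_\varepsilon(\theta,y)
=R_0'(y)+\varepsilon\,\partial_y\mathcal R_1(\theta,y)+\varepsilon^2\partial_y\mathcal R_{2,\varepsilon}(\theta,y),
\]
defined for $(\theta,y)\in\T^d\times[\tfrac12-\delta,\tfrac12+\delta]$. By \eqref{eq-Robin-hull-expansion-centerline}, the remainder $\mathcal R_{2,\varepsilon}$ is bounded in $C^{m+2}$. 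Hence $F$ is $C^{m+1}$ in $(\theta,y)$, with $\|F(\varepsilon,\cdot)-R_0'\|_{C^{m+1}}\le C|\varepsilon|$.

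Using \eqref{eq-flat-centerline-nondegeneracy}, I first choose $\delta_0$ so that $R_0''\le-\pi/4$ on $[\tfrac12-\delta_0,\tfrac12+\delta_0]$. Then I take $\varepsilon_0$ small enough that two things hold uniformly in $\theta$:
\begin{itemize}
\item $\partial_yF\le-\pi/8$ on $\T^d\times[\tfrac12-\delta_0,\tfrac12+\delta_0]$;
\item $F(\varepsilon,\theta,\tfrac12-\delta_0)>0>F(\varepsilon,\theta,\tfrac12+\delta_0)$, which holds because $R_0'(\tfrac12\mp\delta_0)=\pm\tfrac12\cot(\pi\delta_0)$ up to sign and so these values are bounded away from zero.
\end{itemize}
Strict monotonicity in $y$ together with the intermediate value theorem gives, for each $\theta$, a unique zero $\mathcal Y_\varepsilon(\theta)$ in the open interval. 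This settles existence and uniqueness.

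For regularity, the implicit function theorem applied at each $(\theta,\mathcal Y_\varepsilon(\theta))$ gives a local $C^{m+1}$ solution, and in particular a $C^m$ one. Fiberwise uniqueness makes the local solutions coincide on overlaps, so $\mathcal Y_\varepsilon$ is a periodic function of class $C^m$. Its derivatives can be bounded uniformly in $\varepsilon$ by differentiating $F(\varepsilon,\theta,\mathcal Y_\varepsilon(\theta))=0$ repeatedly (Faà di Bruno) and using $|\partial_yF|\ge\pi/8$. This gives $\|\mathcal Y_\varepsilon\|_{C^m}\le C$ uniformly.

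For the expansion \eqref{eq-Yepsilon-expansion-exact}, I first obtain $\mathcal Y_\varepsilon-\tfrac12=O(\varepsilon)$ from $|F(\varepsilon,\theta,\tfrac12)|\le C|\varepsilon|$ and the lower bound on $|\partial_yF|$. Next, set $\mathcal Y_\varepsilon=\tfrac12+\varepsilon Y_1+\varepsilon^2\mathcal Y_{2,\varepsilon}$ with
\[
Y_1:=-\frac{\partial_y\mathcal R_1(\theta,\tfrac12)}{R_0''(\tfrac12)}=\frac{2}{\pi}\,\partial_y\mathcal R_1\!\left(\theta,\tfrac12\right).
\]
To get the $C^m$ bound on $\mathcal Y_{2,\varepsilon}$ without dividing by $\varepsilon^2$ pointwise, I write $w:=\mathcal Y_\varepsilon-\tfrac12$ and use the Taylor form with integral remainders:
\[
0=R_0'(\tfrac12+w)+\varepsilon\,\partial_y\mathcal R_1(\theta,\tfrac12+w)+\varepsilon^2\partial_y\mathcal R_{2,\varepsilon}(\theta,\tfrac12+w).
\]
Since $R_0'(\tfrac12)=0$, the first term equals $R_0''(\tfrac12)w+w^2\int_0^1(1-s)R_0'''(\tfrac12+sw)\,ds$. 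The second term equals $\varepsilon\,\partial_y\mathcal R_1(\theta,\tfrac12)+\varepsilon w\int_0^1\partial_y^2\mathcal R_1(\theta,\tfrac12+sw)\,ds$.

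Solving for $w-\varepsilon Y_1$ then gives $\varepsilon^2\mathcal Y_{2,\varepsilon}$ as $(R_0''(\tfrac12))^{-1}$ times a sum of three kinds of terms: $w^2$ times a $C^m$ function, $\varepsilon w$ times a $C^m$ function, and $\varepsilon^2$ times a $C^m$ function. Each of these is in $C^m$ with norm at most $C\varepsilon^2$, because $w=\varepsilon\,\widetilde w$ with $\|\widetilde w\|_{C^m}\le C$. To get that last bound I will write $w$ itself as $-\varepsilon$ times a $C^m$ quotient: apply the mean value identity to $F(\varepsilon,\theta,\tfrac12+w)-F(\varepsilon,\theta,\tfrac12)$ and divide by $\int_0^1\partial_yF(\varepsilon,\theta,\tfrac12+sw)\,ds$, which is bounded away from zero. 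The compositions stay in $C^m$ because $\mathcal R_1$ and $\mathcal R_{2,\varepsilon}$ carry $C^{m+2}$ regularity, and the hypothesis \eqref{REg-smooth} makes $\mathcal R_1$ smooth enough.

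The main obstacle is this bookkeeping of uniform $C^m$ bounds for the compositions. The loss of derivatives must be absorbed, and this is exactly why \eqref{REg-smooth} assumes $C^{m+3,\alpha}$ data. The rest is standard.
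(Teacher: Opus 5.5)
Your argument is correct, but it takes a different route from the paper's.

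\textbf{The paper's route.} The paper applies the implicit function theorem in the Banach space $X=C^m(\T^d)$ to the composition operator $\mathscr F(\varepsilon,Y)=\partial_y\mathcal R_\varepsilon(\cdot,Y(\cdot))$. The linearization there is $\partial_Y\mathscr F(0,\tfrac12)=-\tfrac{\pi}{2}I_X$. Differentiating the resulting $C^1$ branch $\varepsilon\mapsto\mathcal Y_\varepsilon$ at $\varepsilon=0$ gives the first-order term $\tfrac{2}{\pi}\partial_y\mathcal R_1(\theta,\tfrac12)$.

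\textbf{Your route.} You solve fiberwise, using a uniform bound $\partial_yF\le-\pi/8$ together with the intermediate value theorem, as in the proof of Theorem~\ref{thm-exact-qp-orbits}. You then glue the local implicit-function-theorem solutions and obtain the remainder from Taylor formulas with integral remainder.

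\textbf{What your route buys.}
\begin{enumerate}
\item Uniqueness holds among all functions with values in $(\tfrac12-\delta_0,\tfrac12+\delta_0)$, which is what the statement asserts. The Banach-space argument only gives uniqueness in a small $C^m$-ball $\mathcal U_\eta$ around $\tfrac12$. A $C^m$ function with values in the interval can have large derivatives and so lie outside that ball.
\item You get a genuine $\varepsilon^2\mathcal Y_{2,\varepsilon}$ with a uniform $C^m$ bound, using only the uniform $C^{m+2}$ bound on $\mathcal R_{2,\varepsilon}$. The factorization $w=\varepsilon\widetilde w$ with $\|\widetilde w\|_{C^m}\le C$ is the right device here. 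A branch that is only $C^1$ in $\varepsilon$ yields an $o(\varepsilon)$ remainder. Upgrading it to $O(\varepsilon^2)$ would require $C^2$ or Lipschitz dependence on $\varepsilon$, which the expansion \eqref{eq-Robin-hull-expansion-centerline} does not provide.
\end{enumerate}

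\textbf{What the paper's route buys.} It is shorter, and it directly gives $C^1$ dependence of $\mathcal Y_\varepsilon$ on $\varepsilon$ in $C^m(\T^d)$.

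\textbf{Minor slips.}
\begin{itemize}
\item You wrote $\cot$, but $R_0'(\tfrac12\mp\delta_0)=\pm\tfrac12\tan(\pi\delta_0)$. This does not affect the sign argument.
\item Since $R_0''(y)=-\tfrac{\pi}{2}\csc^2(\pi y)\le-\tfrac{\pi}{2}$ on all of $(0,1)$, the only real constraint on $\delta_0$ is $\delta_0\le\delta$, the window where the expansion holds.
\item Your argument actually gives $\mathcal Y_\varepsilon\in C^{m+1}$, which is more than needed.
\end{itemize}
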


\begin{proof}
The point is to solve the vertical critical-point equation directly
in a Banach space of periodic functions, rather than solving it
pointwise in $\theta$. Let $m\ge0$ as in \eqref{REg-smooth} and 
set
\[
X:=C^m(\mathbb T^d),\quad
Y_0(\theta)\equiv\tfrac12.
\]
For $\eta>0$, introduce the open neighborhood
\[
\mathcal U_\eta
:=
\left\{
Y\in X:
\left\|Y-Y_0\right\|_{C^m}<\eta
\right\}.
\]
Choosing $\eta<\delta$, every $Y\in\mathcal U_\eta$ takes its values
in
\(
\left(
\tfrac12-\delta,\tfrac12+\delta
\right).
\)
Define the nonlinear operator
\begin{equation*}
\label{eq-functional-IFT-operator}
\mathscr F:
(-\varepsilon_0,\varepsilon_0)
\times\mathcal U_\eta
\longrightarrow X
\end{equation*}
by
\begin{equation*}
\label{eq-functional-F-definition}
\mathscr F(\varepsilon,Y)(\theta)
:=
\partial_y\mathcal R_\varepsilon
\bigl(
\theta,Y(\theta)
\bigr),\,\,\theta\in\T^d.
\end{equation*}
By \eqref{REg-smooth} and the  regularity of the Robin hull following from Proposition \ref{prop-regularity-Robin-hull} together with the standard
regularity of composition operators on $C^m(\mathbb T^d)$,
$\mathscr F$ is of class $C^1$.
For the flat strip,
\[
\mathcal R_0(\theta,y)=R_0(y),
\]
and therefore
\[
\mathscr F(0,Y_0)(\theta)
=
R_0'\left(\tfrac12\right)
=
0.
\]
We now compute the differential with respect to the function
variable $Y$. For $H\in X$,
\begin{align*}
\partial_Y\mathscr F(\varepsilon,Y)[H](\theta)
&=
\partial_{yy}\mathcal R_\varepsilon
\bigl(
\theta,Y(\theta)
\bigr)
H(\theta).
\label{eq-linearized-functional-F}
\end{align*}
Consequently,  we get by \eqref{eq-flat-centerline-nondegeneracy}
\begin{eqnarray*}
\partial_Y\mathscr F(0,Y_0)[H](\theta)
&=&
R_0''\left(\tfrac12\right)H(\theta)
\\
&=&
-\tfrac{\pi}{2}H(\theta).
\end{eqnarray*}
Thus
\begin{equation}\label{oso-dec}
\partial_Y\mathscr F(0,Y_0)
=
-\tfrac{\pi}{2} I_X.
\end{equation}
This is an isomorphism from $X$ onto itself.
The implicit function theorem in Banach spaces therefore yields
$\varepsilon_1>0$ and a unique $C^1$ map
\[
\varepsilon\in (-\varepsilon_1,\varepsilon_1)
\longmapsto
\mathcal Y_\varepsilon\in X
\]
such that
\[
\mathcal Y_0=Y_0=\tfrac12
\quad\hbox{and}\quad
\mathscr F(\varepsilon,\mathcal Y_\varepsilon)=0.
\]
Equivalently,
\[
\partial_y\mathcal R_\varepsilon
\bigl(
\theta,\mathcal Y_\varepsilon(\theta)
\bigr)
=
0
\qquad
\text{for every }\theta\in\mathbb T^d.
\]
Since the unknown belongs to
$C^m(\mathbb T^d)$, the function
$\mathcal Y_\varepsilon$ is automatically periodic in every phase
variable. 
We now derive the first-order expansion. It follows directly from the implicit function theorem with parameter, applied at the unperturbed configuration.
Differentiate the implicit equation
\[
\mathscr F(\varepsilon,\mathcal Y_\varepsilon)=0
\]
with respect to $\varepsilon$ at $\varepsilon=0$, we obtain
\[
\partial_\varepsilon
\mathscr F(0,Y_0)
+
\partial_Y\mathscr F(0,Y_0)
\left[
\left.
\partial_\varepsilon
\mathcal Y_\varepsilon
\right|_{\varepsilon=0}
\right]
=
0.
\]
By the expansion
\eqref{eq-Robin-hull-expansion-centerline},
\[
\partial_\varepsilon
\mathscr F(0,Y_0)(\theta)
=
\partial_y\mathcal R_1
\left(\theta,\tfrac12\right).
\]
Using \eqref{oso-dec},
we find
\[
\left.
\partial_\varepsilon
\mathcal Y_\varepsilon(\theta)
\right|_{\varepsilon=0}
=
\tfrac{2}{\pi}
\partial_y\mathcal R_1
\left(\theta,\tfrac12\right).
\]
Therefore
\[
\mathcal Y_\varepsilon(\theta)
=
\tfrac12
+
\tfrac{2\varepsilon}{\pi}
\partial_y\mathcal R_1
\left(\theta,\tfrac12\right)
+
\varepsilon^2\mathcal Y_{2,\varepsilon}(\theta),
\]
which proves the desired decomposition.
\end{proof}
We now reduce the full critical-point problem to the exact reduced
Robin hull using the result of Proposition \ref{prop-exact-vertical-critical-graph-centerline}. Define
\begin{equation}
\label{eq-reduced-Robin-epsilon}
\widehat{\mathcal R}_\varepsilon(\theta)
:=
\mathcal R_\varepsilon
\bigl(
\theta,\mathcal Y_\varepsilon(\theta)
\bigr)\quad \hbox{and}\quad
\mathcal S_\varepsilon(\theta)
:=
D_\omega
\widehat{\mathcal R}_\varepsilon(\theta).
\end{equation}
Since
\[
\partial_y\mathcal R_\varepsilon
\bigl(
\theta,\mathcal Y_\varepsilon(\theta)
\bigr)
=
0,
\]
the chain rule gives the exact identity
\begin{equation}
\label{eq-exact-S-epsilon-identity}
\mathcal S_\varepsilon(\theta)
=
D_\omega\mathcal R_\varepsilon
\bigl(
\theta,\mathcal Y_\varepsilon(\theta)
\bigr).
\end{equation}
By setting
\[
Y_\varepsilon(x)
:=
\mathcal Y_\varepsilon(x\omega),
\]
and recalling \eqref{robin-link} we find that
\[
\partial_yR_{\Omega_\varepsilon}
\bigl(x,Y_\varepsilon(x)\bigr)
=
0
\]
and
\[
\left(\partial_xR_{\Omega_\varepsilon}\right)
\bigl(x,Y_\varepsilon(x)\bigr)
=
\mathcal S_\varepsilon(x\omega).
\]
Hence
\begin{equation}
\label{eq-full-critical-exact-reduction}
\big(\nabla R_{\Omega_\varepsilon}\big)
\bigl(x,Y_\varepsilon(x)\bigr)
=
0
\quad\Longleftrightarrow\quad
\mathcal S_\varepsilon(x\omega)=0.
\end{equation}
Thus the critical points of the full Robin function near the centerline
are described exactly by the intersections of the Kronecker orbit
$x\mapsto x\omega$ with the hypersurface
\begin{equation*}
\label{eq-Sigma-epsilon-exact}
\Sigma_\varepsilon
:=
\left\{
\theta\in\mathbb T^d:
\mathcal S_\varepsilon(\theta)=0
\right\}.
\end{equation*}
We next identify the perturbative limit of this exact hypersurface.
At $y=1/2$, let
\begin{equation}
\label{eq-K-centerline}
K(s)
:=
P_+^2\left(s,\tfrac12\right)
=
\frac{1}{4\cosh^2(\pi s)}
\end{equation}
and define
\begin{equation}
\label{eq-filtered-profile-centerline}
F_j(\theta)
:=
\int_{\mathbb R}
K(s)
f_j(\theta+\omega_js)\,ds.
\end{equation}
Then, by \eqref{eq-R1-hull-final}
\begin{equation*}
\label{eq-R1-centerline-hull}
\mathcal R_1
\left(\theta,\tfrac12\right)
=
\sum_{j=1}^d F_j(\theta_j).
\end{equation*}
We therefore introduce
\begin{equation*}
\label{eq-S1-centerline}
\mathcal S_1(\theta)
:=
D_\omega
\mathcal R_1
\left(\theta,\tfrac12\right)
=
\sum_{j=1}^d
\omega_jF_j'(\theta_j).
\end{equation*}
To connect the exact critical-point equation with its leading-order
perturbative model, we first identify the first-order behavior of
$\mathcal S_\varepsilon$. The following lemma  deals with this asymptotics. 
\begin{lemma}
\label{lem-S-epsilon-expansion}
For small $\varepsilon$, we have the decomposition
\begin{equation*}
\label{eq-S-epsilon-expansion}
\mathcal S_\varepsilon
=
\varepsilon\mathcal S_1
+
O_{C^{m-1}}(\varepsilon^2).
\end{equation*}

\end{lemma}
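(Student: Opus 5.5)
The starting point will be the exact identity \eqref{eq-exact-S-epsilon-identity},
\[
\mathcal S_\varepsilon(\theta)=D_\omega\mathcal R_\varepsilon\bigl(\theta,\mathcal Y_\varepsilon(\theta)\bigr),
\]
into which I substitute the hull expansion \eqref{eq-Robin-hull-expansion-centerline}. Since $R_0$ depends only on $y$, we have $D_\omega R_0=0$. This gives
\[
\mathcal S_\varepsilon(\theta)=\varepsilon\,(D_\omega\mathcal R_1)\bigl(\theta,\mathcal Y_\varepsilon(\theta)\bigr)+\varepsilon^2\,(D_\omega\mathcal R_{2,\varepsilon})\bigl(\theta,\mathcal Y_\varepsilon(\theta)\bigr).
\]
The whole proof then reduces to showing two things. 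First, the second term is $O(\varepsilon^2)$ in $C^{m-1}(\mathbb T^d)$. Second, the first term equals $\varepsilon\mathcal S_1+O_{C^{m-1}}(\varepsilon^2)$.

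For the remainder term, I use that $\mathcal R_{2,\varepsilon}$ is uniformly bounded in $C^{m+2}$ on $\mathbb T^d\times[\tfrac12-\delta,\tfrac12+\delta]$. Hence $D_\omega\mathcal R_{2,\varepsilon}=\omega\cdot\nabla_\theta\mathcal R_{2,\varepsilon}$ is uniformly bounded in $C^{m+1}$ there. By Proposition~\ref{prop-exact-vertical-critical-graph-centerline}, $\mathcal Y_\varepsilon$ is uniformly bounded in $C^m(\mathbb T^d)$ and takes values in the strip. The standard composition estimate, $\|G(\cdot,Y(\cdot))\|_{C^k}\le C\bigl(\|G\|_{C^k},\|Y\|_{C^k}\bigr)$ for $G\in C^k$, then bounds this term in $C^m$, and in particular in $C^{m-1}$, uniformly in $\varepsilon$.

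For the main term, I write $\mathcal Y_\varepsilon=\tfrac12+\varepsilon Z_\varepsilon$, where $Z_\varepsilon:=\tfrac2\pi\partial_y\mathcal R_1(\cdot,\tfrac12)+\varepsilon\mathcal Y_{2,\varepsilon}$ is bounded in $C^m$ by \eqref{eq-Yepsilon-expansion-exact}–\eqref{eq-Y2-uniform-C1}. With $G:=D_\omega\mathcal R_1$, the fundamental theorem of calculus in $y$ gives
\[
G\bigl(\theta,\mathcal Y_\varepsilon(\theta)\bigr)-G\bigl(\theta,\tfrac12\bigr)=\varepsilon Z_\varepsilon(\theta)\int_0^1\partial_yG\bigl(\theta,\tfrac12+t\varepsilon Z_\varepsilon(\theta)\bigr)\,dt .
\]
Here $G(\theta,\tfrac12)=\mathcal S_1(\theta)$ by the definition of $\mathcal S_1$ and the formula $\mathcal R_1(\theta,\tfrac12)=\sum_jF_j(\theta_j)$.

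The regularity of $\partial_yG$ comes from the explicit formula \eqref{eq-R1-hull-final}. The kernels $P_+^2$ and their $y$-derivatives decay exponentially uniformly on the strip. The assumption $f_j\in C^{m+3,\alpha}$ then lets all $\theta$-derivatives of order at most $m+3$ fall on $f_j$. This yields $\mathcal R_1\in C^{m+3}$ in $\theta$ and smooth in $y$, hence $\partial_yG\in C^{m+1}$ on the strip. The integral factor is therefore bounded in $C^{m}$, uniformly in $\varepsilon$ and $t$, again by the composition estimate. Since $C^m(\mathbb T^d)$ is an algebra, the product with $Z_\varepsilon$ is bounded, and this difference is $O_{C^{m}}(\varepsilon)$. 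Multiplying by the prefactor $\varepsilon$ gives $O(\varepsilon^2)$.

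The only delicate point is bookkeeping of derivatives. The estimate for $\mathcal S_\varepsilon$ involves one $\theta$-derivative of $\mathcal R_\varepsilon$ composed with the $C^m$ graph $\mathcal Y_\varepsilon$. The natural space is therefore $C^{m-1}$ (or even $C^m$), provided the hull and its remainder genuinely have the $C^{m+2}$ regularity asserted in \eqref{eq-Robin-hull-expansion-centerline}. I would state the composition lemma once, namely that $G\in C^{k}$ on a compact strip and $Y\in C^{k}$ with values in it give $G(\cdot,Y)\in C^{k}$ with norm controlled polynomially, and apply it with $k=m-1$ throughout. I expect no real obstacle beyond verifying that loss count.
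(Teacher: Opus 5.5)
Your proof is correct, but it is organized differently from the paper's.

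\textbf{The paper's route.} The paper works with the reduced hull $\widehat{\mathcal R}_\varepsilon(\theta)=\mathcal R_\varepsilon(\theta,\mathcal Y_\varepsilon(\theta))$. It substitutes the hull expansion and the expansion of $\mathcal Y_\varepsilon$, and uses the cancellation $R_0'(\tfrac12)=0$: the $O(\varepsilon)$ displacement of the graph then contributes only $O(\varepsilon^2)$ to the flat part $R_0(\mathcal Y_\varepsilon)$. This gives $\widehat{\mathcal R}_\varepsilon=R_0(\tfrac12)+\varepsilon\mathcal R_1(\cdot,\tfrac12)+O_{C^m}(\varepsilon^2)$. Only then does it apply $D_\omega$, and that step costs one derivative, which is where the $C^{m-1}$ in the statement comes from.

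\textbf{Your route.} You start from the exact chain-rule identity $\mathcal S_\varepsilon=(D_\omega\mathcal R_\varepsilon)(\theta,\mathcal Y_\varepsilon(\theta))$, which already encodes the vertical criticality $\partial_y\mathcal R_\varepsilon(\theta,\mathcal Y_\varepsilon)=0$. The flat part then vanishes identically because $D_\omega R_0\equiv0$, so you need neither $R_0'(\tfrac12)=0$ nor a second-order Taylor expansion of $R_0$. What remains is a first-order Taylor expansion in $y$ of $D_\omega\mathcal R_1$ and a composition bound for $D_\omega\mathcal R_{2,\varepsilon}$.

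\textbf{What each buys.} In your identity the flow derivative falls on $\mathcal R_\varepsilon$, which has regularity to spare thanks to the $C^{m+2}$ control in \eqref{eq-Robin-hull-expansion-centerline} and $f_j\in C^{m+3,\alpha}$. It does not fall on the graph $\mathcal Y_\varepsilon$, which is only known to be $C^m$. As a result you get the estimate directly in $C^m$, a slightly stronger statement. The price is one more $\theta$-derivative on $\mathcal R_1$ and $\mathcal R_{2,\varepsilon}$ than the paper's route uses. The paper's route is shorter to write but leaves the composition estimates behind its ``$O_{C^m}(\varepsilon^2)$'' implicit, whereas yours makes them explicit.
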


\begin{proof}
Using
\eqref{eq-Robin-hull-expansion-centerline}, \eqref{eq-flat-centerline-nondegeneracy} and
\eqref{eq-Yepsilon-expansion-exact}, we have
\begin{align*}
\widehat{\mathcal R}_\varepsilon(\theta)
&=
R_0\bigl(\mathcal Y_\varepsilon(\theta)\bigr)
+
\varepsilon
\mathcal R_1
\bigl(
\theta,\mathcal Y_\varepsilon(\theta)
\bigr)
+
O_{C^m}(\varepsilon^2)
\\
&=
R_0\left(\tfrac12\right)
+
\varepsilon
\mathcal R_1\left(\theta,\tfrac12\right)
+
O_{C^m}(\varepsilon^2).
\end{align*}
Indeed, the first-order contribution of the displacement of
$\mathcal Y_\varepsilon$ to the flat part vanishes because
\[
R_0'\left(\tfrac12\right)=0.
\]
Applying $D_\omega$ gives
\[
\mathcal S_\varepsilon
=
\varepsilon
D_\omega
\mathcal R_1\left(\theta,\tfrac12\right)
+
O_{C^{m-1}}(\varepsilon^2),
\]
which gives the desired expansion of the lemma.
\end{proof}
Set
\begin{equation*}
\label{eq-Sigma1-centerline}
\Sigma_1
:=
\left\{
\theta\in\mathbb T^d:
\mathcal S_1(\theta)=0
\right\}.
\end{equation*}
We impose the following natural nondegeneracy assumption,
\begin{equation}
\label{eq-leading-transversality-centerline}
D_\omega\mathcal S_1(\theta)\neq0,
\qquad
\forall\,\theta\in\Sigma_1.
\end{equation}
By compactness of $\Sigma_1$, this condition is  uniform. Hence Lemma \ref{lem-S-epsilon-expansion} implies that, for all sufficiently
small nonzero $\varepsilon$, $\Sigma_\varepsilon$ is a smooth
hypersurface, close to $\Sigma_1$, and
\begin{equation*}
\label{eq-exact-transversality-centerline}
D_\omega\mathcal S_\varepsilon(\theta)\neq0,
\qquad
\theta\in\Sigma_\varepsilon.
\end{equation*}
Denote
\begin{equation*}
\label{eq-Nepsilon-definition}
N_\varepsilon(L)
:=
\#
\Big\{
x\in(0,L):
\mathcal S_{\eps}(x\omega)=0
\Big\},
\end{equation*}
and  the asymptotic density distribution
\begin{equation*}
\label{eq-rhoepsilon-exact-definition}
\rho_\varepsilon
:=
\lim_{L\to\infty}
\frac{N_\varepsilon(L)}{L}
\end{equation*}
when it exists.
We can therefore apply directly
Theorem~\ref{thm-nonperturbative-Robin-critical-density}.

\begin{proposition}
\label{thm-exact-density-centerline-small-perturbation}
Assume the nonresonance condition
\eqref{eq-nonresonance-Kronecker-critical} and  that
$\Sigma_1\neq\varnothing$ and that
\eqref{eq-leading-transversality-centerline} holds.
Then there exists $\varepsilon_0>0$ such that, for every
$0<|\varepsilon|<\varepsilon_0$, the critical points of the full
Robin function $R_{\Omega_\varepsilon}$ contained in a fixed
neighborhood of the centerline lie on the quasi-periodic graph
\[
y=Y_\varepsilon(x)
=
\mathcal Y_\varepsilon(x\omega),
\]
and are characterized exactly by
\[
\mathcal S_\varepsilon(x\omega)=0.
\]
Moreover, $\rho_\varepsilon$ exists and is given by the exact formula
\begin{align}
\label{eq-rhoepsilon-exact-surface}
\rho_\varepsilon
&=\frac{1}{(2\pi)^d}
\int_{\mathbb T^d}
\delta\!\left(\mathcal S_\varepsilon(\theta)\right)
\left|
D_\omega\mathcal S_\varepsilon(\theta)
\right|
\,d\theta\\
\nonumber&=
\frac{1}{(2\pi)^d}
\int_{\Sigma_\varepsilon}
\frac{
|D_\omega\mathcal S_\varepsilon(\theta)|
}{
|\nabla_\theta\mathcal S_\varepsilon(\theta)|
}
\,d\sigma_\varepsilon(\theta).
\end{align}
and decomposes as
\begin{equation}
\label{eq-rhoepsilon-leading}
\rho_\varepsilon
=
\frac{1}{(2\pi)^d}
\int_{\Sigma_1}
\frac{
|D_\omega\mathcal S_1(\theta)|
}{
|\nabla_\theta\mathcal S_1(\theta)|
}
\,d\sigma(\theta)+O(\varepsilon).
\end{equation}
Let
\[
0<x_1^\varepsilon<x_2^\varepsilon<\cdots
\]
denote the positive abscissas of these critical points, and define
their phases by
\[
\Theta_n^\varepsilon
:=
x_n^\varepsilon\omega
\pmod{2\pi}
\in\Sigma_\varepsilon.
\]
Then the empirical measures
\begin{equation}
\label{eq-empirical-measures-epsilon}
\mu_N^\varepsilon
:=
\frac1N
\sum_{n=1}^N
\delta_{\Theta_n^\varepsilon}
\end{equation}
converge weakly, as $N\to\infty$, to the probability measure
$\mu_\varepsilon$ on $\Sigma_\varepsilon$ given by
\begin{equation}
\label{eq-limiting-measure-epsilon}
d\mu_\varepsilon(\theta)
=
\frac{1}{(2\pi)^d\rho_\varepsilon}
\frac{
|D_\omega\mathcal S_\varepsilon(\theta)|
}{
|\nabla_\theta\mathcal S_\varepsilon(\theta)|
}
\,d\sigma_\varepsilon(\theta).
\end{equation}
\end{proposition}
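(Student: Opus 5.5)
The proof has three parts: localization of critical points on the graph, the exact density and empirical measures, and the $O(\varepsilon)$ expansion.

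\textbf{Localization.} Fix $\delta_0$ as in Proposition~\ref{prop-exact-vertical-critical-graph-centerline}. By \eqref{eq-Robin-hull-expansion-centerline} and \eqref{eq-flat-centerline-nondegeneracy},
\[
\partial_{yy}\mathcal R_\varepsilon\le-\tfrac{\pi}{4}
\]
on $\mathbb T^d\times[\tfrac12-\delta_0,\tfrac12+\delta_0]$ for $|\varepsilon|$ small, after shrinking $\delta_0$ if needed. Hence for each $\theta$ the map $y\mapsto\partial_y\mathcal R_\varepsilon(\theta,y)$ has at most one zero in this interval, and that zero is $\mathcal Y_\varepsilon(\theta)$. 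If $(x,y)$ is a critical point of $R_{\Omega_\varepsilon}$ in the strip, then $\partial_y\mathcal R_\varepsilon(x\omega,y)=0$, so $y=Y_\varepsilon(x)$. The equivalence \eqref{eq-full-critical-exact-reduction} then characterizes such points exactly by $\mathcal S_\varepsilon(x\omega)=0$.

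\textbf{Hypotheses of Theorem~\ref{thm-nonperturbative-Robin-critical-density}.} By \eqref{eq-leading-transversality-centerline} and compactness, there are $\eta,c>0$ with $|D_\omega\mathcal S_1|\ge c$ on $\{|\mathcal S_1|\le\eta\}$. Lemma~\ref{lem-S-epsilon-expansion} gives
\[
\varepsilon^{-1}\mathcal S_\varepsilon=\mathcal S_1+O_{C^{m-1}}(\varepsilon),
\]
which is at least $C^1$-close to $\mathcal S_1$ since $m\ge2$. On $\Sigma_\varepsilon=\{\varepsilon^{-1}\mathcal S_\varepsilon=0\}$ we therefore have $|\mathcal S_1|=O(\varepsilon)<\eta$, so $|D_\omega(\varepsilon^{-1}\mathcal S_\varepsilon)|\ge c/2$. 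This is \eqref{eq-global-transversality-reduced-Robin}, and it also shows that $0$ is a regular value.

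Nonemptiness of $\Sigma_\varepsilon$ comes from $\Sigma_1\neq\varnothing$. At a point $\theta_1\in\Sigma_1$, moving along the flow direction changes $\mathcal S_1$ with speed at least $c$, so the intermediate value theorem produces a zero of $\varepsilon^{-1}\mathcal S_\varepsilon$ within $O(\varepsilon)$ of $\theta_1$.

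The density formula \eqref{eq-rhoepsilon-exact-surface} and the weak convergence to \eqref{eq-limiting-measure-epsilon} are then exactly Theorem~\ref{thm-nonperturbative-Robin-critical-density} (via Theorem~\ref{thm-critical-point-ergodic-distribution}) applied to $\mathcal S_\varepsilon$. The normalized function $\varepsilon^{-1}\mathcal S_\varepsilon$ has the same zeros as $\mathcal S_\varepsilon$ and the same ratio $|D_\omega\cdot|/|\nabla\cdot|$.

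\textbf{The $O(\varepsilon)$ expansion \eqref{eq-rhoepsilon-leading}: the main obstacle.} A crude level-set perturbation argument would need $C^2$ control to compare surface measures on $\Sigma_\varepsilon$ and $\Sigma_1$. Instead I will use the mollified representation \eqref{cor-era}. Write $T=\varepsilon^{-1}\mathcal S_\varepsilon$. For every $\eta'<\eta/2$,
\[
(2\pi)^d\rho_\varepsilon=\int_{\mathbb T^d}\chi_{\eta'}(T)\,|D_\omega T|\,d\theta ,
\]
and the right-hand side does not depend on $\eta'$. Fix one such $\eta'$ and compare this integral with the same expression for $\mathcal S_1$. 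The difference is controlled by $\|\chi_{\eta'}\|_{C^1}\,\|T-\mathcal S_1\|_{C^1}$, which is $O(\varepsilon)$.

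By the coarea computation in the proof of Theorem~\ref{thm-critical-point-ergodic-distribution}, the $\mathcal S_1$-integral equals the surface integral over $\Sigma_1$, up to an error from level sets $\{\mathcal S_1=s\}$ with $|s|<\eta'$. That error vanishes: transversality on $\{|\mathcal S_1|<\eta\}$ makes the mollified integral independent of $\eta'$ and equal to the density of $\mathcal S_1$ along Kronecker orbits, which is the $\Sigma_1$ surface integral. Choosing $\eta'$ fixed but small, uniformly in $\varepsilon$, then gives \eqref{eq-rhoepsilon-leading}.
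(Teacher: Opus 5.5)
Your proposal is correct. The localization and hypothesis-checking parts match the paper, which also just verifies the hypotheses of Theorem~\ref{thm-nonperturbative-Robin-critical-density}, using Proposition~\ref{prop-exact-vertical-critical-graph-centerline}, Lemma~\ref{lem-S-epsilon-expansion} and the persistence of \eqref{eq-leading-transversality-centerline}.

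You are more explicit than the paper in two places it leaves implicit:
\begin{itemize}
\item Fiberwise uniqueness of the zero of $\partial_y\mathcal R_\varepsilon$. The Banach-space implicit function theorem only gives uniqueness of $\mathcal Y_\varepsilon$ in a $C^m$-ball, and your concavity bound settles this. In fact $R_0''=-\frac{\pi}{2}\csc^2(\pi y)\le-\frac{\pi}{2}$ on all of $(0,1)$, so no shrinking of $\delta_0$ is needed.
\item Nonemptiness of $\Sigma_\varepsilon$, which you obtain by the intermediate value argument along the flow.
\end{itemize}

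The genuine difference is in the proof of \eqref{eq-rhoepsilon-leading}. The paper uses the implicit function theorem for regular level sets to build a $C^1$ diffeomorphism $\Psi_\varepsilon:\Sigma_1\to\Sigma_\varepsilon$ with $\Psi_\varepsilon=\mathrm{Id}+O_{C^1}(\varepsilon)$. It then pulls the surface integral back to $\Sigma_1$, so it must control the Jacobian of $\Psi_\varepsilon$ and the integrand along the displaced surface.

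You never touch surface measures. Your argument runs as follows:
\begin{itemize}
\item \eqref{cor-era} is an exact identity, not merely an asymptotic one, for every mollification width below the transversality threshold.
\item That threshold, here $\eta/2$ for $T=\varepsilon^{-1}\mathcal S_\varepsilon$, is uniform in $\varepsilon$.
\item With $\eta'$ fixed, you compare two volume integrals, whose difference is $O(\|T-\mathcal S_1\|_{C^1})$.
\item The same identity applied to $\mathcal S_1$ converts the result back into the $\Sigma_1$ surface integral.
\end{itemize}
This is softer and reuses the counting machinery already proved.

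What the paper's route buys in exchange is the geometric fact that $\Sigma_\varepsilon$ is a $C^1$-small normal graph over $\Sigma_1$. That is the natural tool for comparing the phase measures themselves, as needed later for $\mu_\varepsilon=\mu_0+O_{\mathrm{weak}}(\varepsilon)$. Your fixed-$\eta'$ trick does not transfer verbatim to weighted integrals: by the estimate in the proof of Theorem~\ref{thm-critical-point-ergodic-distribution}, the weighted mollified integral matches the weighted surface integral only up to a modulus-of-continuity error that vanishes as $\eta'\to0$.

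One correction to your motivating remark: the level-set route does not actually need $C^2$ closeness. Since $\mathcal S_1$ is smooth, the bound $\|T-\mathcal S_1\|_{C^1}=O(\varepsilon)$ already makes the normal displacement of $\Sigma_\varepsilon$ over $\Sigma_1$ of size $O_{C^1}(\varepsilon)$. So both approaches work at the available regularity.
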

\begin{proof}
Proposition~\ref{prop-exact-vertical-critical-graph-centerline}
provides the exact vertical critical graph required in
Theorem~\ref{thm-nonperturbative-Robin-critical-density}.
Furthermore, Lemma~\ref{lem-S-epsilon-expansion} gives
\[
\tfrac{\mathcal S_\varepsilon}{\varepsilon}
\longrightarrow
\mathcal S_1
\qquad\text{in }C^{m-1}(\mathbb T^d),\, m\geq2.
\]
The regularity and the transversality assumption 
\eqref{eq-leading-transversality-centerline}
persist for all sufficiently small
$\varepsilon\neq0$. In particular,
$\Sigma_\varepsilon$ is a smooth hypersurface and the Kronecker
direction $\omega$ is everywhere transverse to it.
All assumptions of
Theorem~\ref{thm-nonperturbative-Robin-critical-density}
are therefore satisfied, and
\eqref{eq-rhoepsilon-exact-surface} follows directly.
\\
The weighted distribution statement in
Theorem~\ref{thm-nonperturbative-Robin-critical-density}
also yields, for every
$\varphi\in C(\Sigma_\varepsilon)$,
\[
\lim_{N\to\infty}
\frac1N
\sum_{n=1}^N
\varphi(\Theta_n^\varepsilon)
=
\frac{1}{(2\pi)^d\rho_\varepsilon}
\int_{\Sigma_\varepsilon}
\varphi(\theta)
\frac{
|D_\omega\mathcal S_\varepsilon(\theta)|
}{
|\nabla_\theta\mathcal S_\varepsilon(\theta)|
}
\,d\sigma_\varepsilon(\theta).
\]
This is precisely the weak convergence
\[
\mu_N^\varepsilon\rightharpoonup\mu_\varepsilon
\]
with $\mu_\varepsilon$ given by
\eqref{eq-limiting-measure-epsilon}.
Finally, since
\[
\widetilde{\mathcal S}_\varepsilon
=
\tfrac{\mathcal S_\varepsilon}{\varepsilon}
=
\mathcal S_1+O_{C^{m-1}}(\varepsilon),
\]
the implicit function theorem for regular level sets gives a
$C^1$ family of diffeomorphisms
\[
\Psi_\varepsilon:
\Sigma_1\longrightarrow\Sigma_\varepsilon,
\qquad
\Psi_\varepsilon
=
\operatorname{Id}+O_{C^1}(\varepsilon).
\]
The density is unchanged if $\mathcal S_\varepsilon$ is multiplied
by the nonzero scalar $1/\varepsilon$. Pulling the surface integral
back to $\Sigma_1$ therefore gives
\[
\rho_\varepsilon
=
\rho_0+O(\varepsilon),
\]
which proves \eqref{eq-rhoepsilon-leading}.
\end{proof}


\begin{remark}
\label{rem-Morse-type-centerline-exact}
The preceding argument concerns the existence and spatial density of
the critical points and makes the pointwise persistence argument
unnecessary for these purposes. The local Hessian expansion remains
useful, however, for determining the Morse type of the equilibria.
If $x_\ast$ is a simple zero of
$q(x):=\mathcal S_1(x\omega)$, then the corresponding exact critical
point $(x_\varepsilon,y_\varepsilon)$ satisfies
\[
x_\varepsilon=x_\ast+O(\varepsilon),
\qquad
y_\varepsilon
=
\tfrac12
+
\tfrac{2\varepsilon}{\pi}
\partial_yR_1
\left(x_\ast,\tfrac12\right)
+
O(\varepsilon^2),
\]
and
\[
\det D^2R_{\Omega_\varepsilon}
(x_\varepsilon,y_\varepsilon)
=-\tfrac{\pi \varepsilon}{2}
 q'(x_\ast)
+
O(\varepsilon^2).
\]
Hence, for $\varepsilon>0$, the critical point is hyperbolic when
$q'(x_\ast)>0$ and  elliptic when $q'(x_\ast)<0$, with the
classification reversed for $\varepsilon<0$.
\end{remark}{
\paragraph{Full-Robin critical points in a two-frequency channel.} We illustrate the perturbative relation between the limiting critical
set and the equilibria of the full Robin function using the
two-frequency upper-wall deformation
\[
\Omega_\varepsilon
=
\left\{
(x,y)\in\mathbb R^2:
0<y<
1+\varepsilon
\left(
\cos x+a\cos(\sqrt{2}x)
\right)
\right\}.
\]
The parameter $a$ is chosen so that the effective amplitude introduced
in Section~\ref{sec-two-frequency-cosine-case} is $b=0.5$, namely
\[
a
=
\frac{b\sinh(1/\sqrt{2})}
     {2\sinh(1/2)}
\simeq0.3682259.
\]
This value lies in the globally transverse regime
$0<b<1/\sqrt{2}$.
\\
The leading longitudinal critical points are the zeros $x_*$ of
\[
q_b(x)=\sin x+b\sin(\sqrt{2}x).
\]
For each such zero, we compute the corresponding critical point
$(x_\varepsilon,y_\varepsilon)$ directly from the full finite-$\varepsilon$
Robin function by solving
\[
\partial_xR_{\Omega_\varepsilon}(x_\varepsilon,y_\varepsilon)=0,
\qquad
\partial_yR_{\Omega_\varepsilon}(x_\varepsilon,y_\varepsilon)=0.
\]
The numerical computation is independent of the first-order critical
point approximation.
\\
Figure~\ref{fig:full_robin_qp_critical_convergence} compares the full-Robin critical points with the asymptotic
predictions of Remark~7.11 as $\varepsilon$ is decreased. The longitudinal position is compared with the leading-order zero $x_\ast$, whereas the transverse position is compared with its first-order approximation. We obtain
\[
\max_n|x_n^\varepsilon-x_n^*|
\propto\varepsilon^{1.022},
\]
while
\[
\max_n\left|
y_n^\varepsilon-
\left(
\frac12+
\frac{2\varepsilon}{\pi}
\partial_yR_1(x_n^*,1/2)
\right)
\right|
\propto\varepsilon^{1.975}.
\]
These rates are consistent with the $O(\varepsilon)$ and
$O(\varepsilon^2)$ remainders in Remark~\ref{rem-Morse-type-centerline-exact}. The Morse classifications
of all computed equilibria also agree with the sign criterion based on
$q'(x_*)$.

}

\begin{figure}[ht]
    \centering
    \includegraphics[width=0.78\textwidth]{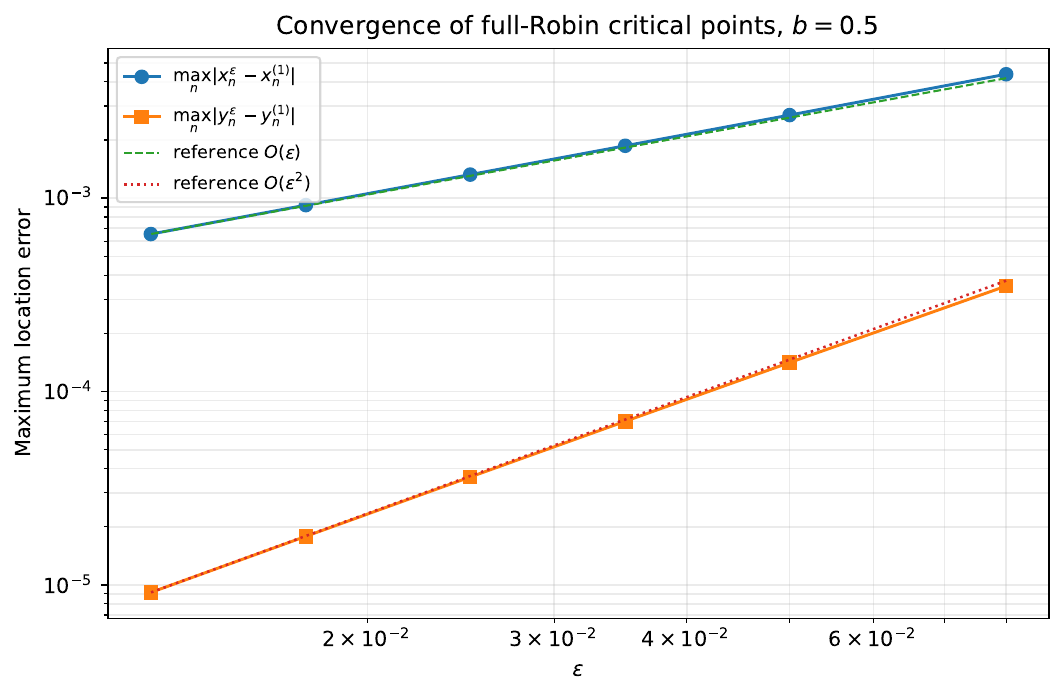}
    \caption{
Convergence of full-Robin critical points to the perturbative
predictions for the two-frequency upper-wall deformation with
$b=0.5$. The longitudinal error is measured relative to the zeros
$x_n^*$ of the leading critical equation, while the transverse error
is measured relative to the first-order vertical displacement in
Remark~\ref{rem-Morse-type-centerline-exact}. The reference slopes $O(\varepsilon)$ and
$O(\varepsilon^2)$ are shown for comparison.
}
\label{fig:full_robin_qp_critical_convergence}
\end{figure}

\subsection{Two-frequency cosine model: ergodic density and phase transition}
\label{sec-two-frequency-cosine-case}

We now specialize the preceding general framework to a simple
two-frequency quasi-periodic profile for which the critical set, its
interaction with the Kronecker flow, and the associated zero density
can be analyzed explicitly. Besides providing a concrete illustration
of the abstract results developed above, this model exhibits the main
geometric mechanisms that may affect the statistics of the equilibria:
transverse intersections, finite-order tangencies, and
parameter-dependent changes in the geometry of the critical set.
We consider
\[
d=2,
\qquad
\omega=(1,\sqrt2),
\qquad
f_1(\theta)=\cos\theta,
\qquad
f_2(\theta)=a\cos\theta.
\]
Using \eqref{eq-K-centerline} and
\eqref{eq-filtered-profile-centerline}, we introduce the Fourier
multiplier
\[
\kappa(\xi)
:=
\int_{\mathbb R}K(s)e^{i\xi s}\,ds
=
\frac{\xi}{4\pi\sinh(\xi/2)}.
\]
The corresponding filtered profiles are
\[
F_1(\theta)=\kappa(1)\cos\theta,
\qquad
F_2(\theta)=a\kappa(\sqrt2)\cos\theta,
\]
so that the first-order contribution to the Robin function on the
centerline is
\[
R_1(x,1/2)
=
\kappa(1)\cos x
+
a\kappa(\sqrt2)\cos(\sqrt2 x).
\]
Consequently, the leading-order critical equation is
\begin{equation}
\label{eq-x-critical-two-cosine}
\kappa(1)\sin x
+
\sqrt2\,a\kappa(\sqrt2)\sin(\sqrt2 x)
=
0.
\end{equation}
It is convenient to absorb the effect of the filtering operator into a
single effective amplitude. We therefore set
\[
b=b(a)
:=
\frac{\sqrt2\,a\kappa(\sqrt2)}{\kappa(1)}
=
\frac{2a\sinh(1/2)}{\sinh(1/\sqrt2)}.
\]
Since $\kappa(1)\neq0$, equation
\eqref{eq-x-critical-two-cosine} is equivalent to
\[
q_b(x)
:=
\sin x+b\sin(\sqrt2 x)
=
0.
\]
The quasi-periodic function $q_b$ is obtained by restricting to the
Kronecker orbit the function
\[
\mathcal S_b(\theta_1,\theta_2)
:=
\sin\theta_1+b\sin\theta_2,
\qquad
(\theta_1,\theta_2)\in\T^2,
\]
namely
\[
q_b(x)=\mathcal S_b(x,\sqrt2 x).
\]
We denote its zero set on the torus by
\[
\Sigma_b
:=
\left\{
(\theta_1,\theta_2)\in\T^2:
\mathcal S_b(\theta_1,\theta_2)=0
\right\}.
\]
Thus the zeros of $q_b$ correspond precisely to the intersections of
the Kronecker trajectory
\[
x\longmapsto (x,\sqrt2 x)\pmod{2\pi}
\]
with the critical set $\Sigma_b$. This formulation separates the two
ingredients entering the equilibrium statistics. The effective
amplitude $b$ determines the geometry of the curve $\Sigma_b$, whereas
the frequency vector $\omega=(1,\sqrt2)$ determines the direction
along which this curve is sampled. The distribution of the vortex
equilibria is therefore reduced to a geometric intersection problem
between a fixed linear flow and a parameter-dependent curve on
$\T^2$.
{For reference, Figure~\ref{fig:qp_full_robin_orbits} shows the level-set
geometry of the full finite-$\varepsilon$ Robin function for the same
two-frequency channel, with $b=0.8$ and $\varepsilon=0.05$. The colored curves are level sets of $R_\varepsilon(x,y)$ and hence point-vortex orbits. Closed families surround elliptic equilibria,
whereas hyperbolic equilibria organize the separatrix-like level sets.
The computation is performed directly from the full Robin function of
the perturbed domain and therefore complements the first-order critical
equation used below.
}
\begin{figure}[ht]
    \centering
   \includegraphics[width=0.8\textwidth]{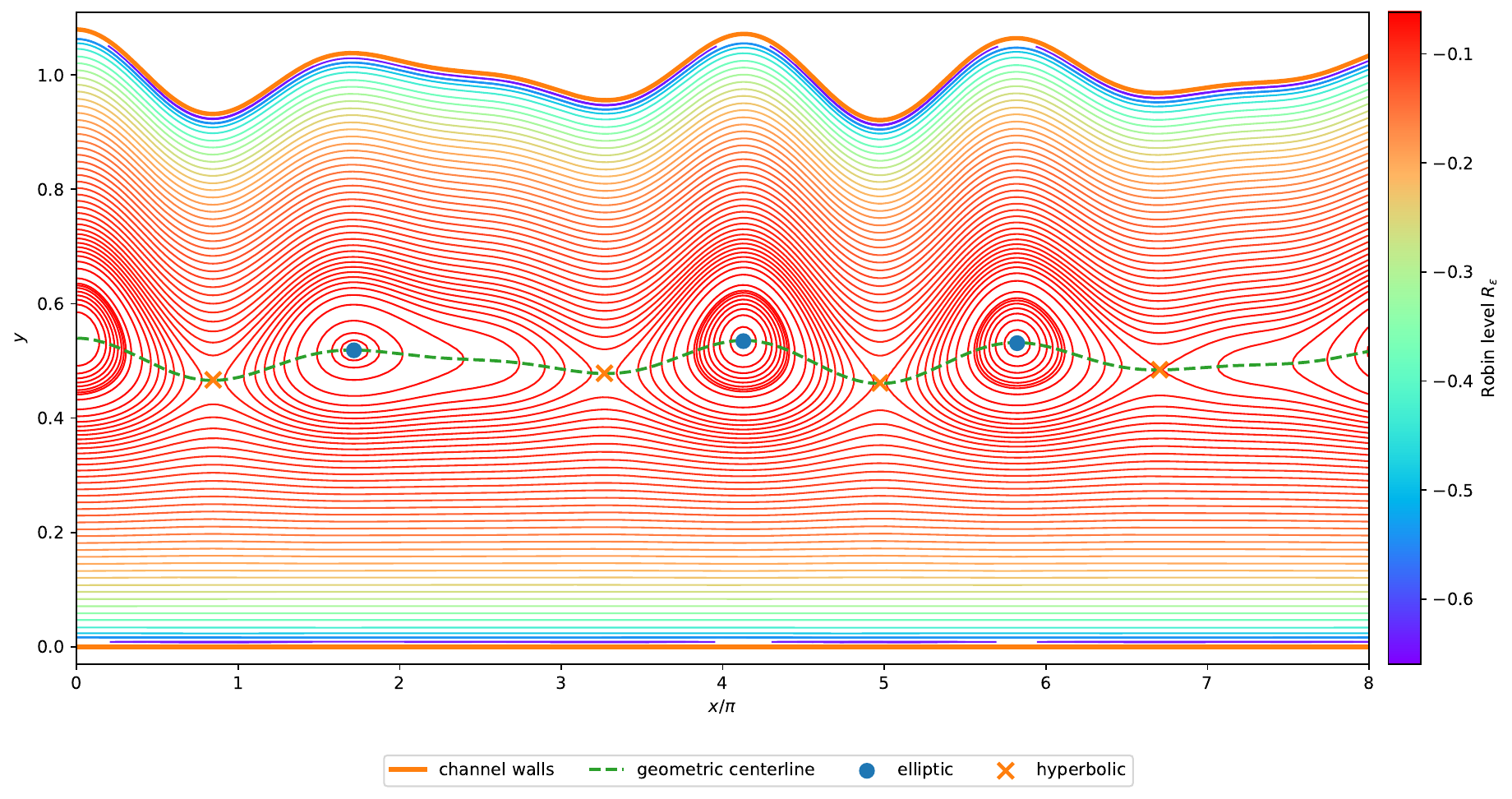}
\caption{
Level sets of the full finite-$\varepsilon$ Robin function
$R_\varepsilon(x,y)$ for the two-frequency quasi-periodic channel $0<y<1+0.05\left[\cos x+a\cos(\sqrt2 x)\right]$. The discrete colored curves are Robin level sets and therefore
point-vortex orbits; their colors are assigned from the continuous
Robin-value scale shown on the colorbar.
Elliptic and hyperbolic critical points of $R_\varepsilon$ are marked
separately. Closed orbit families surround elliptic equilibria, while
hyperbolic equilibria are associated with separatrix-like structures.}
\label{fig:qp_full_robin_orbits}
\end{figure}
\\For $0<|b|<1$, the geometry of this intersection problem can be made
particularly explicit. For each $\theta_2\in\mathbb T$, the equation
\[
\sin\theta_1=-b\sin\theta_2
\]
has two solutions. Writing
\[
A_b(\theta_2)
=
\sqrt{1-b^2\sin^2\theta_2},
\]
we decompose the critical curve as
\[
\Sigma_b=\Sigma_b^+\cup\Sigma_b^-,
\qquad
\Sigma_b^\pm
=
\left\{
(\theta_1,\theta_2)\in\Sigma_b:
\cos\theta_1=\pm A_b(\theta_2)
\right\}.
\]
Each zero of $q_b$ therefore corresponds to a crossing of the
Kronecker trajectory with one of these two branches.
Figure~\ref{fig:sigma_branches_zero_count} illustrates this
correspondence for $b=0.8$. The shifted representation
$-\pi/2\leq\theta_1\leq3\pi/2$ is used only to display both components
$\Sigma_b^\pm$ as continuous curves. The intersections of the
Kronecker trajectory with these branches correspond exactly to the
zeros of $q_b$ shown in the physical variable $x$.
\begin{figure}[!htbp]
    \centering
    \includegraphics[width=0.8\textwidth]{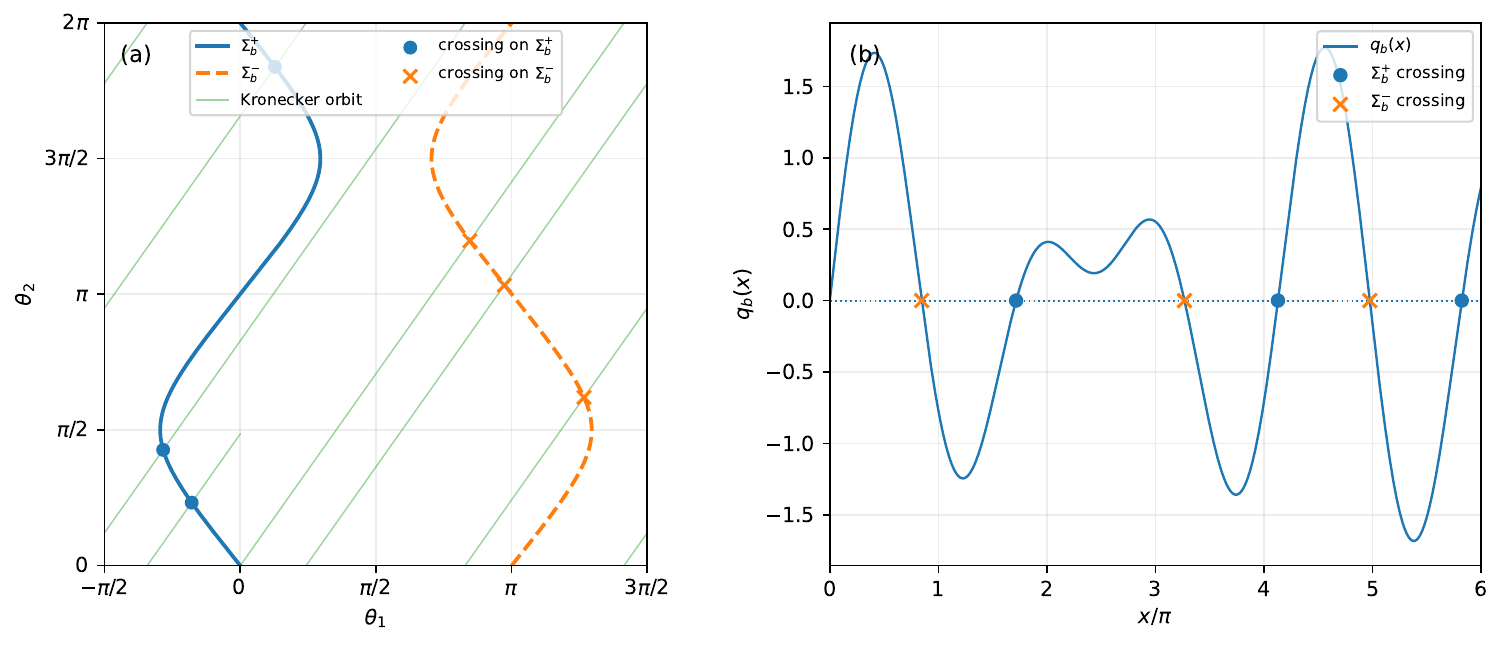}
    \caption{
    Geometric interpretation of the zero counting for the
    two-frequency model with $b=0.8$.
    Left: the two components $\Sigma_b^+$ and $\Sigma_b^-$ of the
    critical set
    $\Sigma_b=\{(\theta_1,\theta_2):
    \sin\theta_1+b\sin\theta_2=0\}$,
    together with the Kronecker trajectory
    $x\mapsto(x,\sqrt{2}x)\pmod{2\pi}$.
    The $\theta_1$ window is shifted to
    $[-\pi/2,3\pi/2]$ so that both branches are displayed continuously.
    Markers indicate intersections with $\Sigma_b^+$ and
    $\Sigma_b^-$.
    Right: the corresponding zeros of
    $q_b(x)=\sin x+b\sin(\sqrt{2}x)$ over $0<x<6\pi$,
    with the same branch classification.
    }
\label{fig:sigma_branches_zero_count}
\end{figure}
The directional derivative along this flow is
\[
D_\omega
=
\partial_{\theta_1}
+
\sqrt2\,\partial_{\theta_2},
\]
and hence
\[
D_\omega\mathcal S_b
=
\cos\theta_1+\sqrt2\,b\cos\theta_2.
\]
Accordingly, the condition
\[
D_\omega\mathcal S_b\neq0
\qquad\text{on }\Sigma_b
\]
expresses the transversality of the Kronecker orbit with the critical
set, while the simultaneous conditions
\[
\mathcal S_b=0,
\qquad
D_\omega\mathcal S_b=0
\]
characterize tangency points. Their degeneracy order is understood in
the sense of \eqref{eq-finite-order-flow-tangency-main}.
{
\paragraph{Temporal structure of the quasi-periodic motion.}
The same two-frequency geometry also provides a concrete illustration of the quasi-periodic transporting dynamics described in Proposition~\ref{prop-first-order-orbit-small-channel} and Theorem~\ref{thm-exact-qp-orbits-1}. We consider the two-frequency
upper-wall perturbation
\[
\Omega_\varepsilon
=
\left\{
(x,y)\in\mathbb R^2:
0<y<
1+\varepsilon
\left(
\cos x+a\cos(\sqrt{2}x)
\right)
\right\},
\]
with
\[
\varepsilon=0.05,
\qquad
a=0.3682259,
\]
and follow the transporting orbit issued from the flat reference
level
\[
y_\star=0.25.
\]
The value of $a$ corresponds to the effective two-frequency amplitude
$b=0.5$.\\
The trajectory is reconstructed from the first-order invariant-graph
and velocity expansions of Proposition~4.3. The corresponding
effective longitudinal drift is
\[
c_\varepsilon\simeq0.249686.
\]
Figure~\ref{fig:temporal_qp_motion} shows the transverse displacement
$y(t)$ together with the bounded longitudinal modulation
\[
x(t)-c_\varepsilon t.
\]
Both quantities exhibit persistent oscillations without a finite
temporal period, consistently with the quasi-periodic representation
\[
z(t)
=
c_\varepsilon t\,e_1
+
Z_\varepsilon(c_\varepsilon t\,\omega),
\qquad
\omega=(1,\sqrt{2}).
\]
The corresponding temporal spectra are discrete and are organized by
integer combinations of the two basic frequencies. In particular,
the dominant peaks occur near
\[
\Omega_\ell
=
\left|
c_\varepsilon
\left(
\ell_1+\sqrt{2}\ell_2
\right)
\right|,
\qquad
(\ell_1,\ell_2)\in\mathbb Z^2.
\]
For the present trajectory, the principal measured peaks include
\[
\Omega\simeq
0.249685,\quad
0.353055,\quad
0.103370,\quad
0.499371,\quad
0.602741,
\]
in close correspondence with
\[
c_\varepsilon,\qquad
\sqrt{2}c_\varepsilon,\qquad
(\sqrt{2}-1)c_\varepsilon,\qquad
2c_\varepsilon,\qquad
(1+\sqrt{2})c_\varepsilon,
\]
respectively. The computation therefore illustrates how the spatial
quasi-periodicity of the channel is transferred to the temporal
frequency content of a transporting vortex trajectory.
}

\begin{figure}[h]
    \centering
    \includegraphics[width=0.7\textwidth]{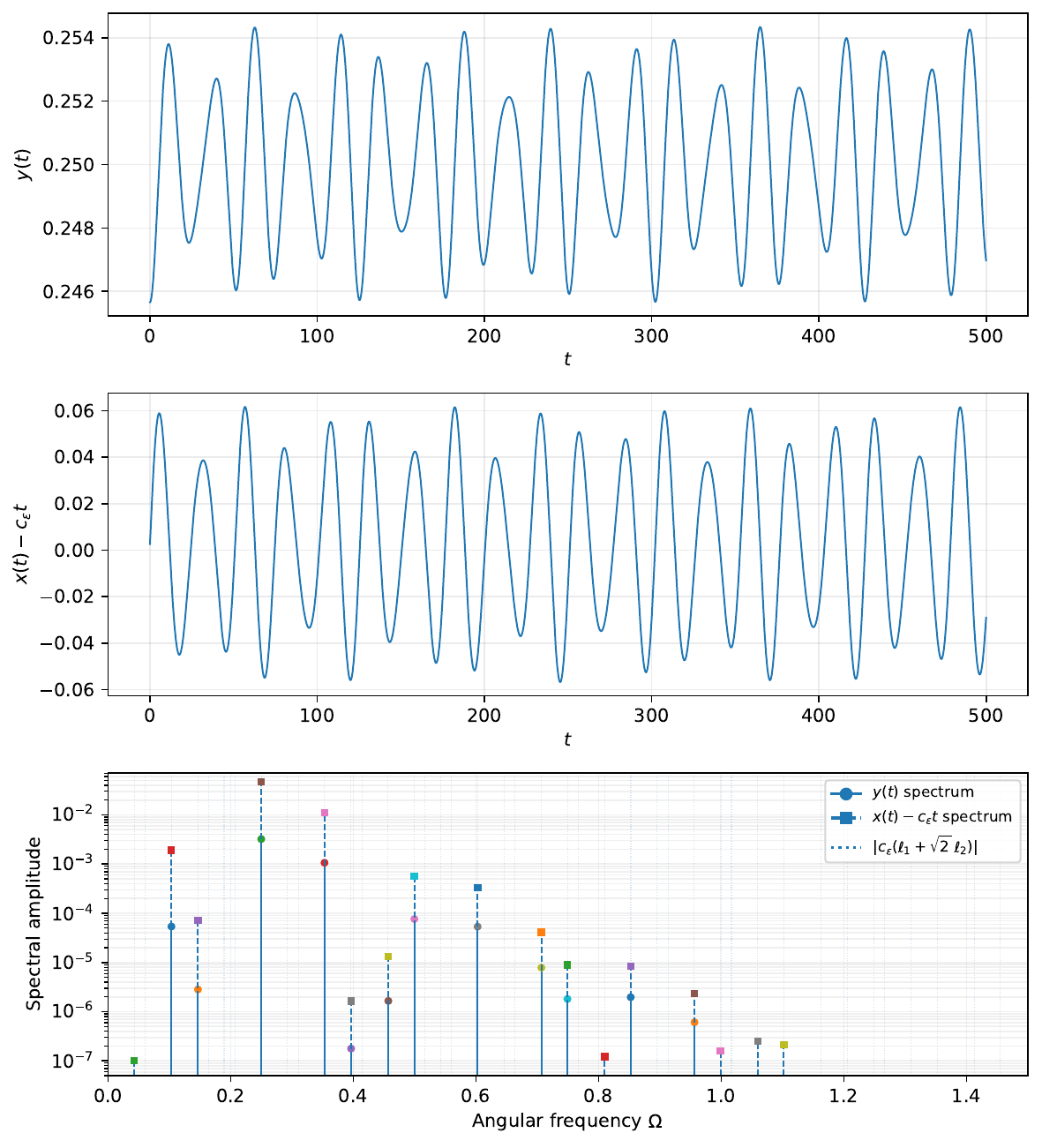}
    \caption{Temporal structure of a transporting trajectory for the two-frequency upper-wall perturbation $0<y<1+\varepsilon[\cos x+a\cos(\sqrt{2}x)]$ with $\varepsilon=0.05$, $a=0.3682259$, and reference level $y_\star=0.25$. Top: transverse position $y(t)$. Middle: bounded longitudinal modulation $x(t)-c_\varepsilon t$ after removal of the effective drift $c_\varepsilon\simeq0.249686$. Bottom: discrete temporal spectra of both signals; vertical reference lines indicate frequencies of the form $|c_\varepsilon(\ell_1+\sqrt{2}\ell_2)|$, $(\ell_1,\ell_2)\in\mathbb Z^2$. Only resolved spectral peaks with normalized amplitude
$A_k>10^{-7}$ are displayed. The trajectory starts from
$(x_0,y_0)=(0,0.2456603)$, corresponding to the reference level
$y_\star=0.25$ on the first-order invariant graph.
}
\label{fig:temporal_qp_motion}
\end{figure}

\subsubsection{Geometry of the critical set and tangencies}

We first describe the geometry of $\Sigma_b$ and classify the possible
tangencies. This geometric analysis identifies the parameter values at
which global transversality is lost and will subsequently explain the
changes in the asymptotic density and in the distribution of the
critical phases.

\begin{proposition}
\label{prop-two-frequency-geometry}
Assume that $|b|\notin\{0,1\}$. Then $\Sigma_b$ is a smooth compact
curve in $\T^2$. Moreover:
\begin{enumerate}
\item if
\(
0<|b|<\frac1{\sqrt2}
\qquad\text{or}\qquad
|b|>1,
\)
then $\Sigma_b$ is everywhere transverse to the Kronecker flow;

\item if
\(
\frac1{\sqrt2}<|b|<1,
\)
the tangencies are of order two;

\item if
\(
|b|=\frac1{\sqrt2},
\)
the tangencies are of order three.
\end{enumerate}
\end{proposition}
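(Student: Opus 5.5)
The whole proposition reduces to elementary trigonometric computations with $\mathcal S_b(\theta)=\sin\theta_1+b\sin\theta_2$ and the iterated flow derivatives $D_\omega^\ell\mathcal S_b$, with $D_\omega=\partial_{\theta_1}+\sqrt2\,\partial_{\theta_2}$. The plan is to check, in order, regularity of $\Sigma_b$, the location of tangencies, and then the order of vanishing at those tangencies.

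\emph{Smoothness.} We have $\nabla\mathcal S_b=(\cos\theta_1,\,b\cos\theta_2)$. For $b\neq0$ this vanishes only if $\cos\theta_1=\cos\theta_2=0$, so $\sin\theta_1,\sin\theta_2\in\{\pm1\}$. The equation $\mathcal S_b=0$ then forces $\pm1\pm b=0$, that is $|b|=1$. Hence for $|b|\notin\{0,1\}$, $0$ is a regular value, and $\Sigma_b$ is a compact smooth curve in $\T^2$ by the regular value theorem. This is exactly condition \eqref{eq-regular-level-set-finite-tangencies}.

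\emph{Location of tangencies.} A tangency is a point where $\mathcal S_b=0$ and $D_\omega\mathcal S_b=\cos\theta_1+\sqrt2\,b\cos\theta_2=0$. This gives $\sin\theta_1=-b\sin\theta_2$ and $\cos\theta_1=-\sqrt2\,b\cos\theta_2$. Squaring and adding yields $b^2(1+\cos^2\theta_2)=1$, i.e.
\[
\cos^2\theta_2=\tfrac1{b^2}-1 .
\]
This is solvable if and only if $0\le b^{-2}-1\le1$, i.e. $\tfrac1{\sqrt2}\le|b|\le1$. So for $0<|b|<\tfrac1{\sqrt2}$ or $|b|>1$ there are no tangencies, which proves item 1. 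In the remaining range, $\theta_2$ takes finitely many values and $\theta_1$ is then determined uniquely by the pair $(\sin\theta_1,\cos\theta_1)$. The tangency set $\mathcal C$ is therefore finite, as required by \eqref{eq-finite-tangency-set-main}.

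\emph{Order of tangency.} Next I compute $D_\omega^2\mathcal S_b=-\sin\theta_1-2b\sin\theta_2$. On a tangency point, substituting $\sin\theta_1=-b\sin\theta_2$ gives $D_\omega^2\mathcal S_b=-b\sin\theta_2$. This vanishes if and only if $\sin\theta_2=0$, i.e. $\cos^2\theta_2=1$, i.e. $b^2=\tfrac12$.
\begin{itemize}
\item[] For $\tfrac1{\sqrt2}<|b|<1$ we have $\cos^2\theta_2<1$, so $D_\omega^2\mathcal S_b\neq0$ and every tangency has order exactly two. This is item 2.
\item[] For $|b|=\tfrac1{\sqrt2}$ the second derivative vanishes, so I compute $D_\omega^3\mathcal S_b=-\cos\theta_1-2\sqrt2\,b\cos\theta_2$. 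Substituting $\cos\theta_1=-\sqrt2\,b\cos\theta_2$ gives $D_\omega^3\mathcal S_b=-\sqrt2\,b\cos\theta_2$. This is nonzero because $|\cos\theta_2|=1$ there, so the tangencies have order three. This is item 3.
\end{itemize}

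I expect no real obstacle. The only points needing care are the bookkeeping of the substitution of the tangency relations into the higher derivatives, and checking that the tangency points are consistent with the constraint $\sin^2\theta_1+\cos^2\theta_1=1$. For $|b|=\tfrac1{\sqrt2}$ this holds: $\sin\theta_1=0$ and $\cos\theta_1=\mp1$. This confirms that the order-three tangencies actually occur.
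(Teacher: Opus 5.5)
Your proposal is correct and follows the paper's proof essentially step for step. You use the same gradient check for regularity and the same squaring-and-adding of $\mathcal S_b=0$ and $D_\omega\mathcal S_b=0$; your relation $\cos^2\theta_2=b^{-2}-1$ is just the paper's $\sin^2\theta_2=2-b^{-2}$. You also evaluate the same quantities $D_\omega^2\mathcal S_b=-b\sin\theta_2$ and $D_\omega^3\mathcal S_b=-\sqrt2\,b\cos\theta_2$ at tangency points. Your additional checks that tangency points actually exist and form a finite set when $\tfrac1{\sqrt2}\le|b|<1$ are correct, and they make explicit the hypothesis \eqref{eq-finite-tangency-set-main}, which the paper uses later without stating it here.
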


\begin{proof}
Differentiating yields
\[
\nabla\mathcal S_b
=
(\cos\theta_1,b\cos\theta_2).
\]
Therefore, for $b\neq0,$ a singular point of $\Sigma_b$ would satisfy
\[
\cos\theta_1=\cos\theta_2=0.
\]
Since $\mathcal S_b=0$, this would imply $|b|=1$. Thus
\[
\nabla\mathcal S_b\neq0
\qquad\text{on }\Sigma_b
\]
for $|b|\notin\{0,1\}$, and hence $\Sigma_b$ is a smooth compact curve.
A tangency point is characterized by
\[
\sin\theta_1+b\sin\theta_2=0,
\qquad
\cos\theta_1+\sqrt2\,b\cos\theta_2=0.
\]
Squaring these identities and adding them gives
\[
1
=
b^2\sin^2\theta_2
+
2b^2\cos^2\theta_2,
\]
which is equivalent to
\begin{equation}
\label{tangency-one}
\sin^2\theta_2
=
2-\frac1{b^2}.
\end{equation}
This equation has no solutions if
\[
|b|<\frac1{\sqrt2}
\qquad\text{or}\qquad
|b|>1.
\]
Thus global transversality holds in these two regimes.
Furthermore,
\[
D_\omega^2\mathcal S_b
=
-\sin\theta_1-2b\sin\theta_2.
\]
At a tangency point, using
\[
\sin\theta_1=-b\sin\theta_2,
\]
we obtain
\[
D_\omega^2\mathcal S_b
=
-b\sin\theta_2.
\]
If
\[
\frac1{\sqrt2}<|b|<1,
\]
relation \eqref{tangency-one} implies
$\sin\theta_2\neq0$, so the tangencies are quadratic.
\\In the case
\[
|b|=\frac1{\sqrt2},
\]
we get
\[
\sin\theta_1=\sin\theta_2=0,
\]
and consequently
\[
D_\omega\mathcal S_b
=
D_\omega^2\mathcal S_b
=
0.
\]
On the other hand,
\[
D_\omega^3\mathcal S_b
=
-\cos\theta_1-2\sqrt2\,b\cos\theta_2.
\]
Using the tangency relation
\[
\cos\theta_1=-\sqrt2\,b\cos\theta_2,
\]
we obtain
\[
D_\omega^3\mathcal S_b
=
-\sqrt2\,b\cos\theta_2
\neq0.
\]
Hence, the tangencies are cubic at the threshold. This completes the proof.
\end{proof}
Proposition~\ref{prop-two-frequency-geometry} identifies the geometric
mechanism behind the transition in the equilibrium statistics. For
\[
0<|b|<\frac1{\sqrt2},
\]
every intersection is transverse. At the threshold
$|b|=1/\sqrt2$, the first tangencies appear and are cubic. Once
$1/\sqrt2<|b|<1$, quadratic tangencies are present. Thus
$|b|=1/\sqrt2$ marks the first loss of global transversality. We now
show that the same threshold appears naturally in the asymptotic
density of the vortex equilibria.

\subsubsection{Asymptotic density of equilibria}

The preceding geometric classification can now be converted into a
counting law for the physical equilibria. Since all the tangencies
described in Proposition~\ref{prop-two-frequency-geometry} are of
finite order, the zero-counting theory developed above remains
applicable. In particular, the geometric flux formula continues to
determine the leading density even in the intermediate regime in which
global transversality fails.

\begin{proposition}
\label{prop-two-frequency-density}
Let
\(
b\in\mathbb R\setminus\{0,\pm1\}.
\)
Then the leading term $\rho_0(b)$  of the asymptotic density  in \eqref{eq-rhoepsilon-leading} is given as follows.
If $0<|b|<1$, then
\begin{equation}
\label{eq-rho-b-integral}
{
\rho_0(b)
=
\frac1{2\pi^2}
\int_0^{2\pi}
\max\left\{
1,
\frac{\sqrt2\,|b\cos\theta|}
{\sqrt{1-b^2\sin^2\theta}}
\right\}
\,d\theta .
}
\end{equation}
In particular,
\begin{equation*}
\label{eq-rho-small-b}
{
\rho_0(b)
=
\frac1\pi,
\qquad
0<|b|\leq\frac1{\sqrt2},
}
\end{equation*}
whereas, for
\(
\frac1{\sqrt2}<|b|<1,
\)
one has
\begin{equation}
\label{eq-rho-intermediate-final}
{
\rho_0(b)
=
\frac1\pi
+
\frac{2}{\pi^2}
\left[
\sqrt2\,
\arcsin\sqrt{2b^2-1}
-
\arcsin\sqrt{2-\tfrac1{b^2}}
\right].
}
\end{equation}
If $|b|>1$, then
\begin{equation}
\label{eq-rho-large-b}
{
\rho_0(b)
=
\frac{\sqrt2}{\pi}.
}
\end{equation}
Moreover, for every
\(
b\in\mathbb R\setminus\{0,\pm1\},
\)
the limiting empirical measure \eqref{eq-limiting-measure-epsilon} satisfies
\[
\mu_\varepsilon
=
\mu_0+O_{\mathrm{weak}}(\varepsilon).
\]
with
\begin{equation}
\label{eq-two-cosine-leading-empirical-measure}
{
d\mu_0(\theta)
=
\frac1{4\pi^2\rho_0(b)}
\frac{
|\cos\theta_1+\sqrt2\,b\cos\theta_2|
}{
\sqrt{\cos^2\theta_1+b^2\cos^2\theta_2}
}
\,d\sigma_b(\theta).
}
\end{equation}
Furthermore,
\[
\rho_0\in C_{\mathrm{loc}}^{1,\frac12}((-1,1)),
\qquad
\rho_0\in C^{0,\frac12}([-1,1]).
\]

\end{proposition}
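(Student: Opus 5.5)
The plan is to evaluate the flux integral $\rho_0(b)=\frac1{(2\pi)^2}\int_{\Sigma_b}\frac{|D_\omega\mathcal S_b|}{|\nabla\mathcal S_b|}\,d\sigma$ from \eqref{eq-rhoepsilon-leading}, with $\mathcal S_1$ replaced by $\mathcal S_b$. This is legitimate because $\mathcal S_1=\kappa(1)\,\mathcal S_b$, and the ratio $|D_\omega\mathcal S|/|\nabla\mathcal S|=|\omega\cdot n|$ is invariant under multiplication by a nonzero constant. By Proposition~\ref{prop-two-frequency-geometry}, $\Sigma_b$ is smooth for $|b|\notin\{0,1\}$, so the integral is well defined. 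Whenever the quantity is interpreted as a genuine zero density, including the finite-order tangency regimes, Theorem~\ref{thm-density-finite-order-tangencies} justifies that reading. The whole computation is a graph parametrization of $\Sigma_b$ combined with the elementary identity $|A+c|+|-A+c|=2\max\{A,|c|\}$.

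\emph{Case $0<|b|<1$.} Parametrize each branch $\Sigma_b^\pm$ as a graph over $\theta_2\in\T$, with $\cos\theta_1=\pm A_b(\theta_2)$ and $A_b>0$. Since $\partial_{\theta_1}\mathcal S_b=\cos\theta_1=\pm A_b\neq0$, the coarea formula gives $d\sigma/|\nabla\mathcal S_b|=d\theta_2/A_b(\theta_2)$ on each branch. The integrand is then $|\pm A_b+\sqrt2\,b\cos\theta_2|/A_b$. Adding the two branches yields $2\max\{1,\sqrt2|b\cos\theta_2|/A_b\}$, and this is exactly \eqref{eq-rho-b-integral}.

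The inequality $2b^2\cos^2\theta\le 1-b^2\sin^2\theta$ is equivalent to $b^2(1+\cos^2\theta)\le1$. It therefore holds for all $\theta$ exactly when $|b|\le1/\sqrt2$, which gives $\rho_0=1/\pi$ in that range. For $1/\sqrt2<|b|<1$ the maximum exceeds $1$ exactly on the set $\cos^2\theta>1/b^2-1$. On this set I integrate $\sqrt2|b\cos\theta|/\sqrt{1-b^2\sin^2\theta}-1$. By symmetry it suffices to work in the first quadrant and multiply by $4$. The substitution $u=b\sin\theta$ turns the first term into $\sqrt2\arcsin$ evaluated at the endpoint $|b|\sin\theta_c=\sqrt{2b^2-1}$, where $\sin^2\theta_c=2-1/b^2$. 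The second term contributes $-\theta_c=-\arcsin\sqrt{2-1/b^2}$. Together these give \eqref{eq-rho-intermediate-final}.

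\emph{Case $|b|>1$.} Here the roles of the variables are exchanged. I parametrize $\Sigma_b$ over $\theta_1$ by $b\cos\theta_2=\pm B$ with $B=\sqrt{b^2-\sin^2\theta_1}$, so that $d\sigma/|\nabla\mathcal S_b|=d\theta_1/B$. Summing the two branches gives $2\max\{|\cos\theta_1|,\sqrt2B\}/B$. The inequality $2B^2\ge\cos^2\theta_1$ reduces to $2b^2\ge1+\sin^2\theta_1$, which always holds when $|b|>1$. The integrand is therefore the constant $2\sqrt2$, and $\rho_0=\frac{2\sqrt2\cdot2\pi}{4\pi^2}=\sqrt2/\pi$.

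\emph{Measure and regularity.} Formula \eqref{eq-two-cosine-leading-empirical-measure} is \eqref{eq-limiting-measure-epsilon} with $\varepsilon=0$ and $\mathcal S_b$ inserted. To get $\mu_\varepsilon=\mu_0+O_{\mathrm{weak}}(\varepsilon)$, I pull $\mu_\varepsilon$ back to $\Sigma_1=\Sigma_b$ through the diffeomorphisms $\Psi_\varepsilon=\mathrm{Id}+O_{C^1}(\varepsilon)$ from the proof of Proposition~\ref{thm-exact-density-centerline-small-perturbation}. The density of the pullback is $\mu_0$'s density plus $O(\varepsilon)$ in $C^0$, which gives the weak estimate after testing against $C^1$ functions. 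In the tangency regimes I expect this to need the cut-off argument of Theorem~\ref{thm-density-finite-order-tangencies}.

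For regularity, $\rho_0$ is smooth on each open regime, so the main obstacle is the behaviour at the two thresholds. Near $|b|=1/\sqrt2$, set $s=2b^2-1\downarrow0$. Both $\sqrt2\arcsin\sqrt{s}$ and $\arcsin\sqrt{2-1/b^2}=\arcsin\sqrt{s/b^2}$ are of order $\sqrt s$. I will expand both to order $s^{3/2}$ and check that the $\sqrt s$ terms cancel, the expectation being that they do because the integrand's excess over $1$ vanishes at the boundary of its support. The remaining leading term is then $c\,s^{3/2}$, which gives $C^{1,1/2}$ matching with the constant branch. Near $|b|\to1^-$, $\arcsin\sqrt{2-1/b^2}\to\pi/2$ like $\pi/2-O(\sqrt{1-|b|})$, and $\sqrt2\arcsin\sqrt{2b^2-1}$ behaves the same way. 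This yields only $C^{0,1/2}$ up to $|b|=1$, with boundary value $\frac1\pi+\frac2{\pi^2}(\sqrt2-1)\frac\pi2=\frac{\sqrt2}{\pi}$, which matches the $|b|>1$ value. Verifying these Puiseux cancellations is the delicate step; I would carry it out by direct Taylor expansion of $\arcsin\sqrt{\cdot}$.
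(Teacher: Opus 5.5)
Your proposal is correct and follows essentially the paper's route: evaluate the flux integral fiberwise over $\theta_2$, add the two branches using $|A+B|+|A-B|=2\max\{|A|,|B|\}$, isolate the threshold angle $\arcsin\sqrt{2-1/b^2}$ and integrate with $u=|b|\sin\theta$, then compare $\mu_\varepsilon$ with $\mu_0$ through the level-set diffeomorphisms $\Psi_\varepsilon$. You go beyond the paper by supplying details it only asserts: the $\theta_1$-parametrization giving $\sqrt2/\pi$ for $|b|>1$, and the expansions behind the H\"older claims (the $\sqrt{s}$ terms cancel at $|b|=1/\sqrt2$, leaving $\tfrac{\sqrt2}{3}s^{3/2}$, while at $|b|\to1^-$ an uncancelled $-\tfrac{2}{\pi^2}\sqrt{1-b^2}$ term appears), and your only slip is the harmless sign $\mathcal S_1=-\kappa(1)\,\mathcal S_b$.
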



\begin{proof}
Proposition~\ref{prop-two-frequency-geometry} shows that the critical
set is either transverse to the Kronecker flow or has only
finite-order tangencies. Applying the flux formula
\eqref{eq-rhoepsilon-exact-surface} to the limiting reduced function
$\mathcal S_b$ gives
\[
\rho_0(b)
=
\frac1{4\pi^2}
\int_{\T^2}
\delta_0\bigl(
\sin\theta_1+b\sin\theta_2
\bigr)
\left|
\cos\theta_1+\sqrt2\,b\cos\theta_2
\right|
\,d\theta_1d\theta_2.
\]
To evaluate this integral, fix $\theta_2$. Since $|b|<1$, the equation
\[
\sin\theta_1=-b\sin\theta_2
\]
has two solutions modulo $2\pi$, at which
\[
|\cos\theta_1|
=
A(\theta_2)
:=
\sqrt{1-b^2\sin^2\theta_2}.
\]
Using the formula
\begin{align}\label{dirac-mass}
|A+B|+|A-B|
=
2\max\{|A|,|B|\},
\end{align}
with
\[
B=\sqrt2\,b\cos\theta_2,
\]
gives
\[
\int_0^{2\pi}
\delta_0(\mathcal S_b)
|D_\omega\mathcal S_b|
\,d\theta_1
=
2\max\left\{
1,
\frac{\sqrt2\,|b\cos\theta_2|}
{\sqrt{1-b^2\sin^2\theta_2}}
\right\},
\]
which proves \eqref{eq-rho-b-integral}.
If
\(
|b|\leq\frac1{\sqrt2},
\)
then
\[
2b^2\cos^2\theta
\leq
1-b^2\sin^2\theta,
\]
so the maximum in \eqref{eq-rho-b-integral} is identically equal to
$1$. Therefore
\[
\rho_0(b)=\frac1\pi.
\]
Consider now the case
\(
\frac1{\sqrt2}<|b|<1
\)
and set
\[
\theta_b
=
\arcsin\sqrt{2-\tfrac1{b^2}}.
\]
By symmetry,
\[
\rho_0(b)
=
\frac{2}{\pi^2}
\left[
\sqrt2\,|b|
\int_0^{\theta_b}
\frac{\cos\theta}
{\sqrt{1-b^2\sin^2\theta}}
\,d\theta
+
\frac{\pi}{2}-\theta_b
\right].
\]
Since
\[
|b|
\int_0^{\theta_b}
\frac{\cos\theta}
{\sqrt{1-b^2\sin^2\theta}}
\,d\theta
=
\arcsin\sqrt{2b^2-1},
\]
we obtain \eqref{eq-rho-intermediate-final}.
Finally, in the globally transverse regime
$0<|b|<1/\sqrt2$, Proposition~\ref{prop-two-frequency-geometry}
and Proposition~\ref{thm-exact-density-centerline-small-perturbation}
give
\[
\rho_\varepsilon
=
\rho_0(b)+O(\varepsilon)
=
\frac1\pi+O(\varepsilon)
\]
and
\[
\mu_\varepsilon
=
\mu_b+O_{\mathrm{weak}}(\varepsilon),
\]
where
\[
d\mu_b
=
\frac{1}{(2\pi)^2\rho_0(b)}
\frac{|D_\omega\mathcal S_b|}
{|\nabla\mathcal S_b|}
\,d\sigma_b.
\]
Since
\[
|\nabla\mathcal S_b|
=
\sqrt{\cos^2\theta_1+b^2\cos^2\theta_2},
\]
this yields \eqref{eq-two-cosine-leading-empirical-measure}.
The same limiting flux expression extends across the finite-order
tangency regime by the tangency theorem developed above.
The stated regularity properties follow from the explicit formula for
$\rho_0$.
\end{proof}
It is worth emphasizing that the family $\mathcal S_b$, and hence the
underlying geometric perturbation, depends analytically on $b$, whereas
the asymptotic density $\rho_0(b)$ has only finite H\"older regularity
at the transition values. Thus analytic dependence of the geometry on
$b$ is not automatically inherited by the equilibrium statistics.
\\
The loss of regularity occurs precisely when the geometry of the zero
set becomes degenerate with respect to the Kronecker direction. Away
from such values, the relevant intersections are transverse, and the
implicit function theorem gives smooth dependence of the crossing
points on $b$. At a critical value, however, transversality is lost:
simple crossings may merge into a higher-order tangency. The
corresponding zeros may then depend on $b$ through fractional powers of
the distance to the critical parameter, and this singular dependence
is inherited by the integral formula defining $\rho_0(b)$. The loss
of regularity of the density therefore reflects a geometric
bifurcation in the intersection problem rather than a lack of
regularity of the underlying family of channels.
{
\paragraph{Computational assessment of the equilibrium density.}
We complement Proposition~\ref{prop-two-frequency-density} by a direct numerical count of the
zeros of
\[
q_b(x)=\sin x+b\sin(\sqrt{2}x)
\]
over the interval $0<x<L$, with $L=5000\pi$. For each value of $b$,
we compute
\[
\rho_L(b)
=
\frac{1}{L}
\#\left\{
x\in(0,L):q_b(x)=0
\right\},
\]
and compare it with the analytical density $\rho_0(b)$ given by
\eqref{eq-rho-b-integral}--\eqref{eq-rho-large-b}.
Figure~\ref{fig:rho_b_numerical} shows excellent agreement over the
three regimes separated by $b=1/\sqrt{2}$ and $b=1$. In particular,
the computation reproduces the constant density $1/\pi$ in the first
transverse regime, the nontrivial increase in the intermediate
tangency regime, and the limiting value $\sqrt{2}/\pi$ for $b>1$.

\begin{figure}[ht]
    \centering
    \includegraphics[width=0.78\textwidth]{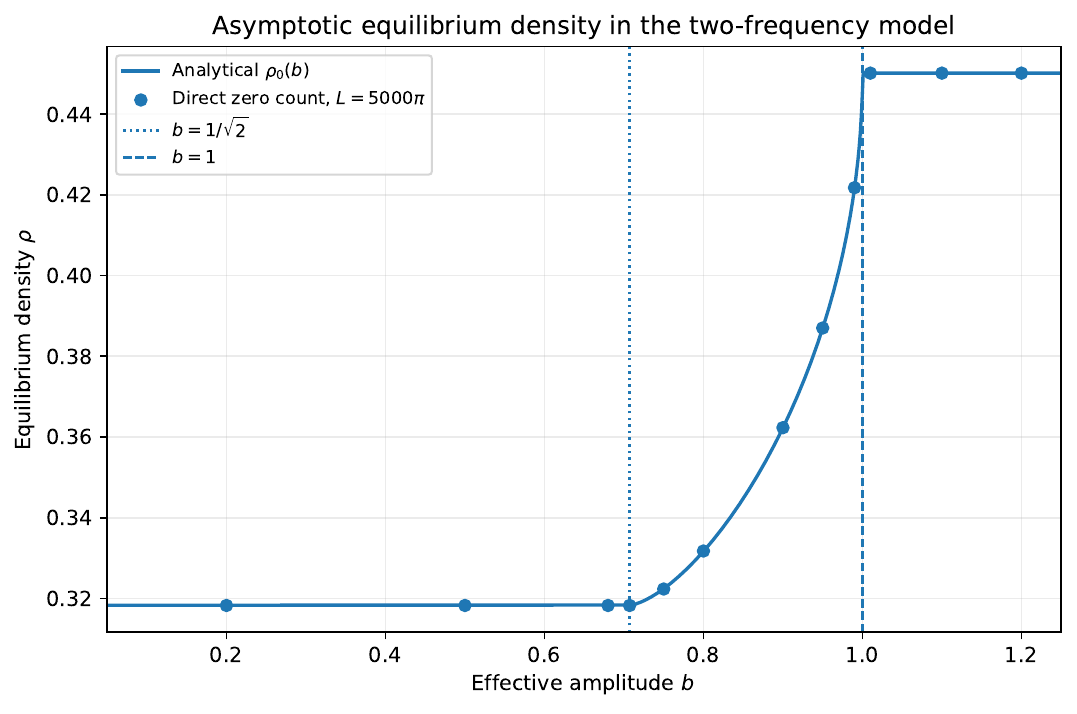}
    \caption{
    Numerical verification of the asymptotic equilibrium density for
    the two-frequency model
    $q_b(x)=\sin x+b\sin(\sqrt{2}x)$.
    The solid curve is the analytical density $\rho_0(b)$ from
    Proposition~\ref{prop-two-frequency-density}, while the symbols are obtained by direct zero
    counting over $0<x<5000\pi$.
    The vertical lines indicate the transition values
    $b=1/\sqrt{2}$ and $b=1$.
    }
    \label{fig:rho_b_numerical}
\end{figure}

}

\subsubsection{Distribution of the critical phases and phase transition}

The scalar quantity $\rho_0(b)$ measures the frequency with which
equilibria occur along the physical channel, but it does not describe
how their phases are distributed on the critical curve. We now turn to
this finer statistic. By
\eqref{eq-two-cosine-leading-empirical-measure}, the limiting phase
distribution is determined by the normalized transverse flux through
$\Sigma_b$. To obtain a more explicit description, we project this
intrinsic measure onto a global angular parameter of the critical
curve.
\\
The natural parameter depends on whether $|b|<1$ or $|b|>1$.

\paragraph{The case $0<|b|<1$.}

In this regime, for every $\theta_2\in\mathbb T$, the equation
\[
\sin\theta_1=-b\sin\theta_2
\]
has two solutions. Set
\[
A_b(\theta_2)
=
\sqrt{1-b^2\sin^2\theta_2}.
\]
Then
\[
\Sigma_b=\Sigma_b^-\cup\Sigma_b^+,
\]
where $\Sigma_b^-$ and $\Sigma_b^+$ are the two disjoint connected
components characterized by
\begin{equation}
\label{eq-branches-small-final}
\cos\theta_1
=
\pm A_b(\theta_2),
\qquad
A_b(\theta_2)
=
\sqrt{1-b^2\sin^2\theta_2}.
\end{equation}
More precisely,
\[
\Sigma_b^\pm
=
\left\{
(\theta_1,\theta_2)\in\Sigma_b:
\cos\theta_1=\pm A_b(\theta_2)
\right\}.
\]
Along $\Sigma_b$, differentiation of
\[
\sin\theta_1+b\sin\theta_2=0
\]
gives
\[
\cos\theta_1\,d\theta_1
+
b\cos\theta_2\,d\theta_2
=
0.
\]
Therefore
\[
\frac{d\theta_1}{d\theta_2}
=
-\frac{b\cos\theta_2}{\cos\theta_1},
\]
and consequently
\[
d\sigma_b
=
\sqrt{
1+
\left(
\tfrac{d\theta_1}{d\theta_2}
\right)^2
}
\,d\theta_2
=
\frac{
\sqrt{\cos^2\theta_1+b^2\cos^2\theta_2}
}{
|\cos\theta_1|
}
\,d\theta_2.
\]
Thus
\[
\frac{d\sigma_b}{|\nabla\mathcal S_b|}
=
\frac{d\theta_2}{|\cos\theta_1|}.
\]
Substituting this identity into
\eqref{eq-two-cosine-leading-empirical-measure} and using
\eqref{eq-branches-small-final}, we obtain on each branch
$\Sigma_b^\pm$
\[
d\mu_b^\pm(\theta_2)
=
\tfrac{1}{(2\pi)^2\rho_0(b)}
\left|
1
\pm
\tfrac{\sqrt2\,b\cos\theta_2}
{A_b(\theta_2)}
\right|
\,d\theta_2.
\]
Let
\[
\nu_b=(\pi_2)_\#\mu_b,
\qquad
\pi_2(\theta_1,\theta_2)=\theta_2.
\]
Adding the contributions of the two branches and using \eqref{dirac-mass}
we obtain
\begin{equation}
\label{eq-final-measure-small-general}
d\nu_b(\theta_2)
=
\frac{1}{2\pi^2\rho_0(b)}
\max\left\{
1,
\frac{\sqrt2\,|b\cos\theta_2|}
{\sqrt{1-b^2\sin^2\theta_2}}
\right\}
\,d\theta_2,
\qquad
0<|b|<1.
\end{equation}
In the case
\[
0<|b|\leq\tfrac1{\sqrt2},
\]
we have
\[
\sqrt2\,|b\cos\theta_2|
\leq
\sqrt{1-b^2\sin^2\theta_2},
\qquad
\rho_0(b)=\frac1\pi,
\]
and therefore
\begin{equation*}
\label{eq-final-measure-small-uniform}
d\nu_b(\theta_2)
=
\frac{d\theta_2}{2\pi}.
\end{equation*}
Thus the projected distribution is uniform throughout the first
globally transverse regime.
\\
For
\[
\frac1{\sqrt2}<|b|<1,
\]
the two terms inside the maximum in
\eqref{eq-final-measure-small-general} exchange dominance. The
transition occurs when
\[
\frac{\sqrt2\,|b\cos\theta_2|}
{\sqrt{1-b^2\sin^2\theta_2}}
=
1,
\]
or equivalently, by the same relation as in \eqref{tangency-one},
\[
\sin^2\theta_2
=
2-\frac1{b^2}.
\]
Consequently,
\[
d\nu_b(\theta_2)
=
\frac{1}{2\pi^2\rho_0(b)}
\max\left\{
1,
\frac{\sqrt2\,|b\cos\theta_2|}
{\sqrt{1-b^2\sin^2\theta_2}}
\right\}
\,d\theta_2,
\]
where $\rho_0(b)$ is given by
\eqref{eq-rho-intermediate-final}. In this regime the projected
measure is therefore nonuniform.\\
This formula makes the role of the tangencies particularly
transparent. Before the loss of transversality, the contributions of
the two branches compensate exactly after projection and produce the
uniform measure. Once tangencies appear, this compensation fails on
nontrivial angular sectors, and the distribution of the critical
phases becomes nonuniform.
{
\paragraph{Computed critical-phase distribution.}
We now verify the limiting phase measure by direct sampling of the
zeros of
\[
q_b(x)=\sin x+b\sin(\sqrt{2}x).
\]
For each zero $x_n$ in the interval $0<x<L$, we associate the hull
phase
\[
\Theta_n
=
\bigl(x_n,\sqrt{2}x_n\bigr)
\pmod{2\pi},
\]
and in particular its second component
\[
\theta_{2,n}
=
\sqrt{2}x_n
\pmod{2\pi}.
\]
The resulting empirical projected measure is
\[
\nu_{b,N}^{\rm num}
=
\frac1N
\sum_{n=1}^{N}
\delta_{\theta_{2,n}}.
\]
We take $L=10^5\pi$ and consider the same representative amplitudes
$b=1/\sqrt{2},\,0.8,\,0.95,$ and $0.99$ used in the analytical
discussion above. Figure~\ref{fig:phase_distribution_numerical}
compares the empirical densities obtained from the direct zero
sampling with the analytical projected density (\ref{eq-final-measure-small-general}) for different values of $b$. To separate the dependence on the parameter $b$ from the finite-sampling
fluctuations, the corresponding analytical profiles are superposed in Figure~\ref{fig:fb-profiles}.
\begin{figure}[h]
    \centering
    \includegraphics[width=0.88\textwidth]{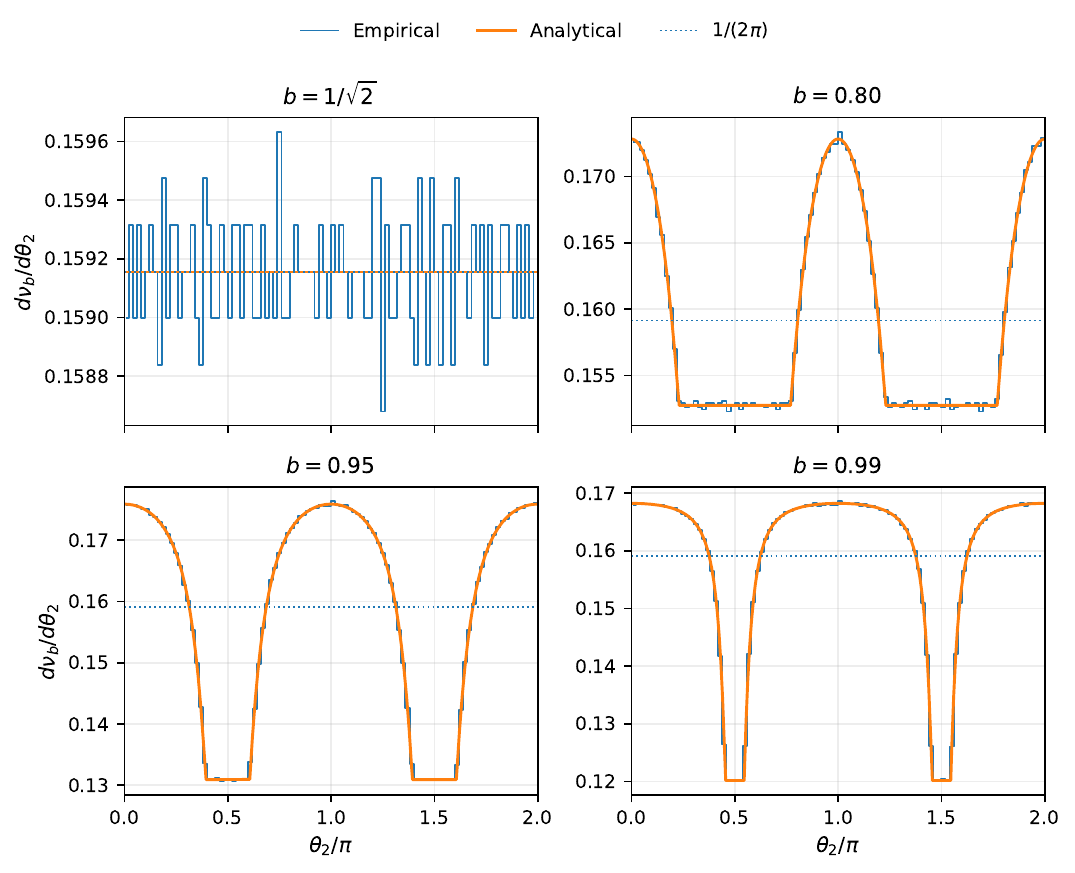}
    \caption{Projected limiting distribution of critical phases for the two-frequency model $q_b(x)=\sin x+b\sin(\sqrt{2}x)$.
The empirical densities are obtained from all zeros in
$0<x<10^5\pi$, while the smooth curves show the analytical projected
density $d\nu_b/d\theta_2$ given by~(\ref{eq-final-measure-small-general}).
The horizontal dashed line denotes the uniform density $1/(2\pi)$.
}
\label{fig:phase_distribution_numerical}
\end{figure}
\begin{figure}[h]
    \centering   \includegraphics[width=0.7\textwidth]{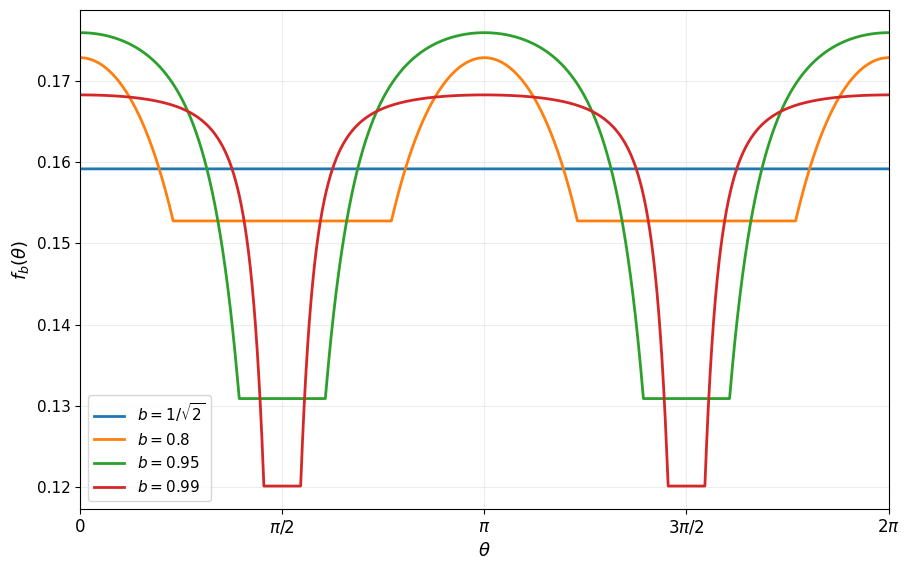}
    \caption{
    Analytical projected densities $d\nu_b/d\theta_2$ given by~(\ref{eq-final-measure-small-general}) for $b=1/\sqrt2$, $0.8$, $0.95$, and $0.99$, superposed to highlight their dependence on the parameter $b$. The case $b=1/\sqrt2$ corresponds to the uniform density $1/(2\pi)$.}
    \label{fig:fb-profiles}
\end{figure}
\\The empirical distributions closely follow the analytical prediction for all four values of $b$. Small fluctuations remain visible because
the phase sequence is sampled over a finite spatial interval, but the overall structure of the limiting distribution is already well resolved for $L=10^5\pi$. Thus the direct zero sampling provides a numerical illustration of the flux-weighted phase measure predicted by the ergodic theory.
\paragraph{The case $|b|>1$.}

When $|b|>1$, the coordinate $\theta_2$ is no longer a global
parameter. Instead, for every $\theta_1\in\mathbb T$,
\[
\sin\theta_2
=
-\tfrac{\sin\theta_1}{b}
\]
has exactly two solutions. Set
\begin{equation}
\label{eq-Cb-final}
C_b(\theta_1)
=
\sqrt{1-\tfrac{\sin^2\theta_1}{b^2}}.
\end{equation}
The two branches are characterized by
\[
\cos\theta_2
=
\pm C_b(\theta_1).
\]
Proceeding as before, but now using $\theta_1$ as the global parameter,
we obtain
\begin{equation}
\label{eq-final-measure-large-uniform}
d\nu_b(\theta_1)
=
\tfrac{d\theta_1}{2\pi},
\qquad
|b|>1,
\end{equation}
where
\[
\nu_b=(\pi_1)_\#\mu_b,
\qquad
\pi_1(\theta_1,\theta_2)=\theta_1.
\]
In the same regime,
\[
\rho_0(b)=\tfrac{\sqrt2}{\pi}.
\]
We therefore obtain a simple global picture. For $|b|<1$, the zero set
is naturally parametrized by $\theta_2$, whereas for $|b|>1$ it is
naturally parametrized by $\theta_1$. In the two outer transverse
regimes,
\[
0<|b|\leq\tfrac1{\sqrt2}
\qquad\text{and}\qquad
|b|>1,
\]
the projected limiting measure is uniform in the corresponding global
parameter. By contrast, in the intermediate regime
\[
\tfrac1{\sqrt2}<|b|<1,
\]
the appearance of tangencies produces a  nonuniform
distribution. In all regular regimes, however, the intrinsic limiting
measure remains supported on the whole critical set $\Sigma_b$; the
one-dimensional formulas above are simply its push-forwards under the
natural graph parametrizations.\\
Figure~\ref{fig:fb-profiles} makes the dependence of the projected measure on $b$ particularly transparent. \\
At
\[
b=\frac{1}{\sqrt2},
\]
the distribution is uniform. For
\[
\frac{1}{\sqrt2}<b<1,
\]
this uniformity is broken and two symmetric depleted regions develop around
\[
\theta_2=\frac{\pi}{2},
\qquad
\theta_2=\frac{3\pi}{2}.
\]
As $b\uparrow1$, these regions become increasingly narrow and pronounced, whereas away from them the density approaches the uniform value $1/(2\pi)$. \\
The restoration of uniformity across $|b|=1$ is therefore singular.
It does not arise from a smooth reversal of the deformation occurring
in the intermediate regime. Rather, the geometry of the critical set
changes at $|b|=1$, and the natural global parametrization switches
from $\theta_2$ to $\theta_1$. The transition should consequently be
understood intrinsically in terms of the curve $\Sigma_b$ and its
transverse flux measure, rather than solely through a fixed
one-dimensional projection.
\\
This explicit example illustrates the main mechanism behind the
ergodic theory developed in this section. The large-scale statistics
of vortex equilibria are controlled by the geometry of the critical
hypersurface relative to the Kronecker direction. Global
transversality leads to regular statistics and, in the present model,
to uniform projected distributions. Finite-order tangencies generate
both nonuniformity and a loss of regularity with respect to the
geometric parameter.
{
\paragraph{Folded spatial distribution of the equilibria.}
The preceding phase distributions describe the equilibria intrinsically
on the critical curve $\Sigma_b$. It is also useful to visualize how
these equilibria are organized in physical coordinates. Let $x_n$
denote the positive zeros of
\[
q_b(x)=\sin x+b\sin(\sqrt{2}x),
\]
and use the first-order critical-point expansion
\[
y_n^{(1)}
=
\frac12+
\frac{2\varepsilon}{\pi}
\partial_yR_1
\left(x_n,\frac12\right).
\]
Since the quasi-periodic channel has no fundamental spatial cell, we
introduce only for visualization the folded coordinate
\[
X_n=x_n\pmod{2\pi}, \qquad X_n \in [-\pi/2,3\pi/2)
\]
Figure~\ref{fig:folded_equilibrium_distribution} shows the resulting
sets $(X_n,y_n^{(1)})$ for several values of $b$.
\begin{figure}[h]
    \centering
    \includegraphics[width=0.92\textwidth]{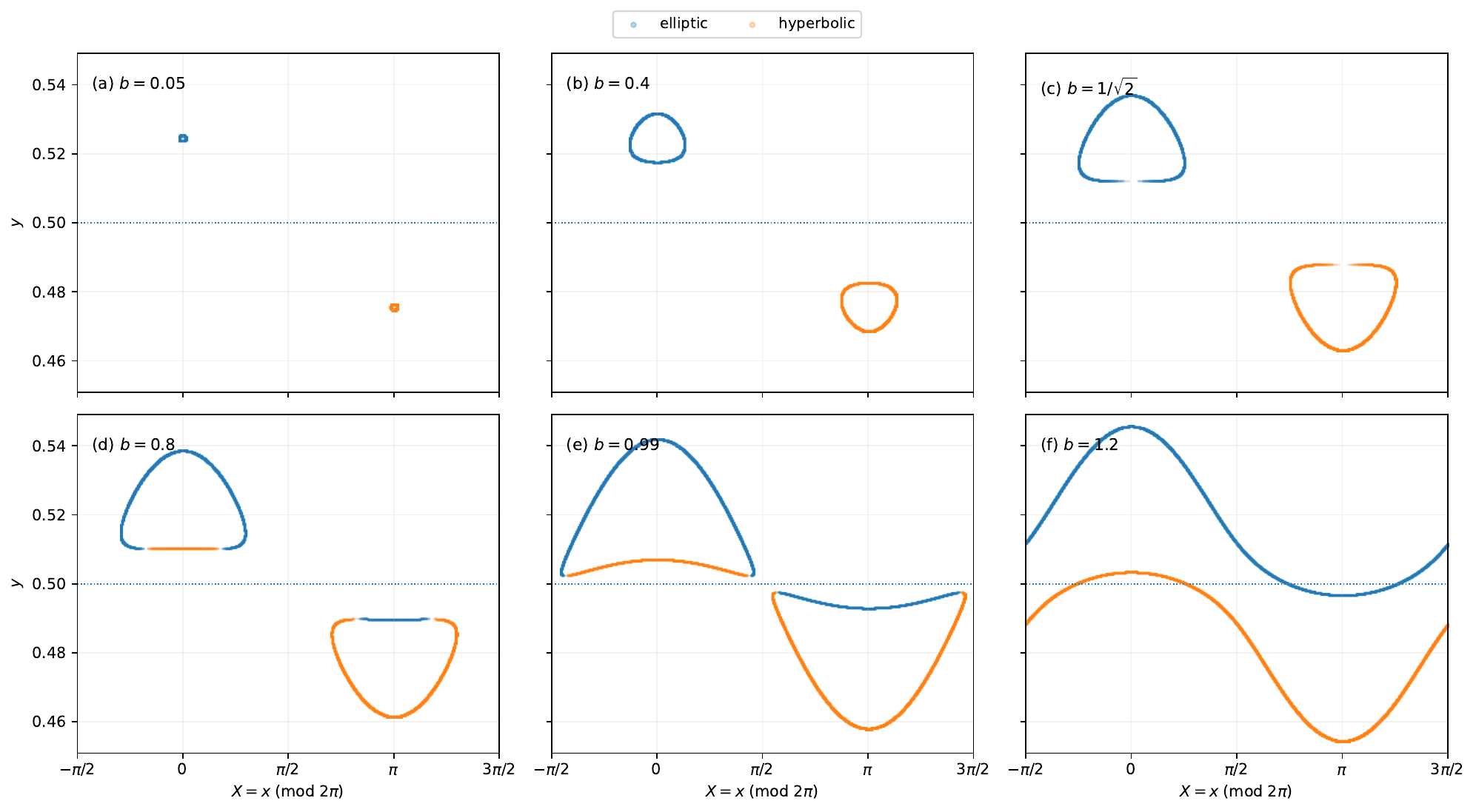}
    \caption{Folded spatial distribution of the vortex equilibria predicted by
the first-order two-frequency model for $\varepsilon=0.05$.
The longitudinal positions are zeros of
$q_b(x)=\sin x+b\sin(\sqrt{2}x)$ sampled over a long spatial interval,
while the transverse positions are obtained from the first-order
critical-point expansion.
The equilibria are represented in the shifted $2\pi$-wide window
$-\pi/2\leq X<3\pi/2$, with $X=x\pmod{2\pi}$, chosen so that the
branches are displayed without artificial cuts at the boundary.
The six panels correspond to
(a) $b=0.05$,
(b) $b=0.4$,
(c) $b=1/\sqrt{2}$,
(d) $b=0.8$,
(e) $b=0.99$, and
(f) $b=1.2$.
Elliptic and hyperbolic equilibria are distinguished according to
the sign of the leading Hessian determinant.
The folding is used only as a visualization device and does not imply
spatial periodicity of the quasi-periodic channel.
    }
    \label{fig:folded_equilibrium_distribution}
\end{figure}
For $b\ll1$, the second incommensurate mode is weak and the folded
distribution concentrates near the two equilibrium positions of the
single-mode periodic problem, $X=0$ and $X=\pi$. As $b$ increases,
these points broaden into curves reflecting the sampling of the two
components of $\Sigma_b$ by the Kronecker orbit. For
$0<b<1/\sqrt{2}$, the two components retain a fixed Morse type.
At $b=1/\sqrt{2}$ the first tangencies occur, while for
$1/\sqrt{2}<b<1$ elliptic and hyperbolic portions coexist on the
same folded branches. As $b\uparrow1$, the admissible longitudinal
support expands toward the whole $2\pi$ window.
\\
This representation should be distinguished from the projected
limiting distribution $\nu_b$ in~\eqref{eq-final-measure-small-general}. For $b<1$, the
projection of $\Sigma_b$ onto $\theta_2$ has full support, whereas
the folded physical coordinate is
$X=\theta_1=x\pmod{2\pi}$ and is constrained by
\[
|\sin X|\leq b.
\]
Consequently, a continuous, and in the globally transverse regime
uniform, limiting distribution in $\theta_2$ is fully compatible with
a spatial distribution localized on lower-dimensional subsets of the
folded $(X,y)$ plane.
As an independent consistency check, direct computations of the full
finite-$\varepsilon$ Robin function at representative parameter values
produce the same qualitative folded organization of the critical
points and close agreement with their first-order locations; these
additional computations are not shown.
}

\section*{Acknowledgment}
T. Hmidi has been supported by Tamkeen under the NYU Abu Dhabi Research Institute grant.

\section*{Data availability}
The numerical data and computational scripts supporting the figures and results presented in this study are available from the authors upon reasonable request.

\section*{Declaration on the use of generative AI}
Generative AI tools were used during the preparation of this manuscript to assist with text editing, selected mathematical and numerical calculations, and aspects of code development and debugging. The authors independently assessed and validated the resulting material and take full responsibility for the manuscript.

\appendix

\section{Auxiliary Diophantine estimates}
The following results are classical.
\begin{lemma}
\label{lem-regularity-u}
Assume that
\[
g\in C^m(\T^d),
\qquad
\widehat g_0=0,
\]
and that $\omega\in\R^d$ satisfies the Diophantine condition
\begin{equation}
\label{eq-diophantine-u}
|\omega\cdot\ell|
\geq
\frac{\gamma}{|\ell|^\tau},
\qquad
\forall\,\ell\in\Z^d\setminus\{0\},
\end{equation}
for some $\gamma>0$ and $\tau>0$. Define
\begin{equation}
\label{eq-u-Fourier-orbit}
u(\theta)
=
\sum_{\ell\in\Z^d\setminus\{0\}}
\frac{\widehat g_\ell}
{i\omega\cdot\ell}
e^{i\ell\cdot\theta}.
\end{equation}
Then, for every integer $k\geq0$ satisfying
\begin{equation}
\label{eq-regularity-condition-u}
k<m-\tau-d,
\end{equation}
one has
\[
u\in C^k(\T^d).
\]
Moreover,
\begin{equation*}
\label{eq-Ck-estimate-u}
\|u\|_{C^k}
\leq
{C}\gamma^{-1}\|g\|_{C^m},
\end{equation*}
where $C$ depends only on $d,m,\tau$ and $k$.
\end{lemma}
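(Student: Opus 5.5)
The plan is to estimate the Fourier coefficients of $g$ from its $C^m$ regularity and then sum the Fourier series of $u$ and its derivatives absolutely. First, integration by parts $m$ times in a suitably chosen coordinate gives, for $\ell\neq0$,
\[
|\widehat g_\ell|\leq C_d\,\frac{\|g\|_{C^m}}{|\ell|^m}.
\]
Concretely, one picks $j$ with $|\ell_j|\geq|\ell|/\sqrt d$ and integrates by parts in $\theta_j$ only. Combining this with the Diophantine lower bound \eqref{eq-diophantine-u}, the coefficients of $u$ satisfy
\[
\Bigl|\frac{\widehat g_\ell}{i\,\omega\cdot\ell}\Bigr|
\leq \frac{C_d}{\gamma}\,\|g\|_{C^m}\,|\ell|^{\tau-m}.
\]
Note that $\widehat g_0=0$ is exactly what makes the zero mode absent, so no division by zero occurs.

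Next, for any multi-index $\beta$ with $|\beta|\leq k$, we differentiate the series \eqref{eq-u-Fourier-orbit} term by term. Each term gains a factor $|\ell^\beta|\leq|\ell|^{|\beta|}\leq|\ell|^k$, so the differentiated series is dominated by
\[
\frac{C_d}{\gamma}\,\|g\|_{C^m}\sum_{\ell\neq0}|\ell|^{k+\tau-m}.
\]
This lattice sum converges precisely when $m-k-\tau>d$, which is condition \eqref{eq-regularity-condition-u}. One sees this by comparing it with $\int_{|x|\geq1}|x|^{-(m-k-\tau)}\,dx$, or equivalently by counting the $O(R^{d-1})$ lattice points on shells $|\ell|\sim R$.

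The Weierstrass M-test then gives uniform convergence of all derivative series up to order $k$. Hence $u\in C^k(\T^d)$, and term-by-term differentiation is justified by the standard theorem on uniformly convergent series of derivatives. The bound $\|u\|_{C^k}\leq C\gamma^{-1}\|g\|_{C^m}$ follows with $C$ depending only on $d,m,\tau,k$, namely the constant from the coefficient estimate times the value of the lattice sum.

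Finally, $u$ solves $\omega\cdot\partial_\theta u=g$. This holds because the series can be differentiated once (we have $k\geq0$, and the $C^1$ case is either included in the range or we check it at the level of distributions/Fourier coefficients). The only mildly delicate point is the coefficient decay estimate with a constant uniform in $\ell$, which is handled by the choice of the dominant coordinate $j$ above. The convergence condition is simply $m-k-\tau>d$, and the argument is otherwise routine.
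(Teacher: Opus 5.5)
Your proposal is correct and follows the paper's proof. Both arguments use the Fourier decay $|\widehat g_\ell|\lesssim \|g\|_{C^m}|\ell|^{-m}$, the Diophantine lower bound on the divisors, and absolute uniform convergence of the termwise-differentiated series when $m-\tau-k>d$. Your dominant-coordinate integration by parts and lattice-sum count simply supply details the paper treats as standard; note that the resulting constant depends on $m$ as well as $d$.

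The closing remark about $\omega\cdot\partial_\theta u=g$ is not part of the lemma. When only $k=0$ is admissible it holds only in the distributional sense, so it is better stated separately, if at all.
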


\begin{proof}
Since $g\in C^m(\T^d)$, its Fourier coefficients satisfy
\[
|\widehat g_\ell|
\leq
C\|g\|_{C^m}|\ell|^{-m},
\qquad
\ell\neq0.
\]
The Diophantine condition yields
\begin{equation}
\label{eq-u-Fourier-decay-Ck}
|\widehat u_\ell|
=
\frac{|\widehat g_\ell|}{|\omega\cdot\ell|}
\leq
{C}{\gamma}^{-1}
\|g\|_{C^m}
|\ell|^{-(m-\tau)}.
\end{equation}
Let $\beta\in\N^d$ with $|\beta|\leq k$, then
\[
\partial_\theta^\beta u(\theta)
=
\sum_{\ell\neq0}
(i\ell)^\beta\widehat u_\ell e^{i\ell\cdot\theta}.
\]
Using \eqref{eq-u-Fourier-decay-Ck}, we obtain
\[
\begin{aligned}
\sum_{\ell\neq0}
|\ell|^{|\beta|}|\widehat u_\ell|
&\leq
{C}{\gamma}^{-1}
\|g\|_{C^m}
\sum_{\ell\neq0}
|\ell|^{-(m-\tau-|\beta|)}
\\
&\leq
{C}{\gamma}^{-1}
\|g\|_{C^m}
\sum_{\ell\neq0}
|\ell|^{-(m-\tau-k)}.
\end{aligned}
\]
The last series converges because
\[
m-\tau-k>d,
\]
which is exactly \eqref{eq-regularity-condition-u}. Hence the Fourier
series defining $\partial_\theta^\beta u$ converges absolutely and
uniformly for every $|\beta|\leq k$. Consequently,
\[
u\in C^k(\T^d).
\]
The same estimates yield
\[
\|u\|_{C^k}
\leq
{C}{\gamma}^{-1}\|g\|_{C^m},
\]
which concludes the proof.
\end{proof}
\begin{lemma}
\label{lem-full-measure-diophantine}
Let $\tau>d-1$ and let $B\subset\R^d$ be a bounded measurable set. For
$\gamma>0$, define
\[
\mathrm{DC}(\gamma,\tau)
:=
\left\{
\omega\in B:
|\omega\cdot\ell|
\geq
\frac{\gamma}{|\ell|^\tau},
\quad
\forall\,\ell\in\Z^d\setminus\{0\}
\right\}.
\]
Then there exists a
constant $C=C(B,d,\tau)>0$ such that
\[
\bigl|B\setminus \mathrm{DC}(\gamma,\tau)\bigr|
\leq
C\gamma.
\]
Consequently,
\[
\left|
B\cap\bigcup_{\gamma>0}\mathrm{DC}(\gamma,\tau)
\right|
=
|B|.
\]
\end{lemma}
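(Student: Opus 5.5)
This is the classical measure estimate for Diophantine vectors. The plan is to cover the complement by resonant slabs and apply a union bound.

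First, for each $\ell\in\Z^d\setminus\{0\}$ set
\[
A_\ell:=\Big\{\omega\in B:\ |\omega\cdot\ell|<\tfrac{\gamma}{|\ell|^\tau}\Big\}.
\]
By definition,
\[
B\setminus\mathrm{DC}(\gamma,\tau)=\bigcup_{\ell\neq0}A_\ell .
\]

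Second, estimate each $|A_\ell|$. Since $B$ is bounded, $B\subset B_{R}(0)$ for some $R=R(B)$. The set $\{\omega:|\omega\cdot\ell|<\gamma|\ell|^{-\tau}\}$ is a slab orthogonal to $\ell/|\ell|$. Its width is $2\gamma|\ell|^{-\tau-1}$, because $|\omega\cdot\ell|<\eta$ means the component of $\omega$ along $\ell/|\ell|$ is less than $\eta/|\ell|$ in absolute value. Intersecting this slab with the ball $B_R$ gives, by Fubini in coordinates adapted to $\ell/|\ell|$,
\[
|A_\ell|\le 2\gamma|\ell|^{-\tau-1}\,(2R)^{d-1}.
\]

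Third, sum over $\ell$:
\[
|B\setminus\mathrm{DC}(\gamma,\tau)|\le 2(2R)^{d-1}\gamma\sum_{\ell\neq0}|\ell|^{-(\tau+1)} .
\]
The series converges exactly when $\tau+1>d$, that is $\tau>d-1$. This is the one place the hypothesis enters, and it matches Remark~\ref{rem-diophantine-exponent}. To check convergence, group lattice points by shells $n\le|\ell|<n+1$, which contain $O(n^{d-1})$ points. This yields $C=C(B,d,\tau)$.

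Finally, the sets $\mathrm{DC}(\gamma,\tau)$ increase as $\gamma\downarrow0$. Hence
\[
\Big|B\setminus\bigcup_{\gamma>0}\mathrm{DC}(\gamma,\tau)\Big|\le\inf_{\gamma>0}C\gamma=0,
\]
which gives the full-measure statement.

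There is no real obstacle. The only points needing care are the slab-width computation, where one must divide by $|\ell|$ rather than forget it, and the lattice-sum convergence threshold. The norm $|\ell|$ versus $\langle\ell\rangle$ is immaterial for $\ell\neq0$.
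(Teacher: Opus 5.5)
Your proposal is correct and follows the paper's proof essentially verbatim. Both arguments cover $B\setminus\mathrm{DC}(\gamma,\tau)$ by the resonant slabs of width of order $\gamma|\ell|^{-\tau-1}$ intersected with the bounded set, sum over $\ell$ using $\tau+1>d$, and then let $\gamma\downarrow0$. Your version also makes the slab-width constant explicit and uses $|\ell|^\tau$ throughout, consistent with the statement, whereas the paper's proof switches to $\langle\ell\rangle^\tau$ without comment.
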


\begin{proof}
For every $\ell\in\Z^d\setminus\{0\}$, introduce the resonant strip
\[
\mathcal R_\ell(\gamma)
:=
\left\{
\omega\in B:
|\omega\cdot\ell|
<
\frac{\gamma}{\langle\ell\rangle^\tau}
\right\}.
\]
Its width in the direction orthogonal to the hyperplane
$\{\omega\cdot\ell=0\}$ is of order
\[
\frac{\gamma}{|\ell|\langle\ell\rangle^\tau}.
\]
Since $B$ is bounded, it follows that
\[
|\mathcal R_\ell(\gamma)|
\leq
C_B
\frac{\gamma}{|\ell|\langle\ell\rangle^\tau}
\leq
C_B
\frac{\gamma}{\langle\ell\rangle^{\tau+1}}.
\]
Therefore
\[
\begin{aligned}
\bigl|B\setminus \mathrm{DC}(\gamma,\tau)\bigr|
&\leq
\sum_{\ell\in\Z^d\setminus\{0\}}
|\mathcal R_\ell(\gamma)|
\\
&\leq
C_B\gamma
\sum_{\ell\in\Z^d\setminus\{0\}}
\frac{1}{\langle\ell\rangle^{\tau+1}}.
\end{aligned}
\]
Since $\tau>d-1$, one has $\tau+1>d$, and thus
\[
\sum_{\ell\in\Z^d\setminus\{0\}}
\langle\ell\rangle^{-(\tau+1)}
<\infty.
\]
Hence
\[
\bigl|B\setminus \mathrm{DC}(\gamma,\tau)\bigr|
\leq
C\gamma.
\]
Letting $\gamma\downarrow0$ gives
\[
\left|
B\setminus\bigcup_{\gamma>0}\mathrm{DC}(\gamma,\tau)
\right|
=0,
\]
which proves the result.

\end{proof}


\begin{tabular}{l}
		\textbf{Mohamed Ali} \\
		{\small Department of Mathematics}\\
		{\small New York University in Abu Dhabi} \\ 
		{\small Saadiyat Island, P.O. Box 129188, Abu Dhabi, UAE} \\ 
		{\small Email: ma8281@nyu.edu}
	\end{tabular}
	
	\vspace{0.2cm}
	
		\begin{tabular}{l}
		\textbf{Taoufik Hmidi} \\
		{\small Department of Mathematics}\\
		{\small New York University in Abu Dhabi} \\ 
		{\small Saadiyat Island, P.O. Box 129188, Abu Dhabi, UAE} \\ 
		{\small Email: th2644@nyu.edu}
	\end{tabular}
\end{document}